\UseAllTwocells \xyoption{frame} \CompileMatrices
\newtheorem{prop}{Proposition}[section]
\newtheorem{lem}[prop]{Lemma}
\newtheorem{cor}[prop]{Corollary}
\newtheorem{thm}[prop]{Theorem}
\newtheorem{rmk}[prop]{Remark}
\newtheorem{defn}[prop]{Definition}
\newcommand{\noprint}[1]{}
\newcommand{\Ext}{\mbox{Ext}}
\newcommand{\Hom}{\mbox{Hom}}
\newcommand{\Y}{\mathop{\sf Y}\nolimits}
\newcommand{\zz}{{\mathbb Z}}
\renewcommand{\i}{{\mathbb I}}
\renewcommand{\j}{{\mathbb J}}
\newcommand{\qq}{{\mathbb Q}}
\newcommand{\pp}{{\mathbb P}}
\newcommand{\cc}{{\mathbb C}}
\newcommand{\ff}{{\mathbb F}}
\newcommand{\sD}{{\mathcal D}}
\newcommand{\sE}{{\mathcal E}}
\newcommand{\sI}{{\mathcal I}}
\newcommand{\sO}{{\mathcal O}}
\newcommand{\sM}{{\mathcal M}}
\newcommand{\sA}{{\mathcal A}}
\newcommand{\Ac}{{\mathcal A}}
\newcommand{\Bc}{{\mathcal B}}
\newcommand{\Cc}{{\mathcal C}}
\newcommand{\rH}{\mathscr{H}}
\DeclareMathOperator{\Hilb}{Hilb}
\DeclareMathOperator{\pic}{Pic}
\DeclareMathOperator{\Ob}{Ob}
\DeclareMathOperator{\vir}{vir}
\DeclareMathOperator{\mov}{mov}
\DeclareMathOperator{\vd}{vd}
\DeclareMathOperator{\Ch}{ch}
\DeclareMathOperator{\fix}{fix}
\DeclareMathOperator{\dimension}{dimension}
\DeclareMathOperator{\SO}{SO}
\DeclareMathOperator{\image}{Im}
\newcommand{\rk}{\mathop{\rm rk}}
\newcommand{\td}{\mathop{\rm td}}
\renewcommand{\Im}{\mathop{\rm Im}}
\newcommand{\rank}{\mathop{\rm rank}\nolimits}
\newcommand{\spec}{\mathop{\rm Spec}\nolimits}
\newcommand{\Sym}{\mathop{\rm Sym}\nolimits}
\newcommand{\bb}[1]{\mathbb{#1}}
\newcommand{\bu}{\bullet}
\newcommand{\bff}[1]{\mathbf{#1}}
\newcommand{\lra}[1]{\langle #1 \rangle}
\newcommand\beq[1]{\begin{equation}\label{#1}}
\newcommand\eeq{\end{equation}}
\newcommand{\op}[1]{\operatorname{#1}}
\numberwithin{equation}{subsection}
\newcommand {\mat}      [1] {\left(\begin{array}{#1}}
\newcommand {\rix}          {\end{array}\right)}
\title[Stable pairs in 4-folds]{Stable pairs of 2-dimensional sheaves on 4-folds}
\author{Amin Gholampour, Yunfeng Jiang and Jason Lo}
\address{Department of Mathematics\\ University of Maryland\\ 4176 Campus Drive 
William E. Kirwan Hall
\\College Park, MD 20742-4015405 USA} 
\email{amingh@umd.edu}
\address{Department of Mathematics\\ University of Kansas\\ 405 Snow Hall 1460 Jayhawk Blvd\\Lawrence KS 66045 USA} 
\email{y.jiang@ku.edu}
\address{Department of Mathematics\\ California State University Northridge \\
18111 Nordhoff Street, Northridge, CA 91330 USA} 
\email{jason.lo@csun.edu}
\begin{document}
\sloppy \maketitle
\begin{abstract}
We identify Le Potier's moduli spaces of limit stable pairs $(F,s)$, where $F$ is a 2-dimensional sheaf on a nonsingular projective 4-fold $X$ and $s\in H^0(F)$, with the moduli spaces of polynomial stable 2-term complexes in derived category. These stable pairs are 2-dimensional analogs of Pandharipande-Thomas' stable pairs defined for 3-folds. We establish categorical correspondences involving these stable pairs, ideal sheaves of 2-dimensional subschemes of $X$, and 1-dimensional sheaves on $X$. Under some conditions on the Chern character, these lead to Hall algebra correspondences. The generalization of most of these results to higher ranks is also given.

 In case $X$ is Calabi-Yau, Oh-Thomas' construction gives a new set of invariants of $X$ counting these stable pairs. For certain Chern characters, these are related to the invariants of 2-dimensional stable sheaves. We calculate and study them in some cases and examples such as fibrations by abelian surfaces, local surfaces, and local Fano 3-folds. The last case in particular leads to new invariants of Fano 3-folds counting 2-dimensional stable pairs with reduced supports.
\end{abstract}

\maketitle

\tableofcontents

\section{Introduction}
The stable pair theory of Pandharipande and Thomas \cite{PT} has been a successful \emph{curve counting theory} for 3-folds.  It studies pairs $(F,s)$, where $F$ is a pure 1-dimensional sheaf on a nonsingular (quasi-)projective threefold $X$ and $s\in \Gamma(X,F)$ is a global section such that the induced map $\sO_X\to F$ has at most a 0-dimensional cokernel. The purity condition implies that the sheaf $F$ is supported  over a Cohen-Macaulay curve $C\subset X$.  If $C$ is a nonsingular curve, then $F$ is necessarily the pushforward of a line bundle on $C$. The moduli space of such pairs is a Le Potier's moduli space of pairs $(F,s)$, when the pair stability parameter is chosen to be sufficiently large. At the  same time, this is also a moduli space of stable 2-term complexes $[\sO_X\to F]$ in derived category.
This is used to equip the moduli space of PT pairs with a perfect obstruction theory. One can define the PT invariant by integrating the resulting virtual cycle. There are very simple to state but highly nontrivial connections among PT invariants and other curve counting invariants such as Gromov-Witten invariants and Donaldson-Thomas invariants of rank 1 torsion free sheaves. PT theory has an advantage as it avoids the ``roaming points" present in the corresponding DT theory.

The fact that the support of $F$ in PT theory has codimension 2 proved to be crucial for the theory to work. A natural question is whether one can develop a parallel \emph{surface counting theory} if one replaces the ambient 3-fold by a 4-fold.
Motivated by this, in this paper we study pairs $(F,s)$, where $F$ is a pure 2-dimensional sheaf on a nonsingular (quasi-)projective 4-fold $X$ and $s\in \Gamma(X,F)$ is a global section, such that the induced map $\sO_X\to F$ has at most a 1-dimensional cokernel. 
Unlike the PT pairs the scheme theoretic support of $F$, denoted by $S$, is not Cohen-Macaulay in general. If the support $S$ is Cohen-Macaulay then the cokernel of  $\sO_X\to F$ is forced to be pure 1-dimensional (see  \S \ref{sec_description}).  There is a simple description for the pairs above in the special case that $S$ is nonsingular as discussed in \S \ref{sec_description}. Any such a pair is equivalent to the data of a PT stable pair  on $S$ or equivalently to the data of nested subschemes $Z\subset D\subset S$, where $Z$ is 0-dimensional and $D$ is 1-dimensional.

The moduli space of these pairs can again be constructed via Le Potier's coherent system theory \cite{LeP1, LeP2}. One of the main results of this paper (\emph{Theorem \ref{thm_moduli}}) shows that this moduli space is identified with the moduli space of 2-term complexes in derived category, which are stable under a certain \emph{polynomial stability condition}. Let $P:=P(X,\Ch)$ denote this moduli space of pairs $(F,s)$ with $\Ch(F)=\Ch$.  By the description above, for certain $\Ch$ one can view $P(X,\Ch)$ as a compactification of the space of embedded nonsingular surfaces $S\subset X$ that is different from the Hilbert scheme compactification. This is carried out in \S \ref{sec_stability_moduli}, where we work more generally and construct the moduli spaces of \emph{higher rank} analogs of these stable pairs as well (\emph{Theorem \ref{thm_exist}}). These moduli spaces are first shown to exist as Artin stacks locally of finite presentations. We show that they are in fact of finite type over $\bb C$ (\emph{Theorem \ref{prop:bdd-2}}).

In \S \ref{subsec_polynomial_ideal_sheaves} we introduce another polynomial stability condition that gives the ideal sheaves of 2-dimensional subschemes of $X$ leading to the Hilbert schemes of such subschemes.  Stable pair theory compactification avoids the ``roaming curves and points"  present in the Hilbert scheme compactification. This is again done along with the higher ranks generalization (\emph{Theorem \ref{thm_moduli2}}). A pictorial comparison between two polynomial stability conditions mentioned above can be found in Figure 1 at the end of \S \ref{subsec_polynomial_ideal_sheaves}.

In \S \ref{sec_cat} we work out \emph{categorical correspondences} involving the categories of 2-term complexes which are stable with respect to two polynomial stability conditions mentioned above, as well as the categories of semistable 1-dimensional sheaves (\emph{Proposition  \ref{lem:AG50-17-2}, Theorem \ref{thm:catwc}, Corollaries \ref{cor:AG50-17-7}, \ref{lem7}}). A novelty of Theorem \ref{thm:catwc} is that it can be applied to several other similar situations; see \S \ref{sec_cat} for a list of applications involving PT pairs that can be directly deduced from this Theorem.

In \S \ref{sec_Hall}, under some restrictions on the Chern character vector $\Ch$, we deduce \emph{Hall algebra correspondences} from the categorical relations in \S \ref{sec_cat} (\emph{Theorem \ref{thm_hall}}). These are analogs of PT/DT Hall algebra correspondence of \cite{Bridgeland10,Toda2} in dimension 3. Unlike in dimension 3, where the correspondence is expressed by a single relation, we need two relations to state our Hall algebra stable pair/ideal correspondence. 

In order to equip $P(X,\Ch)$ with a \emph{virtual cycle}, we confine ourselves to \emph{Calabi-Yau 4-folds} $X$ and use the recent result of Oh-Thomas \cite{OT}. This is done in \S \ref{subsec_virtual_cycle_Oh-Thomas}. Since our pairs are equivalent to stable and hence simple objects of the derived category the construction \cite{OT}  directly applies to our case resulting in a virtual cycle 
\begin{align*}&[P(X,\Ch)]^{\vir} \in A_{\vd/2}(P, \bb Z [\frac{1}{2}]),\\ &\vd=2\Ch_4+2\Ch_2\cdot \operatorname{td}_2(X)-\Ch_2^2.\end{align*} The pair invariants are defined by integrating against this class. By the description above these invariants can be thought of as the virtual counts of nested triples $Z \subset D \subset S$ of $0,1,2$-dimensional  subschemes of $X$ in which $S$ is nonsingular. 
One can similarly equip the Hilbert scheme of 2-dimensional subschemes $Y\subset X$ such that $\Ch(\sO_Y)=\Ch$ with a virtual cycle that has the same virtual dimension and define the DT invariants. It would be interesting to formulate a general relation among  these and the stable pair invariants. In a work in preparation \cite{BKP}, Bae, Kool and Park are investigating this problem \cite{BKP}.

In \S \ref{sec_degen} under some conditions on $\Ch$, we realize $P(X,\Ch)$ as the zero locus of a section of a vector bundle over another moduli space of stable pairs, which is a projective bundle over the moduli space of stable 2-dimensional sheaves $M(X,\Ch)$. We then find a Chern class formula relating the virtual cycles (\emph{Theorem \ref{thm_degen}}). This in turn leads to a relation to Oh-Thomas' virtual cycle $[M(X,\Ch)]^{\vir}$ (\emph{Corollary \ref{cor_degen}}).

 The rest of the paper is devoted to working out a few examples and an application. In our concrete examples, either the moduli space $P(X,\Ch)$ is smooth in which case the virtual cycle can be expressed as a (square root) Euler class of its obstruction bundle (\S \ref{sec_local_surfaces}, \S\ref{subsec_planes}, \S\ref{sec_compact}), or one can apply Oh-Thomas' virtual localization formula (\S \ref{sec_virloc}).  In these cases, the stable pair invariants are expressed in terms of the product of Seiberg-Witten invariants of a nonsingular surface and well-known integrals over the Hilbert scheme of points on the surface.
 
 In \S \ref{sec_Fano_threefolds} we take $X$ to be the total space of the canonical bundle of a Fano threefold $Y$ and $\Ch$ to be the pushforward of a Chern character vector $\Ch_Y$ on $Y$. Due to higher obstructions, in general the natural obstruction theory of the moduli space of 2-dimensional stable pairs $P(Y,\Ch_Y)$ is not perfect. By considering $P(Y,\Ch_Y)$ as a component of $P(X,\Ch)$ for certain choices of $\Ch_Y$, we can deduce a virtual cycle for  $P(Y,\Ch_Y)$ from $[P(X,\Ch)]^{\vir}$ (\emph{Theorem \ref{fano}}).

\smallskip\noindent\textbf{Acknowledgements.} This paper has gotten several ideas and inspirations from \cite{CMT, OT, PT, Toda2}. The authors would like to thank Martijn Kool,  Yinbang Lin, Hyeonjun Park, Feng Qu, Richard Thomas, and Ziyu Zhang for helpful discussions. A.G. was partially supported by Simons Foundation Collaboration Grant 703680
, Y. J. was partially supported by Simons Foundation Collaboration Grant 714191, and J. L. was partially supported by NSF Grant DMS-2100906.

\section{Stability conditions and moduli spaces} \label{sec_stability_moduli}
In this Section we let $X$ be a nonsingular projective 4-fold with a choice of a polarization  $\omega$.

\subsection{Moduli space of stable pairs} \label{sec_mst}
Let 
\begin{equation} \label{chern_char} \Ch=(0,0,\gamma,\beta,\xi)\in H^{2*}(X,\mathbb Q)\end{equation} be a Chern Character vector. 
 A 2-dimensional pair $(F,s)$ is given by a  2-dimensional coherent  sheaf $F$ and 
a section $s: \sO_X\to F$.   Let $F(m)=F\otimes \omega^m$. For a  pair $(F,s)$ with the Chern character 
$\Ch$ the Hilbert polynomial of $F$  is given by
$$\chi(F(m))=m^2\frac{\gamma\cdot \omega^2}2 +m(\beta-\frac{\gamma\cdot K_X}2)\cdot \omega +\xi-\frac{\beta\cdot K_X}2+\gamma\cdot \text{td}_2(X).$$
For a 2-dimensional sheaf $F$, let 
$r_2(E), r_1(E)$ denote the leading and linear coefficients of its Hilbert polynomial. Let $q\in\qq[m]$ be a polynomial with positive leading coefficient. The pair $(F,s)$ is called $q$-stable if for every proper subsheaf $E\subset F$, 
\begin{equation}\label{eqn_stability_1}
\frac{\chi(E(m))}{r_2(E)}<\frac{\chi(F(m))+q(m)}{r_2(F)}, \quad m\gg 0
\end{equation}
holds, and for every  proper subsheaf $E\subset F$ through which $s$ factors, 
\begin{equation}\label{eqn_stability_2}
\frac{\chi(E(m))+q(m)}{r_2(E)}<\frac{\chi(F(m))+q(m)}{r_2(F)}, \quad m\gg 0.
\end{equation}
The $q$-semistability conditions are defined by allowing equalities in \eqref{eqn_stability_1} and \eqref{eqn_stability_2}.
Let $P^q(X,\Ch)$ be the moduli space of $q$-semistable pairs $(F,s)$ in class $\Ch$.  This moduli space was constructed by Le Potier in \cite{LeP1, LeP2} by GIT techniques. 
For  $q\gg 0$ there is a simple description of $q$-semistability similar to  \cite[Lemma 1.3]{PT}:
\begin{lem}\label{lem_stable_pair}
Assume that $\gamma\neq 0$ and $q$ is sufficiently large then the $q$-stability and $q$-semistability coincide, and a pair 
$(F,s)$ is $q$-stable if and only if 
\begin{enumerate}
\item the sheaf $F$ is pure,
\item the section $\sO_X\stackrel{s}{\longrightarrow}F$ has at most $1$-dimensional cokernel.
\end{enumerate}
\end{lem}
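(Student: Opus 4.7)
The plan is to adapt the argument of \cite[Lemma 1.3]{PT}, which treats $1$-dimensional sheaves on a threefold, to our setting of $2$-dimensional sheaves on a fourfold. The hypothesis $\gamma \neq 0$ ensures $r_2(F) > 0$, so the Hilbert polynomial of $F$ is genuinely quadratic. I will take $q(m)$ to have large positive $m^2$-coefficient $q_2$ (or, equivalently, strictly higher degree). By Grothendieck's boundedness theorem, the Hilbert polynomials of subsheaves $E \subseteq F$ arising in the moduli problem form a bounded family, so a single $q$ can be chosen uniformly for all pairs with fixed Chern character $\Ch$.

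For the forward direction, assume $(F,s)$ is $q$-stable. \emph{Purity}: if $T \subseteq F$ is any nonzero subsheaf of dimension $\leq 1$, then $r_2(T)=0$, and clearing denominators in \eqref{eqn_stability_1} yields $r_2(F)\chi(T(m)) < 0$ for $m \gg 0$; since $\chi(T(m))$ is a nonzero polynomial with positive leading coefficient, this is a contradiction. Hence $F$ is pure. \emph{Cokernel}: let $E = \image(s)$ and assume $E \subsetneq F$ (otherwise the cokernel vanishes). Since $s$ factors through $E$, one applies \eqref{eqn_stability_2}; comparing top-degree $m$-coefficients (dominated by $q_2$ when large) forces $r_2(E) = r_2(F)$, whence $\cok(s) = F/E$ has dimension at most $1$.

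Conversely, assume $F$ is pure $2$-dimensional with $\cok(s)$ of dimension $\leq 1$. Any proper nonzero $E \subsetneq F$ is then pure of dimension $2$, so $r_2(E) > 0$. Inequality \eqref{eqn_stability_1} holds once $q_2$ exceeds the (uniformly bounded) gap between the reduced Hilbert polynomials appearing. For \eqref{eqn_stability_2}, if $s$ factors through $E$ then $\image(s) \subseteq E$, so $F/E$ is a quotient of $\cok(s)$ and thus of dimension $\leq 1$; this gives $r_2(E) = r_2(F)$, and the inequality reduces to $\chi((F/E)(m)) > 0$ for $m \gg 0$, which holds since $F/E$ is a nonzero sheaf of dimension $\leq 1$. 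The same asymptotic analysis, with $<$ replaced by $\leq$ and polynomial equality tracked, shows that no equality in \eqref{eqn_stability_1} or \eqref{eqn_stability_2} can occur for a proper nonzero subsheaf once $q_2$ is large, so $q$-semistability coincides with $q$-stability. The main delicate point is obtaining a uniform $q$ valid across the entire moduli problem, which is precisely where Grothendieck's boundedness is essential.
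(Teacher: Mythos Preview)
Your proof is correct and follows essentially the same route as the paper's (both adapt \cite[Lemma~1.3]{PT}): rearranging \eqref{eqn_stability_2} to force $r_2(E)=r_2(F)$ when $q\gg 0$, and noting that \eqref{eqn_stability_1} is automatic in that regime. You are in fact more thorough than the paper, spelling out purity, the converse direction, and why equality cannot occur.

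One remark: the appeal to Grothendieck boundedness is unnecessary. Since $r_2(E)$ is by definition the $m^2$-coefficient of $\chi(E(m))$, every reduced polynomial $\chi(E(m))/r_2(E)$ for a pure $2$-dimensional $E$ has $m^2$-coefficient exactly $1$. Hence the gap $\chi(E(m))/r_2(E)-\chi(F(m))/r_2(F)$ is at most linear in $m$, and any $q$ with positive $m^2$-coefficient (or degree $\ge 3$) makes \eqref{eqn_stability_1} hold strictly, uniformly over all $E$ and all $(F,s)$ with the given $\Ch$. So a single choice of $q$ works without invoking boundedness of subsheaves.
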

\begin{proof}
If $q\gg 0$ \eqref{eqn_stability_1} is always satisfied.
We rearrange the inequality for (\ref{eqn_stability_2}) as:
\begin{equation}\label{eqn_stability_3}
(r_2(F)-r_2(E))(\chi(F(m))+q(m))\leq r_2(F)\chi((F/E)(m)). 
\end{equation}
Then for $q\gg 0$ we get 
$r_2(F)-r_2(E)=0$ and the equality cannot occur. 
This in particular for $E=\Im(s)$ implies that $\op{coker}(s)$ has to be at  most $1$-dimensional. 
\end{proof}

We refer to $q$-stability for $q\gg 0$ as the \emph{limit stability}, and a limit stable pair is simply called a \emph{stable pair}. If  
$S$ is the scheme theoretic support of $F$ Lemma \ref{lem_stable_pair} implies that $F$ is isomorphic to $\sO_S$ away from at most  a 1-dimensional subscheme of $S$.

Denote the moduli space of (limit) stable pairs by $P(X,\Ch)$. 
In \S \ref{subsec_polynomial_stability} we will realize the limit stability of pairs as an instance of polynomial stability for certain objects of derived category. 

\subsection{Stable pairs with nice supports} \label{sec_description} For better understanding the objects of our study, in this Subsection we give a description of some of the stable pairs parameterized by the moduli space $P(X,\Ch)$. This will also play an important role in the examples that we will later consider in this paper. 

 First, suppose that for a given stable pair $(F,s)$ in class $\Ch$ the sheaf $F$ is supported on a surface $i\colon S\subset X$ with $\Ch_2(F)=[S]$ satisfying Serre's $S_{2,2}$ condition. This is equivalent to $\op{depth}(\sO_{S,s})=2$ for all closed points $s\in S$ or that $S$ is a Cohen-Macaulay surface. This also implies $\mathcal Ext^{i\neq 2}(\sO_S, \sO_X)=0$. Using the short exact sequence $$0\to \sO_S\xrightarrow{s} F\to Q\to 0,$$ we find that $\mathcal Ext^4(Q, \sO_X)=0$, and hence $Q$ is a pure 1-dimensional sheaf by \cite[Proposition 1.1.10]{HL}. 
 Let $\omega_S$ be the dualizing sheaf of $S$, and $$F^{**}:=\mathcal Hom_S(\mathcal Hom_S(F,\omega_S)).$$The composition of the section $s$ and the natural embedding $F\hookrightarrow F^{**}$ gives a stable pair $(F^{**},s')$. This data is equivalent to an effective generalized divisor in the sense of \cite[Proposition 2.4]{H} . 
 
 Next, suppose that the surface $S$ is in particular nonsingular. Since $F$ is pure and $Q$ is a torsion sheaf, $F$ must be the pushforward of a rank 1 torsion free sheaf on $S$. Since $S$ is nonsingular we conclude that $F=i_*\sI_Z(D)$, where $D\subset S$ is an effective divisor and  $\sI_Z<\sO_S$ is the ideal of a 0-dimension subscheme $Z\subset S$. By adjunction the section $s$ corresponds to a nonzero section $\sO_S\to \sI_Z(D)$ that we also denote by $s$. We obtain a commutative diagram in which the last two rows and the first two columns are the natural short exact sequences
 $$
 \xymatrix{\sO_S \ar@{=}[r] \ar[d]^-s & \sO_S \ar[d]^-{s'} & \\
   \sI_Z(D) \ar[r] \ar[d] & \sO_S(D) \ar[r] \ar[d] & \sO_Z  \ar@{=}[d] \\
  \sI_{Z\subset D}(D) \ar[r] & \sO_D(D) \ar[r] & \sO_Z.}
 $$
We conclude that $Z$ is a subscheme of $D$ with the ideal $\sI_{Z\subset D} <\sO_D$. Conversely, given a chain of subschemes $Z\subset D\subset S$ we can recover the stable pair $(F,s)$ by chasing the diagram above backward.  By \cite[Lemma A.4]{KT}, after dualizing, the first column in diagram  is equivalent to a stable pair on $S$ \cite{PT3, KT} i.e. the data of a pure 1-dimensional sheaf and a section with 0-dimensional cokernel. If the class of $D$ on $S$ is $\beta$ and the length of $Z$ is $n$, we will therefore, identify the locus $P_S \subset P(X,\Ch)$ consisting of stable pairs supported on $S$ with the nested Hilbert scheme (see  \cite{GSY, GT1, GT2})
  \begin{equation} \label{nested} j\colon S^{[0,n]}_\beta\subset  S^{[n]}\times S_\beta.\end{equation}

\subsection{Polynomial stability and stable pairs}\label{subsec_polynomial_stability} 
We define the following full subcategories of $\op{Coh} (X)$:
\begin{itemize}
\item  $\op{Coh}^{\leq d}(X)$ is the  subcategory of sheaves supported in dimension at most $d$;
\item $\op{Coh}^{=d}(X)$ is the subcategory of pure $d$-dimensional sheaves;
\item $\op{Coh}^{\geq d}(X)$ is the subcategory of sheaves without nonzero subsheaves in $\op{Coh}^{\leq d-1}(X)$.
\end{itemize}
For any interval $I$, we will write $$D^I_{\op{Coh}(X)}$$ to denote the full subcategory of $D^b(X)$ consisting of
objects $E$ such that $H^i(E) = 0$ for all $i\not \in I$.

We refer to \cite[Section 3]{BayerPBSC} for the construction of the heart $\sA^p$ associated to a \emph{perversity function} $p\colon \{0, 1, 2, 3, 4\}\to \bb Z$ and other relevant notation.

In this Section, we will describe some polynomial stability conditions $(Z_P,\sA^p)$ where the heart $\sA^p$ contains stable pairs of \S \ref{sec_mst}.  
Note that by Lemma \ref{lem_stable_pair}  we may think of a stable pair as a 2-term complex $J := [\sO_X \to F]$ satisfying:
\begin{itemize}
\item $H^{-1}(J)$ is a rank-one torsion-free sheaf,
\item $H^0(J) \in \op{Coh}^{\leq 1}(X)$.
\end{itemize}
Therefore, in order for $\sA^p$ to contain these objects, a sufficient condition is
\begin{equation}\label{eq1}
\op{Coh}^{=4}(X) [1]\subset \sA^p \text{ and } \op{Coh}^{\leq 1}(X) \subset \sA^p.
\end{equation}  Since $\sA^p = D^{p,\leq 0}\cap D^{p,\geq 0}$, where
\begin{align*}
\sA^{p,\leq k} &:= \{ F \in \op{Coh} (X) : p(\dim F) \geq -k\},\\
D^{p,\leq 0}&:=\{ E \in D^b(X) : H^{-k}(E) \in \sA^{p,\leq k} \;\; \forall k \in \mathbb Z\},\\
D^{p,\geq 0}&:=\{ E \in D^b(X) : \Hom(A,E)=0 \;\; \forall k\in \bb Z, \; \;\forall A \in \sA^{p,\leq k}[k+1] \},
\end{align*}
the condition \eqref{eq1} would hold if we require the perversity function $p$ to satisfy
\begin{align}
 p(4) &\geq -1 \label{eq2}\\
  p(1) &\geq 0. \label{eq3}
\end{align}
Let us suppose from now on that $p$ satisfies \eqref{eq2} and \eqref{eq3}. The condition \eqref{eq2} then implies $\sA^{p, \leq k}=\op{Coh} (X)$ for all $k \geq 1$.  Then, since $\sA^p \subset D^{p,\geq 0}$, we have $\Hom_{D^b(X)}(D^{\leq -2}_{\op{Coh} (X)}, \sA^p)=0$, implying $\sA^p \subseteq D^{\geq -1}_{\op{Coh} (X)}$.  On the other hand, let us further impose
\begin{equation}\label{eq4}
p(0)=0.
\end{equation}
Then $\sA^{p,\leq i}=\{0\}$ for all $i<0$, and so the condition $\sA^p \subset D^{p,\leq 0}$ implies $\sA^p \subset D^{\leq 0}_{\op{Coh} (X)}$.  Overall, we obtain \begin{equation}\label{equ_heart} \sA^p = \langle \op{Coh}^{\geq m+1}(X)[1], \op{Coh}^{\leq m}(X)\rangle, \quad \text{ where $m=1,2$ or $3$.} \end{equation} Thus we have

\begin{prop}\label{lem1}
Suppose $p$ is a perversity function satisfying \eqref{eq2}, \eqref{eq3} and \eqref{eq4}.  Then the heart $\sA^p$ contains all stable pairs.
\end{prop}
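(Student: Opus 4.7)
The plan is to combine Lemma~\ref{lem_stable_pair} with the explicit description of $\sA^p$ already derived in the paragraphs preceding the statement. First, I would unpack the cohomology sheaves of a stable pair viewed as a two-term complex $J=[\sO_X \xrightarrow{s} F]$ placed in degrees $-1$ and $0$. Since $F$ is torsion (being $2$-dimensional), the image of $s$ is torsion, so $\ker(s)$ has rank one as a subsheaf of $\sO_X$; being a subsheaf of a rank-one torsion-free sheaf on the smooth $4$-fold $X$, it is itself torsion-free and therefore pure of dimension four, giving $H^{-1}(J) \in \op{Coh}^{=4}(X)$. By Lemma~\ref{lem_stable_pair}(2), $H^0(J)=\op{coker}(s) \in \op{Coh}^{\leq 1}(X)$.

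Next, I would observe that $J$ fits into the standard cohomology triangle
$$H^{-1}(J)[1] \longrightarrow J \longrightarrow H^0(J) \xrightarrow{+1}$$
in $D^b(X)$. Since $\sA^p$ is the heart of a $t$-structure, it is an abelian subcategory of $D^b(X)$ closed under extensions, so it suffices to verify that both $H^{-1}(J)[1]$ and $H^0(J)$ lie in $\sA^p$; that is, it suffices to check the two inclusions of \eqref{eq1}. But this is precisely the content of the derivation already carried out above the statement: the hypotheses \eqref{eq2}, \eqref{eq3}, \eqref{eq4} force $\sA^p$ to equal the tilted heart
$$\sA^p=\langle \op{Coh}^{\geq m+1}(X)[1],\; \op{Coh}^{\leq m}(X) \rangle$$
for some $m\in\{1,2,3\}$, as recorded in \eqref{equ_heart}. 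For any such $m$ one has $\op{Coh}^{=4}(X) \subset \op{Coh}^{\geq m+1}(X)$ (a pure four-dimensional sheaf has no subsheaf of strictly smaller dimension) and $\op{Coh}^{\leq 1}(X) \subset \op{Coh}^{\leq m}(X)$, so both inclusions of \eqref{eq1} indeed hold.

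There is essentially no genuine obstacle: the proposition is really the formal payoff of the computation of $\sA^p$ laid out just above it, combined with the cohomological description of stable pairs afforded by Lemma~\ref{lem_stable_pair}. The only point requiring care is asserting purity of $H^{-1}(J)$ rather than mere torsion-freeness, which I would handle by noting that a torsion-free sheaf on a smooth variety is automatically pure of the top dimension; everything else is a direct unwinding of the definitions of $\sA^{p,\leq k}$, $D^{p,\leq 0}$ and $D^{p,\geq 0}$.
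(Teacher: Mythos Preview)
Your proposal is correct and follows essentially the same approach as the paper: the proposition is stated with a bare \qed, the proof being precisely the derivation in the preceding paragraphs that you have faithfully unpacked (the description of $H^{-1}(J)$ and $H^0(J)$, the sufficiency of \eqref{eq1}, and the identification \eqref{equ_heart}). Your write-up simply makes explicit the extension-closure step via the cohomology triangle, which the paper leaves implicit.
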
 \qed


\begin{prop}\label{lem3}
Suppose $E \in D^{[-1,0]}_{\op{Coh} (X)}$ satisfies:
\begin{itemize}
\item $\Ch(E)=(-1,0,\gamma,\beta,n)$ where $\gamma$ is effective, and $\det{E}\cong \sO_X$.
\item $H^{-1}(E)$ is a torsion-free sheaf.
\item $H^0(E)\in \op{Coh}^{\leq 1}(X)$.
\end{itemize}
Then $E$ is isomorphic to a 2-term complex of the form $[\sO_X \overset{s}{\to} F]$ for some nonzero sheaf $F$ supported in dimension 2.
\end{prop}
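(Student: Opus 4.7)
The plan is to realize $E$ as the cone of a map $\sO_X \to F$ by first constructing a morphism $\alpha\colon E\to \sO_X[1]$ and then showing its fiber is a sheaf supported in dimension at most $2$; the 2-term complex $[\sO_X \to F]$ will then arise by a rotation of the corresponding distinguished triangle.

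First, I would pin down $H^{-1}(E)$. Since $H^0(E)\in \op{Coh}^{\leq 1}(X)$ is a torsion sheaf its first Chern class vanishes, so $\det H^0(E)\cong \sO_X$; combined with $\det E\cong \sO_X$ and the identity $\det E\cong (\det H^{-1}(E))^{-1}\otimes \det H^0(E)$, this forces $\det H^{-1}(E)\cong \sO_X$. The Chern character of $E$ shows that $H^{-1}(E)$ is torsion-free of rank $1$; its double dual is a reflexive rank-one sheaf with trivial determinant on the smooth variety $X$, hence isomorphic to $\sO_X$. Therefore $H^{-1}(E)$ embeds as an ideal sheaf $\sI_W\subset \sO_X$, and the triviality of $\det$ rules out divisorial components of $W$, so $W$ has codimension at least $2$.

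Next, writing $Q:=H^0(E)$, I would apply $\Hom(-,\sO_X[1])$ to the canonical truncation triangle $\sI_W[1]\to E\to Q\xrightarrow{+1}$. The key technical input is the vanishing $\Ext^i(Q,\sO_X)=0$ for $i=1,2$: since $Q$ is supported in codimension at least $3$ in $X$, the local $\mathcal Ext^i(Q,\sO_X)$ vanish for $i<3$, and the local-to-global spectral sequence transfers this vanishing to the global $\Ext$ in the same range. Consequently $\Hom(E,\sO_X[1])\xrightarrow{\sim}\Hom(\sI_W[1],\sO_X[1])=\Hom(\sI_W,\sO_X)$, and I can choose the morphism $\alpha\colon E\to \sO_X[1]$ corresponding to the tautological inclusion $\iota\colon\sI_W\hookrightarrow\sO_X$; by construction $H^{-1}(\alpha)=\iota$.

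Completing $\alpha$ to a distinguished triangle $F\to E\xrightarrow{\alpha}\sO_X[1]\xrightarrow{+1}$, I would run the long exact sequence of cohomology. Injectivity of $\iota$ kills $H^{-1}(F)$, and the boundedness of $E$ and $\sO_X[1]$ together with $H^0(\sO_X[1])=0$ kills $H^i(F)$ for $i\neq 0$. What remains of the long exact sequence is the short exact sequence
\[
0\to \sO_W\to F\to Q\to 0,
\]
exhibiting $F$ as a coherent sheaf supported on $W\cup \supp(Q)$, a set of dimension at most $2$. Rotating the triangle finally yields $\sO_X\xrightarrow{s} F\to E\to \sO_X[1]$, which identifies $E$ with the two-term complex $[\sO_X\xrightarrow{s} F]$. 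The effectivity (and nontriviality) of $\gamma=\Ch_2(\sO_W)$ guarantees that $W$ has a codimension-two component, so $\sO_W$ and hence $F$ is nonzero with two-dimensional support.

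The only nontrivial step is the Ext vanishing used in constructing $\alpha$; once that is available everything else is a formal consequence of the long exact sequences associated with the truncation and fiber triangles.
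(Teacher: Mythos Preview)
Your proof is correct and follows essentially the same route as the paper: both identify $H^{-1}(E)$ with an ideal sheaf $I_W$, invoke the vanishing $\Ext^{1,2}(Q,\sO_X)=0$ for the codimension-$\geq 3$ sheaf $Q=H^0(E)$, and use it to produce the triangle $\sO_X\to F\to E\to\sO_X[1]$ together with the extension $0\to\sO_W\to F\to Q\to 0$. The only cosmetic difference is that the paper factors the connecting map $Q\to I_W[2]$ through $\sO_W[1]$ and applies the octahedral axiom, whereas you lift the inclusion $I_W\hookrightarrow\sO_X$ directly to a map $E\to\sO_X[1]$ and take its fiber; these are two packagings of the same construction.
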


\begin{proof}
In the canonical exact sequence $0\to H^{-1}(E)\to (H^{-1}(E))^{\ast\ast} \to H\to 0$, the sheaf $H$ is necessarily supported in codimension at least 2, and so the rank-one reflexive sheaf $(H^{-1}(E))^{\ast\ast}$, which is necessarily a line bundle and has trivial determinant, must be the structure sheaf $\sO_X$.  Hence $H= \sO_W$ is the structure sheaf of some closed subscheme of $X$ of dimension at most 2, while $H^{-1}(E)=I_W$ is the corresponding ideal sheaf.  Writing $R=H^0(E)$, we have the exact triangle
\begin{equation}\label{eq5}
I_W [1] \to E \to R \overset{\alpha}{\to} I_W [2]
\end{equation}
Since $R$ is supported in dimension at most 1, it follows that $$\Ext^1(R,\sO_X)=\Ext^2(R,\sO_X)=0.$$  Applying $\Hom (R,-)$ to the structure exact sequence
\begin{equation}\label{eq6}
0 \to I_W \to \sO_X \to \sO_W \to 0
\end{equation}
 then gives
\begin{equation}\label{eq7}
  \Ext^1(R,\sO_W) \cong \Ext^2(R,I_W).
\end{equation}
From \eqref{eq5}, we know  $E[1]$ is the cone of $\alpha$ which represents an element of  $\Ext^2(R,I_W)$.  The isomorphism \eqref{eq7} then implies $\alpha$ factors as
\[
  R \to \sO_W [1] \to I_W [2].
\]
Applying the octahedral axiom to this composition then yields an exact sequence of sheaves
\begin{equation}\label{eq8}
0 \to \sO_W \to F \to R \to 0
\end{equation}
as well as  an exact triangle
\begin{equation}\label{eq9}
\sO_X \to F \to E \to \sO_X[1]
\end{equation}
which shows that $E$ is isomorphic in $D^b(X)$ to the 2-term complex $[\sO_X \to F]$.
From \eqref{eq8}, we know $F \in \op{Coh}^{\leq 2}(X)$.  From the Chern classes of the terms in \eqref{eq9}, we know $F$ is a nonzero sheaf supported in dimension 2.
\end{proof}

\begin{lem}\label{lem2}
Given an exact triangle in $D^b(X)$
\[
\sO_X \to F \to E \to \sO_X[1]
\]
where $F$ is a coherent sheaf, we have
\[
  \Hom (A,F)\cong \Hom (A,E)
  \]
  for any $A \in \op{Coh}^{\leq 1}(X)$.
\end{lem}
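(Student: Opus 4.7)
The plan is to apply the contravariant cohomological functor $\Hom(A,-)$ to the given exact triangle and extract the isomorphism from the resulting long exact sequence. Explicitly, we obtain
\[
\Hom(A,\sO_X) \to \Hom(A,F) \to \Hom(A,E) \to \Hom(A,\sO_X[1]) \to \Hom(A,F[1]),
\]
so it suffices to show that $\Hom(A,\sO_X)=0=\Ext^1(A,\sO_X)$ whenever $A\in\op{Coh}^{\leq 1}(X)$.

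For this, I would invoke Serre duality on the nonsingular projective 4-fold $X$:
\[
\Ext^i(A,\sO_X)\cong H^{4-i}(X,A\otimes\omega_X)^{\vee}.
\]
Since $\dim\supp(A)\leq 1$, the same holds for $A\otimes\omega_X$, so its cohomology vanishes in degrees $\geq 2$. Hence $\Ext^i(A,\sO_X)=0$ for all $i\leq 2$, which in particular kills both boundary terms in the long exact sequence above and produces the desired isomorphism $\Hom(A,F)\cong\Hom(A,E)$.

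A purely local alternative, in case one prefers not to invoke Serre duality at this point, is to use the local-to-global spectral sequence $H^p(X,\sE xt^q(A,\sO_X))\Rightarrow \Ext^{p+q}(A,\sO_X)$ together with the standard vanishing $\sE xt^q(A,\sO_X)=0$ for $q<\codim\supp(A)\geq 3$ on a smooth 4-fold; this forces the $E_2$-page to vanish in total degrees $\leq 2$, giving the same conclusion.

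No serious obstacle is anticipated: the entire statement is a one-step homological consequence of the triangle, and the only input needed is the low-Ext vanishing for sheaves supported in codimension $\geq 3$ on a smooth fourfold, which is standard. The only mild subtlety is to notice that the lemma is stated without any assumption that $E$ lies in a particular heart, so one must not use derived-category $t$-structure hypotheses — the argument above uses only the triangle itself and the dimension of $\supp(A)$.
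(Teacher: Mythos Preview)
Your proposal is correct and follows exactly the paper's approach: the paper's one-line proof simply cites the vanishings $\Hom(A,\sO_X)=0=\Hom(A,\sO_X[1])$ for $A\in\op{Coh}^{\leq 1}(X)$, and you have supplied the (standard) justification for these via Serre duality or the local-to-global spectral sequence. One tiny slip: $\Hom(A,-)$ is covariant, not contravariant, but your long exact sequence is the correct covariant one, so this is purely a wording issue.
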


\begin{proof}
This follows from the vanishings $\Hom (A,\sO_X)=\Hom (A,\sO_X[1])=0$ for any $A \in \op{Coh}^{\leq 1}(X)$.
\end{proof}


We briefly describe the notion of a polynomial stability condition in the sense of Bayer  \cite{BayerPBSC}.  Since we use only very specific polynomial stability conditions in this article, we will not define them in the fullest generality:

Given the heart $\Ac$ of a t-structure on $D^b(X)$, a \emph{polynomial stability function} (in the restricted sense in this article) on $\Ac$ is a group homomorphism $Z : K(\Ac) \to \mathbb{C}[x]$ such that for every nonzero object $E \in \Ac$, we have
\[
  Z(E)(x) \in \{ re^{i\pi\theta} : r \in \mathbb{R}_{>0}, \theta \in (0,1] \} \text{\quad for $x \gg 0$}.
\]
Given a polynomial stability function $Z$ on $\Ac$, we can associate to every nonzero $E \in \Ac$ a phase function $\phi_Z (E)$ (or $\phi (E)$), which is a function germ $$\phi (E) : \mathbb{R}_{>0} \to (0,1]$$ defined by the relation
\[
  Z(E)(x) \in \mathbb{R}_{>0}\cdot e^{i \pi \phi (E)(x)} \text{\quad for $x \gg 0$}.
\]
Given two nonzero objects $E, F \in \Ac$, we write $\phi (E) \prec \phi (F)$ to mean $\phi (E)(x) < \phi (F)(x)$ for $x \gg 0$.

When $Z$ is a polynomial stability function on $\Ac$ and $0 \neq E \in \Ac$, we say $E$ is $Z$-semistable (resp.\ $Z$-stable) if, for every $\Ac$-short exact sequence
\[
0 \to E' \to E \to E'' \to 0
\]
where $E', E'' \neq 0$, we have $\phi (E') \preceq  \phi (E'')$ (resp.\ $\phi (E') \prec \phi (E'')$).  We say $Z$ has the Harder-Narasimhan (HN) property on $\Ac$ if every nonzero $E \in \Ac$ admits a filtration in $\Ac$
\[
 0 = E_0 \subsetneq E_1 \subsetneq \cdots \subsetneq E_n = E
 \]
 where each $E_i/E_{i-1}$ is $Z$-semistable and $$\phi (E_1/E_0) \succ \phi (E_2/E_1) \succ \cdots \succ \phi (E_n/E_{n-1}).$$
 
 A \emph{polynomial stability condition} on $D^b(X)$ is then a pair $(Z,\Ac)$ where $\Ac$ is the heart of a t-structure on $D^b(X)$, and $Z$ is a polynomial stability function on $\Ac$ with the HN property.  For the polynomial stability conditions we use in this article, the heart $\Ac$ will always take the form $\Ac^p$, for some perversity function $p$, as constructed  in \eqref{equ_heart}, while the central charge $Z$ will always take the form
 \begin{equation} \label{central}
   Z (E)(x) = \int_X \sum_{i=0}^4 \rho_i \omega^i  \Ch(E) x^i
\end{equation}
for some complex numbers $\rho_i$ such that $\rho_{i}/\rho_{i+1}$ for $0\le i <4$ belong to the open upper half plane, and  $(-1)^{p(i)} \rho_i$ for $0\le i\le 4$ belong to the semiclosed upper half plane obtained by removing the nonnegative real axis.

\begin{prop}\label{lem4}
Suppose $p$ is a perversity function satisfying
\begin{equation*}
p(0)=p(1)=0, \text{\quad} p(2)=p(3)=p(4)=-1,
\end{equation*}
 and that  $p$ is associated to a stability vector $\rho$ with
\begin{equation}\label{eq10}
\phi (\rho_0) > \phi(\rho_1) > \phi (-\rho_4)
\end{equation}
and
\begin{equation}\label{eq11}
\phi (-\rho_2), \phi (-\rho_3)>\phi (-\rho_4).
\end{equation}
Let $Z_P$ be the central charge of the resulting polynomial stability condition according to \eqref{central}.  Then for any object $E \in D^b(X)$ with $\Ch(E)=(-1,0,\gamma,\beta, \xi)$, where $\gamma$ is effective and $\det{E} =\sO_X$, we have that $E$ is a stable pair if and only if $E$ is a $Z_P$-semistable object in the heart $\sA^p$. 
\end{prop}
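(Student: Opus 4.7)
The plan is to identify the conditions of Lemma \ref{lem_stable_pair} (purity of $F$ and at-most-$1$-dimensional $\op{coker}(s)$) with $Z_P$-semistability via a phase hierarchy in $\sA^p$. The first step will be to observe that, since $\Ch_0(E)=-1$ and $\Ch_1(E)=0$, the leading term of $Z_P(E)(x)$ is $-\rho_4\omega^4 x^4$, so $\phi_{Z_P}(E)=\phi(-\rho_4)$; and for any nonzero $M\in\sA^p$ with $\Ch_0(M)=0$, the $x^4$ term of $Z_P(M)$ vanishes and the leading term has phase one of $\phi(-\rho_3)$ (when $H^{-1}(M)$ has $3$-dimensional support), $\phi(-\rho_2)$ (when $H^{-1}(M)$ is nonzero of support dimension at most $2$), $\phi(\rho_1)$ (when $M=H^0(M)$ is $1$-dimensional), or $\phi(\rho_0)$ (when $M=H^0(M)$ is $0$-dimensional). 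By \eqref{eq10}--\eqref{eq11}, each of these strictly exceeds $\phi(-\rho_4)$.

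For the implication $Z_P$-semistable $\Rightarrow$ stable pair, I would suppose $E$ is $Z_P$-semistable. From $E\in\sA^p$ and $\Ch_0(E)=-1$, $H^{-1}(E)$ is a rank-one object of $\op{Coh}^{\geq 2}$; its torsion subsheaf $T$ then lies in $\op{Coh}^{\geq 2}$ (dimension $2$ or $3$), and $H^{-1}(E)/T$ is torsion-free of rank one, hence also in $\op{Coh}^{\geq 2}$. So $0\to T[1]\to H^{-1}(E)[1]\to (H^{-1}(E)/T)[1]\to 0$ is short exact in $\sA^p$, and composing $T[1]\hookrightarrow H^{-1}(E)[1]\hookrightarrow E$ (via the truncation triangle of $E$) exhibits $T[1]$ as an $\sA^p$-subobject of $E$ of phase $\phi(-\rho_2)$ or $\phi(-\rho_3)$; semistability forces $T=0$, so $H^{-1}(E)$ is torsion-free. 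Proposition \ref{lem3} now produces $E\cong[\sO_X\overset{s}{\to}F]$ with $F$ supported in dimension $2$, and $\op{coker}(s)=H^0(E)\in\op{Coh}^{\leq 1}$. For purity of $F$, a nonzero torsion subsheaf $T\subset F$ would give, via the octahedral axiom applied to $T\hookrightarrow F\to E$, an identification $\cone(T\to E)\cong[\sO_X\to F/T]$, which lies in $\sA^p$ since its $H^{-1}=\ker(\sO_X\to F/T)$ is an ideal sheaf in $\sO_X$ and its $H^0$ is a quotient of $\op{coker}(s)\in\op{Coh}^{\leq 1}$. Thus $T$ would be an $\sA^p$-subobject of phase $\phi(\rho_1)$ or $\phi(\rho_0)$, again destabilizing $E$; so $F$ must be pure and $E$ is a stable pair by Lemma \ref{lem_stable_pair}.

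For the converse, let $E=[\sO_X\overset{s}{\to}F]$ be a stable pair, so $E\in\sA^p$ by Proposition \ref{lem1}. Given any short exact sequence $0\to A\to E\to B\to 0$ in $\sA^p$ with $A,B\neq 0$, additivity of $\Ch_0$ forces $\rk H^{-1}(A)+\rk H^{-1}(B)=1$. If $\rk H^{-1}(A)=0$, the long exact cohomology sequence embeds the torsion sheaf $H^{-1}(A)$ into the torsion-free $H^{-1}(E)=\ker(s)$, so $H^{-1}(A)=0$ and $A=H^0(A)\in\op{Coh}^{\leq 1}$. Lemma \ref{lem2} then gives $\Hom(A,E)\cong\Hom(A,F)=0$ by purity of $F$, forcing $A=0$, a contradiction. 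Hence $\rk H^{-1}(A)=1$, $\rk H^{-1}(B)=0$, and by the first paragraph $\phi_{Z_P}(B)\succ\phi(-\rho_4)=\phi_{Z_P}(A)$, so $E$ is $Z_P$-semistable.

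The main obstacle lies in the structural verification for the candidate destabilizers in the first direction: confirming that $T[1]$ is an $\sA^p$-subobject of $E$, which relies on the rank-one torsion-free quotient $H^{-1}(E)/T$ remaining in $\op{Coh}^{\geq 2}$; and that the cone $[\sO_X\to F/T]$ lies in $\sA^p$, a direct Chern-class check on its cohomology sheaves. Once these identifications are in hand, both implications reduce to the clean phase hierarchy established in the first paragraph.
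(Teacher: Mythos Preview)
Your proof is correct and follows essentially the same approach as the paper's: both directions rest on the phase hierarchy $\phi(-\rho_2),\phi(-\rho_3),\phi(\rho_0),\phi(\rho_1)\succ\phi(-\rho_4)$, the use of Proposition~\ref{lem3} to rewrite $E$ as $[\sO_X\to F]$, and Lemma~\ref{lem2} to rule out $\sA^p$-subobjects coming from $\op{Coh}^{\leq 1}(X)$. The only notable difference is in the purity step of the forward direction: the paper observes that $\op{Coh}^{\leq 1}(X)$ is a Serre subcategory of $\sA^p$, so semistability plus~\eqref{eq10} immediately give $\Hom(\op{Coh}^{\leq 1}(X),E)=0$, and then Lemma~\ref{lem2} transfers this to $F$; you instead take $T\subset F$ and explicitly verify via the octahedral axiom that the cone $[\sO_X\to F/T]$ lies in $\sA^p$, exhibiting $T$ as a destabilizing subobject. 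Both arguments are equivalent, yours being slightly more hands-on while the paper's is more structural.
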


\begin{proof}
 Our choice of perversity function $p$ implies $$\sA^p = \langle \op{Coh}^{\geq 2}(X)[1], \op{Coh}^{\leq 1}(X)\rangle.$$  Let  $E$ be an object in $D^b(X)$ with the prescribed Chern character and determinant.
 
Suppose first that $E$  is a $Z_P$-semistable object in $\sA^p$. Then $H^0(E) \in \op{Coh}^{\leq 1}(X)$ and so $H^{-1}(E)$ is of rank one.  By \eqref{eq11} and the $Z_P$-semistability of $E$, it follows that $H^{-1}(E)$ is torsion-free.  Then by Proposition \ref{lem3}, $E$ is isomorphic to a complex of the form $[\sO_X \overset{s}{\to} F]$ where $F$ is a 2-dimensional sheaf sitting at degree 0.  Since $\op{Coh}^{\leq 1}(X)$ is a Serre subcategory of $\sA^p$, the $Z_P$-semistability of $E$ implies $$\Hom_{D^b(X)} (\op{Coh}^{\leq 1}(X),E)=0.$$  By Lemma \ref{lem2}, it follows that $\Hom (\op{Coh}^{\leq 1}(X),F)=0$, i.e.\ $F$ is a pure 2-dimensional sheaf, i.e.\ $[\sO_X \overset{s}{\to} F]$ is a stable pair.

Next, suppose $E \in D^b(X)$ is represented by a stable pair $[\sO_X \overset{s}{\to} F]$ with $F$ sitting at degree $0$.  Clearly $E$ is an object in $\sA^p$ and is supported in dimension $4$.  To prove that $E$ is $Z_P$-semistable, take any $\sA^p$-short exact sequence
\[
0 \to A \to E \to B \to 0
\]
where $A, B \neq 0$.  Consider the associated long exact sequence of sheaves
\[
0 \to H^{-1}(A) \to H^{-1}(E) \to H^{-1}(B) \to H^0(A) \to H^0(E) \to H^0(B) \to 0.
\]

If $0 \neq H^{-1}(A) \subsetneq H^{-1}(E)$, then $H^{-1}(B)$ has rank zero.  If $H^{-1}(B)$ is nonzero, then from \eqref{eq11} we have $\phi (E) \prec \phi (B)$; if $H^{-1}(B)=0$, then $B=H^0(B) \in \op{Coh}^{\leq 1}(X)$ and from \eqref{eq10} we also have $\phi (E)\prec \phi (B)$.

If $H^{-1}(A)=H^{-1}(E)$, then $H^{-1}(B)=0$ for dimension reason and $B=H^0(B)$. Then $\phi (E) \prec \phi (B)$ as in the previous paragraph.

If $H^{-1}(A)=0$, then $A = H^0(A) \in \op{Coh}^{\leq 1}(X)$.  The purity of $F$ implies $\Hom (A,F)=0$.  Then $\Hom (A,E)=0$ by Lemma \ref{lem2},  contradicting the assumption that $A$ is a nonzero subobject of $E$ in $\sA^p$.
Hence $E$ is $Z_P$-semistable whenever it is a stable pair.
\end{proof}

\subsection{Moduli space of $Z_P$-stable objects}
In this Subsection, we discuss the existence of the moduli space of $Z_P$-stable objects of arbitrary nonzero ranks as an Artin stack.
Recall the slope stability  for coherent sheaves corresponding to $\omega$ uses the slope function
\[
  \mu_\omega(E) = \begin{cases} \frac{\omega^3  \Ch_1(E)}{\Ch_0(E)} &\text{ if $\Ch_0(E)\neq 0$.} \\
  \infty &\text{ if $\Ch_0(E)=0$,}
  \end{cases}
\]
We say a nonzero coherent sheaf $E$ on $X$ is $\mu_\omega$-(semi)stable if, for every short exact sequence of sheaves $0 \to M\to E \to N \to 0$ on $X$ where $M, N \neq 0$, we have $\mu_\omega (M) < (\leq) \mu_\omega (N)$.

\begin{prop}\label{prop1}
Let  $(Z_P,\Ac^p)$ be a polynomial stability condition on $D^b(X)$ as in Proposition \ref{lem4}.  Suppose $E$ is an object of $D^b(X)$ with $\Ch(E)=(-r,-D,\gamma,\beta, n)$ where $r\neq 0$ and $r, \omega^3D$ are coprime (e.g. $r=-1, D=0$).  Then the following are equivalent:
\begin{itemize}
\item[(i)] $E$ is a $Z_P$-stable object in $\Ac^p$.
\item[(ii)] $E$ is a $Z_P$-semistable object in $\Ac^p$.
\item[(iii)] $H^{-1}(E)$ is a $\mu_\omega$-stable torsion-free sheaf, $H^0(E) \in\op{Coh}^{\leq 1}(X)$ and $\Hom (\op{Coh}^{\leq 1}(X), E)=0$.
\end{itemize}
\end{prop}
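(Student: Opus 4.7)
The plan is to prove the cycle (i)$\Rightarrow$(ii)$\Rightarrow$(iii)$\Rightarrow$(i), of which the first implication is tautological. The remaining two implications are phase-comparison arguments against carefully chosen $\Ac^p$-subobjects of $E$, using the leading-order expansion of the central charge
\[
Z_P(E)(x) = -r\omega^4 \rho_4 x^4 - (\omega^3 D)\rho_3 x^3 + (\omega^2\cdot\gamma)\rho_2 x^2 + (\omega\cdot\beta)\rho_1 x + n\rho_0.
\]

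For (ii)$\Rightarrow$(iii), membership $E \in \Ac^p$ immediately gives $H^{-1}(E) \in \op{Coh}^{\geq 2}(X)$ and $H^0(E) \in \op{Coh}^{\leq 1}(X)$. I would verify torsion-freeness of $H^{-1}(E)$ first: any torsion subsheaf $T\subset H^{-1}(E)$ is forced into $\op{Coh}^{=2}(X) \cup \op{Coh}^{=3}(X)$, so $T[1] \hookrightarrow H^{-1}(E)[1] \hookrightarrow E$ is a nonzero $\Ac^p$-subobject whose leading phase is $\phi(-\rho_2)$ or $\phi(-\rho_3)$, strictly greater than $\phi(-\rho_4)$ by \eqref{eq11}, contradicting $Z_P$-semistability. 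The same idea using \eqref{eq10} (with $\phi(\rho_0), \phi(\rho_1) > \phi(-\rho_4)$) rules out nonzero maps from $\op{Coh}^{\leq 1}(X)$ to $E$. For $\mu_\omega$-stability of $H^{-1}(E)$, given a proper nonzero subsheaf $F \subset H^{-1}(E)$ I pass to the saturation $F^{\mathrm{sat}}$, so that both $F^{\mathrm{sat}}[1]$ and the cokernel $E/F^{\mathrm{sat}}[1]$ lie in $\Ac^p$. A direct expansion shows
\[
\Im\bigl(Z_P(F^{\mathrm{sat}}[1])\,\overline{Z_P(E)}\bigr) = \omega^4\bigl(r^{\mathrm{sat}}(\omega^3 D) - r(\omega^3 \Ch_1(F^{\mathrm{sat}}))\bigr)\,\Im(\rho_4\overline{\rho_3}) x^7 + O(x^6),
\]
where $r^{\mathrm{sat}}=\rk F^{\mathrm{sat}}$; since $\rho_3/\rho_4$ lies in the open upper half plane, $\Im(\rho_3\overline{\rho_4})>0$, and $Z_P$-semistability of $E$ forces $\mu_\omega(F^{\mathrm{sat}}) \leq \mu_\omega(H^{-1}(E))$, hence $\mu_\omega(F)\leq\mu_\omega(F^{\mathrm{sat}})\leq\mu_\omega(H^{-1}(E))$. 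Coprimality of $r$ and $\omega^3 D$ excludes equality for $0 < \rk F < r$, promoting $\mu_\omega$-semistability to $\mu_\omega$-stability.

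For (iii)$\Rightarrow$(i), given any nontrivial $\Ac^p$-short exact sequence $0 \to A \to E \to B \to 0$, I would read off the sheaf long exact sequence and distinguish cases by the behaviour of $H^{-1}(A)\hookrightarrow H^{-1}(E)$. If $H^{-1}(A)=0$, then $A \in \op{Coh}^{\leq 1}(X)$ is a nonzero subobject of $E$, contradicting the Hom-vanishing hypothesis. If $H^{-1}(A)=H^{-1}(E)$, then $H^{-1}(B)=0$ and $B$ is a nonzero object of $\op{Coh}^{\leq 1}(X)$, and \eqref{eq10} gives $\phi(A) \prec \phi(B)$. Otherwise $H^{-1}(A)\subsetneq H^{-1}(E)$ is proper nonzero: if $\rk H^{-1}(A)<r$, the $\mu_\omega$-stability of $H^{-1}(E)$ combined with the same order-$x^7$ computation as above delivers the strict inequality $\phi(A)\prec \phi(B)$; if $\rk H^{-1}(A)=r$, then $H^{-1}(B)$ is a nonzero torsion sheaf in $\op{Coh}^{\geq 2}(X)$, supported in dimension $2$ or $3$, so $B$ has leading phase $\phi(-\rho_2)$ or $\phi(-\rho_3)$ while $A$ still has leading phase $\phi(-\rho_4)$, and \eqref{eq11} again gives $\phi(A)\prec\phi(B)$. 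The main technical obstacle is the order-$x^7$ phase computation that translates $Z_P$-stability into $\mu_\omega$-stability and isolates the sign condition $\Im(\rho_3\overline{\rho_4})>0$; once that linchpin is in place, everything else is careful bookkeeping in the long exact cohomology sequence.
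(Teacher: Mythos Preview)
Your proof is correct and follows essentially the same route as the paper: the same cycle (i)$\Rightarrow$(ii)$\Rightarrow$(iii)$\Rightarrow$(i), the same use of \eqref{eq10} and \eqref{eq11} to rule out torsion in $H^{-1}(E)$ and maps from $\op{Coh}^{\leq 1}(X)$, and the same translation between $Z_P$-semistability and $\mu_\omega$-semistability via subobjects of the form $F'[1]\hookrightarrow E$. Your explicit order-$x^7$ expansion of $\Im\!\bigl(Z_P(F^{\mathrm{sat}}[1])\overline{Z_P(E)}\bigr)$ just makes precise what the paper asserts in one line, and your four-case split in (iii)$\Rightarrow$(i) (separating $H^{-1}(A)=H^{-1}(E)$ from $\rk H^{-1}(A)=r$ with $H^{-1}(A)\subsetneq H^{-1}(E)$) is a harmless refinement of the paper's three cases indexed by $\Ch_0(H^{-1}(M))$.
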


\begin{proof}
(i) $\Rightarrow$ (ii): clear.

(ii) $\Rightarrow$ (iii): Suppose $E$ is a $Z_P$-semistable object in $\Ac^p$.  The assumption \eqref{eq11} forces $H^{-1}(E)$ to be torsion-free.  For any short exact sequence of torsion-free sheaves of the form $0 \to F' \to H^{-1}(E) \to F'' \to 0$, we have an $\Ac^p$-short exact sequence $0 \to F'[1] \to H^{-1}(E)[1] \to F''[1] \to 0$.  This induces an $\Ac^p$-injection $F'[1] \hookrightarrow E$ and the $Z$-semistability of $E$ forces $\mu_\omega (F') \leq \mu_\omega (H^{-1}(E))$.  Hence $H^{-1}(E)$ is $\mu_\omega$-semistable.  The coprime condition on $r$ and $\omega^3 D$ then implies $H^{-1}(E)$ is $\mu_\omega$-stable.

Next, since $\op{Coh}^{\leq 1}(X)$ is a Serre subcategory of $\Ac^p$, assumption \eqref{eq10} and the stability of $E$ force $\Hom (\op{Coh}^{\leq 1}(X),E)=0$.

(iii) $\Rightarrow$ (i): Suppose $E$ satisfies all the conditions in (iii).  Take any $\Ac^p$-short exact sequence $0 \to M \to E \to N \to 0$ where $M, N \neq 0$.  We have three cases:
\begin{itemize}
\item If $\Ch_0(H^{-1}(M))=\Ch_0(H^{-1}(E))$, then $\dimension N \leq 3$ and $\phi (E) \prec \phi (N)$ by \eqref{eq10} or \eqref{eq11}.
\item If $0 < \Ch_0(H^{-1}(M))<\Ch_0(H^{-1}(E))$ then the $\mu_\omega$-stability of $H^{-1}(E)$ and the coprime assumption on $r,\omega^3D$ imply that $\mu_\omega (H^{-1}(M)) < \mu_\omega (H^{-1}(E))$, which in turn gives $\phi (M) \prec \phi (E)$.
\item If  $\Ch_0(H^{-1}(M))=0$, then $M=H^0(M) \in \op{Coh}^{\leq 1}(X)$, which is impossible by the assumption $\Hom (\op{Coh}^{\leq 1}(X),E)=0$.
\end{itemize}
Hence $E$ is $Z$-stable in $\Ac^p$.
\end{proof}

\begin{lem}\label{lem:Hileqdopen}
Let $Y$ be a nonsingular projective variety and $0\leq d \leq \dim Y$  an integer.  Then for objects in $D^{\leq 0}_{\op{Coh} (Y)}$, the property
\[
  H^0(E) \in \op{Coh}^{\leq d}(Y)
\]
is an open property.
\end{lem}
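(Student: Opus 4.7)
The plan is to reduce openness for the complex-theoretic condition on $E$ to a fiberwise support-dimension condition for a single coherent sheaf, and then invoke upper-semicontinuity of fiber dimension for proper morphisms.

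Concretely, openness should be interpreted in families: given $E \in D^{\leq 0}_{\op{Coh}(Y \times T)}$ over a (Noetherian) scheme $T$, one must show that
\[
U \;:=\; \{\, t \in T \,:\, H^0(L i_t^\ast E) \in \op{Coh}^{\leq d}(Y)\,\}
\]
is open in $T$, where $i_t \colon Y \times \{t\} \hookrightarrow Y \times T$. The first step is the base-change identity $H^0(L i_t^\ast E) \cong i_t^\ast H^0(E)$, which holds because $E$ sits in nonpositive degrees: applying $L i_t^\ast$ to the truncation triangle $\tau^{\leq -1}E \to E \to H^0(E) \to \tau^{\leq -1}E[1]$ and taking $H^0$ kills the first term, since $H^0 \circ L i_t^\ast$ vanishes on $D^{\leq -1}$. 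Setting $G := H^0(E) \in \op{Coh}(Y \times T)$, the defining condition for $U$ therefore becomes $\dim \supp(G|_{Y_t}) \leq d$.

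Let $Z := \supp G \subseteq Y \times T$ and let $\pi \colon Y \times T \to T$ denote the projection. A routine Nakayama argument gives $\supp(G|_{Y_t}) = Z \cap (Y \times \{t\})$ as subsets of $Y$, so set-theoretically $\supp(G|_{Y_t})$ is precisely the fiber of $\pi|_Z$ over $t$. Since $Y$ is projective, $\pi$ is proper, hence so is $\pi|_Z$, and upper-semicontinuity of fiber dimension for proper morphisms then shows that
\[
\{\, t \in T \,:\, \dim (\pi|_Z)^{-1}(t) \leq d\,\}
\]
is open in $T$; this is precisely $U$.

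The main subtlety is the base-change step, where one must be careful that $H^0$ commutes with $L i_t^\ast$ without any flatness hypothesis on $E$ along $T$; this uses nothing beyond $E \in D^{\leq 0}$ and the right-exactness of pullback at the level of $H^0$. Once this reduction is in place, the remainder is a direct appeal to properness of the support and a standard upper-semicontinuity result.
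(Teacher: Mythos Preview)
Your argument is correct. The paper's own proof consists solely of a reference to \cite[Example A.4]{ABL}, so there is nothing substantive to compare against; you have essentially supplied the unpacked version of that citation. The two key ingredients you identify---the right-$t$-exactness of $Li_t^\ast$ giving $H^0(Li_t^\ast E)\cong i_t^\ast H^0(E)$ for $E\in D^{\leq 0}$, and upper-semicontinuity of fiber dimension for the proper map $\pi|_Z$---are exactly what one expects such an argument to use.
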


\begin{proof}
The proof of \cite[Example A.4]{ABL} applies.
\end{proof}

For the next  result, we will use the fact that a subspace of a Zariski topological space is open if and only if it is constructible and is stable under  generization.  When $Y$ is a nonsingular projective variety and $R$ is a discrete valuation ring with uniformizer $\eta$, we will write $k=R/(\eta)$ to denote the residue field and $K$ to denote the field of fractions of $R$.  We will also  write  $i : Y_k \hookrightarrow Y_R$ and  $j : Y_K \hookrightarrow Y_R$ to denote the closed and open immersions of the central and generic fibers of $Y_R = Y \otimes R \to \op{Spec} R$, respectively.

\begin{prop}\label{prop:PTopen4fold}
Suppose $(Z_P,\Ac^p)$ is a polynomial stability condition on $D^b(X)$ as in Proposition \ref{lem4}, and  $\Ch':=(-r, -D, \gamma, \beta, \xi)$ is a Chern character vector on $X$, where $r\neq 0, \omega^3D$ are coprime.  Then, for families of objects in $\Ac^p$ in class $\Ch'$, being $Z_P$-stable is an open property.
\end{prop}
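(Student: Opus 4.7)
By Proposition~\ref{prop1}, for $E \in D^b(X)$ with $\Ch(E)=\Ch'$, being $Z_P$-stable in $\sA^p$ is equivalent to the three conditions: (a) $H^{-1}(E)$ is a torsion-free $\mu_\omega$-stable sheaf, (b) $H^0(E) \in \op{Coh}^{\leq 1}(X)$, and (c) $\Hom_{D^b(X)}(\op{Coh}^{\leq 1}(X), E)=0$. The plan is to show that each of (a), (b), (c) is an open condition in any family of objects of class $\Ch'$ in $\sA^p$; the $Z_P$-stable locus is then the intersection of three opens in the base.

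Condition (b) is open by Lemma~\ref{lem:Hileqdopen}, since every object of $\sA^p$ lies in $D^{\leq 0}_{\op{Coh}(X)}$. Over the open locus where (b) holds, the identity $\Ch(E)=\Ch(H^0(E))-\Ch(H^{-1}(E))$ together with $\Ch_{\leq 2}(H^0(E_s))=0$ pins down $\Ch_0, \Ch_1, \Ch_2$ of $H^{-1}(E_s)$ to the constants $r, D, -\gamma$. Thus $\{H^{-1}(E_s)\}$ forms a family of torsion-free sheaves with constant numerical slope data, and the openness of torsion-freeness and of $\mu_\omega$-stability for such families is classical (cf.\ \cite[\S2.3]{HL}), which gives (a).

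The main obstacle is (c), which I would handle by the valuative criterion: a constructible subset of a Noetherian base is open iff it is stable under generization. For constructibility, I bound the possible Chern characters of destabilizing $A \in \op{Coh}^{\leq 1}(X)$ admitting a nonzero map into $E_s$ in $\sA^p$, using the long exact cohomology sequence of $0 \to A \to E_s \to E_s/A \to 0$ in $\sA^p$ together with the requirement that $E_s/A$ remain in the heart (which forces dimensional constraints on $A$ via the cohomology sheaves of the quotient). The resulting bounded family of potential $A$'s produces a finite-type relative Hom scheme over the base $S$; its image is constructible by Chevalley, so the locus where (c) fails is constructible, and hence so is the locus where it holds. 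For generization, let $R$ be a DVR with residue field $k$ and fraction field $K$, and suppose $E_R$ is an $R$-flat family in $\sA^p_R$ whose central fiber $E_k$ satisfies (c); assuming for contradiction $E_K$ admits a destabilizer $A_K \hookrightarrow E_K$ with $A_K \in \op{Coh}^{\leq 1}(X_K)$, I would form the $R$-flat closure $A_R \hookrightarrow E_R$ inside $\sA^p_R$ and apply $Li^*$ to produce, after taking the $\sA^p$-image and truncating, a nonzero morphism from an object of $\op{Coh}^{\leq 1}(X_k)$ to $E_k$, contradicting (c) on the central fiber.

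The most delicate step will be the bounding of destabilizing Chern characters for (c) and the construction and control of the $R$-flat closure $A_R$ together with its derived pullback to $X_k$; both rely on the Noetherian structure of the heart $\sA^p$ and the compatibility of the t-structure defining $\sA^p$ with base change along $\spec k \hookrightarrow \spec R$.
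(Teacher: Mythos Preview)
Your overall strategy---reducing to the three conditions of Proposition~\ref{prop1} and showing each is open---is exactly what the paper does.  Condition (b) is handled identically.  The differences are in (a) and especially (c).

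For (c), your approach has a genuine gap.  You claim the Chern characters of destabilizing subobjects $A\in\op{Coh}^{\leq 1}(X)$ with $A\hookrightarrow E_s$ in $\Ac^p$ are bounded via the long exact sequence, but this is not so: writing the quotient as $Q$, the sheaf $H^{-1}(Q)/H^{-1}(E_s)$ injects into $A$, and while it lies in $\op{Coh}^{\leq 1}(X)$, there is no a~priori control on its $\Ch_3$ or $\Ch_4$ because $H^{-1}(Q)$ is only constrained to lie in $\op{Coh}^{\geq 2}(X)$.  Nor can one reduce to simple test objects, since $\op{Coh}^{\leq 1}(X)$ is not Artinian.  The paper sidesteps this entirely by \emph{dualizing}: for $E\in\Ac^p$ one has $(\Ac^p)^\vee\subset D^{\leq 4}_{\op{Coh}(X)}$, and using $(\op{Coh}^{=d}(X))^\vee\cong\op{Coh}^{=d}(X)[-(4-d)]$ for $d=0,1$, the vanishing $\Hom(\op{Coh}^{\leq 1}(X),E)=0$ translates into the purely cohomological conditions $H^4(E^\vee)=0$ and $H^3(E^\vee)\in\op{Coh}^{=0}(X)$ on the derived dual.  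Since derived dual preserves flat families, both are open by upper-semicontinuity and Lemma~\ref{lem:Hileqdopen}.  This avoids any boundedness or flat-closure argument in $\Ac^p$.

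For (a), you assert that $\{H^{-1}(E_s)\}$ forms a family of torsion-free sheaves once (b) holds, but (b) does not force torsion-freeness of $H^{-1}(E_s)$ (only membership in $\op{Coh}^{\geq 2}(X)$), and more seriously $H^{-1}$ of a flat family of complexes need not be flat.  The paper separates (a) into torsion-freeness (reformulated as membership in $\langle\op{Coh}^{=4}(X)[1],\op{Coh}^{\leq 3}(X)\rangle$, open by \cite[Example~A.4]{ABL}) and $\mu_\omega$-semistability of $H^{-1}(E)$; for the latter it first passes to a flattening stratification to get constructibility, and then runs a generization argument \emph{on sheaves} (extending a maximal destabilizing subsheaf over a DVR via \cite[Theorem~2.B.1]{HL}), not on objects of $\Ac^p$.
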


For convenience, let us label the following properties for objects in $\Ac^p$:

\begin{itemize}
\item[A.] $H^{-1}(E)$ is torsion-free and semistable in $\op{Coh}_{4,2}(X)$.
\item[B.] $H^0(E) \in \op{Coh}^{\leq 1}(X)$.
\item[C.] $\Hom_{D^b(X)}(A,E)=0$ for all $A \in \op{Coh}^{\leq 1}(X)$.
\end{itemize}
and
\begin{itemize}
\item[A'.] $E$ lies in $\langle \op{Coh}^{= 4}(X)[1], \op{Coh}^{\leq 3}(X)\rangle$.
\item[A''.] $H^{-1}(E)$ is semistable in $\op{Coh}_{4,2}(X)$.
\end{itemize}

Here, $\op{Coh}_{4,2}(X)$ denotes the quotient category $\op{Coh}(X)/\op{Coh}^{\leq 2}(X)$.  To every object $F$ in $\op{Coh}_{4,2}(X)$, we can associate an analog $p_{4,2}(F)$ of the reduced Hilbert polynomial which defines a notion of semistability on $\op{Coh}_{4,2}(X)$ -- see \cite[Definition 1.6.3]{HL}.

\begin{proof}
By Proposition \ref{prop1}, for objects of nonzero ranks in $\Ac^p$, being $Z_P$-stable is equivalent to having all of properties A, B, and C, which in turn is equivalent to having all of properties A', A'', B, and C.  Note that property A' itself is an open property for flat families of objects in $D^b(X)$ by \cite[Example A.4]{ABL}, while property B is an open property by Lemma \ref{lem:Hileqdopen}.

Next, we claim that for flat families of objects in $\Ac^p$, property C is an open property.  To see this, note that if we write $-^\vee$ to denote the derived dual functor $R\mathcal{H}om_{D^b(X)}(-,\sO_X)$, then for any $0 \leq d \leq 4$  we have
\[
(\op{Coh}^{=d}(X))^\vee \subseteq \begin{cases} D^{[4-d,3]}_{\op{Coh} (X)} &\text{ if $d=2, 3, 4$}\\
\op{Coh}^{=d}(X)[-(4-d)] &\text{ if $d=0, 1$}
\end{cases}
\]
by \cite[Proposition 1.1.6]{HL}.  As a result, we have  $(\Ac^p)^\vee \subset D^{\leq 4}_{\op{Coh} (X)}$.  Then for any $E \in \Ac^p$, the vanishing  $\Hom_{D^b(X)} (\op{Coh}^{\leq 1}(X),E)=0$ holds if and only if both of the following hold: \begin{enumerate}[(i)] \item $\Hom_{D^b(X)}  (\op{Coh}^{= 0}(X),E)=0$, \item $\Hom_{D^b(X)}  (\op{Coh}^{=1}(X), E)=0$. \end{enumerate}  In addition, for $d=0, 1$, the fact that derived dual is an involution up to a shift gives us  $$(\op{Coh}^{=d}(X))^\vee [4-d] = \op{Coh}^{=d}(X)$$ by the same argument as in \cite[Lemma 5.6]{bridgeland2011hall}.  Therefore,  (i) is equivalent to $$\Hom_{D^b(X)} (E^\vee, \op{Coh}^{= 0}(X)[-4])=0,$$ which in turn is equivalent to $H^4(E^\vee)=0$, while (ii) is equivalent to $$\Hom_{D^b(X)}  (E^\vee, \op{Coh}^{=1}(X)[-3])=0.$$  Overall, for any $E \in \Ac^p$, the property $\Hom_{D^b(X)}  (\op{Coh}^{\leq 1}(X),E)=0$ holds if and only if both $H^4(E^\vee)=0$ and $H^3(E^\vee) \in \op{Coh}^{= 0}(X)$ hold.  Now, derived dual preserves flat families of objects in $D^b(X)$ while $H^4(-)=0$ is an open property for  objects in $D^{\leq 4}_{\op{Coh} (X)}$; in addition,  the property  $H^3(-)\in \op{Coh}^{=0}(X)$ is an open property for objects in $D^{\leq 3}_{\op{Coh} (X)}$ by Lemma \ref{lem:Hileqdopen}.     Therefore, C is an open property for objects in $\Ac^p$.


To finish the proof, it suffices to  show that for a flat family of objects in $\Ac^p$ that already satisfy A', B, and C,  property A'' is an open property.  Suppose $R$ is a discrete valuation ring.  Let $E$ be a flat family of objects in $\Ac^p$ over $\op{Spec} R$ such that the central fiber $Li^\ast E$ satisfies A', A'', B, and C.  If $H^{-1}(j^\ast E)$ is not semistable in $\op{Coh}_{4,2}(X)$, then it has a maximal destabilising subsheaf $F$ that is necessarily torsion-free (since A' is an open property) and semistable in $\op{Coh}_{4,2}(X_K)$.  Now we can proceed as in the proof of openness of PT stability on threefolds  \cite[Proposition 3.1]{Lo2}: we  extend $F$ to an $R$-flat family of sheaves $F_R$ such that $i^\ast F_R$ is semistable in $\op{Coh}_{4,2}(X_k)$ \cite[Theorem 2.B.1]{HL}, and then use it to produce a destabilising subsheaf of  $H^{-1}(Li^\ast E)$, giving us a contradiction.

Lastly, suppose $T$ is a Noetherian scheme and  $E \in D^b(X \times T)$ is a $T$-flat family of objects in $\Ac^p$ such that $E|^L_t$ already satisfies A', B, and C for all $t \in T$.  It remains to show that  the locus
\begin{equation}\label{eq:App}
  \{ t \in T : E|^L_t \text{ satisfies } A''\}
\end{equation}
is constructible.  Using a flattening stratification and replacing $T$ by a locally closed subscheme if necessary, we can assume each cohomology sheaf $H^i(E)$ is flat over $T$.  Then the openness of A'' follows from the openness of semistability in $\op{Coh}_{4,2}(X)$ for sheaves \cite[Proposition 2.3.1]{HL}.
\end{proof}

Proposition \ref{prop:PTopen4fold} together with Lieblich's result \cite[Corollary A.9]{ABL} now imply that

\begin{thm} \label{thm_exist}Suppose that $r, \omega^3 D$ are coprime. There is an Artin stack $\sM^{Z_P}(X,\Ch')$ locally of finite presentation over $\mathbb{C}$ that parametrizes $Z_P$-semistable objects with Chern character $\Ch'=(-r, -D, \gamma, \beta, n)$. 
\end{thm}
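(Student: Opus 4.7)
The plan is to deduce the theorem essentially as stated in the sentence immediately preceding it: combine Lieblich's construction of the moduli stack of universally gluable simple complexes with the openness of $Z_P$-stability established in Proposition \ref{prop:PTopen4fold}.

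First I would observe that under the hypothesis that $r$ and $\omega^3 D$ are coprime, Proposition \ref{prop1} tells us that every $Z_P$-semistable object in $\sA^p$ with Chern character $\Ch'=(-r,-D,\gamma,\beta,n)$ is in fact $Z_P$-stable. In particular such an object $E$ is a simple object of $D^b(X)$: any endomorphism $f\colon E\to E$ has kernel and cokernel (in $\sA^p$) whose phases would have to violate stability unless $f$ is an isomorphism or zero, and standard arguments then force $\Hom(E,E)=\mathbb{C}$. Moreover $E$ is a perfect complex on the smooth projective variety $X$, and is universally gluable in the sense of \cite{ABL} (meaning $\Ext^{i}(E,E)=0$ for $i<0$, which is automatic for an object concentrated in two adjacent degrees with $H^{-1}$ and $H^0$ coherent sheaves).

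Next I would invoke Lieblich's theorem \cite[Corollary A.9]{ABL}, which produces an algebraic stack $\mathfrak{M}$, locally of finite presentation over $\mathbb{C}$, whose objects over a base scheme $T$ are the $T$-perfect universally gluable complexes on $X\times T$. Chern character is locally constant in flat families of perfect complexes, so the substack $\mathfrak{M}(\Ch')\subset \mathfrak{M}$ of objects with fixed Chern character $\Ch'$ is an open and closed substack, still locally of finite presentation over $\mathbb{C}$.

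Finally I would cut out the locus of $Z_P$-semistable objects. Being an object of the heart $\sA^p$ is an open condition on flat families of complexes in $D^b(X\times T)$ by the standard argument for torsion pairs (or equivalently by the two openness facts for $H^0\in\op{Coh}^{\leq 1}(X)$ in Lemma \ref{lem:Hileqdopen} and for $\sA^p\subset D^{[-1,0]}_{\op{Coh}(X)}$ in \cite[Example A.4]{ABL}). On the resulting open substack, Proposition \ref{prop:PTopen4fold} shows that $Z_P$-stability (equivalently, $Z_P$-semistability in our coprime situation) is an open condition. The desired stack $\sM^{Z_P}(X,\Ch')$ is then the resulting open substack of $\mathfrak{M}(\Ch')$, automatically locally of finite presentation over $\mathbb{C}$.

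The only real content is the openness statement, which is Proposition \ref{prop:PTopen4fold} and is already proved; the rest is bookkeeping. The main point to be careful about is verifying the hypotheses of Lieblich's result, namely simplicity and universal gluability of every $Z_P$-stable object in the prescribed class; both of these follow cleanly from the characterization in Proposition \ref{prop1}(iii), so no further obstacle is anticipated.
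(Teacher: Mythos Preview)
Your approach is essentially the paper's: openness (Proposition \ref{prop:PTopen4fold}) plus Lieblich's \cite[Corollary A.9]{ABL}. One small correction: the vanishing $\Ext^{<0}(E,E)=0$ is \emph{not} automatic merely from $E\in D^{[-1,0]}_{\op{Coh}(X)}$ (e.g.\ $E=\sO_X\oplus\sO_X[1]$ has $\Ext^{-1}\neq 0$); the correct reason, used elsewhere in the paper, is that $E$ lies in the heart $\sA^p$ of a bounded t-structure. Also note that simplicity is not needed for the Artin-stack statement itself---it only enters later, in Corollary \ref{cor_mod}, to pass to an algebraic space.
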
 \qed

In the next Subsection, we prove that the families of $Z_P$-semistable objects are bounded and hence the Artin stack $\mathcal{M}^{Z_P}(X,\Ch')$ is in fact of finite type over $\mathbb C$.

Let $l\colon \op{Spec} \bb C\to \op{Pic} X$ correspond to a line bundle $L$ on $X$, and $$\sM_L^{Z_P}(X,\Ch'):=\sM^{Z_P}(X,\Ch')\times_{\det, \op{Pic}X, l} \op{Spec} \bb C$$ be the moduli space of $Z_P$-stable objects in class $\Ch'$ with fixed determinant $L$. Since $Z_P$-stable objects are in particular simple (see Lemma \ref{lem_simple}) the objects of $\sM_L^{Z_P}(X,\Ch')$ have stabilizers isomorphic to $\mu_r$. In particular, we have

\begin{cor} \label{cor_mod} When $r=1$ $\sM_L^{Z_P}(X,\Ch')$ is a finite type algebraic space over $\mathbb{C}$.
\end{cor}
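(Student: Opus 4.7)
The plan is to deduce the corollary by combining Theorem~\ref{thm_exist}, the boundedness result of the next subsection (Theorem~\ref{prop:bdd-2}), and the general fact that an Artin stack of finite type whose geometric points have trivial stabilizers is an algebraic space of finite type.

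First I would record the following stabilizer computation. A $Z_P$-stable object $E$ in $\sA^p$ is simple by Lemma~\ref{lem_simple}, so $\Aut_{D^b(X)}(E) = \mathbb{G}_m$, acting on $E$ by scalar multiplication. Under the determinant map, a scalar $t\in \mathbb{G}_m$ acts on $\det E$ by $t^{\Ch_0(E)} = t^{-r}$, so the subgroup of scalars that fix a given isomorphism $\det E \cong L$ is precisely $\mu_r \subset \mathbb{G}_m$. Hence every geometric point of $\sM_L^{Z_P}(X,\Ch')$ has stabilizer $\mu_r$. When $r=1$ this stabilizer is trivial at every geometric point.

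Next I would extract the finite type property. By Theorem~\ref{thm_exist}, $\sM^{Z_P}(X,\Ch')$ is an Artin stack locally of finite presentation over $\mathbb{C}$, and by Theorem~\ref{prop:bdd-2} (proved in the next subsection) the family of $Z_P$-semistable objects in class $\Ch'$ is bounded, so $\sM^{Z_P}(X,\Ch')$ is in fact of finite type over $\mathbb{C}$. Since $\op{Pic} X$ is an algebraic stack locally of finite type and the determinant morphism $\det\colon \sM^{Z_P}(X,\Ch')\to \op{Pic} X$ is representable (it is a morphism of Artin stacks), the fiber product
\[
\sM_L^{Z_P}(X,\Ch') = \sM^{Z_P}(X,\Ch') \times_{\op{Pic} X} \op{Spec}\mathbb{C}
\]
is again an Artin stack of finite type over $\mathbb{C}$.

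Finally, when $r=1$, the previous paragraph says all stabilizer groups of $\sM_L^{Z_P}(X,\Ch')$ are trivial. By a standard result (see for example \cite{StacksProject}, Tag~04SZ, or use Artin's representability theorem), an Artin stack whose inertia stack equals the identity is an algebraic space; combined with finite type from above, this gives the conclusion. The only step requiring any real attention is the stabilizer computation, which reduces to the simplicity of stable objects; the rest is a routine fiber-product and representability argument, and the essential input of boundedness is deferred to the following subsection where it is proved in full.
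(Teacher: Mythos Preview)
Your proposal is correct and follows essentially the same approach as the paper: the paper's proof is just a \qed following the discussion immediately before the corollary, which records exactly the ingredients you list—Theorem~\ref{thm_exist} for the Artin stack structure, the forward reference to boundedness (Theorem~\ref{prop:bdd-2}) for finite type, and the stabilizer computation via simplicity (Lemma~\ref{lem_simple}) giving $\mu_r$, hence trivial when $r=1$. Your write-up simply makes these steps explicit.
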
\qed

\subsection{Boundedness of family of $Z_P$-semistable objects}
The argument in this Subsection implies that the Artin stack $\mathcal{M}^{Z_P}(X,\Ch')$ in Theorem \ref{thm_exist}  is of finite type over $\mathbb{C}$.  The strategy of the proof is to first show the boundedness of the set of all possible $H^{-1}(E)$ for $Z_P$-semistable objects $E$ of Chern character $\Ch'$, and then show the boundedness for all possible $H^0(E)$.  

For $H^{-1}(E)$, we adapt an argument of Maruyama's for showing boundedness of semistable sheaves in \cite{nmj/1118786090, 10.1215/kjm/1250521908}.   Even though Maruyama's original argument that is the most relevant to us was written for  reflexive sheaves, which $H^{-1}(E)$ is not, it can be adapted to our case since $H^{-1}(E)$ is ``close to being reflexive.''

Once we have boundedness for $H^{-1}(E)$, we use a long exact sequence of cohomology to produce morphisms of the form
\[
  \mathcal{E}xt^1(H^{-1}(E),\mathcal O_X) \to \mathcal{E}xt^3(H^0(E),\mathcal O_X).
\]
We then show that such pairs are stable pairs in the sense of Lin \cite{lin2018moduli} (and also Wandel \cite{wandel2015moduli}), and hence are bounded.  The boundedness of $H^0(E)$ then follows.

We first work out some properties of $H^{-1}(E)$ for a PT semistable object $E \in \Ac^p$ of nonzero rank:

\begin{lem}\label{lem:bdd-1}
Let $(Z_P, \Ac^p)$ be as in Proposition  \ref{lem4}.  Suppose $E \in \Ac^p$ is a $Z_P$-semistable object of nonzero rank, and let $F = H^{-1}(E)$.  Then $F$ has homological dimension at most 2, and
\[
  H^0(F^\vee) \in \op{Coh}^{=4}(X), \text{\quad} H^1(F^\vee) \in \op{Coh}^{\leq 2}(X), \text{\quad} H^2(F^\vee) \in \op{Coh}^{=0}(X).
\]
\end{lem}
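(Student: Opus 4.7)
My plan is to dualize the exact triangle
\[
F[1] \to E \to H^0(E) \xrightarrow{+1}
\]
and read off each claim from the resulting long exact sequence of cohomology sheaves. The two inputs I need are (a) that $F = H^{-1}(E)$ is torsion-free and $H^0(E) \in \op{Coh}^{\leq 1}(X)$, which follows from $E \in \sA^p$ together with the assumption \eqref{eq11} forcing $H^{-1}(E)$ to have no torsion, exactly as in the proof of Proposition \ref{prop1}; and (b) that $\Hom_{D^b(X)}(\op{Coh}^{\leq 1}(X), E) = 0$. For (b), note that for every nonzero $A \in \op{Coh}^{\leq 1}(X)$ the phase $\phi(A)$ equals $\phi(\rho_0)$ or $\phi(\rho_1)$ for $x \gg 0$ (depending on whether $\op{Ch}_3(A)=0$ or not), while $\phi(E)$ approaches $\phi(-\rho_4)$ since $\op{Ch}_0(E) \neq 0$; the strict inequalities in \eqref{eq10} give $\phi(A) \succ \phi(E)$, and because $\op{Coh}^{\leq 1}(X)$ is a Serre subcategory of $\sA^p$, $Z_P$-semistability alone rules out any nonzero morphism $A \to E$. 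As in the proof of Proposition \ref{prop:PTopen4fold}, (b) is equivalent to
\[
H^4(E^\vee) = 0 \quad\text{and}\quad H^3(E^\vee) \in \op{Coh}^{=0}(X),
\]
both of which will be used below.

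Applying $R\mathcal{H}om(-, \mathcal{O}_X)$ to the triangle produces $(H^0(E))^\vee \to E^\vee \to F^\vee[-1] \xrightarrow{+1}$, and hence the long exact sequence of sheaves
\[
\cdots \to \mathcal{E}xt^i(H^0(E), \mathcal{O}_X) \to H^i(E^\vee) \to \mathcal{E}xt^{i-1}(F, \mathcal{O}_X) \to \mathcal{E}xt^{i+1}(H^0(E), \mathcal{O}_X) \to \cdots .
\]
Since $\dim H^0(E) \leq 1$, one has $\mathcal{E}xt^j(H^0(E), \mathcal{O}_X) = 0$ for $j \leq 2$ and $j \geq 5$, and $\mathcal{E}xt^4(H^0(E), \mathcal{O}_X) \in \op{Coh}^{=0}(X)$. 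Reading the sequence at $i = 5$ (using $H^5(E^\vee) = 0$ since $E^\vee \in D^{\leq 4}_{\op{Coh}(X)}$) and at $i = 4$ (using $H^4(E^\vee) = 0$) yields $\mathcal{E}xt^3(F, \mathcal{O}_X) = \mathcal{E}xt^4(F, \mathcal{O}_X) = 0$, i.e.\ $F$ has homological dimension at most $2$. At $i = 3$ one has
\[
\mathcal{E}xt^3(H^0(E), \mathcal{O}_X) \to H^3(E^\vee) \to \mathcal{E}xt^2(F, \mathcal{O}_X) \to \mathcal{E}xt^4(H^0(E), \mathcal{O}_X) \to H^4(E^\vee) = 0,
\]
so $\mathcal{E}xt^2(F, \mathcal{O}_X)$ is an extension of a subsheaf of $\mathcal{E}xt^4(H^0(E), \mathcal{O}_X)$ by a quotient of $H^3(E^\vee)$; both of these lie in $\op{Coh}^{=0}(X)$, hence so does $\mathcal{E}xt^2(F, \mathcal{O}_X)$.

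The remaining two assertions are standard consequences of $F$ being torsion-free of positive rank: $H^0(F^\vee) = F^\ast$ is reflexive, hence pure of dimension $4$; and $\mathcal{E}xt^1(F, \mathcal{O}_X)$ is supported in codimension $\geq 2$, for instance by combining the sequence $0 \to F \to F^{\ast\ast} \to F^{\ast\ast}/F \to 0$ with the bound $\op{codim}(F^{\ast\ast}/F) \geq 2$, or directly from \cite[Proposition 1.1.6]{HL}. The one subtle point I anticipate is deducing (b) from semistability rather than stability (the latter being what is used in Proposition \ref{prop:PTopen4fold}); this is resolved by the strictness of the phase inequalities in \eqref{eq10}, so there is no substantive obstruction.
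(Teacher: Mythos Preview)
Your argument is correct. The ingredients are the same as in the paper (torsion-freeness of $F$, the vanishing $\Hom(\op{Coh}^{\leq 1}(X),E)=0$, and duality), but the assembly differs. The paper first transfers the Hom vanishing from $E$ to $F[1]$ directly, using that $F[1]\hookrightarrow E$ is an $\Ac^p$-injection, so any nonzero $A\to F[1]$ with $A\in\op{Coh}^{\leq 1}(X)$ would compose to a nonzero $A\to E$; this yields $\Hom(F^\vee,\op{Coh}^{=0}(X)[-3])=0$ and $\Hom(F^\vee,\op{Coh}^{=1}(X)[-2])=0$, from which $H^3(F^\vee)=0$ and $H^2(F^\vee)\in\op{Coh}^{=0}(X)$ follow at once without invoking $H^0(E)$ at all. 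You instead dualize the whole triangle and extract the same conclusions from the long exact sequence, using as input the translation of the Hom vanishing into $H^4(E^\vee)=0$ and $H^3(E^\vee)\in\op{Coh}^{=0}(X)$ already recorded in Proposition~\ref{prop:PTopen4fold}. Your route is a bit more computational but entirely sound; the paper's route is shorter because it bypasses $E^\vee$ and the $\mathcal{E}xt$ sheaves of $H^0(E)$ altogether. Your remark that only semistability (not stability, hence no coprimality) is needed for (b) is well taken and matches how the paper's citation of Proposition~\ref{prop1} is actually used.
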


\begin{proof}
From the proof of Proposition 2.6, we know $H^{-1}(E)$ is a torsion-free sheaf and the vanishing $\Hom (\op{Coh}^{\leq 1}(X), E)=0$ holds.  Since $\op{Coh}^{\leq 1}(X)$ is a Serre subcategory of $\Ac^p$ and $H^{-1}(E)[1]$ is an $\Ac^p$-subobject of $E$, it follows that $\Hom (\op{Coh}^{\leq 1}(X), H^{-1}(E)[1])=0$.  Let us write $F = H^{-1}(E)$.

From the proof of Proposition 2.8, we have $$(\op{Coh}^{=1}(X))^\vee \cong \op{Coh}^{= 1}(X)[-3]\quad \text{and}\quad (\op{Coh}^{= 0}(X))^\vee \cong\op{Coh}^{= 0}[-4],$$ so we have the vanishings
\begin{align*}
  \Hom (F^\vee, \op{Coh}^{=1}(X)[-2])=0, \\
  \Hom (F^\vee, \op{Coh}^{=0}(X)[-3])=0.
\end{align*}
Since $F$ is torsion-free, we have $F^\vee \in D^{[0,3]}_{\op{Coh} (X)}$.  Then the second vanishing implies $H^3(F^\vee)=0$, i.e.\ $F$ has homological dimension at most 2.  The first vanishing then  implies $H^2(F^\vee) \in \op{Coh}^{=0}(X)$. That $H^1(F^\vee)\in\op{Coh}^{\leq 2}(X)$ follows from the purity of $F$ \cite[Proposition 1.1.10]{HL} while $H^0(F^\vee)\cong \mathcal{E}xt^0(F,\mathcal O_X)$ is again torsion-free.
\end{proof}

\begin{lem}\label{lem:bdd-2}
Let $V$ be a smooth projective variety of dimension $n \geq 4$.  Suppose $F$ is a torsion-free sheaf of homological dimension at most 2.
\begin{itemize}
\item[(i)] Then $H^0(V, F(-l))=0=H^1(V,F(-l))$  for $l \gg 0$.
\item[(ii)] If we further assume $H^2(F^\vee) \in \op{Coh}^{= 0}(V)$, then for any positive integer $m$ and a generic smooth hypersurface $Y \in |\mathcal O_V(m)|$, the restriction $F|_Y$ is a  torsion-free sheaf of homological dimension at most 1 and satisfies
    \[
      H^0(Y, (F|_Y)(-l))=0=H^1(Y, (F|_Y)(-l)) \text{ for } l \gg 0.
    \]
\end{itemize}
\end{lem}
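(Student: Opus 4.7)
Part (i) follows by Serre vanishing applied to a finite locally free resolution of $F$. Since $V$ is smooth projective and $F$ has homological dimension at most $2$ at every closed point, any surjection from a locally free sheaf onto $F$ extends to a length-$2$ locally free resolution
\[
0\to E_2\to E_1\to E_0\to F\to 0
\]
(the second syzygy sheaf is locally free by Auslander--Buchsbaum on each regular local ring $\sO_{V,x}$). Twisting by $\sO_V(-l)$ and breaking the resolution into two short exact sequences through the intermediate kernel, Serre vanishing gives $H^i(V,E_j(-l))=0$ for $0\le i<n$ and $l\gg 0$; chasing the long exact sequences yields $H^i(V,F(-l))=0$ for $0\le i\le n-3$, which contains $i=0,1$ since $n\ge 4$.

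For part (ii), the strategy is to find a smooth $Y\in|\sO_V(m)|$ on which $F|_Y$ has homological dimension at most $1$, and then rerun the resolution-plus-Serre-vanishing argument on $Y$. Let $Z:=\op{Supp}\mathcal{E}xt^2(F,\sO_V)$, which is $0$-dimensional by hypothesis. Off $Z$ one has $\mathcal{E}xt^i(F,\sO_V)=0$ for every $i\ge 2$, so $\op{pd}_x F\le 1$ and (by Auslander--Buchsbaum) $\op{depth}_x F\ge n-1$ outside $Z$, while $\op{depth}_x F\ge n-2$ everywhere. A Bertini argument combined with the fact that a finite set can be avoided by a generic member of any base-point-free linear system produces a smooth $Y\in|\sO_V(m)|$ disjoint from $Z$; since $F$ is torsion-free, a local defining equation of $Y$ is automatically a nonzerodivisor on $F$. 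The resulting short exact sequence
\[
0\to F(-m)\to F\to F|_Y\to 0
\]
shows $F|_Y=F\otimes\sO_Y$, and the standard depth-drop lemma gives $\op{depth}_y(F|_Y)\ge n-2$ for every $y\in Y$. Since $\dim Y=n-1$, Auslander--Buchsbaum on $Y$ yields $\op{pd}_y(F|_Y)\le 1$ globally on $Y$, and $F|_Y$ is torsion-free because $n-2\ge 2$.

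Once $F|_Y$ has homological dimension at most $1$, it admits a length-$1$ locally free resolution on $Y$, and Serre vanishing on $Y$ (of dimension $n-1\ge 3$) yields $H^i(Y,(F|_Y)(-l))=0$ for $0\le i\le n-3$, in particular for $i=0,1$. The main obstacle is precisely the global homological dimension bound for $F|_Y$, which is where the hypothesis $H^2(F^\vee)\in\op{Coh}^{=0}(V)$ is used crucially: a $0$-dimensional bad locus can be avoided by a generic smooth hypersurface of any positive degree, whereas a positive-dimensional bad locus would necessarily meet $Y$ and would obstruct the existence of a global length-$1$ resolution of $F|_Y$.
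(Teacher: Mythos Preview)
Your argument is correct in substance and takes a somewhat different route from the paper. For (i), the paper argues via Serre duality, writing $H^i(V,F(-l))\cong \Ext^{n-i}(F,\omega_V(l))\cong H^0(\mathcal{E}xt^{n-i}(F,\omega_V(l)))$ for $l\gg 0$ and invoking the vanishing of $\mathcal{E}xt^{n-i}$ for $n-i>2$; you instead chase a global length-$2$ locally free resolution. For (ii), the paper restricts such a resolution to $Y$, then uses the base-change spectral sequence $L^pf^\ast\mathcal{E}xt^q_V(F,\sO_V)\Rightarrow \mathcal{E}xt^{p+q}_Y(f^\ast F,\sO_Y)$ to identify $\mathcal{E}xt^2_Y(F|_Y,\sO_Y)\cong f^\ast\mathcal{E}xt^2_V(F,\sO_V)$, which vanishes once $Y$ avoids the $0$-dimensional support. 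Your depth/Auslander--Buchsbaum argument reaches the same conclusion $\op{pd}_y(F|_Y)\le 1$ more directly and without spectral sequences; the paper's approach has the advantage of making the comparison of $\mathcal{E}xt$-sheaves on $V$ and $Y$ explicit.

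There is one genuine slip. Your justification ``$F|_Y$ is torsion-free because $n-2\ge 2$'' does not hold: depth $\ge 2$ at closed points only forces associated primes of $F|_Y$ to have codimension $\le 1$ in $Y$, so codimension-$1$ torsion is not excluded by the depth bound alone. (Concretely, the ideal $\mathfrak m=(x,y)$ in a $2$-dimensional regular local ring has $\op{pd}=1$ and is torsion-free, yet $\mathfrak m/s\mathfrak m$ acquires torsion for \emph{every} regular parameter $s$; this is exactly what happens at a codimension-$2$ point of $V$ lying in $\op{Sing}(F)\cap Y$.) The conclusion is nevertheless true for \emph{generic} $Y$: since $F$ is torsion-free, one may also choose $Y$ to avoid the finitely many associated points of $F^{\ast\ast}/F$ and of $\mathcal{E}xt^1(F,\sO_V)$, so that $F|_Y\hookrightarrow F^{\ast\ast}|_Y$ with $F^{\ast\ast}|_Y$ locally free; the paper simply asserts this standard Bertini-type fact without proof. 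Replace your depth sentence with this genericity argument and the proof is complete.
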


\begin{proof}
(i) For any $0 \leq i \leq n$ we have
\[
  H^i(V,F(-l)) \cong \Ext^i (\mathcal O_V, F(-l)) \cong \Ext^{n-i}(F,\omega_V (l))
\]
which is isomorphic to $H^0(V,\mathcal{E}xt^{n-i}(F,\omega_V(l)))$ for $l \gg 0$.  Since $F$ has homological dimension at most 2, the sheaf $\mathcal{E}xt^{n-i}(F,\omega_V(l))$ is zero when $n-i >2$, i.e.\ $i<n-2$, proving (i).

(ii) Suppose $$0 \to F^{-2} \to F^{-1} \to F^0 \to F \to 0$$ is  a locally free resolution of $F$ on $V$.  Let us write $F^\bullet$ to denote the complex $F^{-2} \to F^{-1} \to F^0$ with $F^i$ sitting at degree $i$, and let $Y \in |\mathcal O_V (m)|$ be a generic smooth hypersurface so that the restricted sequence
\[
0 \to F^{-2}|_Y \to F^{-1}|_Y \to F^0|_Y \to F|_Y \to 0
\]
is exact on $Y$, with $F|_Y$ torsion-free on $Y$ and $\rank_V (F)=\rank_Y(F|_Y)$.  
Then $F^\bullet$ is isomorphic to $F$ in $D^b(V)$ while $F^\bullet|_Y = [F^{-2}|_Y \to F^{-1}|_Y \to F^0|_Y]$ is isomorphic to $F|_Y$ in $D^b(Y)$.  If we let $f$ denote the closed immersion $Y \hookrightarrow  V$, then this means $$Lf^\ast F \cong F^\bullet |_Y\cong F|_Y=f^\ast F$$ in $D^b(Y)$.

On the other hand, we have
\[
  (Lf^\ast F)^\vee \cong R\mathcal{H}om_Y (Lf^\ast F, \mathcal O_Y) \cong Lf^\ast R\mathcal{H}om_V(F,\mathcal O_V) = Lf^\ast (F^\vee).
\]
Taking cohomology gives the last isomorphism in
\begin{equation}\label{eq:AG49-134-1}
  \mathcal{E}xt^i_Y(f^\ast F,\mathcal O_Y) \cong H^i( (f^\ast F)^\vee) \cong H^i ((Lf^\ast F)^\vee) \cong L^if^\ast (F^\vee).
\end{equation}
Plugging in $F^\vee$ in  the spectral sequence \cite[(3.10)]{huybrechts2006fourier}
\[
  E_2^{p,q} = L^pf^\ast (H^q (-)) \Rightarrow L^{p+q}f^\ast (-)
\]
and using \eqref{eq:AG49-134-1}, we obtain
\[
  E_2^{p,q} = L^pf^\ast \mathcal{E}xt^q_V (F,\mathcal O_V) \Rightarrow \mathcal{E}xt^{p+q}_Y(f^\ast F,\mathcal O_Y).
\]
By assumption, $E_2^{p,q}=0$ for $q>2$ or $p>0$, so $E_2^{0,2} = f^\ast \mathcal{E}xt^2(F,\mathcal O_V)$ already equals $E_\infty^{0,2}$ which in turn is isomorphic to $\mathcal{E}xt^2 (f^\ast F,\mathcal O_Y)$.  Since $Y$ is a generic smooth hypersurface of $V$ and $\mathcal{E}xt^2 (F,\mathcal O_V)$ is supported in dimension 0, we can further assume $Y$ is disjoint from the support of $\mathcal{E}xt^2 (F,\mathcal O_V)$, in which case $\mathcal{E}xt^2 (f^\ast F,\mathcal O_Y)$ vanishes, i.e.\ $f^\ast F$ has homological dimension at most 1.  The same argument as in part (i) then shows the desired  cohomology vanishing.
\end{proof}

We introduce some of Maruyama's notations for  families of sheaves: suppose $f : V \to S$ is a smooth, projective, geometrically integral morphism of noetherian schemes over $\mathbb{C}$, and $\mathcal O_V(1)$ is an $f$-very ample line bundle on $V$.  Also suppose that the dimensions of the fibers of $f$ are constant and equal $n$.  Then for any coherent sheaf $E$ on a fiber $V_s$ of $f$, we can write the Hilbert polynomial of $E$ as
\[
 \chi (E(m)) = \sum_{i=0}^n a_i(E) \binom{m+n-1}{n-i}
\]
for some integers $a_i (E)$ for $0 \leq i \leq n$.  Note that $a_0=d\Ch_0(E)$ where $d$ is the degree of $V_s$ with respect to $\mathcal O_{V_s}(1)$.

For any integers $r, a_1, \dots, a_n$, we now set $$\Sigma_{V/S}(n,r,a_1, \dots, a_n)$$ to be the family of isomorphism classes of coherent sheaves $E$ on the fibers $V_s$ of $V$ over $S$ such that
     \begin{enumerate}
     \item $E$ is slope semistable with respect to $\mathcal O_{V_s}(1)$,
     \item $a_0(E)=rd, a_1(E)=a_1, a_i(E) \geq a_i$ for all $2 \leq i \leq n$.
     \end{enumerate}
For any integers $r, a_1, a_2$, we set $$\Sigma_{V/S}'(n,r,a_1, a_2)$$ to be the family of isomorphism classes of coherent sheaves $E$ on the fibers $V_s$ of $V$ over $S$ such that
     \begin{enumerate}
     \item $E$  is slope semistable with respect to $\mathcal O_{V_s}(1)$,
     \item $E$ satisfies Serre's criterion $S_2$ (i.e.\ $E$ is reflexive),
     \item $a_0(E)=rd, a_1(E)=a_1, a_2(E) \geq a_2$.
     \end{enumerate}
  Given a coherent sheaf $E$ of rank $r$ on $V_s$ and a sequence $(\alpha) = (\alpha_1, \dots, \alpha_{r-1})$ of rational numbers, we say $E$ is of type $(\alpha)$ (in the sense of Maruyama \cite[Definition 1.3(1)]{10.1215/kjm/1250521908}) if $E$ is torsion-free, and for every subsheaf $F$ of $E$ with $1 \leq \Ch_0(F)=r_0 \leq r-1$, we have
\[
  \mu (F)\leq \mu (E) + \tfrac{r-r_0}{r_0}\alpha_{r-r_0}
\]
where $\mu$ is the slope computed with respect to $\mathcal O_{V_s}(1)$.     
     For any integers $r, a_1, \cdots, a_n$ and any sequence of rational numbers $(\alpha) = (\alpha_1, \dots, \alpha_{r-1})$, we set $$T_{V/S}(n,r,a_1, \dots, a_n, (\alpha))$$ to be the family of isomorphism classes of coherent sheaves $E$ on the fibers $V_s$ of $V$ over $S$ such that
     \begin{enumerate}
     \item $E$ is of type $(\alpha)$ with respect to $\mathcal O_{V_s}(1)$,
     \item $a_0(E)=rd, a_1(E)=a_1, a_i(E) \geq a_i$ for all $2 \leq i \leq n$.
     \end{enumerate}
In particular, when $(\alpha) = (0,\dots, 0)$, a torsion-free coherent sheaf on $V_s$ is of type $(\alpha)$ if and only if it is slope semistable with respect to $\mathcal O_{V_s}(1)$ \cite[Definition 1.3(2)]{10.1215/kjm/1250521908}, and  $T_{V/S}(n,r,a_1, \cdots, a_n, (\alpha))$ coincides with $\Sigma_{V/S}(n,r,a_1,\dots, a_n)$ in this case.  We also define the following statements for fixed $n, r$:
\begin{itemize}
\item $B_{n,r}$: $\Sigma_{V/S}(n,r,a_1,\dots,a_n)$ is bounded for all $f : V \to S$, $\mathcal O_V(1)$ and $a_1, \dots, a_n$,
\item $B_{n,r}'$: $\Sigma_{V/S}'(n,r,a_1,a_2)$ is bounded for all $f : V \to S$, $\mathcal O_V(1)$ and $a_1, a_2$,
\item $L_{n,r}$: $T_{V/S}(n,r,a_1, \dots, a_n, (\alpha))$ is bounded for all $f : V \to S$, $\mathcal O_V(1)$, $a_1, \dots, a_n$ and $(\alpha)$.
\end{itemize}

In adapting Maruyama's arguments for boundedness of moduli of sheaves to our situation, we need the following variations of the definitions above:

For any integers $r, a_1, a_2, a_3$ and sequence of rational numbers $(\alpha)=(\alpha_0,\dots, \alpha_{r-1})$, we set $$\Sigma_{V/S}^\ast (n,r,a_1, a_2, a_3,(\alpha))$$ to be the family of isomorphism classes of coherent sheaves $E$ on the fibers $V_s$ of $V$ over $S$ such that

     \begin{enumerate}
     \item $E$ is of type $(\alpha)$  with respect to $\mathcal O_{V_s}(1)$,
     \item $H^0(V_s, E(-l))=0=H^1(V_s, E(-l))$ for $l \gg 0$,
     \item $a_0(E)=rd, a_1(E)=a_1, a_2(E)=a_2, a_3(E) \geq a_3$.
     \end{enumerate}
Then we define the statement
\begin{itemize}
\item $B_{n,r}^\ast$: $\Sigma_{V/S}^\ast (n,r,a_1,a_2, a_3,(\alpha))$ is  bounded for all $f : V \to S$, $\mathcal O_V(1)$, $a_1, a_2, a_3$ and $(\alpha)$.
\end{itemize}

The proof of the following  Proposition is modeled after the proof of \cite[Proposition 2.5]{10.1215/kjm/1250521908}, which in turn is modeled after the proof of \cite[Proposition 3.6]{nmj/1118786090}.
\begin{prop}\label{prop:bdd-1}
Suppose $n \geq 4$,  and $\mathscr{F}^\ast$ is a subfamily of $\Sigma^\ast_{V/S} (n,r,a_1, a_2, a_3, (\alpha))$.  Also, suppose there exists a sequence of rational numbers $(\alpha') = (\alpha_1', \cdots, \alpha_{r-1}')$ such that    each member $E \in \mathscr{F}^\ast$ satisfies:
\begin{enumerate}
\item If $E$ is a coherent sheaf on the fiber $V_s$, then $E$ is torsion-free with homological dimension at most 2 on $V_s$ and $H^2(E^\vee) \in \op{Coh}^{= 0}(V_s)$.
\item For a generic smooth hyperplane $Y \in |\sO_{V_s}(1)|$, the restriction $E|_Y$ is of type $(\alpha')$ with respect to $\sO_Y(1)$.
\end{enumerate}
If $B_{n-1,r}^\ast$ holds, then $\mathscr{F}^\ast$ is bounded.
\end{prop}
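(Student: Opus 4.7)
The plan is to run Maruyama's inductive boundedness strategy: restrict each $E \in \mathscr{F}^\ast$ to a generic smooth hyperplane section $Y$, apply the dimension-$(n{-}1)$ hypothesis $B^\ast_{n-1,r}$ to the restricted family, and then lift the resulting Castelnuovo--Mumford regularity bound back from $Y$ to $V_s$.

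For a fixed $E \in \mathscr{F}^\ast$ on a fiber $V_s$, I would pick a generic smooth $Y \in |\sO_{V_s}(1)|$. Hypothesis (1) lets us apply Lemma \ref{lem:bdd-2}(ii) with $m=1$, so $E|_Y$ is a torsion-free sheaf on $Y$ of homological dimension at most $1$ satisfying $H^0(Y, E|_Y(-l)) = H^1(Y, E|_Y(-l)) = 0$ for $l \gg 0$, while hypothesis (2) gives that $E|_Y$ is of type $(\alpha')$ with respect to $\sO_Y(1)$. To control the numerical invariants of $E|_Y$, I would use the short exact sequence
\[
0 \to E(-1) \to E \to E|_Y \to 0
\]
on $V_s$, so that $\chi(E|_Y(m)) = \chi(E(m)) - \chi(E(m-1))$ expresses each $a_i(E|_Y)$ for $0 \leq i \leq 3$ through universal combinatorial identities in $a_0(E), \dots, a_3(E)$. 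Since $a_0(E), a_1(E), a_2(E)$ are fixed and $a_3(E) \geq a_3$ across $\mathscr{F}^\ast$, the invariants $a_0(E|_Y), a_1(E|_Y), a_2(E|_Y)$ are also fixed and $a_3(E|_Y)$ is bounded below by a universal constant. Hence the restricted family $\{E|_Y\}_{E \in \mathscr{F}^\ast}$ lies in $\Sigma^\ast_{Y/S}(n-1, r, a_1', a_2', a_3', (\alpha'))$ for suitable fixed integers $a_1', a_2', a_3'$, and $B^\ast_{n-1,r}$ delivers boundedness of the restrictions, equivalently a uniform regularity bound on them.

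The main obstacle is the final lifting step: translating a uniform regularity of the restrictions $E|_Y$ into a uniform regularity of $E$ on the ambient $V_s$. Here I would run the standard Mumford-type argument, using the long exact cohomology sequences attached to $0 \to E(k-1) \to E(k) \to E|_Y(k) \to 0$ together with descending induction on $k$. Serre vanishing initialises the induction for large $k$; the hypothesised $m$-regularity of $E|_Y$ feeds the inductive step for higher cohomology; and the built-in vanishings $H^0(V_s, E(-l)) = H^1(V_s, E(-l)) = 0$ for $l \gg 0$ coming from $E \in \Sigma^\ast_{V/S}$ control the low-degree cohomology along the induction. Combined with the fixed Hilbert polynomial data, this yields a uniform $m_0$-regularity of $\mathscr{F}^\ast$, and Kleiman's criterion then closes the argument.
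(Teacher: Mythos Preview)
Your plan is essentially Maruyama's argument, which is exactly what the paper invokes: restrict to a generic hyperplane via Lemma~\ref{lem:bdd-2}(ii), check that the restrictions land in a family of the form $\Sigma^\ast_{Y/S}(n-1,r,a_1',a_2',a_3',(\alpha'))$, apply $B^\ast_{n-1,r}$ to bound them, and then lift the regularity bound through the restriction sequence.  So the strategy matches.

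There is, however, one point where your writeup is imprecise, and it is exactly the substitution the paper singles out.  In the lifting step you invoke ``the built-in vanishings $H^0(V_s,E(-l))=0=H^1(V_s,E(-l))$ for $l\gg 0$ coming from $E\in \Sigma^\ast_{V/S}$''.  But condition~(2) in the definition of $\Sigma^\ast$ only asserts these vanishings for each $E$ separately, with no uniform $l$; a non-uniform bound is useless for producing a regularity estimate valid across the whole family.  The uniform vanishing of $H^0(E(-l))$ is supplied instead by condition~(1) of $\Sigma^\ast$---that $E$ is of type~$(\alpha)$---combined with \cite[Lemma~1.7]{nmj/1118795443}: a torsion-free sheaf $F$ of type~$(\beta)$ has $H^0(V_s,F)=0$ whenever $d(F,\sO_{V_s}(1))<-(r-1)r\,\beta_{r-1}$.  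In Maruyama's original proof this role is played by $\mu$-semistability; here that is replaced by the type-$(\alpha)$ hypothesis, and the paper flags precisely this replacement.  With that correction, your argument goes through and agrees with the paper's.
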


\begin{proof}
The proof of \cite[Proposition 3.6]{nmj/1118786090} contains a case for a subfamily $\mathscr{F}'$ of $\Sigma_{V/S}'(n, r, a_1, a_2)$.  The argument for that case   applies here, with the family $\Sigma_{V/S}'(n,r,a_1,a_2)$ replaced with $\Sigma^\ast_{V/S} (n,r,a_1, a_2, a_3, (\alpha))$, the statement $B_{n-1,r}'$ replaced with $B_{n-1,r}^\ast$, the subfamily $\mathscr{F}'$ replaced with $\mathscr{F}^\ast$, and \cite[Lemma 3.7]{nmj/1118786090} replaced with Lemma 2.12.  Also, in Maruyama's proof where he uses the fact that $H^0(V_s, E(l))=0$ for a $\mu$-semistable sheaf with $d(E(l), \sO_{V_s}(1))<0$ (where $d(A,\sO_{V_s}(1))$ denotes the degree of a sheaf $A$ with respect to $\sO_{V_s}(1)$), we replace it with  the fact that $H^0(V_s,F)=0$ when $F$ is a sheaf of type $(\beta)$ and $d(F,\sO_{V_s}(1))<-(r-1)r\beta_{r-1}$ \cite[Lemma 1.7]{nmj/1118795443}.
\end{proof}

We can now bound the set of all possible $H^{-1}(E)$ for  $Z_P$-semistable objects $E$ of a fixed Chern character. More generally, we have the following result.

\begin{cor}\label{coro:bdd-1}
 Let $(Z_P, \Ac^p)$ be as in Proposition \ref{lem4}.  Fix $r\ge 1$ and $(D,\gamma,\beta)\in \oplus_{i=1}^3 H^{2i}(X,\bb Q)$.  Then the family of sheaves
 \begin{equation}\label{eq:HleftEset}
  \{ H^{-1}(E) : E  \text{ is $Z_P$-semistable in $\Ac^p$ with }\Ch_{i \le 3}(E)=(-r,-D,\gamma,\beta)\}
 \end{equation}
 is bounded.
\end{cor}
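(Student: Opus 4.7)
The strategy is to verify the hypotheses of Proposition \ref{prop:bdd-1} and then invoke the statement $B^\ast_{3,r}$. Fix $E$ in the family \eqref{eq:HleftEset} and set $F = H^{-1}(E)$. The argument in the proof of Proposition \ref{prop1}, part (ii) $\Rightarrow$ (iii) (the portion that uses only $Z_P$-semistability and not the coprimality assumption) shows that $F$ is torsion-free and $\mu_\omega$-semistable. Lemma \ref{lem:bdd-1} then gives that $F$ has homological dimension at most $2$ and $H^2(F^\vee) \in \op{Coh}^{=0}(X)$, while Lemma \ref{lem:bdd-2}(i) supplies the vanishings $H^0(X, F(-l)) = 0 = H^1(X, F(-l))$ for $l \gg 0$.

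Next I extract the numerical constraints. Since $H^0(E) \in \op{Coh}^{\le 1}(X)$, its Chern character vanishes in codegrees $0, 1, 2$, so the truncation $(\Ch_0(F), \Ch_1(F), \Ch_2(F)) = (r, D, -\gamma)$ is fixed, and Hirzebruch--Riemann--Roch determines $a_0(F), a_1(F), a_2(F)$. From $\Ch_3(F) = -\beta + \Ch_3(H^0(E))$ and the fact that $H^0(E)$ is at most $1$-dimensional, one obtains $\omega \cdot \Ch_3(H^0(E)) \ge 0$, giving a uniform lower bound for $a_3(F)$. Thus every such $F$ lies in one family $\Sigma^\ast_{X/\spec \bb C}(4, r, a_1, a_2, a_3, (0))$. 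To check the remaining hypothesis of Proposition \ref{prop:bdd-1}, I apply Langer's restriction theorem to the $\mu_\omega$-semistable sheaf $F$: it furnishes a sequence $(\alpha')$, depending only on $r$ and $\omega$, such that $F|_Y$ is of type $(\alpha')$ on a generic smooth hyperplane $Y \in |\sO_X(1)|$.

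With these inputs, Proposition \ref{prop:bdd-1} reduces the boundedness of \eqref{eq:HleftEset} to the statement $B^\ast_{3,r}$. I expect establishing $B^\ast_{3,r}$ to be the main obstacle, and would attack it by a parallel downward induction: applying a threefold analogue of Proposition \ref{prop:bdd-1} together with Langer's restriction theorem on threefolds to reduce to $B^\ast_{2,r}$, which should follow from Grothendieck's classical boundedness result for $\mu$-semistable torsion-free sheaves on surfaces once the extra cohomological condition is checked to be automatic in dimension two.
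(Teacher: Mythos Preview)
Your argument is correct and tracks the paper's proof closely up to and including the reduction to $B^\ast_{3,r}$ via Proposition \ref{prop:bdd-1}. Where you cite Langer's restriction theorem, the paper uses Maruyama \cite[Corollary 4.7]{10.1215/kjm/1250521908}; either suffices.

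The one substantive difference is the final step. You regard $B^\ast_{3,r}$ as the main obstacle and sketch a further downward induction to surfaces. The paper instead observes that $B^\ast_{3,r}$ is an immediate consequence of Maruyama's statement $L_{3,r}$, established in \cite[Theorem 4.8]{10.1215/kjm/1250521908}: the family $\Sigma^\ast_{V/S}(3,r,a_1,a_2,a_3,(\alpha))$ is visibly a subfamily of $T_{V/S}(3,r,a_1,a_2,a_3,(\alpha))$ (the extra cohomological vanishing in $\Sigma^\ast$ only shrinks the family, and fixing $a_2$ is stronger than bounding it from below), so boundedness of the latter gives boundedness of the former. Your inductive plan might well succeed, but it is unnecessary once you recognise this containment.
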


\begin{proof}
Given any $Z_P$-semistable object $E$ in $\Ac^p$ of the prescribed Chern character, we know that $H^{-1}(E)$ is torsion-free and $\mu_\omega$-semistable from the proof of Proposition 2.6.  By Lemmas \ref{lem:bdd-1} and \ref{lem:bdd-2}, we also know that $H^{-1}(E)$ satisfies the conclusions in both parts of Lemma \ref{lem:bdd-2}.  In addition, $$\Ch_3(H^{-1}(E)[1]) = \Ch_3(E)-\Ch_3(H^0(E))$$ and so  $\omega\cdot \Ch_3(H^{-1}(E)) \geq -\omega\cdot \Ch_3(E)$ because  $H^0(E) \in \op{Coh}^{\leq 1}(X)$.  That is, in the Hilbert polynomial of $H^{-1}(E)$ (with respect to $\omega$) the coefficient $a_3(H^{-1}(E))$ is bounded from below by some constant $a_3$ that  depends only on $\omega$ and $\beta$.  If we let $a_1, a_2$ denote the coefficients  $a_1(H^{-1}(E)), a_2(H^{-1}(E))$, then  $H^{-1}(E)$ is a member of $\Sigma_{X/\mathbb{C}}^\ast (4,r,a_1, a_2, a_3, (0))$ (where $(0)=(0, \cdots, 0)$) and satisfies condition (1) in Proposition \ref{prop:bdd-1}.

By \cite[Corollary 4.7]{10.1215/kjm/1250521908}, there exists a sequence of rational numbers $(\alpha')=(\alpha_1', \cdots, \alpha_{r-1}')$ depending only on $r$ and $\omega$, such that $H^{-1}(E)$ also satisfies condition (2) in Proposition 2.13. Therefore,  the boundedness of \eqref{eq:HleftEset} would follow from the boundedness of $B_{3,r}^\ast$ by Proposition \ref{prop:bdd-1}.

Since $B_{3,r}^\ast$ is a special case of $L_{3,r}$, which holds by \cite[Theorem 4.8]{10.1215/kjm/1250521908}, we are done.
\end{proof}

Next, we will bound the set of all possible $H^0(E)$ for $Z_P$-semistable objects of a fixed Chern character.  The strategy is to first show  the  $\mathcal{E}xt^i (H^{-1}(E),\mathcal O_X)$ are bounded, and then show the $\mathcal{E}xt^i (H^0(E),\mathcal O_X)$ are bounded.

\begin{lem}\label{lem:bdd-3}
Let $V$ be a smooth projective variety.  Suppose $\{ E_t \}_{t \in T}$ is a bounded family of complexes in $D^b(V)$.  Then for any $i$, the family of sheaves $\{ H^i (E_t)\}_{t \in T}$ is bounded.
\end{lem}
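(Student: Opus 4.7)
The plan is to reduce the statement to the boundedness of a single coherent sheaf on $V \times S$ for some finite-type scheme $S$, after passing to a flattening stratification. By the meaning of boundedness for a family of complexes, I may assume there exist a finite-type $\bb C$-scheme $S$ and an object $\mathcal{E} \in D^b(V \times S)$ such that each $E_t$ is isomorphic to $L\iota_s^*\mathcal{E}$ for some closed point $s \in S$, where $\iota_s\colon V \hookrightarrow V \times S$ denotes the fiber inclusion.

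Next I would apply the flattening stratification to each of the finitely many nonzero cohomology sheaves $H^j(\mathcal{E})$ and take the common refinement, producing a finite decomposition $S = \bigsqcup_{\alpha} S_\alpha$ into locally closed subschemes on which every $H^j(\mathcal{E})|_{V \times S_\alpha}$ is flat over $S_\alpha$. Over each stratum the identification
\[
  H^i(L\iota_s^*\mathcal{E}) \cong \iota_s^* H^i(\mathcal{E}|_{V \times S_\alpha})
\]
then holds for all $i$ and all $s \in S_\alpha$, and I would prove it by induction on the cohomological amplitude of $\mathcal{E}|_{V \times S_\alpha}$ using the truncation triangle $\tau_{\le b-1}\mathcal{E} \to \mathcal{E} \to H^b(\mathcal{E})[-b] \to \tau_{\le b-1}\mathcal{E}[1]$, where $b$ is the top nonzero degree. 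The base case is a flat sheaf placed in a single degree, for which $L\iota_s^*$ agrees with $\iota_s^*$; the long exact sequence of cohomology obtained by applying $L\iota_s^*$ to the truncation triangle then propagates the identification to the middle term.

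With this identification in hand, on each stratum the family $\{H^i(E_t) : t \in S_\alpha\}$ coincides with the family of closed fibers of the $S_\alpha$-flat coherent sheaf $H^i(\mathcal{E})|_{V \times S_\alpha}$ parameterized by the finite-type scheme $S_\alpha$, and is therefore a bounded family of sheaves on $V$. Since the stratification is finite, the full family $\{H^i(E_t)\}_{t \in T}$ is a finite union of bounded subfamilies and is hence bounded. The only delicate point is the compatibility of derived pullback with the formation of cohomology sheaves under simultaneous flatness of all the $H^j(\mathcal{E})$ on each stratum; this is a standard truncation-triangle argument and presents no real obstacle once the flattening stratification is in place.
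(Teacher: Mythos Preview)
Your strategy is reasonable, but the base case of your truncation induction fails as written. Flattening the sheaves $H^j(\mathcal{E})$ over $S$ only makes the \emph{naive} restrictions $H^j(\mathcal{E})|_{V\times S_\alpha}$ flat over $S_\alpha$; it does not make $L\iota_s^\ast H^j(\mathcal{E})$ a sheaf for $s\in S_\alpha$, which is what ``$L\iota_s^\ast$ agrees with $\iota_s^\ast$'' actually requires. For instance, take $V=\op{Spec}\bb C$, $S=\bb A^1$, and $\mathcal{E}=\sO_0$ the skyscraper at the origin: your stratum is $S_\alpha=\{0\}$, on which $\sO_0|_{\{0\}}\cong\bb C$ is trivially flat, yet $L\iota_0^\ast\sO_0\cong\bb C\oplus\bb C[1]$, so $H^{-1}(L\iota_0^\ast\mathcal{E})\cong\bb C$ whereas your identification predicts $0$. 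The underlying issue is that $H^j(\mathcal{E})|_{V\times S_\alpha}$ and $H^j(\mathcal{E}|^L_{V\times S_\alpha})$ differ in general, and the stratification you chose controls the former, not the latter. The fix is to build the stratification by noetherian induction on the \emph{restricted} complex: represent $\mathcal{E}$ by a bounded complex $G^\bullet$ of $S$-flat sheaves, use generic flatness to find a dense open $U\subset S$ where all $H^j(G^\bullet|_{V\times U})=H^j(G^\bullet)|_{V\times U}$ are $U$-flat (open restriction being exact), then recurse on $S\setminus U$ with $G^\bullet|_{V\times(S\setminus U)}$. On each resulting stratum the cohomology of the restricted complex is flat over the stratum, and your truncation argument, now carried out over $S_\alpha$, goes through.

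The paper takes a different and shorter route that avoids stratification entirely. Representing the family by a bounded complex $G$ of locally free sheaves with top degree $0$, it observes that $H^0(G|^L_t)\cong H^0(G)|_t$ always holds by right exactness of tensor product, so $\{H^0(E_t)\}$ is automatically bounded as the fibers of a single coherent sheaf on $V\times T$; then \cite[Lemma~3.16]{TODA20082736} yields boundedness of the family of truncations $\{\tau^{\le -1}(G|^L_t)\}$, and one iterates downward. This trades your stratification step for an appeal to Toda's boundedness lemma for cones; once your stratification is set up correctly, your approach is more self-contained but longer.
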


\begin{proof}
We can assume that there exists a scheme $T$ of finite type over $\mathbb C$, together with a complex of locally free sheaves
\[
 G := [ 0 \to G^m \to G^{m+1} \to \cdots \to G^n \to 0]
\]
on $V \times T$ with $G^j$ sitting at degree $j$ in $D^b(V \times T)$, such that each $E_t$ is isomorphic to $G|^L_t$ for some $t \in T$.  Without loss of generality, let us assume $n=0$.

To show the boundedness of $\{ H^0(G|^L_t)\}_{t \in T}$, note that $H^0(G|^L_t) \cong H^0 (G)|_t$ for all $t$.  Since $T$ is of finite type, we have boundedness of $\{ H^0(G|^L_t)\}_{t \in T}$.  Applying \cite[Lemma 3.16]{TODA20082736}, the family of complexes $\{  \tau^{\leq -1}(G|^L_t) \}_{t \in T}$ (where $\tau^{\leq -1}$ denotes the truncation functor with respect to the standard t-structure) is also bounded.  Repeating this process, we can show that $\{ H^i (G|^L_t)\}_{t \in T}$ is bounded for any  $i$, and the lemma follows.
\end{proof}

\begin{thm}\label{prop:bdd-2}
 Let $(Z_P, \Ac^p)$ be as in Proposition \ref{lem4}.  Fix a Chern character $\Ch' = (-r, -D, \gamma, \beta, \xi)$ on $X$ where $r \geq 1$.  Then the family of complexes
 \begin{equation}\label{eq:bdd-0}
  \{ E \in \Ac^p : \text{$E$  is $Z_P$-semistable in $\Ac^p$ with $\Ch(E)=\Ch'$}\}
 \end{equation}
 is bounded.
\end{thm}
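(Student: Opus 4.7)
The plan is to establish boundedness of $H^{-1}(E)$ and $H^0(E)$ separately, and then combine them. Boundedness of $\{H^{-1}(E)\}$ is already provided by Corollary \ref{coro:bdd-1}. To bound $H^0(E)\in \op{Coh}^{\leq 1}(X)$, the proposal is to encode it, together with the remaining data of $E$, as a stable pair in the sense of Lin \cite{lin2018moduli} and Wandel \cite{wandel2015moduli}, and then invoke the boundedness of the corresponding moduli.

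First, since $\{H^{-1}(E)\}$ is bounded and each $H^{-1}(E)$ has homological dimension at most $2$ by Lemma \ref{lem:bdd-1}, applying the derived dual $(-)^\vee := R\mathcal{H}om_X(-,\sO_X)$ and Lemma \ref{lem:bdd-3} shows that each sheaf $\mathcal{E}xt^i(H^{-1}(E),\sO_X)$, $i = 0, 1, 2$, ranges in a bounded family. Next, dualise the canonical exact triangle
\[
  H^{-1}(E)[1] \to E \to H^0(E) \to H^{-1}(E)[2]
\]
and take the long exact sequence of cohomology sheaves of the resulting triangle. Since $H^0(E) \in \op{Coh}^{\leq 1}(X)$, the complex $H^0(E)^\vee$ is concentrated in degrees $3$ and $4$. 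Reading off the connecting maps produces a natural morphism
\[
  \sigma \colon \mathcal{E}xt^1(H^{-1}(E),\sO_X) \longrightarrow \mathcal{E}xt^3(H^0(E),\sO_X),
\]
whose source lies in a bounded family by the previous step.

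The next step is to view $\sigma$ as a pair whose source is a bounded ``parameter'' sheaf and whose target is the unknown. The vanishing $\Hom_{D^b(X)}(\op{Coh}^{\leq 1}(X),E)=0$ from Proposition \ref{prop1}(iii), together with the $Z_P$-semistability of $E$, transports across the dualising triangle into the purity of $\mathcal{E}xt^3(H^0(E),\sO_X)$ and the absence of destabilising subsheaves compatible with $\sigma$. This should place $(\mathcal{E}xt^3(H^0(E),\sO_X),\sigma)$ inside a family of stable pairs covered by the boundedness theorems of \cite{lin2018moduli,wandel2015moduli}. By double duality for pure $1$-dimensional sheaves, boundedness of the target then bounds the $1$-dimensional part of $H^0(E)$; the remaining $0$-dimensional part has length determined by $\Ch(H^0(E))=\Ch(E)+\Ch(H^{-1}(E))$ and the already-established bounds, hence lies in a bounded family of its own. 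Boundedness of $E$ itself then follows since its extension class lies in $\Ext^1_X(H^0(E),H^{-1}(E)[1])$, which is semicontinuous over the bounding parameter scheme.

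The main obstacle will be the middle step: checking that $(\mathcal{E}xt^3(H^0(E),\sO_X),\sigma)$ really satisfies the precise stability hypothesis of \cite{lin2018moduli} or \cite{wandel2015moduli}, so that their boundedness theorems apply. Concretely, one has to transport $Z_P$-semistability of $E$, phrased on $\Ac^p$-subobjects, into the non-destabilising condition on subsheaves of $\mathcal{E}xt^3(H^0(E),\sO_X)$ in the pair picture, and to pin the numerical invariants of the pair down to a finite set. A secondary technicality is the case where the $1$-dimensional part of $H^0(E)$ vanishes and $\sigma = 0$, which has to be handled by a direct length bound on $\mathcal{E}xt^4(H^0(E),\sO_X)$ using the fixed Chern character.
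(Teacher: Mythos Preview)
Your overall strategy matches the paper's proof: bound $H^{-1}(E)$ via Corollary~\ref{coro:bdd-1}, dualise the canonical triangle, and use Lin's stable pairs to control $\mathcal{E}xt^3(H^0(E),\sO_X)$. Two points, however, need sharpening before the argument goes through.

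The main gap is that the pair $(\mathcal{E}xt^3(H^0(E),\sO_X),\sigma)$ as you have written it has a \emph{varying} source $\mathcal{E}xt^1(H^{-1}(E),\sO_X)$, whereas the stable-pair moduli of \cite{lin2018moduli,wandel2015moduli} are defined relative to a \emph{fixed} sheaf $E_0$. The paper fixes this by invoking Kleiman's boundedness criterion: since $\{\mathcal{E}xt^1(H^{-1}(E),\sO_X)\}$ is bounded, there exist integers $\rho,N$ such that every member is a quotient of $\sO_X(-\rho)^{\oplus N}$, and one then composes to obtain $\alpha'\colon \sO_X(-\rho)^{\oplus N}\to \mathcal{E}xt^3(H^0(E),\sO_X)$. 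Moreover, the ``transport of stability'' you describe is not a subobject check but simply the observation that $\op{coker}(\alpha')$ injects into $H^3(E^\vee)$, which lies in $\op{Coh}^{=0}(X)$ by the proof of Proposition~\ref{prop:PTopen4fold}; together with purity of $\mathcal{E}xt^3(H^0(E),\sO_X)$ this is exactly Lin's limit stability via \cite[Lemma 2.10]{lin2018moduli}.

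Secondly, the ordering matters. You propose to bound $\mathcal{E}xt^4(H^0(E),\sO_X)$ \emph{after} $\mathcal{E}xt^3$, but Lin's boundedness is stated for a fixed Hilbert polynomial of the target, so you must first reduce to finitely many such polynomials. The paper does this by bounding $\mathcal{E}xt^4(H^0(E),\sO_X)$ \emph{first}, directly from the long exact sequence: since $H^4(E^\vee)=0$, this sheaf is a quotient of $\mathcal{E}xt^2(H^{-1}(E),\sO_X)\in\op{Coh}^{=0}(X)$, whose length is uniformly bounded. Combined with the bounds on all $\mathcal{E}xt^i(H^{-1}(E),\sO_X)$, this pins down finitely many Hilbert polynomials for $\mathcal{E}xt^3(H^0(E),\sO_X)$, after which \cite[Proposition 3.3]{lin2018moduli} applies. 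The final assembly is then a two-fold application of \cite[Lemma 3.16]{TODA20082736} rather than a semicontinuity argument.
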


\begin{proof}
For any $Z_P$-semistable object $E$ with the prescribed Chern character, we know from Lemma \ref{lem:bdd-1} that  $(H^{-1}(E))^\vee \in D^{[0,2]}_{\op{Coh}(X)}$.  On the other hand, if we fit $H^0(E)$ in a short exact sequence of sheaves $$0 \to Q \to H^0(E) \to T \to 0$$ where $Q \in \op{Coh}^{= 0}(X)$ and $T \in \op{Coh}^{=1}(X)$,  we obtain the exact triangle $$T^\vee \to (H^0(E))^\vee \to Q^\vee \to T^\vee [1]$$ from which we see $H^0(E)^\vee \in D^{[3,4]}_{\op{Coh}(X)}$ with $\mathcal{E}xt^3 (H^0(E),\mathcal O_X) \cong \mathcal{E}xt^3 (T,\mathcal O_X)$ is pure 1-dimensional, and $\mathcal{E}xt^4 (H^0(E),\mathcal O_X) \cong \mathcal{E}xt^4 (Q, \mathcal O_X)$ is 0-dimensional.  Therefore, dualizing the exact triangle
\[
  H^{-1}(E)[1]\to E \to H^0(E) \to H^{-1}(E)[2],
\]
we obtain the long exact sequence of sheaves
\[
\xymatrix{
  0 \ar[r] & H^1 (E^\vee) \ar[r] & \mathcal{E}xt^0 (H^{-1}(E),\mathcal O_X) \ar `d[ll] `[dll] [dll]  \\
  0 \ar[r] & H^2 (E^\vee) \ar[r] & \mathcal{E}xt^1 (H^{-1}(E),\mathcal O_X) \ar `d[ll] `[dll]_\alpha [dll] \\
  \mathcal{E}xt^3 (H^0(E), \mathcal O_X) \ar[r]^(0.6)\beta & H^3(E^\vee) \ar[r] & \mathcal{E}xt^2 (H^{-1}(E),\mathcal O_X) \ar `d[ll] `[dll] [dll] \\
  \mathcal{E}xt^4 (H^0(E), \mathcal O_X) \ar[r] & H^4(E^\vee ) \ar[r] & 0
}.
\]

From Corollary \ref{coro:bdd-1} and Lemma \ref{lem:bdd-3}, we see that for $i=0, 1, 2$, the family of sheaves
\begin{equation}\label{eq:bdd-2}
\{ \mathcal{E}xt^i (H^{-1}(E),\mathcal O_X) : \text{ $E \in \Ac^p$ is $Z_P$-semistable with $\Ch(E)=\Ch'$} \}
\end{equation}
is bounded.  Recall that Lemma \ref{lem:bdd-1} also tells us $\mathcal{E}xt^2 (H^{-1}(E), \mathcal O_X)$ is a 0-dimensional sheaf, so there exists a constant $l_0$ such that the length of $\mathcal{E}xt^2 (H^{-1}(E),\mathcal O_X)$ is bounded from above by $l_0$ for any $Z_P$-semistable object in $\Ac^p$ with $\Ch (E)=\Ch'$.  On the other hand, $H^4(E^\vee)=0$ from the proof of Proposition 2.8.  Hence
\begin{equation}\label{eq:bdd-3}
  \{ \mathcal{E}xt^4 (H^0(E), \mathcal O_X) : \text{ $E$ is $Z_P$-semistable in  $\Ac^p$ with $\Ch(E)=\Ch'$} \},
\end{equation}
a family of 0-dimensional sheaves on $X$ with length bounded from above by $l_0$, is also bounded.

To bound the possible $\mathcal{E}xt^3 (H^0(E), \mathcal O_X)$, we first note that from the previous paragraph
\[
  \{ \mathcal{E}xt^1 (H^{-1}(E), \mathcal O_X) : \text{ $E$ is $Z_P$-semistable in $\Ac^p$  with $\Ch(E)=\Ch'$} \}
\]
is bounded; by Kleiman's boundedness criterion \cite[Theorem 1.13(iv)]{kleiman1971theoremes}, this means that  there exist integers $\rho, N$ such that every member of this family occurs as a quotient sheaf of $\mathcal O_X(-\rho)^{\oplus N}$.  Composing with $\alpha$, we see that every member of
\begin{equation}\label{eq:bdd-4}
\{ \mathcal{E}xt^3 (H^0(E),\mathcal O_X) : \text{ $E$ is $Z_P$-semistable  in $\Ac^p$  with $\Ch(E)=\Ch'$} \},
\end{equation}
is a pure 1-dimensional sheaf and occurs in a morphism of sheaves of the form
\begin{equation}\label{eq:bdd-1}
  \mathcal O (-\rho)^{\oplus N}  \overset{\alpha'}{\longrightarrow} \mathcal{E}xt^3 (H^0(E),\mathcal O_X)
\end{equation}
where $\op{cokernel} (\alpha') \in \op{Coh}^{= 0}(X)$ (since $H^3(E^\vee)\in\op{Coh}^{= 0}(X)$ from the proof of Proposition 2.8).  Now, since \eqref{eq:bdd-2} is bounded for all $i$ and \eqref{eq:bdd-3} is bounded, there is only a finite number of possibilities for the Hilbert polynomials with respect to $\omega$ for  members of \eqref{eq:bdd-4}.

For each such possibility of Hilbert polynomial $P$, the pair $(\mathcal{E}xt^3(H^0(E),\mathcal O_X), \alpha')$ is then  a stable pair of type $P$ for a `sufficiently large' parameter $\delta$ in the sense of \cite[Definition 2.6]{lin2018moduli} by \cite[Lemma 2.10]{lin2018moduli} (see also \cite[Proposition 1.12]{wandel2015moduli}).  Then by \cite[Proposition 3.3]{lin2018moduli}, the family \eqref{eq:bdd-4} is bounded.  Applying \cite[Lemma 3.16]{TODA20082736} now shows that
 \begin{equation*}
  \{ H^0(E)  : \text{$E$  is $Z_P$-semistable in $\Ac^p$ with $\Ch(E)=\Ch'$}\}
 \end{equation*}
is bounded; applying it once more together with Corollary \ref{coro:bdd-1} then shows that  \eqref{eq:bdd-0} is bounded.
\end{proof}

As a byproduct of the arguments above we prove the following Proposition that will be used in \S \ref{sec_Hall} (cf. Lemma \ref{bdab}).
\begin{prop} \label{prop_bdch4}
Let $(Z_P, \Ac^p)$ be as in Proposition \ref{lem4}.  Fix $r\ge 1$ and $(D,\gamma,\beta)\in \oplus_{i=1}^3 H^{2i}(X,\bb Q)$.  Suppose the subset of $\bb Q$ defined by 
$$\{ \Ch_4(\mathcal E xt^4(H^0(E), \sO_X)) : \text{$E$  is $Z_P$-semistable in $\Ac^p$ with } \Ch_{i\le 3}(E)=(-r,-D,\gamma, \beta)\}$$
 is bounded above. Then, the subset of $\bb Q$ defined by 
 $$\{ \Ch_4(E) : \text{$E$  is $Z_P$-semistable in $\Ac^p$ with } \Ch_{i\le 3}(E)=(-r,-D,\gamma, \beta)\}$$
 is also bounded above.
\end{prop}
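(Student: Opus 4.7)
The plan is to reuse the argument of Theorem~\ref{prop:bdd-2}, tracking only what is required to bound the single number $\Ch_4(E)$ from above. In $K$-theory $[E] = [H^0(E)] - [H^{-1}(E)]$, which gives
\[
\Ch_4(E) \;=\; \Ch_4(H^0(E)) - \Ch_4(H^{-1}(E)).
\]
Corollary~\ref{coro:bdd-1} requires fixing only $\Ch_{i\le 3}(E)$, so it already implies that $\{H^{-1}(E)\}$ is a bounded family, and in particular $\Ch_4(H^{-1}(E))$ takes only finitely many values. It therefore suffices to bound $\Ch_4(H^0(E))$ from above.

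Following the decomposition in the proof of Theorem~\ref{prop:bdd-2}, I fit $H^0(E)$ into a short exact sequence $0 \to Q \to H^0(E) \to T \to 0$ with $Q \in \op{Coh}^{=0}(X)$ and $T \in \op{Coh}^{=1}(X)$. Since $T$ is pure $1$-dimensional on the smooth $4$-fold $X$, $\mathcal{E}xt^4(T,\sO_X)=0$, so applying $R\mathcal{H}om(-,\sO_X)$ to this sequence gives $\mathcal{E}xt^4(H^0(E),\sO_X) \cong \mathcal{E}xt^4(Q,\sO_X)$, and for a $0$-dimensional sheaf this last sheaf has the same length as $Q$. The hypothesis of the proposition therefore reads as an upper bound on $\Ch_4(Q)$, and it remains to bound $\Ch_4(T)$.

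For $\Ch_4(T)$, I recycle the second half of the proof of Theorem~\ref{prop:bdd-2}. Boundedness of $\{H^{-1}(E)\}$ plus Lemma~\ref{lem:bdd-3} gives boundedness of each family $\{\mathcal{E}xt^i(H^{-1}(E),\sO_X)\}$ for $i=0,1,2$. The cokernel of the connecting map
\[
\alpha : \mathcal{E}xt^1(H^{-1}(E),\sO_X) \longrightarrow \mathcal{E}xt^3(H^0(E),\sO_X)
\]
embeds into $H^3(E^\vee) \hookrightarrow \mathcal{E}xt^2(H^{-1}(E),\sO_X)$, hence is $0$-dimensional of uniformly bounded length. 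Composing $\alpha$ with a Kleiman surjection $\mathcal O(-\rho)^{\oplus N} \twoheadrightarrow \mathcal{E}xt^1(H^{-1}(E),\sO_X)$ produces a morphism $\alpha' : \mathcal O(-\rho)^{\oplus N} \to \mathcal{E}xt^3(T,\sO_X)$ whose cokernel is $0$-dimensional of uniformly bounded length, verbatim as in Theorem~\ref{prop:bdd-2}. The stable pairs argument of \cite{lin2018moduli, wandel2015moduli} invoked there gives boundedness of the family $\{\mathcal{E}xt^3(T,\sO_X)\}$; since $T \mapsto \mathcal{E}xt^3(T,\omega_X)$ is an involutive duality on pure $1$-dimensional sheaves on the smooth $4$-fold $X$, the family $\{T\}$ is bounded, so $\Ch_4(T)$ takes only finitely many values.

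The main subtlety relative to Theorem~\ref{prop:bdd-2} is that we no longer have a fixed $\Ch_4(E)$ to pin down the Hilbert polynomial of $\mathcal{E}xt^3(H^0(E),\sO_X)$. Instead it must be shown to lie in a finite set using only boundedness of the $\mathcal{E}xt^i(H^{-1}(E),\sO_X)$ together with the uniformly bounded length of $\op{cokernel}(\alpha')$: the leading coefficient of the Hilbert polynomial is controlled by the former (via $\image(\alpha')$, a quotient of a bounded family) and the constant term by the latter. Once this finiteness is verified, the stable pairs step goes through as in Theorem~\ref{prop:bdd-2}, and combining the three bounds gives $\Ch_4(H^0(E)) = \Ch_4(Q) + \Ch_4(T)$ bounded above, so the displayed $K$-theoretic identity finishes the argument.
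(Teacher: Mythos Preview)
Your argument breaks down at exactly the point you flag as the ``main subtlety.'' You claim that the Hilbert polynomial of $\mathcal{E}xt^3(H^0(E),\sO_X)$ lies in a finite set, with the constant term controlled by the bounded length of $\op{cokernel}(\alpha')$. But
\[
\chi\bigl(\mathcal{E}xt^3(H^0(E),\sO_X)\bigr) = \chi\bigl(\image(\alpha')\bigr) + \text{length}\bigl(\op{cokernel}(\alpha')\bigr),
\]
and while the second term is bounded, the first is the Euler characteristic of a $1$-dimensional quotient of $\sO(-\rho)^{\oplus N}$ with bounded $\omega\cdot\Ch_3$, which is \emph{not} bounded in both directions. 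Indeed, since $\Ch_4(T)=-\Ch_4(\mathcal{E}xt^3(T,\sO_X))$ and $\Ch_4(H^0(E))=\Ch_4(E)+\Ch_4(H^{-1}(E))$, the constant term of the Hilbert polynomial of $\mathcal{E}xt^3(H^0(E),\sO_X)$ varies with $\Ch_4(E)$, which is precisely the quantity we are trying to control. When $\Ch_4(E)$ is unbounded below (as it typically is), the Hilbert polynomials form an infinite set, the Lin--Wandel stable-pairs boundedness cannot be invoked, and the family $\{T\}$ is genuinely unbounded.

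The paper avoids this trap by not attempting to bound the family $\{T\}$ at all. Instead it observes
\[
\Ch_4(E) = -\Ch_4(H^{-1}(E)) - \Ch_4\bigl(\mathcal{E}xt^3(H^0(E),\sO_X)\bigr) + \Ch_4\bigl(\mathcal{E}xt^4(H^0(E),\sO_X)\bigr),
\]
so one only needs $\Ch_4(\mathcal{E}xt^3(H^0(E),\sO_X))$ bounded \emph{below} (equivalently $\Ch_4(T)$ bounded above). This one-sided bound is obtained by the Quot-scheme ``moving points'' trick of \cite[Proposition~3.6]{GK}: for a $1$-dimensional quotient $\sO_X^{\oplus N}\twoheadrightarrow G$ with fixed $\Ch_3$, if $\Ch_4(G)$ could be made arbitrarily negative one could repeatedly lift structure sheaves of points to the kernel and produce points of a fixed Quot scheme with arbitrarily large tangent space, a contradiction. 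Your approach would work if you replaced the appeal to Lin's boundedness with this lower-bound argument for $\Ch_4(\image(\alpha'))$; everything else you wrote (the $K$-theory decomposition, the identification of the hypothesis with a bound on $\Ch_4(Q)$, the role of Corollary~\ref{coro:bdd-1}) is correct.
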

\begin{proof}
By Corollary \ref{coro:bdd-1},  the set of possible $\omega \cdot \Ch_3(H^0(E)^\vee)=-\omega \cdot \Ch_3(H^0(E))$ is bounded, so is the set of possible $$\omega\cdot \Ch_3(\mathcal E xt^3(H^0(E), \sO_X)).$$ 
Proceeding as in the proof of Theorem \ref{prop:bdd-2}, we can find constants $p, N$ such that there is a morphism of sheaves
$$ \alpha' : \sO_X(-p)^{\oplus N} \to  \mathcal E xt^3(H^0(E), \sO_X)$$  with 0-dimensional cokernel.
Here, $ \mathcal E xt^3 (H^0(E), \sO_X)$ is a pure 1-dimensional sheaf.  Tensoring with $\sO_X(p)$, we get a morphism of sheaves
$$\alpha'' : \sO_X^{\oplus N} \to A''$$
with 0-dimensional cokernel and in which $A''$ is a pure 1-dimensional sheaf, with the set of possible $ \omega \cdot \Ch_3(A'')$ is still bounded.  We claim that the set of possible  $\Ch_4 (\op{im}(\alpha''))$ is bounded below.
If the claim is proven it will follow that the set of possible  $\Ch_4 (\op{im}(\alpha'))$ is also be bounded below, and hence the set of possible $$\Ch_4(\mathcal E xt^3(H^0(E), \sO_X))$$ is bounded below.  But we have \begin{align*} \Ch_4(E)&=-\Ch_4(H^{-1}(E))+\Ch_4(H^0(E))=-\Ch_4(H^{-1}(E))+\Ch_4(H^0(E)^\vee)\\&=-\Ch_4(H^{-1}(E))-\Ch_4(\mathcal E xt^3(H^0(E), \sO_X))+\Ch_4(\mathcal E xt^4(H^0(E), \sO_X)) ,\end{align*} so by what we said above and our assumption the Proposition follows.

To prove the claim above, it suffices to show that if $\sO_X^{\oplus N} \twoheadrightarrow G$  is a 1-dimensional quotient ($G$ is the image of $\alpha''$) such that $\Ch_3(G)$ is fixed (since $\omega\cdot \Ch_3(G)$ is bounded and $\Ch_3(G)$ is effective, there are only finitely many choices for $\Ch_3(G)$) then the set of possible $\Ch_4(G)$ is bounded below. Let $K$ be the kernel of this quotient. 
Since $K$ is torsion free, it is locally free on an open subset of $X$. So we can find a point $p \in X$  a surjection $K\twoheadrightarrow \sO_p$ with the kernel denoted by $K_1$. We get a short exact sequence
$$0\to K_1\to \sO_X^{\oplus N}\to G_1\to 0,$$
where $\Ch_3(G_1)=\Ch_3(G)$, but $\Ch_4(G_1)=\Ch_4(G)+1$. The rest of argument is as the proof of \cite[Proposition 3.6]{GK} in which the moduli space of stable sheaves is replaced by a suitable Quot scheme of 1-dimensional quotients of $\sO_X^{\oplus N}$.
%
\end{proof}

\subsection{Equivalence of moduli spaces}\label{subsec_deformation}

In this Subsection, we show that the moduli space of stable pairs $P(X,\Ch)$ is identified with a moduli space of $Z_P$-stable objects with trivial determinant.


Given a scheme $T$ of finite type over $\cc$, by a $T$-flat family of stable pairs we mean a complex of sheaves $[\sO_{X \times T} \to F]$ in $D^b(X \times T)$ sitting at degrees $-1, 0$ such that $F$ is flat over $T$, and every restriction $[\sO_X \to F_t]$ to the fiber over a closed point $t \in T$ is a stable pair. For any $J\in D^b(X\times T)$ we denote the derived restriction $J|^L_t\in D^b(X_t)$ by $J_t$, where $X_t:=X\times \{t\}$. 

\begin{prop} \label{prop-family}
Let $\Ch':=(-1,0,\gamma, \beta, \xi)$ be a Chern character vector with $\gamma$ an effective class, and  $T$ be a scheme of finite type over $\cc$.  Let $(Z_P,\sA^p)$ be a polynomial stability condition on $D^b(X)$ as in Proposition \ref{lem4}, and $J \in D^b(X \times T)$ an object with trivial determinant, such that for each closed point $t \in T$, we have $\Ch(J_t) =   \Ch$. Then $J$ is isomorphic in $D^b(X\times T)$ to a $T$-flat family of stable pairs if and only if $J$ is a $T$-flat family of $Z_P$-semistable objects in $\sA^p$.
\end{prop}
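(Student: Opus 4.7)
\emph{Plan.} The direction from a stable-pair family to a $Z_P$-semistable family is immediate from the pointwise statement: given $J\cong[\sO_{X\times T}\to F]$ with $F$ flat over $T$, the $T$-flatness of both $F$ and $\sO_{X\times T}$ means the derived restriction $J_t$ is represented by $[\sO_{X_t}\to F_t]$, which is a stable pair by hypothesis, hence $Z_P$-semistable in $\sA^p$ by Proposition \ref{lem4}.

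For the converse I would proceed in three steps. \emph{Step 1 (cohomology sheaves).} By Proposition \ref{lem4}, each fiber $J_t$ is a stable pair, so $H^{-1}(J_t)\cong \sI_{W_t}$ for a subscheme $W_t\subset X$ of dimension at most $2$, and $H^0(J_t)\in\op{Coh}^{\le 1}(X)$. Since the Hilbert polynomials of $H^{-1}(J_t)$ and $H^0(J_t)$ are determined by $\Ch'$ and hence constant in $t$, cohomology-and-base-change shows that both $H^{-1}(J)$ and $H^0(J)$ are $T$-flat. The triviality of $\det J_t$ together with fiberwise rank-one torsion-freeness upgrades $H^{-1}(J)$ to the ideal sheaf $\sI_{\mathcal W}$ of a $T$-flat closed subscheme $\mathcal W\hookrightarrow X\times T$ of relative dimension at most $2$.

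\emph{Step 2 (relative octahedral construction).} Mimicking the proof of Proposition \ref{lem3}, the cohomology triangle
\[
\sI_{\mathcal W}[1]\to J \to H^0(J) \xrightarrow{\alpha} \sI_{\mathcal W}[2]
\]
exhibits $\alpha$ as a class in $\Ext^2_{X\times T}(H^0(J),\sI_{\mathcal W})$. The key input is the vanishing $\Ext^i_{X\times T}(H^0(J),\sO_{X\times T})=0$ for $i=1,2$: because $H^0(J)$ has support of relative dimension at most $1$ inside the $(4+\dim T)$-dimensional $X\times T$, the sheafy $\mathcal E xt^i(H^0(J),\sO_{X\times T})$ vanish for $i\le 2$, and working locally on $T$ and invoking the local-to-global spectral sequence the global $\Ext^i$ vanish as well. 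Applying $\Hom(H^0(J),-)$ to $0\to \sI_{\mathcal W}\to \sO_{X\times T}\to \sO_{\mathcal W}\to 0$ yields $\Ext^2(H^0(J),\sI_{\mathcal W})\cong \Ext^1(H^0(J),\sO_{\mathcal W})$, so $\alpha$ factors as $H^0(J)\to \sO_{\mathcal W}[1]\to \sI_{\mathcal W}[2]$. The octahedral axiom assembles this factorization into a short exact sequence of sheaves $0\to \sO_{\mathcal W}\to F\to H^0(J)\to 0$ together with a triangle $\sO_{X\times T}\to F\to J\to \sO_{X\times T}[1]$.

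\emph{Step 3 (flatness and conclusion).} Since $\sO_{\mathcal W}$ and $H^0(J)$ are $T$-flat by Step~1, so is $F$. The triangle just built identifies $J$ with the 2-term complex $[\sO_{X\times T}\to F]$, whose fiberwise restriction is a stable pair by Proposition \ref{lem4}. The main obstacle is Step~1: one must argue carefully that the fiberwise identifications of $H^{-1}$ and $H^0$ glue to $T$-flat global sheaves, which relies on semi-continuity together with the fact that the Hilbert polynomials of $H^{-1}(J_t)$ and $H^0(J_t)$ (and hence their supports) behave constantly in $t$ under the pinning by $\Ch'$ and the stability condition.
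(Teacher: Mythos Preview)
Your Step~1 has a genuine gap: the Chern character $\Ch'$ fixes only the alternating sum $\Ch(H^0(J_t))-\Ch(H^{-1}(J_t))$, not the individual terms. The Hilbert polynomials of $H^{-1}(J_t)$ and $H^0(J_t)$ can jump in a flat family of $Z_P$-semistable objects---for instance along a path in $P(X,\Ch)$ from a point where $J_t=I_{S_t}[1]$ (so $H^0(J_t)=0$) to one where the section acquires a nonzero 1-dimensional cokernel. Neither $Z_P$-semistability nor the pinning by $\Ch'$ prevents this. Hence $H^{-1}(J)$ and $H^0(J)$ need not be $T$-flat, and your Step~3 deduction of the flatness of $F$ from theirs collapses.

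The paper's proof circumvents this by never claiming flatness of the cohomology sheaves. It first shows $H^{-1}(J)$ is torsion-free of rank~1 (via \cite[Lemma~5.12]{Lo1}) and that $H^0(J)$ is supported in codimension~3. On the complement $U$ of $\mathrm{supp}\,H^0(J)$ the complex $J|_U$ is fiberwise a sheaf, so $H^{-1}(J)|_U$ is flat with rank-1 torsion-free fibers; a Koll\'ar-type argument then shows the reflexive hull $H^{-1}(J)^{\ast\ast}$ is locally free on $U$ and hence globally, giving $H^{-1}(J)=I_W$ for some closed subscheme $W\subset X\times T$ that is \emph{not} assumed $T$-flat. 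Your Step~2 octahedral construction then applies verbatim to produce $F$ and the triangle $\sO_{X\times T}\to F\to J\to\sO_{X\times T}[1]$. Flatness of $F$ is established \emph{directly} at the end: for each closed $t$ one compares, inside $\Hom(J_t[-1],\sO_{X_t})\cong\Gamma(X_t,\sO_{X_t})$, the derived restriction of the global map $J[-1]\to\sO_{X\times T}$ with the map coming from the fiberwise stable-pair structure guaranteed by Proposition~\ref{lem4}; both are sections of $\sO_{X_t}$ agreeing outside a codimension-2 locus, hence coincide by Hartog's theorem, and so $L\iota_t^\ast F$ is a sheaf.
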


\begin{proof}
By Proposition \ref{lem4}, it is clear that every $T$-flat family of stable pairs is a $T$-flat family of $Z_P$-semistable objects in $\sA^p$.

For the converse, suppose $J \in D^b(X \times T)$ is a $T$-flat family of $Z_P$-semistable objects in $\sA^p$.  For each $t \in T$, we have $H^0(J)|_t \cong H^0(J_t) \in \op{Coh}^{\leq 1}(X)$, and so $H^0(J)$ is supported in codimension 3 on $X\times T$.  On the other hand, since $H^{-1}(J_t)$ is torsion-free for each $t \in T$, the sheaf $H^{-1}(J)$ itself is torsion free of rank 1 by \cite[Lemma 5.12]{Lo1}, so it is embedded in its double dual $H^{-1}(J)^{\ast \ast}$, which is also of rank 1.  By the determinant assumption we have $\det{H^{-1}(J)}\cong \det H^{-1}(J)^{\ast \ast}\cong \sO_{X \times T}$. 

Now let $U: = (X \times T) \setminus \mathrm{supp}(H^0(J))$.  Then $(J|_U)|^L_t \cong (H^{-1}(J)|_U)|^L_t[1]$.  We show this is a sheaf for every $t \in T$.  
 We need to prove that $(J|_U)|^L_t$ is a sheaf for each closed point $t\in T$.   First note that $H^0(J_t)\cong H^0(J)|_t$, and so $$\mathrm{supp}(H^0(J_t)) = \mathrm{supp}(H^0(J)|_t) =\mathrm{supp}(H^0(J))\cap X_t,$$ which is disjoint from $U$ by definition.  That is, $H^0(J_t|_{U \cap X_t})=0$.  From the exact triangle $$H^{-1}(J_t) [1] \to J_t \to H^0(J_t)$$ we see $J_t|_{U \cap X_t}$ is a torsion free sheaf (sitting at degree $-1$).  Hence  $$(J|_U)|^L_t \cong (J|^L_t)|_{U \cap X_t}=J_t|_{U \cap X_t}$$ is a sheaf.
 We now know $H^{-1}(J)$ is flat with torsion free rank 1 fibers over $T$ away from the codimension 3 locus $\mathrm{supp}(H^0(J))\subset X \times T$. Therefore, by \cite[Lemma 6.13]{kollar1990}, $H^{-1}(J)^{\ast \ast}$ is locally free away from $\mathrm{supp}(H^0(J))$.
Now, as in the proof of \cite[Lemma 6.13]{kollar1990} (and also proof of \cite[Theorem 2.7]{PT}), any reflexive
rank 1 sheaf that is locally free away from a codimension 3 locus is locally free globally.
Hence $H^{-1}(J)^{\ast \ast}$ is locally free on $X\times T$ and so $H^{-1}(J)^{\ast \ast}\cong \sO_{X \times T}$. As a result, $H^{-1}(J) = I_W$ is the ideal sheaf of some closed subscheme $W \subset X \times T$ of codimension 2. 

Writing $Q = H^0(J)$, from the canonical exact triangle $$I_W [1] \to J \to Q \to I_W [2]$$ we know $J$ is isomorphic to the mapping cone of some morphism $Q [-1] \to I_W [1]$, which represents a class $\alpha$ in $\Ext^2 (Q, I_W)$.  On the other hand, applying $\Hom (Q,-)$ to the short exact sequence $0 \to I_W \to \sO_{X \times T} \to \sO_W \to 0$ gives an isomorphism $\delta : \Ext^1 (Q, \sO_W)\to  \Ext^2 (Q, I_W)$ since we have the vanishings $\Ext^i(Q, \sO_{X \times T})=0$ for $i=1,2$.  This means that there exists a short exact sequence of sheaves on $X \times T$
\[
0 \to \sO_W \to F \to Q \to 0
\]
that represents an element of $\Ext^1 (Q,\sO_W)$ whose image under  $\delta$ is  $\alpha$.  In particular, the composition of the morphisms
\[
  \sO_{X \times T} \to \sO_W \to F
\]
gives a 2-term complex sitting at degrees $-1,0$ that is isomorphic to $J$ in $D^b(X \times T)$.  

It remains to show that $F$ is flat over $T$, which follows from the argument in \cite[Theorem 2.7]{PT}.  For the convenience of readers, we provide a proof here. 
Let $t \in T$ be an arbitrary  closed point, and let $\iota : X_t \hookrightarrow X \times T$ denote the corresponding closed immersion.  By \cite[Lemma 3.31]{huybrechts2006fourier}, in order to show $F$ is flat over $T$, it suffices to show that $L\iota^\ast F$ is a sheaf sitting at degree $0$ on $X_t$.  From the construction of $F$ we have an exact triangle in $D^b(X \times T)$
\[
J[-1] \overset{a}{\to} \sO_{X \times T} {\to} F \to J.
\]
On the other hand, since $J$ is a flat family of $Z_P$-semistable objects in $\sA^p$, by Proposition \ref{lem4} we also have the exact triangle in $D^b(X_t)$
\begin{equation}\label{eq12}
J_t[-1] \overset{b}{\to} \sO_{X_t} {\to} G \to J_t
\end{equation}
for some 2-dimensional coherent sheaf $G$ on $X_t$.  Applying $\Hom (-,\sO_{X_t})$ to \eqref{eq12} gives the exact sequence of $\Gamma (X_t,\sO_{X_t})$-modules
\[
\Hom (G,\sO_{X_t}) \to \Hom (\sO_{X_t},\sO_{X_t}) \to \Hom (J_t[-1], \sO_{X_t}) \to \Hom (G[-1], \sO_{X_t})
\]
where the first and last terms vanish by Serre duality and dimension reasons.  Hence $L\iota^\ast a$ and $b$ are both nonzero elements of $$\Hom (J_t[-1], \sO_{X_t})\cong \Hom (\sO_{X_t}, \sO_{X_t}) \cong \Gamma (X_t, \sO_{X_t}).$$  If we let $V := X_t \setminus ((\mathrm{supp} \, L\iota^\ast F) \cup (\mathrm{supp}\, G))$, then  both $L\iota^\ast a$ and $b$ restrict to the identity on $V$.  Since $L\iota^\ast F$ and $G$ are both supported in codimension 2, this means that $L\iota^\ast a$ and $b$ coincide outside a codimension-2 locus on $X_t$.  Hence $L\iota^\ast a$ and $b$  must coincide by Hartog's Theorem, meaning $L\iota^\ast F$ is a sheaf.
\end{proof}

By Proposition \ref{prop-family} and Corollary \ref{cor_mod} we have

\begin{thm} \label{thm_moduli} Let $X$ be a nonsingular projective 4-fold, and
 $$\Ch=(0,0,\gamma,\beta, \xi), \qquad \Ch'=(-1,0,\gamma,\beta, \xi)$$ be two Chern character vectors. Then,  $P(X,\Ch) \cong \mathcal M_{\sO_X}^{Z_P}(X,\Ch')$.
\end{thm}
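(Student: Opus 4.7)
The plan is to use Proposition \ref{prop-family}, which already provides the family-level biconditional between $T$-flat families of stable pairs and $T$-flat families of $Z_P$-semistable objects in $\sA^p$ with trivial determinant. Given this, together with Corollary \ref{cor_mod} and Le Potier's GIT construction of $P(X,\Ch)$ (both of which guarantee that the two moduli spaces exist as algebraic spaces of finite type over $\bb C$), the theorem reduces to matching up the two moduli functors on test schemes $T$ and showing the constructions are mutually inverse.

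Concretely, for a $T$-flat family $[\sO_{X\times T} \xrightarrow{s} F]$ of stable pairs with $\Ch(F_t) = \Ch$, placing $\sO_{X\times T}$ in cohomological degree $-1$ yields a 2-term complex $J \in D^b(X \times T)$. Using the exact sequences
\[
0 \to \ker(s_t) \to \sO_{X_t} \to \im(s_t) \to 0, \qquad 0 \to \im(s_t) \to F_t \to \op{coker}(s_t) \to 0,
\]
one computes $\Ch(J_t) = \Ch(F_t) - \Ch(\sO_{X_t}) = \Ch'$, and because $F_t$ has codimension-two support the determinant $\det J$ is canonically identified with $\sO_{X \times T}$. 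By Proposition \ref{lem4} each $J_t$ is $Z_P$-stable in $\sA^p$, so this construction yields a morphism $\Phi : P(X,\Ch) \to \mathcal M_{\sO_X}^{Z_P}(X,\Ch')$.

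For the inverse $\Psi$, I would apply Proposition \ref{prop-family} directly: every $T$-flat family of $Z_P$-semistable objects with trivial determinant and Chern character $\Ch'$ is isomorphic in $D^b(X \times T)$ to a 2-term representative $[\sO_{X\times T} \to F]$ with $F$ flat over $T$ and fibrewise a stable pair in class $\Ch$.  That $\Phi$ and $\Psi$ are mutually inverse is essentially tautological: starting from a stable pair, the associated complex has its canonical 2-term representative equal to the original, while starting from a $Z_P$-stable object in $\sA^p$, passing to its 2-term representative and then back recovers the same object in $D^b$.

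The main subtlety I anticipate is canonicity: the 2-term representative produced in Proposition \ref{prop-family} is determined only up to the scaling action of $\Gamma(X \times T, \sO^\times_{X \times T})$ on $\sO_{X \times T}$. However, this ambiguity coincides exactly with the equivalence $(F, s) \sim (F, \lambda s)$ built into Le Potier's moduli space of pairs, since for each $\lambda$ the morphism $\lambda \cdot \op{id}_F$ is an automorphism of $F$ intertwining $s$ with $\lambda s$. Thus both functors are quotiented by the same group, and the natural transformations $\Phi$ and $\Psi$ descend to mutually inverse morphisms between the moduli spaces, giving the required isomorphism.
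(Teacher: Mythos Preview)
Your proposal is correct and follows essentially the same route as the paper: the paper's proof consists of the single sentence ``By Proposition \ref{prop-family} and Corollary \ref{cor_mod} we have'' followed by the statement, so you have simply spelled out the details (constructing $\Phi$ and $\Psi$, checking Chern characters and determinants, and handling the scaling ambiguity) that the paper leaves implicit.
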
  \qed

\section{Stable pair/ideal correspondences}
As in \S \ref{sec_stability_moduli},  we let $X$ be a nonsingular projective 4-fold with a choice of a polarization  $\omega$.

\subsection{Polynomial stability and ideal sheaves}\label{subsec_polynomial_ideal_sheaves}

In this Subsection, we provide a polynomial stability that in particular recovers the ideal sheaves of 2-dimensional subschemes in the 4-fold $X$.

\begin{prop}\label{lem5}
 Suppose $p$ is the perversity function in Proposition \ref{lem4}, i.e.\
\begin{equation*}
p(0)=p(1)=0, \text{\quad} p(2)=p(3)=p(4)=-1,
\end{equation*}
 and that  $p$ is associated to a stability vector $\rho$ with
 \begin{equation}\label{eq14}
 \phi (-\rho_2)> \phi(-\rho_3) > \phi (-\rho_4) > \phi (\rho_0) > \phi (\rho_1).
 \end{equation}
 Let $Z_I$ be the resulting polynomial stability condition as in \cite[Theorem 3.2.2]{BayerPBSC}.  Then for any object $E \in D^b(X)$ with $\Ch(E) = (-r,-D,\gamma, \beta, n)$ where $r\neq 0$ and $r, \omega^3 D$ are coprime, the following are equivalent:
 \begin{itemize}
 \item[(i)]  $E[-1]$ is a $\mu_\omega$-stable torsion-free sheaf.
 \item[(ii)] $E$  is  a $Z_I$-stable object in  $\Ac^p$.
 \item[(iii)] $E$  is  a $Z_I$-semistable object in  $\Ac^p$.
 \end{itemize}
 In particular, when $\Ch(E)=(-1,0,\gamma,\beta, n)$ with $\gamma$ effective and $\det{E} =\sO_X$, the above conditions are equivalent to:
 \begin{itemize}
 \item[(iv)] $E[-1]$ is the ideal sheaf of a 2-dimensional closed subscheme of $X$.
 \end{itemize}
\end{prop}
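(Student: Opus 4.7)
The plan is to adapt the argument of Proposition \ref{prop1} by replacing its phase ordering for $Z_P$ with the new ordering \eqref{eq14} of $Z_I$. Since (ii) $\Rightarrow$ (iii) is trivial, and the equivalence (i) $\Leftrightarrow$ (iv) is routine---any rank-one torsion-free sheaf on the smooth $X$ is automatically $\mu_\omega$-stable, while $\det E = \sO_X$ together with $\Ch_1(E)=0$ and effectivity of $\gamma$ force $E[-1]$ to be the ideal sheaf of a (necessarily 2-dimensional) subscheme---the content lies in (iii) $\Rightarrow$ (i) and (i) $\Rightarrow$ (ii).

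For (iii) $\Rightarrow$ (i), suppose $E \in \Ac^p$ is $Z_I$-semistable. Since $\Ch_0(E)=-r\neq 0$, the phase of $E$ asymptotes to $\phi(-\rho_4)$. The canonical $\Ac^p$-short exact sequence $0 \to H^{-1}(E)[1] \to E \to H^0(E) \to 0$ forces $H^0(E)=0$, for otherwise $H^0(E)\in\op{Coh}^{\leq 1}(X)$ would be an $\Ac^p$-quotient of phase $\phi(\rho_0)$ or $\phi(\rho_1)$, both strictly less than $\phi(-\rho_4)$ by \eqref{eq14}, contradicting semistability. Likewise, any $2$- or $3$-dimensional torsion subsheaf $T \subset H^{-1}(E)$ would give an $\Ac^p$-subobject $T[1]\subset E$ of phase $\phi(-\rho_2)$ or $\phi(-\rho_3)$, both strictly greater than $\phi(-\rho_4)$, again contradicting semistability; hence $F := H^{-1}(E)$ is torsion-free and $E\cong F[1]$. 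To deduce $\mu_\omega$-stability of $F$, for any saturated subsheaf $F'\subsetneq F$ of positive rank I form the $\Ac^p$-short exact sequence $0 \to F'[1] \to F[1] \to (F/F')[1] \to 0$; expanding $Z_I$ to first subleading order in $x$ and using that $\rho_3/\rho_4$ lies in the open upper half-plane translates $\phi(F'[1])\preceq \phi(F[1])$ into $\mu_\omega(F')\leq \mu_\omega(F)$, and the coprimality of $r$ and $\omega^3 D$ upgrades this to strict inequality.

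For (i) $\Rightarrow$ (ii), suppose $F=E[-1]$ is $\mu_\omega$-stable torsion-free of rank $r$. Given any nontrivial $\Ac^p$-short exact sequence $0 \to M \to E \to N \to 0$, the cohomology long exact sequence yields $H^0(N)=0$ and
\[
0 \to H^{-1}(M) \to F \to H^{-1}(N) \to H^0(M) \to 0.
\]
I split into cases by $r':=\rank H^{-1}(M)$. If $r'=0$, then $M \in \op{Coh}^{\leq 1}(X)$ has phase $\phi(\rho_0)$ or $\phi(\rho_1)$, and $\phi(M)\prec \phi(E)$ follows from \eqref{eq14}. If $0<r'<r$, the $\mu_\omega$-stability of $F$ gives $\mu_\omega(H^{-1}(M))<\mu_\omega(F)$; since $H^0(M)\in \op{Coh}^{\leq 1}(X)$ contributes nothing to $\Ch_0$ or $\Ch_1$, the first-order expansion again yields $\phi(M) \prec \phi(E)$. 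If $r'=r$ and $H^{-1}(M)=F$, the induced map $F\to H^{-1}(N)$ is zero, so $H^{-1}(N)\hookrightarrow H^0(M)$; the incompatibility $H^{-1}(N)\in\op{Coh}^{\geq 2}(X)$ versus $H^0(M)\in\op{Coh}^{\leq 1}(X)$ then forces $N=0$, contradicting $N\neq 0$. The remaining borderline is $r'=r$ with $H^{-1}(M)\subsetneq F$, where $F/H^{-1}(M)$ is nonzero torsion lying in $\op{Coh}^{\geq 2}(X)$: if it has a $3$-dimensional component then $\mu_\omega$ drops strictly and the first-order argument applies, but if it is pure $2$-dimensional then the slopes agree and a second-order expansion is needed. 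In that last subcase the correction to $Z_I(M)-Z_I(E)$ of order $x^2$ points in the $\rho_2$-direction, and combining $\Im(-\rho_2/\rho_4)<0$ (a consequence of $\phi(-\rho_2)>\phi(-\rho_4)$ after accounting for $\phi(\rho_4)=\phi(-\rho_4)-1$) with the positivity of $\omega^2\cdot \Ch_2(F/H^{-1}(M))$ yields $\phi(M)\prec \phi(E)$. Hence $E$ is $Z_I$-stable.

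The principal obstacle is precisely the second-order analysis in the pure 2-dimensional subcase above, where the first-order slope comparison degenerates and one must invoke the full hierarchy of inequalities in \eqref{eq14}, not merely the dominant inequality on $-\rho_4$ versus $\rho_0,\rho_1$.
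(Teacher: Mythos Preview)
Your argument is correct, and for (iii)~$\Rightarrow$~(i) it coincides with the paper's proof essentially word for word. The only substantive divergence is in (i)~$\Rightarrow$~(ii), in the full-rank subcase $r'=r$ with $H^{-1}(M)\subsetneq F$: you stay on the subobject side and push the phase comparison to the $x^2$-coefficient, invoking $\Im(\rho_2/\rho_4)>0$ (which indeed follows from $\phi(-\rho_2)>\phi(-\rho_4)$). The paper instead switches to the quotient $N$: since $r'=r$ forces $H^{-1}(N)$ to be torsion in $\op{Coh}^{\geq 2}(X)$ and $H^0(N)=0$, one has $N=H^{-1}(N)[1]$ supported in dimension $2$ or $3$, so $\phi(N)\in\{\phi(-\rho_2),\phi(-\rho_3)\}$ and $\phi(E)\prec\phi(N)$ follows immediately from \eqref{eq14} at the level of leading terms, with no second-order analysis needed. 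Both routes are valid; the paper's is shorter and uses only the dominant phase, while yours shows directly that $\phi(M)\prec\phi(E)$ even in the degenerate case, at the cost of the extra expansion you flagged as the ``principal obstacle.'' In practice the quotient trick is worth remembering, since it sidesteps exactly that obstacle.
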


\begin{proof}
Note that $\Ac^p = \langle \op{Coh}^{\geq 2}(X)[1], \op{Coh}^{\leq 1}(X) \rangle$ as in Proposition \ref{lem4}.  Suppose $E$ is an object of $D^b(X)$ with Chern character  as above.

(iii) $\Rightarrow$ (i): Suppose $E$ is a $Z_I$-semistable object in $\Ac^p$.  Then from \eqref{eq14}, $H^0(E)$ must vanish or else it would destabilize $E$.  Also, the $Z_I$-semistability of $E=H^{-1}(E)[1]$ and \eqref{eq14} together imply that $H^{-1}(E)$ is a torsion-free sheaf.  Given any short exact sequence of torsion-free sheaves $$0 \to F' \to H^{-1}(E) \to F'' \to 0,$$ we have an induced injection $F'[1] \hookrightarrow H^{-1}(E) [1]$ in $\Ac^p$.  The $Z_I$-semistability of $E$ then ensures $\mu_\omega (F') \leq \mu_\omega (H^{-1}(E))$, implying $H^{-1}(E)$ is $\mu_\omega$-semistable, which is equivalent to $\mu_\omega$-stable due to the coprime assumption.

(i) $\Rightarrow$ (ii):  Suppose  $E=H^{-1}(E)[1]$ where $H^{-1}(E)$ is a $\mu_\omega$-stable torsion-free sheaf, and take any $\Ac^p$-short exact sequence of the form $0 \to M \to E \to N \to 0$ where $M, N \neq 0$.  From the associated long exact sequence of sheaves
\[
  0 \to H^{-1}(M) \to H^{-1}(E) \to H^{-1}(N) \to H^0(M) \to 0
\]
we know $H^0(N)=0$. If $H^{-1}(M)=0$, then $M=H^0(M)$ is supported in dimension at most $1$, implying $\phi (M) \prec \phi (E)$.  If $$0<\Ch_0(H^{-1}(M))< \Ch_0(H^{-1}(E))$$ then the coprime assumption on $r, \omega^3 D$ implies $\mu_\omega (H^{-1}(M)) < \mu_\omega (H^{-1}(E))$ which forces $\phi (M) \prec \phi (E)$; moreover,  $H^{-1}(N)$ must be a torsion sheaf  supported in dimension 3 or 2 from which it follows that  $\phi (E) \prec \phi (N)$.  Hence $E$ is $Z_I$-stable in $\Ac^p$.

(ii) $\Rightarrow$ (iii): clear.

For a coherent sheaf $F$ with  $\Ch(F[1])=(-1,0,\gamma,\beta, n)$ where $\gamma$ is effective and $\det{F[1]} =\sO_X$, $F$ is torsion-free if and only if it is the ideal sheaf $I_W$ of a 2-dimensional closed subscheme $W$ of $X$ with $\Ch_2(\sO_W)=\gamma$.  This gives the equivalence with (iv).
\end{proof}

\begin{prop} \label{prop-family2}
Let $T$ be a scheme of finite type over $\cc$.  Let $(Z_I,\Ac^p)$ be a polynomial stability condition on $D^b(X)$ as in Proposition \ref{lem5}, and $I \in D^b(X \times T)$ an object such that, for each closed point $t \in T$,  $\Ch (I_t) = (-r,-D, \gamma,\beta, n)$ is fixed and in which $r\neq 0$ and $r, \omega^3 D$ are coprime.  Then $I[-1]$ is isomorphic in $D^b(X)$ to a $T$-flat family of $\mu_\omega$-stable torsion-free sheaves on $X$ if and only if $I$ is a $T$-flat family of $Z_I$-semistable objects in $\Ac^p$.  In particular, when $\Ch(I_t) = (-1,0,\gamma, \beta, n)$ where $\gamma$ is effective and $\det(I)=\sO_{X\times T}$,  $I[-1]$ is isomorphic in $D^b(X)$ to a $T$-flat family of ideal sheaves of 2-dimensional subschemes of $X$ if and only if $I$ is a $T$-flat family of $Z_I$-semistable objects in $\Ac^p$.
\end{prop}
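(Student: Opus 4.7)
The plan is to mirror the strategy used in the proof of Proposition~\ref{prop-family}, with Proposition~\ref{lem5} playing the role of Proposition~\ref{lem4}; the argument is in fact simpler here, since the target category (torsion-free sheaves) sits entirely in cohomological degree $-1$ and no analog of the codimension-$3$ $H^0$ contribution in the stable pair setting needs to be tracked. The forward direction is immediate from the implication (i)$\Rightarrow$(ii) of Proposition~\ref{lem5}: if $I[-1]$ is a $T$-flat family of $\mu_\omega$-stable torsion-free sheaves, then for every closed $t\in T$ the derived restriction $L\iota_t^\ast(I[-1])$ coincides with $\iota_t^\ast(I[-1])$ by flatness and is a $\mu_\omega$-stable torsion-free sheaf on $X$; hence $I_t$ is $Z_I$-stable, and in particular $Z_I$-semistable, in $\sA^p$.

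For the converse, suppose $I\in D^b(X\times T)$ is a $T$-flat family of $Z_I$-semistable objects in $\sA^p$. Proposition~\ref{lem5} applied fiberwise gives that each $I_t[-1]$ is a $\mu_\omega$-stable torsion-free sheaf on $X$, so each derived fiber $L\iota_t^\ast I$ is concentrated in cohomological degree $-1$. The task is to upgrade this fiberwise statement to the global assertion that $I\cong H^{-1}(I)[1]$ in $D^b(X\times T)$ with $H^{-1}(I)$ flat over $T$. The plan is to run the base-change spectral sequence
\[
  E_2^{p,q} = L^p\iota_t^\ast H^q(I) \Rightarrow H^{p+q}(I_t)
\]
against the vanishing $H^i(I_t)=0$ for $i\neq -1$. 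Starting from the largest $q$ with $H^q(I)\neq 0$ and working downward, one checks that $E_2^{0,q}=\iota_t^\ast H^q(I)$ is hit by no incoming differential and has no outgoing differential into a nonzero term, hence survives to $E_\infty$. The vanishing $H^q(I_t)=0$ for $q\neq -1$ therefore forces $\iota_t^\ast H^q(I)=0$ at every closed point $t$, and Nakayama's lemma yields $H^q(I)=0$ for $q\geq 0$; an analogous argument eliminates $q\leq -2$. This gives $I\cong H^{-1}(I)[1]$, and the fact that each $L\iota_t^\ast H^{-1}(I)$ is concentrated in a single degree provides $T$-flatness of $H^{-1}(I)$ via the standard local flatness criterion. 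A rank-$r$ analogue of \cite[Lemma~5.12]{Lo1}, used in the proof of Proposition~\ref{prop-family}, then confirms that $H^{-1}(I)$ is torsion-free of rank $r$, and Proposition~\ref{lem5} applied pointwise supplies $\mu_\omega$-stability of every fiber.

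The special case with $\Ch(I_t)=(-1,0,\gamma,\beta,n)$, $\gamma$ effective, and $\det(I)=\sO_{X\times T}$ then follows at once: $H^{-1}(I)$ is a rank-one torsion-free sheaf on $X\times T$ with trivial determinant, so each fiber $H^{-1}(I)|_t$ is the ideal sheaf of a $2$-dimensional closed subscheme of $X$ by the equivalence (i)$\Leftrightarrow$(iv) of Proposition~\ref{lem5}. The main obstacle I anticipate is the clean extraction of $H^q(I)=0$ for $q\neq -1$ from fiberwise vanishing via the base-change spectral sequence; this is routine in principle but requires careful bookkeeping of the differentials, and one could alternatively argue by resolving $I$ locally on $X\times T$ by a finite complex of locally free sheaves and tracking its cohomology fiber by fiber, or by invoking a Hartog-type argument in the spirit of the flatness verification at the end of the proof of Proposition~\ref{prop-family}.
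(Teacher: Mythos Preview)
Your forward direction and overall strategy match the paper's. For the converse, however, the paper's proof is a single line: once Proposition~\ref{lem5} gives that each $I_t[-1]$ is a sheaf, it directly invokes \cite[Lemma~3.31]{huybrechts2006fourier}, which says precisely that an object of $D^b(X\times T)$ all of whose derived fibers are sheaves is itself a $T$-flat sheaf. You are essentially reproving this lemma by hand.

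Your top-degree spectral sequence argument (killing $H^q(I)$ for $q\geq 0$) is correct, but the claim that ``an analogous argument eliminates $q\leq -2$'' is not: for the minimal $q$, the term $E_2^{0,q}=\iota_t^\ast H^q(I)$ receives incoming differentials from higher rows and need not survive to $E_\infty$, so you cannot directly conclude it vanishes. The actual proof of Huybrechts' lemma handles the bottom differently: after killing $H^q(I)$ for $q\geq 0$, one first reads off $L_1\iota_t^\ast H^{-1}(I)=0$ from the spectral sequence, deduces $T$-flatness of $H^{-1}(I)$ via the local criterion, and only then uses the triangle $\tau^{\leq -2}I \to I \to H^{-1}(I)[1]$ to conclude that $\tau^{\leq -2}I$ has vanishing derived fibers (hence is zero by iterating the top-degree step). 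So the flatness step must precede, not follow, the vanishing of lower cohomology---this is exactly the bookkeeping difficulty you flag at the end, and it is already packaged in the cited lemma. Finally, the appeal to an analogue of \cite[Lemma~5.12]{Lo1} is unnecessary here: the proposition only requires the \emph{fibers} of $I[-1]$ to be torsion-free, which Proposition~\ref{lem5} already gives.
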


\begin{proof}
Suppose $I=F[1]$, where $F$ is a $T$-flat family of  $\mu_\omega$-stable torsion-free sheaves on $X$.  By Proposition \ref{lem5}, for every closed point $t \in T$, the restriction  $I_t=F|^L_t[1] = F|_t[1]$ is a $Z_I$-semistable object  in $\Ac^p$.


For the converse, suppose $I$ is a $T$-flat family of $Z_I$-semistable objects in $\Ac^p$.  Then by Propostion \ref{lem5}, each $I_t[-1]$ is a $\mu_\omega$-stable torsion-free sheaf on $X$.  By  \cite[Lemma 3.31]{huybrechts2006fourier}, $I[-1]$ must be a $T$-flat family of sheaves on $X$.  This proves the first half of the proposition.

The second half of the proposition follows from the equivalence with (iv) in Proposition \ref{lem5}.
\end{proof}

By Proposition \ref{prop-family2} we have

\begin{thm} \label{thm_moduli2} Let $X$ be a nonsingular projective 4-fold, and
 $$\Ch'=(-r,0,\gamma,\beta, \xi),\quad \Ch=(0,0,\gamma,\beta, \xi)$$ be Chern character vectors, where $r\neq 0$ and $r, \omega^3 D$ are coprime. Then, the moduli stack of $Z_I$-semistable objects with Chern character $\Ch'$ is identified the moduli stack of $\mu_\omega$-stable torsion free sheaves with Chern character $-\Ch'$. In particular, when $r=1$, the moduli stack of $Z_I$-semistable objects with Chern character $\Ch'$ and trivial determinant is identified with the Hilbert scheme $\op{Hilb}(X,\Ch)$.
  \end{thm}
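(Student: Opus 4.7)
The plan is to bootstrap Theorem \ref{thm_moduli2} from the family version of the pointwise equivalence in Proposition \ref{lem5}, namely Proposition \ref{prop-family2}. For any finite-type $\cc$-scheme $T$, Proposition \ref{prop-family2} already produces a bijection on isomorphism classes between $T$-flat families $I\in D^b(X\times T)$ of $Z_I$-semistable objects in $\Ac^p$ whose fibers have Chern character $\Ch'$, and $T$-flat families of $\mu_\omega$-stable torsion-free sheaves on $X$ with fiberwise Chern character $-\Ch'$. The bijection is implemented by the shift functor $I\mapsto I[-1]$, which is an auto-equivalence of $D^b(X\times T)$ and commutes with derived pullback. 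Consequently, this gives an isomorphism of the associated groupoid-valued moduli functors (morphisms are taken to be quasi-isomorphisms / sheaf isomorphisms, and these agree under shift), hence an isomorphism of the corresponding moduli stacks.

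For the second statement I would specialize to $r=1$ and track the determinant through the shift. If $I=F[1]$ with $F$ a coherent sheaf in degree zero, then $\det(I)=\det(F)^{-1}$, so the condition $\det(I)\cong \sO_{X\times T}$ is equivalent to $\det(F)\cong \sO_{X\times T}$. Applying the first part, the $r=1$ fixed-trivial-determinant moduli stack on the $Z_I$ side is therefore identified with the moduli stack of rank-one torsion-free sheaves $F$ on $X$ with trivial determinant and $\Ch(F)=(1,0,-\gamma,-\beta,-\xi)$.

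Next, I would argue that such $F$ are precisely the ideal sheaves $I_W\subset \sO_X$ of closed subschemes $W\subset X$ of codimension at least two: the double dual $F^{\ast\ast}$ is a rank-one reflexive sheaf with trivial determinant, hence a line bundle isomorphic to $\sO_X$, and the quotient $\sO_X/F$ has support of codimension $\geq 2$ because $\Ch_1(F)=0$. Using the short exact sequence $0\to F\to \sO_X\to \sO_W\to 0$, the remaining components $\Ch_i(F)$ for $i\ge 2$ translate directly into $\Ch(\sO_W)=\Ch=(0,0,\gamma,\beta,\xi)$; effectivity of $\gamma$ forces $W$ to be exactly two-dimensional. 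This identifies the objects with points of the Hilbert scheme $\op{Hilb}(X,\Ch)$.

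The only point requiring some care is the upgrade of the object-level bijection to an honest isomorphism of stacks --- one must check compatibility with pullbacks along $T'\to T$ and with morphisms (automorphisms). Both follow formally: derived pullback along $X\times T'\to X\times T$ commutes with $[-1]$, and the shift is an equivalence of triangulated categories so it is fully faithful on automorphism groups (for $F$ a sheaf, $\Aut(F[1])=\Aut(F)$). The identification with $\op{Hilb}(X,\Ch)$ in the $r=1$ case is then the standard moduli-theoretic description of the Hilbert scheme as the fine moduli space of ideal sheaves with fixed Chern character, so no further argument is needed. I expect no serious obstacle beyond this bookkeeping, since the real work --- the openness and boundedness feeding into Proposition \ref{prop-family2} --- has already been done.
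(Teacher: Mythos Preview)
Your proposal is correct and follows essentially the same approach as the paper: both deduce the theorem directly from Proposition~\ref{prop-family2}. The paper's proof is in fact a single line citing that proposition, while you have spelled out the stack-level bookkeeping (compatibility of the shift with pullbacks and automorphisms, the determinant computation $\det(F[1])=\det(F)^{-1}$, and the identification of rank-one torsion-free sheaves with trivial determinant as ideal sheaves) that the paper leaves implicit.
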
  \qed

Fix a polynomial stability $(Z_{I},\Ac^p)$ with $p$ and $\rho$ as in Proposition \ref{lem5}, and a polynomial stability $(Z_P, \Ac^p)$ with $p$ and $\rho$ as in Proposition \ref{lem4}.  The heart $\Ac^p$ is the same in both instances.  Under the conditions
\begin{align}
\Ch(E)&=(-1,0,\gamma, \beta, n) \text{\quad where $\gamma$ is effective}, \notag\\
\det{E} &= \sO_X, \label{eq:chdetpairs}
\end{align}
 a $Z_{I}$-semistable object $E$ in $\Ac^p$ is up to a shift the ideal sheaf of a 2-dimensional subscheme of $X$ by Proposition \ref{lem5}, while a $Z_P$-semistable object $E$ in $\Ac^p$ is  a stable pair by Proposition \ref{lem4}.
An example of stability vectors $\rho_i$ that satisfies \eqref{eq10} and \eqref{eq11} is shown in Figure \ref{fig:DTPTstabvec}. In terms of stability vectors for polynomial stability, we can interpret  the $Z_I |Z_P$ wall-crossing as $-\rho_4$ rotating clockwise from its location in the left figure arrangement past $\rho_0$ and then past $\rho_1$.

\begin{figure} \label{fig1}
  \caption{Stability vectors $\rho_i$ for $(Z_{I},\Ac^p)$- and $(Z_{P},\Ac^p)$- polynomial stability conditions on a 4-fold.} \label{fig:DTPTstabvec}
  \centering
  \vspace*{-0.0in}
  \scalebox{1}{
\begin{tikzpicture}
    \draw[->] (-2,0) -- (2,0);
    \draw[->] (0,-2) -- (0,2);

     \draw[->,thick] (0,0) -- (1.8478,0.7654);
     \draw[->,thick] (0,0) -- (1.4142,1.4142);
     \draw[->,thick] (0,0) -- (0.7654,1.8478);
     \draw[->,thick] (0,0) -- (-1, 1.7321);
     \draw[->,thick] (0,0) -- (-1.7321,1);

     \filldraw (1.8478,0.7654)  node[right] {$\rho_1$};
     \filldraw (1.4142,1.4142)  node[right] {$\rho_0$};
     \filldraw (0.7654,1.8478)  node[right] {$-\rho_4$};
     \filldraw (-1, 1.7321)  node[left] {$-\rho_3$};
     \filldraw (-1.7321,1) node[left] {$-\rho_2$};

     \filldraw (0,-2.5) node[below] {$Z_{I}$};

    \draw[->] (3,0) -- (7,0);
    \draw[->] (5,-2) -- (5,2);

     \draw[->,thick] (5,0) -- (6.8478,0.7654);
     \draw[->,thick] (5,0) -- (6.4142,1.4142);
     \draw[->,thick] (5,0) -- (5.7654,1.8478);
     \draw[->,thick] (5,0) -- (4,1.7321);
     \draw[->,thick] (5,0) -- (3.2679,1);

     \filldraw (6.8478,0.7654)  node[right] {$-\rho_4$};
     \filldraw (6.4142,1.4142)  node[right] {$\rho_1$};
     \filldraw (5.7654,1.8478)  node[right] {$\rho_0$};
     \filldraw (4,1.7321)  node[left] {$-\rho_3$};
     \filldraw (3.2679,1)  node[left] {$-\rho_2$};

     \filldraw (5,-2.5) node[below] {$Z_P$};

\end{tikzpicture}
}
 \vspace*{-0.15in}
\end{figure}
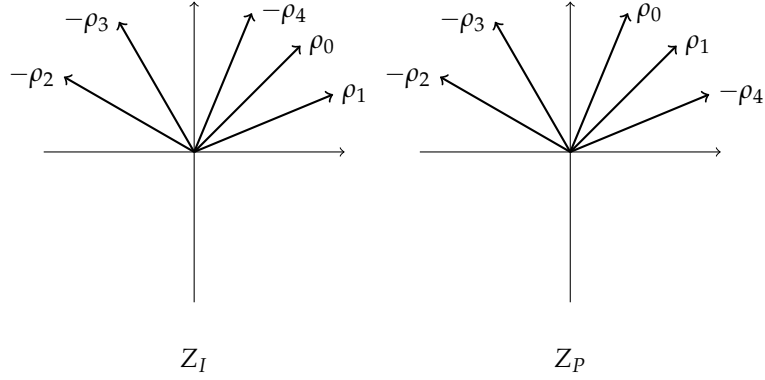

\subsection{Categorical correspondences} \label{sec_cat}

We start by reviewing some homological algebra facts. Let $\mathcal D$ be a triangulated category and  $\Cc_1, \dots, \Cc_n$ be full subcategories of $\mathcal D$ such that  all of $\Cc_1, \dots, \Cc_n$ contain the zero object.  Following the notation in \cite[2.2]{Pol2}, we  write $$[\Cc_1, \dots, \Cc_n]$$ to denote the full subcategory of $\mathcal D$ consisting of objects $E$ such that $E$ admits a filtration by exact triangles of the form
   \[
    \scalebox{0.9}{
    \xymatrix{
    0=E_0 \ar[rr] && E_1 \ar[dl] \ar[rr] & & E_2 \ar[r] \ar[dl] & \cdots \ar[r] & E_{n-1} \ar[rr] & & E_n=E \ar[dl] \\
    & M_1 \ar[ul]^{[1]} & & M_2 \ar[ul]^{[1]} & & & & M_n \ar[ul]^{[1]} &
    }}
    \]
where $M_i \in \Cc_i$ for $1 \leq i \leq n$.  If we also  have $\Hom_{\mathcal D} (\Cc_i, \Cc_j)=0$ for all $1 \leq i < j \leq n$, then we write $$\ldbrack \Cc_1, \dots, \Cc_n \rdbrack$$ instead of $[\Cc_1, \dots, \Cc_n]$. As usual, $$\langle \Cc_1, \dots, \Cc_n \rangle$$ denotes the extension closure of $\Cc_1, \dots, \Cc_n$.

The following Lemma is known to experts. We include a proof for completeness.
\begin{lem}\label{lem:sqbracketassoc}
Given full subcategories $\Cc_1, \Cc_2, \Cc_3$ in a triangulated category $\mathcal D$, we have equalities of subcategories of $\mathcal D$
\begin{equation*}
[[\Cc_1, \Cc_2], \Cc_3] = [\Cc_1, \Cc_2, \Cc_3] = [ \Cc_1, [\Cc_2, \Cc_3]].
\end{equation*}
\end{lem}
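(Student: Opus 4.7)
I will prove both equalities by double inclusion, and the key technical tool will be the octahedral axiom.

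First, the equality $[[\mathcal{C}_1,\mathcal{C}_2],\mathcal{C}_3] = [\mathcal{C}_1,\mathcal{C}_2,\mathcal{C}_3]$ is essentially formal. For $\subseteq$, take $E \in [[\mathcal{C}_1,\mathcal{C}_2],\mathcal{C}_3]$ with witnessing triangle $E_1 \to E \to M_3$ where $E_1 \in [\mathcal{C}_1,\mathcal{C}_2]$ and $M_3 \in \mathcal{C}_3$. By definition of $[\mathcal{C}_1,\mathcal{C}_2]$, there is a triangle $M_1 \to E_1 \to M_2$ with $M_i \in \mathcal{C}_i$. Splicing the two triangles produces the three-step filtration $0 \subset M_1 \subset E_1 \subset E$ with successive cones $M_1, M_2, M_3$, hence $E \in [\mathcal{C}_1,\mathcal{C}_2,\mathcal{C}_3]$. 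For $\supseteq$, given a three-step filtration $0 \subset E_1 \subset E_2 \subset E$ with cones in $\mathcal{C}_1,\mathcal{C}_2,\mathcal{C}_3$ respectively, the two-step truncation $0 \subset E_1 \subset E_2$ exhibits $E_2 \in [\mathcal{C}_1,\mathcal{C}_2]$, and then the triangle $E_2 \to E \to M_3$ witnesses $E \in [[\mathcal{C}_1,\mathcal{C}_2],\mathcal{C}_3]$.

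The equality $[\mathcal{C}_1,\mathcal{C}_2,\mathcal{C}_3] = [\mathcal{C}_1,[\mathcal{C}_2,\mathcal{C}_3]]$ is the step where the octahedral axiom is genuinely needed, because the filtrations are associated in the opposite direction. For $\subseteq$, start with a three-step filtration $0 \subset E_1 \subset E_2 \subset E$ with cones $M_1 \in \mathcal{C}_1$, $M_2 \in \mathcal{C}_2$, $M_3 \in \mathcal{C}_3$ (so in particular $E_1 \cong M_1$). Apply the octahedral axiom to the composition $E_1 \to E_2 \to E$: writing $K$ for the cone of $E_1 \to E$, the axiom yields a triangle $M_2 \to K \to M_3$, which exhibits $K \in [\mathcal{C}_2,\mathcal{C}_3]$. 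Then the triangle $E_1 \to E \to K$ witnesses $E \in [\mathcal{C}_1,[\mathcal{C}_2,\mathcal{C}_3]]$. For $\supseteq$, take $E \in [\mathcal{C}_1,[\mathcal{C}_2,\mathcal{C}_3]]$ with triangle $N_1 \to E \to N_2$ where $N_1 \in \mathcal{C}_1$ and $N_2 \in [\mathcal{C}_2,\mathcal{C}_3]$, and a triangle $K_1 \to N_2 \to K_2$ with $K_i \in \mathcal{C}_{i+1}$. Now apply the octahedral axiom to the composition $E \to N_2 \to K_2$: its fiber $E'$ fits into a triangle $E' \to E \to K_2$, and the axiom produces a triangle $N_1 \to E' \to K_1$. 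Together these give a three-step filtration $0 \subset N_1 \subset E' \subset E$ with cones $N_1 \in \mathcal{C}_1$, $K_1 \in \mathcal{C}_2$, $K_2 \in \mathcal{C}_3$, placing $E$ in $[\mathcal{C}_1,\mathcal{C}_2,\mathcal{C}_3]$.

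The main (and essentially only nontrivial) step is the careful application of the octahedral axiom in the two arguments of the second equality; everything else is a matter of concatenating or truncating filtrations. No semiorthogonality hypothesis is used anywhere in the proof, which is why the conclusion is stated with the square brackets $[-]$ rather than the semiorthogonal brackets $\ldbrack - \rdbrack$. I expect no serious obstacle beyond bookkeeping the octahedral diagrams.
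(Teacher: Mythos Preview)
Your proof is correct and follows essentially the same approach as the paper: the first equality is immediate from the definition, and both inclusions in the second equality are obtained by a single application of the octahedral axiom, exactly as you outline. The only cosmetic difference is that for the inclusion $[\Cc_1,[\Cc_2,\Cc_3]] \subseteq [\Cc_1,\Cc_2,\Cc_3]$ the paper applies the octahedral axiom to the composition $N_2 \to M_2 \to E_1[1]$ (in their notation) rather than to $E \to N_2 \to K_2$ as you do, but these yield the same intermediate object and the same conclusion.
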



\begin{proof}
The first equality easily follows from the definition of the notation $[\cdots]$.

Now take any $E \in [\Cc_1, \Cc_2, \Cc_3]$.  Then there is a sequence of exact triangles
   \[
    \scalebox{0.9}{
    \xymatrix{
    0 \ar[rr] && E_1 \ar[dl] \ar[rr] & & E_2 \ar[rr] \ar[dl] && E_3=E \ar[dl]  \\
    & M_1 \ar[ul]^{[1]} & & M_2 \ar[ul]^{[1]} &&  M_3 \ar[ul]^{[1]} &
    }
    }
    \]
    where $M_i \in \Cc_i$ for each $i$.  Applying the octahedral axiom to the composition of morphisms $E_1 \to E_2 \to E_3$ yields the diagram
    \[
\scalebox{0.8}{\xymatrix{
   & & & M_2 \ar[ddd] \\
  & & & \\
  & E_2 \ar[dr] \ar[uurr] & & \\
  E_1 \ar[ur] \ar[rr] & & E_3 \ar[r] \ar[dr] & M  \ar[d] \\
  & & & M_3
}
}
\]
for some object $M$ that fits in an exact triangle $M_2 \to M \to M_3 \to M_2[1]$.  Now we can form the sequence of exact triangles
\[
    \scalebox{0.9}{
    \xymatrix{
    0 \ar[rr] && E_1 \ar[dl] \ar[rr] & & E_3 =E \ar[dl]   \\
    & M_1 \ar[ul]^{[1]} & & M \ar[ul]^{[1]} &
    }
    }
    \]
which shows $E \in [\Cc_1, [\Cc_2, \Cc_3]]$.

For the other inclusion, take any $E \in [\Cc_1, [\Cc_2, \Cc_3]]$.  This means that we have a sequence of exact triangles
\[
    \scalebox{0.9}{
    \xymatrix{
    0 \ar[rr] && E_1 \ar[dl] \ar[rr] & & E_2=E  \ar[dl]   \\
    & M_1 \ar[ul]^{[1]} & & M_2 \ar[ul]^{[1]} &
    }
    }
\]
where $M_1 \in \Cc_1$ and $M_2 \in [\Cc_2, \Cc_3]$.  Then $M_2$ itself is an extension $N_2 \to M_2 \to N_3 \to N_2[1]$ where $N_2\in \Cc_2$ and $N_3 \in \Cc_3$.  Now  apply the octahedral axiom to the composition $N_2 \to M_2 \to E_1[1]$ to obtain
  \[
\scalebox{0.8}{\xymatrix{
   & & & N_3 \ar[ddd] \\
  & & & \\
  & M_2 \ar[dr] \ar[uurr] & & \\
  N_2 \ar[ur] \ar[rr] & & E_1[1] \ar[r] \ar[dr] & A [1]  \ar[d] \\
  & & & E_2[1]
}
}
\]
for some object $A$.  From the exact triangles $E_1 \to A \to N_2 \to E_1[1]$ and  $A \to E_2 \to N_3 \to A[1]$ we obtain
\[
    \scalebox{0.9}{
    \xymatrix{
    0 \ar[rr] && E_1 \ar[dl] \ar[rr] & & A \ar[rr] \ar[dl] && E_2=E \ar[dl]  \\
    & E_1 \ar[ul]^{[1]} & & N_2 \ar[ul]^{[1]} &&  N_3 \ar[ul]^{[1]} &
    }
    }
\]
which shows $E \in [\Cc_1, \Cc_2, \Cc_3]$.
\end{proof}

From the definition of  $[\cdots]$, it is  easy to see that
\begin{equation}\label{eq:bracket2}
  [\dots[[\Cc_1, \Cc_2], \Cc_3], \dots ], \Cc_{n-1}], \Cc_n ] = [\Cc_1, \dots, \Cc_n]
\end{equation}
for any full subcategories $\Cc_1, \dots, \Cc_n$ of a triangulated category $\mathcal D$.

\begin{cor}\label{coro:P30A3-1}
Let $\mathcal D$ be a triangulated category and $\Cc_1, \dots, \Cc_n$ full subcategories of $\mathcal D$.  Then  any bracketing of $\Cc_1, \dots, \Cc_n$  in this order using $[\cdots]$ is equal to $[\Cc_1, \dots, \Cc_n]$.  The same statement holds for $\ldbrack \cdots \rdbrack$ in place of $[ \cdots ]$.
\end{cor}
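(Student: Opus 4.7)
The plan is to proceed by strong induction on $n$. The cases $n = 1, 2$ are vacuous since only one bracketing is possible, and $n = 3$ is exactly Lemma~\ref{lem:sqbracketassoc}. For the inductive step, observe that any bracketed expression in $\Cc_1, \ldots, \Cc_n$ (in this order) has the form $[B_1, B_2]$, where $B_1$ is some bracketing of $\Cc_1, \ldots, \Cc_k$ and $B_2$ is some bracketing of $\Cc_{k+1}, \ldots, \Cc_n$, for a unique $1 \leq k < n$. By the inductive hypothesis applied to each factor, $B_1 = [\Cc_1, \ldots, \Cc_k]$ and $B_2 = [\Cc_{k+1}, \ldots, \Cc_n]$ as full subcategories of $\mathcal D$. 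It therefore suffices to prove the identity
\[
[[\Cc_1, \ldots, \Cc_k], [\Cc_{k+1}, \ldots, \Cc_n]] = [\Cc_1, \ldots, \Cc_n]
\]
for every $1 \leq k < n$.

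To establish this identity I would run a secondary induction on $n - k$. When $n - k = 1$ the identity is precisely equation~\eqref{eq:bracket2}. When $n - k \geq 2$, first rewrite the right-hand factor using~\eqref{eq:bracket2} as $[\Cc_{k+1}, \ldots, \Cc_n] = [[\Cc_{k+1}, \ldots, \Cc_{n-1}], \Cc_n]$, and then apply Lemma~\ref{lem:sqbracketassoc} to the three full subcategories $[\Cc_1, \ldots, \Cc_k]$, $[\Cc_{k+1}, \ldots, \Cc_{n-1}]$ and $\Cc_n$ to shift the inner bracketing outward:
\[
[[\Cc_1, \ldots, \Cc_k], [[\Cc_{k+1}, \ldots, \Cc_{n-1}], \Cc_n]] = [[[\Cc_1, \ldots, \Cc_k], [\Cc_{k+1}, \ldots, \Cc_{n-1}]], \Cc_n].
\]
The secondary inductive hypothesis (applied to the same split at index $k$ inside $\Cc_1, \ldots, \Cc_{n-1}$) identifies the inner expression with $[\Cc_1, \ldots, \Cc_{n-1}]$, and a final appeal to~\eqref{eq:bracket2} then gives $[\Cc_1, \ldots, \Cc_n]$, closing the argument.

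For the $\ldbrack \cdots \rdbrack$ version, note that as subcategories of $\mathcal D$ the bracket $\ldbrack \cdots \rdbrack$ coincides with $[\cdots]$; the only difference is the semi-orthogonality hypothesis $\Hom_{\mathcal D}(\Cc_i, \Cc_j) = 0$ for $i < j$, which is what licenses the notation. Granted this hypothesis, every subexpression occurring above still satisfies the necessary $\Hom$-vanishings: a short induction using the long exact sequences coming from the filtrations defining objects of $[\Cc_{i_1}, \ldots, \Cc_{i_p}]$ yields
\[
\Hom_{\mathcal D}\bigl([\Cc_{i_1}, \ldots, \Cc_{i_p}], [\Cc_{j_1}, \ldots, \Cc_{j_q}]\bigr) = 0 \quad \text{whenever } i_p < j_1,
\]
which covers every pair of neighbouring subexpressions appearing in any bracketing in the prescribed order. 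Consequently, the equalities of full subcategories established in the first two paragraphs remain valid with $[\cdots]$ replaced throughout by $\ldbrack \cdots \rdbrack$.

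The hard part here is not any deep triangulated calculation -- the three-term case is already handled by Lemma~\ref{lem:sqbracketassoc} and the ``left-bracketed'' reduction is~\eqref{eq:bracket2}. The main obstacle is purely organisational: unambiguously parsing what ``any bracketing'' means, setting up the nested induction on $n$ and on $n - k$ so that the applications of Lemma~\ref{lem:sqbracketassoc} do not go in circles, and verifying that the Hom-vanishings propagate to the intermediate subexpressions in the $\ldbrack \cdots \rdbrack$ version.
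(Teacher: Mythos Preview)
Your proof is correct and is a careful elaboration of exactly the argument the paper gives in one line (``the first claim follows from Lemma~\ref{lem:sqbracketassoc} and~\eqref{eq:bracket2}; the second claim follows easily''). Your secondary induction on $n-k$ is in fact subsumed by the primary strong induction on $n$, since after rewriting the $k$-ary brackets via~\eqref{eq:bracket2} the inner expression $[[\Cc_1,\dots,\Cc_k],[\Cc_{k+1},\dots,\Cc_{n-1}]]$ is already a bracketing of $n-1$ categories; this is a harmless redundancy, not an error.
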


\begin{proof}
The first claim follows from  Lemma \ref{lem:sqbracketassoc}  and \eqref{eq:bracket2}.  The second claim follows easily.
\end{proof}

  \begin{lem}\label{lem9}
     Let $\mathcal D$ be a triangulated category, and $\Cc_1, \dots, \Cc_n$ full subcategories of $\mathcal D$ that all contain the zero object.  If $[\Cc_1, \dots, \Cc_n]$ is extension-closed in $\mathcal D$, then $[\Cc_1, \dots, \Cc_n]=\langle \Cc_1, \dots, \Cc_n \rangle$.
     \end{lem}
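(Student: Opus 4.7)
The plan is to establish the two inclusions separately, with the hypothesis that $[\Cc_1,\dots,\Cc_n]$ is extension-closed being needed only for one direction, and the assumption that each $\Cc_i$ contains the zero object being needed only for the other.

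First I would show that $\langle \Cc_1,\dots,\Cc_n\rangle \subseteq [\Cc_1,\dots,\Cc_n]$. For this inclusion, the key observation is that each individual $\Cc_i$ already sits inside $[\Cc_1,\dots,\Cc_n]$: given $M \in \Cc_i$, the sequence $E_0 = \cdots = E_{i-1}=0$, $E_i = E_{i+1} = \cdots = E_n = M$, with $M_j = 0$ for $j \neq i$ and $M_i = M$, is a valid filtration of the type appearing in the definition of $[\Cc_1,\dots,\Cc_n]$, since each $\Cc_j$ contains the zero object. Because $[\Cc_1,\dots,\Cc_n]$ is assumed to be extension-closed and contains every $\Cc_i$, it contains the smallest extension-closed subcategory of $\mathcal D$ containing all the $\Cc_i$, which is $\langle \Cc_1,\dots,\Cc_n\rangle$.

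For the reverse inclusion $[\Cc_1,\dots,\Cc_n] \subseteq \langle \Cc_1,\dots,\Cc_n\rangle$, I would argue directly from the definition: an object $E \in [\Cc_1,\dots,\Cc_n]$ admits a filtration $0 = E_0 \subsetneq E_1 \subsetneq \cdots \subsetneq E_n = E$ by exact triangles whose $i$-th factor $M_i$ lies in $\Cc_i$. Proceeding by induction on $n$ using the triangle $E_{n-1} \to E_n \to M_n \to E_{n-1}[1]$ (or equivalently invoking Corollary 2.20 to rebracket as $[[\Cc_1,\dots,\Cc_{n-1}],\Cc_n]$), each such $E$ is obtained from objects of $\Cc_1,\dots,\Cc_n$ by a finite sequence of extensions, hence lies in $\langle \Cc_1,\dots,\Cc_n\rangle$ by definition of the extension closure.

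There is no real obstacle here; the only subtlety to check is that in the first inclusion we genuinely use that each $\Cc_j$ contains zero in order to realize an arbitrary $M \in \Cc_i$ as an element of the bracket subcategory, and that in the second inclusion no hypothesis on extension-closedness is required because the extension closure is, by construction, closed under the very operation by which bracketed objects are built.
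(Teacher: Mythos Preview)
Your proof is correct and follows essentially the same approach as the paper: both establish $\Cc_i \subseteq [\Cc_1,\dots,\Cc_n] \subseteq \langle \Cc_1,\dots,\Cc_n\rangle$ and then use extension-closedness of $[\Cc_1,\dots,\Cc_n]$ together with minimality of the extension closure to conclude equality. Your version is simply more explicit about why $\Cc_i \subseteq [\Cc_1,\dots,\Cc_n]$ (via the zero-padded filtration) and why the reverse inclusion holds (via induction on the filtration), details the paper leaves implicit; one trivial slip is that the filtration need not have strict inclusions $E_{i-1}\subsetneq E_i$, but this does not affect the argument.
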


     \begin{proof}
     We have $\Cc_i  \subseteq [\Cc_1, \dots, \Cc_n] \subseteq \langle \Cc_1, \dots, \Cc_n \rangle$ for all $1 \leq i \leq n$.  Consequently, if  $[\Cc_1, \dots, \Cc_n]$ is extension-closed,  then $\langle \Cc_1, \dots, \Cc_n \rangle \subseteq [\Cc_1, \dots, \Cc_n]$ and hence equality holds.
     \end{proof}

Given an abelian category $\Ac$ and a torsion pair $(\mathcal T, \mathcal F)$ in $\Ac$, we will refer to $\mathcal T$ (resp.\ $\mathcal F$) as a torsion class  (resp.\ torsion-free class) in $\Ac$.  If $\Cc_1, \dots, \Cc_n$ are full subcategories of $\Ac$ such that $\Ac = \ldbrack \Cc_1, \dots, \Cc_n\rdbrack$, then for any $1 \leq i \leq n-1$, 
\[
 ( \ldbrack \Cc_1, \dots, \Cc_i \rdbrack, \ldbrack \Cc_{i+1}, \dots, \Cc_n \rdbrack )
\]
is a torsion pair in $\Ac$.

\begin{rmk}\label{rem:Pol1}
When $\Ac$ is a noetherian abelian category, a torsion class in $\Ac$ is precisely an extension-closed full subcategory of $\Ac$ that is closed under quotient in $\Ac$ \cite[Lemma 1.1.3]{Pol}.
\end{rmk}

Let $V$ be a smooth projective variety of dimension $d \geq 3$ and $\omega$ an ample class on $V$.  Given a fixed positive integer  $r$ and a fixed divisor  $D$ on $V$, let $\mu := \tfrac{\omega^{d-1}D}{r}$.  By truncating $\mu_\omega$-HN filtrations of coherent sheaves at $\mu_\omega = \mu$, we obtain a torsion pair
\begin{align*}
  \op{Coh}_{> \mu}(V) := \langle E \in \op{Coh} (V) : E \text{ is $\mu_\omega$-semistable},  \mu_\omega (E)>\mu \rangle \\
  \op{Coh}_{\leq \mu}(V) := \langle E \in \op{Coh} (V) : E  \text{ is $\mu_\omega$-semistable},  \mu_\omega (E)\leq \mu \rangle
\end{align*}
in $\op{Coh} (V)$, and hence the heart of a bounded t-structure on $D^b(V)$,
\begin{equation} \label{equ_heart}
  \Ac_\mu(V) := \langle \op{Coh}_{\leq \mu}(V)[1],\op{Coh}_{>\mu}(V)\rangle.
\end{equation}
Since the pair $(Z,\op{Coh} (V))$ with $Z(E) = -\omega^{d-1} \cdot \Ch_1 (E) + i\Ch_0(E)$ is a very weak stability condition that is good in the sense of  \cite[Lemma 3.3]{piyaratne2019moduli}, it follows from \cite[Lemma 2.17, Remark 2.10]{piyaratne2019moduli} that $\Ac_\mu(V)$ is a noetherian abelian category.

Next, since the function $$r\omega^{d-1}\cdot \Ch_1(-)-\Ch_0(-)\omega^{d-1}\cdot D$$ is nonnegative on $\Ac_\mu(V)$, the full subcategory
\[
  \Bc_\mu(V):= \{ E \in \Ac_\mu : r\omega^{d-1}\cdot \Ch_1(E)-\Ch_0(E) \omega^{d-1}\cdot D=0\}
\]
is a Serre subcategory of $\Ac_\mu(V)$ and is itself a noetherian abelian category.  We also set
$$\op{Coh}_\mu (V): =   \langle E \in \op{Coh} (V) : E \text{ is $\mu_\omega$-semistable},  \mu_\omega (E)=\mu \rangle.$$

\begin{lem}\label{lem10}
We have the following equality of subcategories of $D^b(V)$:
\begin{itemize}
\item[(a)] $\mathcal B_\mu(V) = \ldbrack \op{Coh}_\mu (V)[1], \op{Coh}^{\leq d-2}(V)  \rdbrack = \langle \op{Coh}_\mu (V)[1], \op{Coh}^{\leq d-2}(V) \rangle$.
\item[(b)] For any torsion class $\mathcal T$ in $\op{Coh} (V)$ with $\mathcal T \subseteq \op{Coh}^{\leq 1}(V)$, 
    \[
    \ldbrack \op{Coh}_\mu (V)[1], \mathcal T \rdbrack=\langle \op{Coh}_\mu (V)[1], \mathcal T\rangle.
    \]
\end{itemize}
\end{lem}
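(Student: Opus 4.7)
The plan for (a) is to run two inclusions, where the harder direction is to extract from the linear equation defining $\Bc_\mu(V)$ that both cohomology sheaves of an object of $\Bc_\mu(V)$ have the prescribed form. For the easy direction, $\op{Coh}_\mu(V)[1]$ lies in $\Bc_\mu(V)$ by the very definition of the slope, while $\op{Coh}^{\leq d-2}(V)$ lies in $\Bc_\mu(V)$ because $\Ch_0$ and $\omega^{d-1}\Ch_1$ both vanish on sheaves supported in codimension at least two (the latter because $\omega$ is ample and the support has codimension $\geq 2$). The Hom-vanishing $\Hom_{D^b(V)}(\op{Coh}_\mu(V)[1],\op{Coh}^{\leq d-2}(V))=0$ is immediate from cohomological degrees, so $\ldbrack\op{Coh}_\mu(V)[1],\op{Coh}^{\leq d-2}(V)\rdbrack$ is defined; and since $\Bc_\mu(V)$ is a Serre subcategory of $\Ac_\mu(V)$, the extension closure $\langle\op{Coh}_\mu(V)[1],\op{Coh}^{\leq d-2}(V)\rangle$ is contained in $\Bc_\mu(V)$.

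For the reverse inclusion $\Bc_\mu(V)\subseteq \ldbrack\cdots\rdbrack$, I would take $E\in\Bc_\mu(V)$ and consider the canonical $\Ac_\mu(V)$-short exact sequence $0\to H^{-1}(E)[1]\to E \to H^0(E)\to 0$. The torsion-free sheaf $H^{-1}(E)\in\op{Coh}_{\leq\mu}(V)$ satisfies $\omega^{d-1}\Ch_1(H^{-1}(E))\leq \mu\,\Ch_0(H^{-1}(E))$ by its HN filtration, with equality iff $H^{-1}(E)\in\op{Coh}_\mu(V)$. For $H^0(E)\in\op{Coh}_{>\mu}(V)$, splitting off the maximal torsion subsheaf $T$ and using that the torsion-free quotient remains in $\op{Coh}_{>\mu}(V)$ yields $\omega^{d-1}\Ch_1(H^0(E))\geq \mu\,\Ch_0(H^0(E))$, with equality iff the torsion-free quotient vanishes and $T\in\op{Coh}^{\leq d-2}(V)$. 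Writing $\Ch(E)=-\Ch(H^{-1}(E))+\Ch(H^0(E))$, the defining equation of $\Bc_\mu(V)$ becomes $-a+b=0$ where $a:=\omega^{d-1}\Ch_1(H^{-1}(E))-\mu\,\Ch_0(H^{-1}(E))\leq 0$ and $b:=\omega^{d-1}\Ch_1(H^0(E))-\mu\,\Ch_0(H^0(E))\geq 0$, forcing $a=b=0$. Hence $H^{-1}(E)\in\op{Coh}_\mu(V)$ and $H^0(E)\in\op{Coh}^{\leq d-2}(V)$, and the canonical sequence exhibits $E\in\ldbrack\op{Coh}_\mu(V)[1],\op{Coh}^{\leq d-2}(V)\rdbrack$.

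For part (b), since $d\geq 3$ we have $\mathcal T\subseteq\op{Coh}^{\leq 1}(V)\subseteq\op{Coh}^{\leq d-2}(V)\subseteq\op{Coh}_{>\mu}(V)$, so $\langle\op{Coh}_\mu(V)[1],\mathcal T\rangle\subseteq \Ac_\mu(V)$ and by (a) every object $E$ of this extension closure has $H^{-1}(E)\in\op{Coh}_\mu(V)$ and $H^0(E)\in\op{Coh}^{\leq d-2}(V)$. To upgrade $H^0(E)$ to an object of $\mathcal T$, I would induct on the number of extensions used to build $E$: in a step $0\to A\to E\to B\to 0$ with $H^0(A),H^0(B)\in\mathcal T$, the long exact cohomology sequence in $\op{Coh}(V)$ realizes $H^0(E)$ as an extension of $H^0(B)$ by a quotient of $H^0(A)$, which stays in $\mathcal T$ because $\mathcal T$ is a torsion class (closed under quotients and extensions, by Remark \ref{rem:Pol1}). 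The Hom-vanishing $\Hom(\op{Coh}_\mu(V)[1],\mathcal T)=0$ is again immediate from cohomological degrees, so the canonical filtration places $E$ in $\ldbrack\op{Coh}_\mu(V)[1],\mathcal T\rdbrack$. The main technical point I expect is the slope-inequality bookkeeping in part (a): one must confirm that the equality cases really do force $H^{-1}(E)$ to be of pure slope $\mu$ and $H^0(E)$ to be supported in codimension at least two, which is the content of the HN argument above but is the one step that genuinely uses ampleness of $\omega$.
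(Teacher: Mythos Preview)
Your proof is correct and follows essentially the same approach as the paper. In part (a) you spell out explicitly the slope-inequality bookkeeping that the paper compresses into the single phrase ``the condition \ldots is violated,'' and in part (b) your induction on the number of extensions is exactly the extension-closure argument the paper gives (combined with Lemma~\ref{lem9}), just phrased from the other direction.
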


\begin{proof}
(a) Since $\Bc_\mu(V)$ is extension closed, it suffices to prove the first equality by Lemma \ref{lem9}.  Given any  object $E \in \Bc_\mu(V)$, if it does not satisfy both $H^{-1}(E) \in \op{Coh}_\mu (V)$ and $H^0(E)\in\op{Coh}^{\leq d-2}(V)$ the condition $$r\omega^{d-1}\cdot \Ch_1(E)-\Ch_0(E) \omega^{d-1} \cdot D=0$$ is violated. Hence $\Bc_\mu(V)$ is contained in $\ldbrack \op{Coh}_\mu (V)[1], \op{Coh}^{\leq d-2}(V)  \rdbrack$, while the other inclusion is clear.

(b) Since $[ \op{Coh}_\mu (V)[1], \mathcal T]$ is contained in $\Bc_\mu(V)$, which is an abelian subcategory of $D^b(V)$,  it suffices to show that the category $[ \op{Coh}_\mu (V)[1], \mathcal T]$ is  closed under extension in $\Bc_\mu(V)$ by Lemma \ref{lem9}.

Take any short  exact sequence $$0 \to E' \to E \to E'' \to 0$$ in $\Bc_\mu$ where $H^{-1}(E'), H^{-1}(E'') \in \op{Coh}_\mu(V)$ and $H^0(E'), H^0(E'') \in \mathcal T$.  From the long exact sequence of cohomology, $H^0(E)$ must lie in $\mathcal T$ since  $\mathcal T$ is  a torsion class in $\op{Coh} (V)$.  That $H^{-1}(E)$ lies in $\op{Coh}_\mu (V)$ follows from part (a).
\end{proof}

\begin{lem}\label{lem6}
Let $\mathcal T$ be any torsion class in $\op{Coh} (V)$ with $\mathcal T \subseteq \op{Coh}^{\leq 1}(V)$.  Then  $\mathcal T$ and $\langle \op{Coh}_\mu (V)[1], \mathcal T \rangle$   are both torsion classes in $\Bc_\mu(V)$.
\end{lem}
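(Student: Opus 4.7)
The plan is to invoke Remark~\ref{rem:Pol1}: since $\Bc_\mu(V)$ is noetherian, a full subcategory is a torsion class there precisely when it is closed under both extensions and quotients in $\Bc_\mu(V)$. The containments $\mathcal T\subseteq\op{Coh}^{\leq 1}(V)\subseteq\op{Coh}^{\leq d-2}(V)\subseteq \Bc_\mu(V)$ (using $d\geq 3$) and $\langle \op{Coh}_\mu(V)[1],\mathcal T\rangle\subseteq\Bc_\mu(V)$ are both immediate from Lemma~\ref{lem10}, so only the two closure properties require attention in each case.

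First I would show $\mathcal T$ is extension-closed in $\Bc_\mu(V)$: given a $\Bc_\mu$-short exact sequence $0\to A\to B\to C\to 0$ with $A,C\in\mathcal T$, the associated long exact sequence of cohomology sheaves, combined with $H^{-1}(A)=H^{-1}(C)=0$, forces $H^{-1}(B)=0$, so the sequence is already exact in $\op{Coh}(V)$ and $B\in\mathcal T$ by extension-closedness of $\mathcal T$ in $\op{Coh}(V)$. For quotient-closedness, a $\Bc_\mu$-surjection $A\twoheadrightarrow B$ with kernel $K$ yields an injection $H^{-1}(B)\hookrightarrow H^0(K)$ in the cohomology long exact sequence; since $\mu$ is finite, every sheaf in $\op{Coh}_\mu(V)$ is torsion-free (a torsion subsheaf would have infinite slope, destabilizing), whereas $H^0(K)\in\op{Coh}^{\leq d-2}(V)$ is torsion, so the injection forces $H^{-1}(B)=0$. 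Then $B$ is a sheaf quotient of $A\in\mathcal T$ in $\op{Coh}(V)$, hence in $\mathcal T$.

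Next I would turn to $\langle \op{Coh}_\mu(V)[1],\mathcal T\rangle$, for which extension-closedness is built into the definition of extension closure. For quotient-closedness, take a $\Bc_\mu$-surjection $E\twoheadrightarrow E''$ with $E\in\langle \op{Coh}_\mu(V)[1],\mathcal T\rangle$. By Lemma~\ref{lem10}(b), $E$ admits a canonical $\Bc_\mu$-short exact sequence $0\to H^{-1}(E)[1]\to E\to H^0(E)\to 0$ with $H^{-1}(E)\in\op{Coh}_\mu(V)$ and $H^0(E)\in\mathcal T$. The cohomology long exact sequence for $E\twoheadrightarrow E''$ produces a sheaf surjection $H^0(E)\twoheadrightarrow H^0(E'')$ (the next term $H^1$ of any object of $\Bc_\mu(V)$ vanishes by Lemma~\ref{lem10}(a)), so $H^0(E'')\in\mathcal T$ by quotient-closedness of $\mathcal T$ in $\op{Coh}(V)$. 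Meanwhile $E''\in\Bc_\mu(V)$ gives $H^{-1}(E'')\in\op{Coh}_\mu(V)$, again by Lemma~\ref{lem10}(a). The canonical triangle $H^{-1}(E'')[1]\to E''\to H^0(E'')$ combined with Lemma~\ref{lem10}(b) then places $E''$ in $\langle \op{Coh}_\mu(V)[1],\mathcal T\rangle$, as required.

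There is no real obstacle; the argument is essentially diagrammatic. The one point worth emphasizing, and the only place any geometric content enters, is the torsion-free-versus-torsion dichotomy between $\op{Coh}_\mu(V)$ and $\op{Coh}^{\leq d-2}(V)$, which is what kills the unwanted $H^{-1}$ terms in the two quotient arguments.
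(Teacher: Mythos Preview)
Your proof is correct and follows essentially the same route as the paper's: invoke Remark~\ref{rem:Pol1}, then verify quotient-closedness via the long exact sequence of cohomology sheaves, using that $H^{-1}$ of any object in $\Bc_\mu(V)$ lies in $\op{Coh}_\mu(V)$ (Lemma~\ref{lem10}(a)) and that $H^0$ lies in $\op{Coh}^{\leq d-2}(V)$. The paper phrases the vanishing of the unwanted $H^{-1}$ in the $\mathcal T$ case via Chern characters rather than your torsion-free/torsion dichotomy, but this is the same observation; the paper also leaves extension-closedness of $\mathcal T$ in $\Bc_\mu(V)$ implicit, whereas you spell it out.
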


\begin{proof}
By Remark \ref{rem:Pol1}, it suffices to show that each of these two categories is closed under quotient in $\Bc_\mu(V)$.

Consider a $\Bc_\mu(V)$-short exact sequence $$0 \to M \to E \to N\to 0.$$  If $E \in \mathcal T$, then from the long exact sequence $$0 \to H^{-1}(N) \to H^0(M) \to H^0(E)\to H^0(N) \to 0$$ we see that $\Ch_i(H^{-1}(N))=\Ch_i(H^0(M))$ for $i=0,1$, which is impossible if $\Ch_0(H^{-1}(N))>0$.  Hence $H^{-1}(N)$ must vanish, and so $H^0(N)=N$ lies in $\mathcal T$ since $H^0(E)$ lies in $\mathcal T$.

If $E \in \langle \op{Coh}_\mu (V)[1], \mathcal T \rangle$, then we must have $H^0(N)\in \mathcal T$.  That  $H^{-1}(N) \in \op{Coh}_\mu (V)$ follows from Lemma \ref{lem10}(a), and so $N \in \langle \op{Coh}_\mu (V)[1], \mathcal T \rangle$.
\end{proof}

\begin{defn} For any torsion class $\mathcal T$ in $\op{Coh} (V)$ with $\mathcal T \subseteq \op{Coh}^{\leq 1}(V)$, we define $\op{Coh}^{\mathcal T}_\mu(V)$ to be the full subcategory of objects $E \in D^b(V)$  satisfying
\begin{itemize}
\item[(i)] $E \in D^{[-1,0]}_{\op{Coh}(V)}$.
\item[(ii)] $H^{-1}(E)$ is a $\mu_\omega$-semistable torsion-free sheaf of slope $\mu$.
\item[(iii)] $H^0(E) \in \mathcal T$.
\item[(iv)] $\Hom_{D^b(V)}(\mathcal T,E)=0$.
\end{itemize}
\end{defn}

\begin{prop}\label{lem:AG50-17-2}
Let $\mathcal T$ be a torsion class in $\op{Coh} (V)$ with $\mathcal T \subseteq \op{Coh}^{\leq 1}(V)$.  Then 
\begin{equation}\label{eq:AG50-17-2}
  \ldbrack \op{Coh}_\mu (V)[1], \mathcal T \rdbrack = \ldbrack \mathcal T, \op{Coh}_\mu^{\mathcal T}(V)\rdbrack.
\end{equation}
Furthermore, this  category is extension-closed, and so can  be written as
\begin{equation*}
  \langle \op{Coh}_\mu (V)[1], \mathcal T \rangle = \langle \mathcal T, \op{Coh}_\mu^{\mathcal T}(V)\rangle.
\end{equation*}
\end{prop}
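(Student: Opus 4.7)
The plan is to establish the main equality \eqref{eq:AG50-17-2} through a torsion-pair argument in $\Bc_\mu(V)$, and then deduce the remaining claims from the preceding lemmas. As a preliminary observation I will note that $\mathcal T \subseteq \op{Coh}^{\leq 1}(V) \subseteq \Bc_\mu(V)$ (every torsion sheaf supported in dimension $\leq 1$ is $\mu_\omega$-semistable of slope $\infty > \mu$ and has $\Ch_0 = \Ch_1 = 0$), so both sides of \eqref{eq:AG50-17-2} are a priori contained in $\Bc_\mu(V)$. Also, the left bracket $\ldbrack \mathcal T, \op{Coh}_\mu^{\mathcal T}(V) \rdbrack$ is legitimate because condition (iv) in the definition of $\op{Coh}_\mu^{\mathcal T}(V)$ gives the required Hom-vanishing, while the right bracket $\ldbrack \op{Coh}_\mu(V)[1], \mathcal T \rdbrack$ is legitimate since $\Hom(A[1], T) = 0$ for sheaves $A, T$.

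The easier inclusion $\ldbrack \mathcal T, \op{Coh}_\mu^{\mathcal T}(V) \rdbrack \subseteq \ldbrack \op{Coh}_\mu(V)[1], \mathcal T \rdbrack$ will follow quickly: given an object $E$ on the left fitting in a triangle $T \to E \to F$ with $T \in \mathcal T$ and $F \in \op{Coh}_\mu^{\mathcal T}(V)$, conditions (i)--(iii) for $F$ combined with Lemma \ref{lem10}(a) place $F$ inside $\langle \op{Coh}_\mu(V)[1], \mathcal T\rangle$, and $T$ is there trivially; then Lemma \ref{lem10}(b) asserts this category is extension-closed, hence it contains $E$.

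The reverse inclusion is the main content. By Lemma \ref{lem6}, $\mathcal T$ is a torsion class in the noetherian abelian category $\Bc_\mu(V)$, so it produces a torsion pair with torsion-free class $\mathcal T^\perp := \{E \in \Bc_\mu(V) : \Hom_{\Bc_\mu(V)}(\mathcal T, E) = 0\}$. For any $E \in \ldbrack \op{Coh}_\mu(V)[1], \mathcal T \rdbrack \subseteq \Bc_\mu(V)$, the resulting canonical short exact sequence $0 \to T \to E \to E' \to 0$ in $\Bc_\mu(V)$ will be the desired triangle, and the remaining task is to verify the four conditions for $E' \in \op{Coh}_\mu^{\mathcal T}(V)$. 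Condition (i) is automatic from $\Bc_\mu(V) \subseteq D^{[-1,0]}_{\op{Coh}(V)}$, and (ii) is the statement $H^{-1}(E') \in \op{Coh}_\mu(V)$ from Lemma \ref{lem10}(a). For (iii), the long exact sequence of cohomology exhibits $H^0(E')$ as a quotient of $H^0(E) \in \mathcal T$, and $\mathcal T$ is closed under quotients in $\op{Coh}(V)$. For (iv), since both $\mathcal T$ and $E'$ sit in the heart $\Ac_\mu(V)$ of a $t$-structure on $D^b(V)$, we have the identification $\Hom_{D^b(V)}(\mathcal T, E') = \Hom_{\Bc_\mu(V)}(\mathcal T, E') = 0$ by construction.

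Once \eqref{eq:AG50-17-2} is established, the final assertion is immediate: Lemma \ref{lem10}(b) shows the common category is extension-closed, and Lemma \ref{lem9} then rewrites both bracketings as extension-closures. The main obstacle is producing the torsion-pair decomposition inside $\Bc_\mu(V)$, which ultimately rests on Lemma \ref{lem6} and the noetherianness of $\Bc_\mu(V)$ already available from the setup.
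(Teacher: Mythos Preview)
Your proof is correct and follows essentially the same strategy as the paper's. The one cosmetic difference is in how you obtain the decomposition $0 \to T \to E \to E' \to 0$: you invoke directly that $\mathcal T$ is a torsion class in $\Bc_\mu(V)$ (Lemma~\ref{lem6}) and take the resulting torsion-pair decomposition, whereas the paper builds the maximal $\mathcal T$-subobject by an explicit ascending-chain argument (following Toda), using noetherianness of $\Bc_\mu(V)$ to terminate. Since Lemma~\ref{lem6} already packages that noetherian argument via Remark~\ref{rem:Pol1}, your route is slightly cleaner; the verification of conditions (i)--(iv) for $E'$ is then identical in spirit to the paper's.
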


\begin{proof}

We have proven in Lemma \ref{lem10} that $$\ldbrack \op{Coh}_\mu(V)[1], \mathcal T \rdbrack =
\langle \op{Coh}_\mu(V )[1],\mathcal T \rangle.$$ Also, this category clearly contains the right-hand side of \eqref{eq:AG50-17-2}.


For the other inclusion in \eqref{eq:AG50-17-2}, take any object $E$ in $\langle \op{Coh}_\mu(V) [1], \mathcal T\rangle$.   We proceed as in the proof of \cite[Lemma 3.17]{Toda2}: Suppose $E$ does not lie in $\op{Coh}^\mathcal T_\mu(V)$.  Then  there must be a nonzero morphism $\alpha : T \to E$ in $D^b(X)$ for some $T \in \mathcal T$.  Since $T, E$ both lie in  $\Bc_\mu(V)$, the morphism $\alpha$ is  in the abelian category $\Bc_\mu(V)$.  Since $\mathcal T$ is closed under quotient in $\Bc_\mu(V)$ by Lemma \ref{lem6}, we can replace $T$ by $\image \alpha$ and assume that $\alpha$ is an injection in $\Bc_\mu(V)$.  Then, since $\langle \op{Coh}_\mu(V) [1], \mathcal T \rangle$ is also closed under quotient in $\Bc_\mu(V)$ by Lemma \ref{lem6}, the quotient $E/\image \alpha$ again lies in $\langle \op{Coh}_\mu(V) [1], \mathcal T\rangle$.  If $E/\image \alpha \notin \op{Coh}^{\mathcal T}_\mu(V)$, there must be a nonzero morphism $T' \to E/\image \alpha$ for some $T' \in \mathcal T$, and we can repeat the process.  Since $\Bc_\mu(V)$ is a noetherian abelian category, this process must terminate, in which case we can write $E$ as an extension of an object in $\op{Coh}^{\mathcal T}_\mu(V)$ by an object in $\mathcal T$, i.e.\ $E$ lies in the right-hand side of \eqref{eq:AG50-17-2}.

The last assertion follows from \eqref{eq:AG50-17-2} and Lemma \ref{lem9}.
\end{proof}

Given a torsion class $\mathcal T$ in $\op{Coh} (V)$, we write $\mathcal T^\circ$ to denote the corresponding torsion-free class in $\op{Coh} (V)$ i.e.  the full subcategory of $\op{Coh} (V)$ given by
\[
  \mathcal T^\circ := \{ E \in \op{Coh} (V) : \Hom_{\op{Coh}(V)} (\mathcal T, E)=0\}.
\]

We now have a more general form of Proposition \ref{lem:AG50-17-2}:

\begin{thm}\label{thm:catwc}
Suppose $\mathcal T_1, \mathcal T_2$ are torsion classes in $\op{Coh} (V)$ such that 
\[
  \mathcal T_1 \subseteq \mathcal T_2 \subseteq \op{Coh}^{\leq 1}(V).
\]
Then
\begin{equation}\label{eq:AG50-17-5}
  \ldbrack \mathcal T_2 \cap \mathcal T_1^\circ, \, \op{Coh}^{\mathcal T_2}_\mu(V) \rdbrack = \ldbrack \op{Coh}^{\mathcal T_1}_\mu(V), \,  \mathcal T_2 \cap \mathcal T_1^\circ \rdbrack.
\end{equation}
Furthermore, this  category is extension-closed, and so can  be written as
\begin{equation*}
  \langle \mathcal T_2 \cap \mathcal T_1^\circ, \, \op{Coh}^{\mathcal T_2}_\mu(V) \rangle = \langle \op{Coh}^{\mathcal T_1}_\mu(V), \,  \mathcal T_2 \cap \mathcal T_1^\circ \rangle.
\end{equation*}
\end{thm}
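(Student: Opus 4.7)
The strategy is to show both sides equal $\langle \op{Coh}_\mu(V)[1], \mathcal{T}_2 \rangle \cap \mathcal{F}_1$, where
\[
  \mathcal{F}_1 := \{E \in \Bc_\mu(V) : \Hom_{D^b(V)}(\mathcal{T}_1, E) = 0\}
\]
is the torsion-free class in $\Bc_\mu(V)$ corresponding to the torsion class $\mathcal{T}_1$ of Lemma \ref{lem6}.  Each side will then be expressed as an intersection of extension-closed subcategories of $\Bc_\mu(V)$, and both the extension-closure claim and the equality with $\langle \cdots \rangle$ in the theorem will follow at once from Lemma \ref{lem9}.

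First I would apply Proposition \ref{lem:AG50-17-2} to both $\mathcal{T}_1$ and $\mathcal{T}_2$ to obtain
\[
  \langle \op{Coh}_\mu(V)[1], \mathcal{T}_j \rangle = \ldbrack \op{Coh}_\mu(V)[1], \mathcal{T}_j \rdbrack = \ldbrack \mathcal{T}_j, \op{Coh}^{\mathcal{T}_j}_\mu(V) \rdbrack, \quad j=1,2.
\]
Since $\mathcal{T}_2$ is closed under quotients in $\op{Coh}(V)$, the $\mathcal{T}_1$-torsion pair in $\op{Coh}(V)$ restricts to a decomposition $\mathcal{T}_2 = \ldbrack \mathcal{T}_1, \mathcal{T}_2 \cap \mathcal{T}_1^\circ \rdbrack$; substituting this into the two expressions for $\langle \op{Coh}_\mu(V)[1], \mathcal{T}_2 \rangle$ above and re-bracketing via Corollary \ref{coro:P30A3-1} then yields the two three-step filtrations
\[
  \ldbrack \mathcal{T}_1, \op{Coh}^{\mathcal{T}_1}_\mu(V), \mathcal{T}_2 \cap \mathcal{T}_1^\circ \rdbrack \;=\; \langle \op{Coh}_\mu(V)[1], \mathcal{T}_2 \rangle \;=\; \ldbrack \mathcal{T}_1, \mathcal{T}_2 \cap \mathcal{T}_1^\circ, \op{Coh}^{\mathcal{T}_2}_\mu(V) \rdbrack.
\]
All required pairwise Hom-vanishings reduce to condition (iv) of the relevant $\op{Coh}^{\mathcal{T}_j}_\mu(V)$, the definition of $\mathcal{T}_1^\circ$, the inclusion $\mathcal{T}_1 \subseteq \mathcal{T}_2$, and the trivial $\Hom_{D^b(V)}(\op{Coh}_\mu(V)[1], \op{Coh}(V)) = 0$.

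Next I would verify that the two candidate categories $\ldbrack \op{Coh}^{\mathcal{T}_1}_\mu(V), \mathcal{T}_2 \cap \mathcal{T}_1^\circ \rdbrack$ and $\ldbrack \mathcal{T}_2 \cap \mathcal{T}_1^\circ, \op{Coh}^{\mathcal{T}_2}_\mu(V) \rdbrack$ each coincide with $\langle \op{Coh}_\mu(V)[1], \mathcal{T}_2 \rangle \cap \mathcal{F}_1$.  Containment in $\langle \op{Coh}_\mu(V)[1], \mathcal{T}_2 \rangle$ is immediate from the three-step descriptions above; containment in $\mathcal{F}_1$ propagates through the defining short exact sequence in $\Bc_\mu(V)$ via the left-exactness of $\Hom_{D^b(V)}(\mathcal{T}_1, -)$ together with its vanishing on each constituent.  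Conversely, given $E$ in the intersection, either three-step filtration $0 \subset E_1 \subset E_2 \subset E$ in $\Bc_\mu(V)$ has $E_1 \in \mathcal{T}_1$; the monomorphism $E_1 \hookrightarrow E$ is in particular a morphism $\mathcal{T}_1 \to E$ in $D^b(V)$, which must vanish since $E \in \mathcal{F}_1$, so $E_1 = 0$ and $E$ sits in the corresponding two-step bracket.

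The main obstacle I anticipate is precisely this final cancellation of the $\mathcal{T}_1$-prefix.  It works because $\mathcal{T}_1$ is a torsion class in the abelian category $\Bc_\mu(V)$ (Lemma \ref{lem6}), so the $\mathcal{T}_1$-subobject $E_1$ appearing in the filtration is a genuine subobject in $\Bc_\mu(V)$, and a nonzero monic $E_1 \hookrightarrow E$ would then contradict $\Hom_{D^b(V)}(\mathcal{T}_1, E) = 0$.
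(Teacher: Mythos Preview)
Your proof is correct and takes a cleaner, more symmetric route than the paper's. The paper establishes only the decomposition $\Bc_\mu(V)=\ldbrack \mathcal T_1,\mathcal T_2\cap\mathcal T_1^\circ,\op{Coh}^{\mathcal T_2}_\mu(V),\mathcal F\rdbrack$ and then, via Lemma~\ref{lem:AG50-23-1}, identifies the left-hand side $\ldbrack \mathcal T_2\cap\mathcal T_1^\circ,\op{Coh}^{\mathcal T_2}_\mu(V)\rdbrack$ as the intersection of a torsion class and a torsion-free class in $\Bc_\mu(V)$, hence extension-closed. For the right-hand side $\ldbrack \op{Coh}^{\mathcal T_1}_\mu(V),\mathcal T_2\cap\mathcal T_1^\circ\rdbrack$, however, the paper abandons this method and instead checks extension-closure by hand (via an explicit maximal-$\mathcal T_1$-subobject argument) and then proves the two mutual containments $\op{Coh}^{\mathcal T_2}_\mu(V)\subset\ldbrack \op{Coh}^{\mathcal T_1}_\mu(V),\mathcal T_2\cap\mathcal T_1^\circ\rdbrack$ and $\op{Coh}^{\mathcal T_1}_\mu(V)\subset\ldbrack \mathcal T_2\cap\mathcal T_1^\circ,\op{Coh}^{\mathcal T_2}_\mu(V)\rdbrack$ separately.

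Your observation is that the \emph{same} intersection trick works for the right-hand side once one produces the second three-step decomposition $\langle\op{Coh}_\mu(V)[1],\mathcal T_2\rangle=\ldbrack \mathcal T_1,\op{Coh}^{\mathcal T_1}_\mu(V),\mathcal T_2\cap\mathcal T_1^\circ\rdbrack$, obtained by substituting Proposition~\ref{lem:AG50-17-2} for $\mathcal T_1$ into $\ldbrack\op{Coh}_\mu(V)[1],\mathcal T_1,\mathcal T_2\cap\mathcal T_1^\circ\rdbrack$. Both two-step brackets are then simultaneously identified with $\langle\op{Coh}_\mu(V)[1],\mathcal T_2\rangle\cap\mathcal F_1$, giving equality and extension-closure in one stroke. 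This buys you a shorter and conceptually uniform argument; the paper's approach, by contrast, makes the extension-closure of the right-hand side visible through an explicit construction, which is perhaps more concrete but less economical. One small point: the Hom-vanishing $\Hom(\op{Coh}^{\mathcal T_1}_\mu(V),\mathcal T_2\cap\mathcal T_1^\circ)=0$ needed for your second three-step bracket uses condition (iii) (that $H^0(E)\in\mathcal T_1$) together with $\mathcal T_1^\circ$ and the shift vanishing, rather than condition (iv) as your list suggests; the argument still goes through with your stated ingredients.
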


\begin{proof}
Since $\mathcal T_1, \mathcal T_2$ are both torsion classes in $\op{Coh} (V)$, we have
\[
  \mathcal T_2 = \ldbrack \mathcal T_1, \mathcal T_2 \cap \mathcal T_1^\circ \rdbrack.
\]
By Lemma \ref{lem6} and Proposition  \ref{lem:AG50-17-2}, $\ldbrack \mathcal T_2, \op{Coh}^{\mathcal T_2}_\mu(V) \rdbrack$ is a torsion class in $\Bc_\mu(V)$, and so
\begin{align*}
  \Bc_\mu(V) &= \ldbrack \ldbrack \mathcal T_2, \op{Coh}^{\mathcal T_2}_\mu(V) \rdbrack, \mathcal F \rdbrack \text{ for some torsion-free class $\mathcal F$ in $\Bc_\mu(V)$} \\
  &= \ldbrack \ldbrack \ldbrack \mathcal T_1, \mathcal T_2 \cap \mathcal T_1^\circ \rdbrack, \op{Coh}^{\mathcal T_2}_\mu(V) \rdbrack, \mathcal F \rdbrack \\
  &=  \ldbrack \mathcal T_1, \mathcal T_2 \cap \mathcal T_1^\circ, \op{Coh}^{\mathcal T_2}_\mu(V), \mathcal F \rdbrack \text{ by Corollary \ref{coro:P30A3-1}}.
\end{align*}
Hence $\ldbrack \mathcal T_1, \mathcal T_2 \cap \mathcal T_1^\circ, \op{Coh}^{\mathcal T_2}_\mu(V)\rdbrack$ is a torsion class in $\Bc_\mu(V)$, while $$\ldbrack \mathcal T_2 \cap \mathcal T_1^\circ, \op{Coh}^{\mathcal T_2}_\mu(V), \mathcal F \rdbrack$$ is a torsion-free class in $\Bc_\mu(V)$.  Since their intersection equals $\ldbrack\mathcal T_2 \cap \mathcal T_1^\circ, \op{Coh}_\mu^{\mathcal T_2}(V)\rdbrack$ by the following Lemma, it is extension-closed in $\Bc_\mu(V)$.

\begin{lem}\label{lem:AG50-23-1}
Suppose $\Ac$ is the heart of a t-structure on a triangulated category $\mathcal D$, and $\Ac= \ldbrack \Cc_1, \dots, \Cc_n \rdbrack$ for some full subcategories $\Cc_1, \dots, \Cc_n$.  Then for any $1 \leq i  \leq j \leq n$ we have
    \[
      \ldbrack\Cc_i, \dots, \Cc_j \rdbrack = \ldbrack \Cc_1, \dots, \Cc_j \rdbrack \cap \ldbrack \Cc_i , \dots, \Cc_n \rdbrack.
    \]
\end{lem}

\begin{proof}
The left-to-right inclusion is clear, while the right-to-left inclusion follows from the uniqueness of decomposition (e.g.\ see \cite[Remark 3.7]{Toda2}).
\end{proof}

Next, we show that $\ldbrack \op{Coh}^{\mathcal T_1}_\mu(V), \mathcal T_2 \cap \mathcal T_1^\circ \rdbrack$ is also extension-closed in the abelian category $\Bc_\mu(V)$.  To see this, take any $E', E''$ from this category and consider an extension $E$ of $E''$ by $E'$, i.e.\ we have the $\Bc_\mu(V)$-short exact sequence
\begin{equation}\label{eq:AG50-17-3}
0 \to E' \to E \to E'' \to 0.
\end{equation}
Since $E$ lies in $\Bc_\mu(V)$, we have $H^{-1}(E) \in \op{Coh}_\mu(V)$.  Also, since $$\Hom (\mathcal T_1, E')=0=\Hom (\mathcal T_1, E''),$$ we have $\Hom (\mathcal T_1, E)=0$.  From the long exact sequence of \eqref{eq:AG50-17-3}, we know $H^0(E) \in \mathcal T_2$ so it fits in a short exact sequence of sheaves
\[
0 \to A \to H^0(E) \to H^0(E)/A \to 0
\]
where $A$ is the maximal subsheaf of $H^0(E)$ lying in $\mathcal T_1$, which in turn implies $H^0(E)/A \in \mathcal T_2 \cap \mathcal T_1^\circ$.  Note that both the canonical map $E \to H^0(E)$ and the sheaf surjection $H^0(E) \to H^0(E)/A$ are $\Bc_\mu(V)$-surjections.  Writing $G$ to denote the $\Bc_\mu(V)$-kernel of the composition
\[
  E \to H^0(E) \to H^0(E)/A
\]
and applying the octahedral axiom to this composition, we obtain the $\Bc_\mu(V)$-short exact sequence
\[
0 \to H^{-1}(E)[1] \to G \to A \to 0
\]
from which we see $H^{-1}(G) \cong H^{-1}(E) \in \op{Coh}_\mu(V)$ and $H^0(G) \cong A \in \mathcal T_1$.  On the other hand, applying $\Hom (\mathcal T_1,-)$ to the $\Bc_\mu(V)$-short exact sequence
\[
0 \to G \to E \to H^0(E)/A \to 0
\]
gives $\Hom (\mathcal T_1, G)=0$.  Hence $G \in \op{Coh}^{\mathcal T_1}_\mu(V)$ while $H^0(E)/A \in \mathcal T_2 \cap \mathcal T_1^\circ$.  This shows that $\ldbrack \op{Coh}^{\mathcal T_1}_\mu(V), \mathcal T_2 \cap \mathcal T_1^\circ \rdbrack$ is extension-closed.

We have now shown that both sides of \eqref{eq:AG50-17-5} are extension-closed.  To finish the proof, we will now show
\[
  \op{Coh}^{\mathcal T_2}_\mu (V)\subset \ldbrack \op{Coh}^{\mathcal T_1}_\mu(V), \mathcal T_2 \cap \mathcal T_1^\circ \rdbrack \text{\quad and \quad}  \ldbrack \mathcal T_2 \cap \mathcal T_1^\circ, \op{Coh}^{\mathcal T_2}_\mu(V) \rdbrack \supset  \op{Coh}^{\mathcal T_1}_\mu(V).
\]
Given any $E \in \op{Coh}^{\mathcal T_2}_\mu(V)$, we can follow the same argument as in the previous paragraph to write $E$ as an extension in $\Bc_\mu(V)$
\[
0 \to G' \to E \to A' \to 0
\]
where $H^{-1}(G') \in \op{Coh}_\mu(V)$, $H^0(G') \in \mathcal T_1$ and $A' \in \mathcal T_2 \cap \mathcal T_1^\circ$.  Applying $\Hom (\mathcal T_1, -)$ to this short exact sequence (and noting $\mathcal T_1 \subseteq \mathcal T_2$) gives $\Hom (\mathcal T_1, G')=0$, i.e.\ $G' \in \op{Coh}^{\mathcal T_1}_\mu(V)$.  This proves the first inclusion.

To show the other inclusion, take any $E \in \op{Coh}^{\mathcal T_1}_\mu(V)$.  Since $\mathcal T_2$ is a torsion class in $\Bc_\mu(V)$ by Lemma \ref{lem6}, there is a $\Bc_\mu(V)$-short exact sequence
\begin{equation}\label{eq:AG50-17-6}
0 \to L \to E \to E' \to 0
\end{equation}
where $L$ is the maximal subobject of $E$ lying in $\mathcal T_2$.  Applying the functor $\Hom (\mathcal T_1,-)$ shows $0=\Hom_{D^b(V)}(\mathcal T_1, L) = \Hom_{\op{Coh} (V)} (\mathcal T_1, L)$, i.e.\ $L \in \mathcal T_2\cap\mathcal T_1^\circ$.  We claim that $E' \in \op{Coh}^{\mathcal T_2}_\mu(V)$.  Since $E' \in \Bc_\mu(V)$, we have $H^{-1}(E') \in \op{Coh}_\mu(V) $.  From the long exact sequence of sheaves for \eqref{eq:AG50-17-6}, we see that $H^0(E')$ is a quotient sheaf of $H^0(E)$ which lies in $\mathcal T_1$ and hence $\mathcal T_2$.  Hence $H^0(E')$ itself lies in $\mathcal T_2$.  Finally, we must have $$0 = \Hom_{D^b(V)}(\mathcal T_2, E') = \Hom_{\Bc_\mu(V)}(\mathcal T_2, E'),$$ or else the maximality of $L$ would be violated.  Hence $E' \in \op{Coh}^{\mathcal T_2}_\mu(V)$.
\end{proof}

We have the slope function
\[
\overline{\mu}_\omega (F) := \frac{\Ch_d(F)}{\omega \cdot \Ch_{d-1}(F)}
\]
on $\op{Coh}^{\leq 1}(V)$.  For any interval $I$, we  set
\[
  \Cc_I(V):= \langle F \in \op{Coh}^{\leq 1}(V) : F \text{ is $\overline{\mu}_\omega$-semistable}, \, \overline{\mu}_\omega (F) \in I \rangle.
\]

\begin{cor} \label{cor:AG50-17-7}
Let $b$ be any real number, and $\mathcal U:= \Cc_{(-\infty, b)}(V)$.
Then 
\begin{equation*}
\ldbrack \mathcal U, \op{Coh}^{\op{Coh}^{\leq 1}(V)}_\mu(V) \rdbrack = \ldbrack \op{Coh}^{\Cc_{[b,\infty]}(V)}_\mu(V), \mathcal U\rdbrack.
\end{equation*}
\end{cor}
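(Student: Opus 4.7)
The plan is to deduce this corollary as a direct special case of Theorem \ref{thm:catwc}, applied with the choice of torsion classes
\[
\mathcal T_1 := \Cc_{[b,\infty]}(V), \qquad \mathcal T_2 := \op{Coh}^{\leq 1}(V).
\]
Under this assignment, $\op{Coh}^{\mathcal T_2}_\mu(V)$ is the second factor on the left-hand side and $\op{Coh}^{\mathcal T_1}_\mu(V)$ is the first factor on the right-hand side of the desired equality, so the proof reduces to verifying the hypotheses of Theorem \ref{thm:catwc} and to identifying $\mathcal T_2 \cap \mathcal T_1^\circ$ with $\mathcal U$.

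First I would check that $\mathcal T_1$ and $\mathcal T_2$ are genuine torsion classes in $\op{Coh}(V)$ with $\mathcal T_1 \subseteq \mathcal T_2 \subseteq \op{Coh}^{\leq 1}(V)$. The inclusions and the fact that $\op{Coh}^{\leq 1}(V)$ is a torsion class in $\op{Coh}(V)$ are clear. The nontrivial point is that $\mathcal T_1$ is closed under quotients and extensions inside the full category $\op{Coh}(V)$, not merely inside $\op{Coh}^{\leq 1}(V)$. Closure under extensions in $\op{Coh}(V)$ is automatic because an extension of $\leq 1$-dimensional sheaves is again $\leq 1$-dimensional; closure under quotients in $\op{Coh}(V)$ likewise reduces to the $\op{Coh}^{\leq 1}(V)$ setting, and the $\overline{\mu}_\omega$-HN slopes of a quotient are bounded below by those of the original sheaf by the standard HN inequalities, so they remain in $[b,\infty]$.

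Next I would identify $\mathcal T_2 \cap \mathcal T_1^\circ$ with $\mathcal U$. Because $\op{Coh}^{\leq 1}(V) \hookrightarrow \op{Coh}(V)$ is a full subcategory, an object $E \in \op{Coh}^{\leq 1}(V)$ lies in $\mathcal T_1^\circ$ if and only if it admits no nonzero morphism from $\mathcal T_1$ inside $\op{Coh}^{\leq 1}(V)$. Truncating $\overline{\mu}_\omega$-HN filtrations at the real number $b$ produces the torsion pair $(\Cc_{[b,\infty]}(V), \Cc_{(-\infty,b)}(V))$ on $\op{Coh}^{\leq 1}(V)$, whence the torsion-free class $\mathcal T_1^\circ \cap \op{Coh}^{\leq 1}(V)$ is precisely $\Cc_{(-\infty,b)}(V) = \mathcal U$.

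With these two facts in place, Theorem \ref{thm:catwc} immediately delivers the stated equality. The only mild obstacle is the HN-slope verification in the first step; the remainder is essentially a matching of notations with the hypotheses of Theorem \ref{thm:catwc}.
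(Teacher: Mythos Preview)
Your proposal is correct and follows exactly the paper's approach: the paper's proof consists of the single sentence ``This follows from Theorem \ref{thm:catwc} by choosing $\mathcal T_1 = \Cc_{[b,\infty]}(V)$ and $\mathcal T_2 = \op{Coh}^{\leq 1}(V)$,'' and you have simply spelled out the verification of the hypotheses (torsion-class conditions and the identification $\mathcal T_2 \cap \mathcal T_1^\circ = \mathcal U$) that the paper leaves implicit.
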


\begin{proof}
This follows from Theorem \ref{thm:catwc} by choosing $\mathcal T_1 = \Cc_{[b,\infty]}(V)$  and $\mathcal T_2 = \op{Coh}^{\leq 1}(V)$.
\end{proof}

Suppose $(Z_P, \Ac^p)$ is the polynomial stability condition in Proposition \ref{lem4} and $E \in \Ac^p$ is an object of nonzero rank such that $\Ch_0 (E)$, $\omega^3\Ch_1(E)$ are coprime; let $$\mu:=\frac{\omega^3\cdot \Ch_1(E)}{\Ch_0(E)}.$$  Then, by Proposition \ref{prop1},  $E$ is an object of $\op{Coh}^{\op{Coh}^{\leq 1}(X)}_\mu(X)$ if and only if $E$  is a $Z_P$-stable object of slope $\mu_\omega = \mu$.  

For a fixed triple $(r,D,\gamma)$ such that $\frac{\omega^3D}{r}=\mu$ and a fixed line bundle $L$ on $X$ such that $c_1(L)=D$, define
\begin{align*}
  \op{Coh}_\mu(X; r,L,\gamma) &:= \{E \in \op{Coh}_\mu (X) : (\Ch_0(E),\Ch_2(E))=(r,-\gamma), \det{E}=L\}, \\
  \op{Coh}^P_\mu (X;r,L,\gamma) &:= \{ E\in \op{Coh}^{\op{Coh}^{\leq 1}(X)}_\mu(X) : (\Ch_0(E),\Ch_2(E))=(-r,\gamma), \det{E}=L^*\}.
\end{align*}
Therefore, applying Proposition  \ref{lem:AG50-17-2} with $\mathcal T =\op{Coh}^{\leq 1}(X)$, we get

\begin{cor}\label{lem7}
Suppose that $r, \omega^3D$ are coprime. We have the  equalities of subcategories of $\Bc_\mu(X)$
\begin{equation}\label{eq17}
\ldbrack \op{Coh}_\mu(X;r,L,\gamma)[1], \op{Coh}^{\leq 1}(X) \rdbrack= \ldbrack\op{Coh}^{\leq 1}(X), \op{Coh}^P_\mu(X; r,L,\gamma) \rdbrack.
\end{equation}
\end{cor}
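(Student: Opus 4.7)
My plan is to derive Corollary~\ref{lem7} as a direct specialization of Proposition~\ref{lem:AG50-17-2}. Taking $V=X$ and $\mathcal T=\op{Coh}^{\leq 1}(X)$ (which is clearly a torsion class in $\op{Coh}(X)$ contained in $\op{Coh}^{\leq 1}(X)$) in that proposition yields the unrefined identity
$$\ldbrack \op{Coh}_\mu(X)[1], \op{Coh}^{\leq 1}(X) \rdbrack = \ldbrack \op{Coh}^{\leq 1}(X), \op{Coh}^{\op{Coh}^{\leq 1}(X)}_\mu(X) \rdbrack,$$
both sides being extension-closed subcategories of $\Bc_\mu(X)$. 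The remaining task is to intersect both sides with the full subcategory of $\Bc_\mu(X)$ consisting of objects $E$ with $(\Ch_0(E),\Ch_2(E))=(-r,\gamma)$ and $\det E=L^*$, and to check that this intersection recovers precisely the refined subcategories appearing in the corollary.

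The key step in the refinement rests on the uniqueness of the two-step filtration encoded by the notation $\ldbrack \cdot,\cdot \rdbrack$, which is the standard uniqueness of torsion-pair decompositions inside the abelian category $\Bc_\mu(X)$. Concretely, any object $E$ on the LHS fits uniquely in an exact triangle $F[1]\to E\to Q$ with $F=H^{-1}(E)\in \op{Coh}_\mu(X)$ and $Q=H^0(E)\in \op{Coh}^{\leq 1}(X)$; any object $E$ on the RHS fits uniquely in a short exact sequence $0\to Q\to E\to G\to 0$ in $\Bc_\mu(X)$ with $Q\in \op{Coh}^{\leq 1}(X)$ and $G\in \op{Coh}^{\op{Coh}^{\leq 1}(X)}_\mu(X)$. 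Since any object of $\op{Coh}^{\leq 1}(X)$ has vanishing $\Ch_0,\Ch_1,\Ch_2$ and trivial determinant, additivity along each of these filtrations converts the prescribed numerical/determinant conditions on $E$ directly into the same conditions on the nontrivial constituent: on the LHS one reads $\Ch_0(F)=r$, $\Ch_2(F)=-\gamma$, $\det F=L$, i.e.\ $F\in \op{Coh}_\mu(X;r,L,\gamma)$; on the RHS one reads $\Ch_0(G)=-r$, $\Ch_2(G)=\gamma$, $\det G=L^*$, which is by definition $G\in \op{Coh}^P_\mu(X;r,L,\gamma)$. Both readings are manifestly reversible, yielding the desired equality.

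I do not anticipate any substantive obstacle: the corollary is essentially Chern-character bookkeeping on top of Proposition~\ref{lem:AG50-17-2}, and all the ingredients---noetherianity of $\Bc_\mu(X)$, torsion-pair structure of the relevant decompositions, extension-closure, and the Hom-vanishings dictating the ordering of the two factors---have already been established in Lemmas~\ref{lem10},~\ref{lem6} and Proposition~\ref{lem:AG50-17-2}. The coprimality hypothesis on $r$ and $\omega^3 D$ plays no role in the categorical identity itself; it is inherited from the preceding paragraph and serves only to make the corollary geometrically meaningful by identifying $\op{Coh}^P_\mu(X;r,L,\gamma)$ with a moduli of $Z_P$-stable objects via Proposition~\ref{prop1}.
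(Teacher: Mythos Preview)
Your proposal is correct and follows exactly the same approach as the paper: the paper simply says ``applying Proposition~\ref{lem:AG50-17-2} with $\mathcal T=\op{Coh}^{\leq 1}(X)$, we get'' and marks the corollary with a \qed. You have supplied the bookkeeping that the paper leaves implicit---namely the uniqueness of the $\ldbrack\cdot,\cdot\rdbrack$-decompositions and the transfer of the $(\Ch_0,\Ch_2,\det)$-constraints across them---and your remark that the coprimality hypothesis plays no role in the categorical identity itself is accurate.
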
 \qed

\bigskip

Categories of the form $\op{Coh}^{\mathcal T}_\mu(V)$  appear to be useful in several other situations. Here are some instances:

\begin{enumerate}

\item Suppose $\dim V=3$ and $(Z, \Ac^p)$ is the polynomial stability condition in \cite[1.3]{Lo2}, where $\Ac^p = \langle \op{Coh}^{\geq 2}(V)[1], \op{Coh}^{\leq 1}(V)\rangle$.  Following Bayer  \cite{BayerPBSC}, a $Z$-stable object in $\Ac^p$ is referred to as a PT-stable object in \cite{Lo2}.  Suppose  $E \in \Ac^p$ is an object of nonzero rank such that $\Ch_0 (E)$, $\omega^2\cdot \Ch_1(E)$ are coprime; let $\mu: = \omega^2\cdot \Ch_1(E)/\Ch_0(E)$  and   $\mathcal T = \op{Coh}^{= 0}(V)$.  Then by \cite[Proposition 2.24]{Lo2}, $E$ is an object of  $\op{Coh}^{\mathcal T}_\mu(V)$ if and only if $E$ is a PT-stable object in $\Ac^p$.  In particular, when $\Ch_0(E)=-1$ and $\det(E)=\sO_V$, the object $E$ lies in $\op{Coh}^{\mathcal T}_0(V)$ if and only if $E$ is isomorphic to a stable pair in the sense of Pandharipande-Thomas \cite{PT}.
\item Let $V$ be a smooth projective threefold, and let $\mathcal T = \op{Coh}^{= 0}(V)$.  Then, Proposition  \ref{lem:AG50-17-2} recovers Toda's identity \cite[Lemma 3.17]{Toda2}
\[
  \langle \op{Coh}_\mu (V), \Cc_\infty(V) \rangle = \langle \Cc_\infty(V), \op{Coh}^P_\mu (V)\rangle
\]
where $\op{Coh}^P_\mu(V)$ corresponds to $\op{Coh}^{\op{Coh}^{= 0}(V)}_\mu(V)$ in our notation.
\item Let $V$ be a smooth projective threefold.  If we choose $\mathcal T_1 = \Cc_\infty(V)$ and $\mathcal T_2 = \Cc_{[0,\infty]}(V)$ in Theorem \ref{thm:catwc}, we recover the relation
    \[
      \langle \op{Coh}^P_\mu(V), \Cc_{[0,\infty)(V)}\rangle = \langle \Cc_{[0,\infty)}(V), \op{Coh}^L_\mu (V)\rangle
    \]
    in the proof of Toda's \cite[Theorem 4.8]{Toda2}.  Here, Toda's notation  $\op{Coh}^L_\mu (V)$ corresponds to $\op{Coh}^{\Cc_{[0,\infty]}(V)}_\mu(V)$ in our notation, and the objects in this category define his $L$-invariants in \cite[4.1]{Toda2}.  
\item Let $\pi : V \to S$ be a Weierstra{\ss} elliptic Calabi-Yau threefold.  Let $\mathcal T$ be the extension closure of semistable fiber sheaves on $V$ with slope in  $(-1,\infty]$.  Then  an object $E \in D^b(V)$ with $\Ch_0(E)=1$ and $\Ch_1(E)=0$ lies in $\op{Coh}^{\mathcal T}_0(V)[-1]$ if and only if it is a $\pi$-stable pair in the sense of Oberdieck-Shen \cite{OS1}.
    
    If we choose  $\mathcal T_1 = \op{Coh}^{= 0}(V)$ and $\mathcal T_2=\mathcal T$, and restrict both sides of \eqref{eq:AG50-17-5} to subcategories of objects with $\Ch_0 =-1$, $\Ch_1=0$ followed by a shift in the derived category, we recover Oberdieck-Shen's relation \cite[Lemma 5]{OS1}
    \[
      \langle \op{Coh}_P(V), \Cc_{(-1,\infty)}(V)[-1]  \rangle = \langle \Cc_{(-1,\infty)}(V)[-1],  \op{Coh}_\pi (V) \rangle,
    \]
    where by $\Cc_{(-1,\infty)}(V)$ they mean the extension closure of all semistable fiber sheaves with slope in $(-1,\infty)$, and by $\op{Coh}_P(V)$ and $\op{Coh}_\pi (V)$ they mean, respectively, the subcategories of $\op{Coh}^{\mathcal T_1}_0(V)[-1]$ and $\op{Coh}^{\mathcal T_2}_0(V)[-1]$ with $\Ch_0=1, \Ch_1=0$, or equivalently, the subcategories of Pandharipande-Thomas stable pairs and $\pi$-stable pairs.
\end{enumerate}

\subsection{ Hall algebra correspondences} \label{sec_Hall}
In Proposition  \ref{lem:AG50-17-2}, if we set $\mathcal T=\Cc_{[0,\infty]}(X)$, and restrict to objects of rank $-1$ with trivial determinant and fixed $\Ch_2=\gamma$, we obtain 
\begin{equation} \label{1strel}
  \ldbrack \op{Coh}_0 (X;1,\sO_X,-\gamma)[1], \Cc_{[0,\infty]}(X)\rdbrack = \ldbrack \Cc_{[0,\infty]}(X), \op{Coh}_0^{\Cc_{[0,\infty]}(X)}(X;-1,\sO_X,\gamma)\rdbrack.
\end{equation} 

Similarly in Corollary \ref{cor:AG50-17-7}, if we set $b=0$ and make the same restriction, we obtain 

\begin{equation}\label{2ndrel}
\ldbrack \Cc_{(-\infty, 0)}(X), \op{Coh}^{P}_0(X;-1,\sO_X,\gamma) \rdbrack = \ldbrack \op{Coh}^{\Cc_{[0,\infty]}(X)}_0(X;-1,\sO_X,\gamma), \Cc_{(-\infty, 0)}(X)\rdbrack.
\end{equation}

\begin{lem} \label{bda1a2}
For a fixed $\beta \in H^6(X,\bb Q)$, the following subsets of $\bb Q$ \begin{align*} &A^{\ge 0}_{\Cc}(\beta):=\{\Ch_4(Q) \colon Q \in \Cc_{[0,\infty]}(X) \;\&\; \Ch_3(Q)=\beta\},\\ &A_I(\beta):=\{\Ch_4(E) \colon E \in \op{Coh}_0 (X;1,\sO_X,-\gamma)[1] \;\&\; \Ch_3(E)=\beta\},\end{align*}
are bounded  below. 
\end{lem}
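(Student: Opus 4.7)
The plan is to handle the two sets separately via a direct analysis of the Chern character of their members.

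For $A^{\ge 0}_{\Cc}(\beta)$, I claim in fact $\Ch_4(Q)\ge 0$ for every $Q\in \Cc_{[0,\infty]}(X)$, uniformly in $\beta$. By definition any such $Q$ admits a filtration whose subquotients are $\overline{\mu}_\omega$-semistable sheaves $F$ in $\op{Coh}^{\le 1}(X)$ of slope $\overline{\mu}_\omega(F)\in [0,\infty]$. It therefore suffices to show $\Ch_4(F)\ge 0$ on each factor: when $F$ is $0$-dimensional this is simply its length, and when $F$ is pure $1$-dimensional with $\overline{\mu}_\omega(F)\in[0,\infty)$ then $\omega\cdot \Ch_3(F)>0$ and $\Ch_4(F) = \overline{\mu}_\omega(F)\cdot (\omega\cdot \Ch_3(F))$ is a product of two nonnegative numbers. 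Summing over the filtration gives $\Ch_4(Q)\ge 0$.

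For $A_I(\beta)$, by Proposition \ref{lem5} every such $E$ is isomorphic to $I_Z[1]$ for a closed subscheme $Z\subset X$ of codimension at least $2$, and $\Ch_i(E) = \Ch_i(\sO_Z)$ for $i\ge 1$. Thus the task reduces to bounding $\Ch_4(\sO_Z)$ below over closed subschemes $Z\subset X$ with $\Ch_2(\sO_Z)=\gamma'$ (fixed by $\gamma$) and $\Ch_3(\sO_Z)=\beta$ both fixed. My strategy uses the dimension-torsion filtration $0\subset T_0\subset T_1\subset T_2=\sO_Z$ of $\sO_Z$, with graded pieces $W_0 := T_0$ ($0$-dimensional), $W_1 := T_1/T_0$ (pure $1$-dimensional), and $W_2 := \sO_Z/T_1$ (pure $2$-dimensional). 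Since $T_1$ is an ideal of the ring sheaf $\sO_Z$, $W_2=\sO_{Z'}$ is the structure sheaf of a pure $2$-dimensional closed subscheme $Z'\subset X$ with $\Ch_2(\sO_{Z'})=\gamma'$. I would then show: (i) the family of such pure $2$-dimensional subschemes $Z'$ with $\Ch_2=\gamma'$ fixed is bounded, so $\Ch_3(\sO_{Z'})$ and $\Ch_4(\sO_{Z'})$ range over bounded subsets; (ii) consequently $\Ch_3(W_1) = \beta-\Ch_3(\sO_{Z'})$ is bounded and effective, and $W_1$ (a pure $1$-dimensional sheaf arising as a subquotient of $\sO_X$ with bounded class $\Ch_3$) lies in a bounded family so that $\Ch_4(W_1)$ is bounded; (iii) $\Ch_4(W_0)\ge 0$, since $W_0$ is $0$-dimensional. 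Summing,
\begin{equation*}
\Ch_4(\sO_Z) \;=\; \Ch_4(\sO_{Z'}) + \Ch_4(W_1) + \Ch_4(W_0) \;\ge\; \inf\Ch_4(\sO_{Z'}) + \inf\Ch_4(W_1),
\end{equation*}
which gives the desired lower bound on $A_I(\beta)$.

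The main obstacle is step (i): the boundedness of pure $2$-dimensional closed subschemes of $X$ with fixed $\Ch_2$ is less classical than its $1$-dimensional analogue familiar from Pandharipande-Thomas theory. The cleanest justification I know combines the projectivity of the Chow variety of effective $2$-cycles of class $\gamma'$ (bounding the underlying reduced cycles with multiplicities) with boundedness of pure thickenings over each reduced cycle. An alternative is to repackage the Maruyama-style arguments behind Proposition \ref{prop:bdd-1} and Corollary \ref{coro:bdd-1} so as to bound directly the rank-$1$ torsion-free ideal sheaves $I_{Z'}$ of fixed $\Ch_2$, which are $\mu_\omega$-semistable of slope $0$ and play here the role of $H^{-1}(E)$ in those results. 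Once (i) is established, step (ii) follows from standard boundedness of pure $1$-dimensional coherent sheaves with bounded support class (effective integral classes of bounded degree are finite, and for each the corresponding Quot scheme is projective).
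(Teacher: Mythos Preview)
Your treatment of $A^{\ge 0}_{\Cc}(\beta)$ is correct and is precisely what the paper means by ``follows from the definition of $\Cc_{[0,\infty]}(X)$.''

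For $A_I(\beta)$, the paper takes a much shorter route: every object of $\op{Coh}_0(X;1,\sO_X,-\gamma)$ is an ideal sheaf, hence a stable rank-one torsion-free sheaf with $\Ch_{i\le 3}$ fixed, and then \cite[Proposition~3.6]{GK} (a dimension-count on the moduli of stable sheaves, of the same flavour as the argument reproduced in the proof of Proposition~\ref{prop_bdch4}) bounds $\Ch_4$ directly. Your torsion-filtration approach is genuinely different, but it carries two gaps that, once repaired, push you back to the paper's method.

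First, step~(ii) is not correctly justified: pure $1$-dimensional coherent sheaves with bounded $\Ch_3$ do \emph{not} form a bounded family (take $\sO_\ell(-n)$ on a fixed line $\ell$ as $n\to\infty$), and ``subquotient of $\sO_X$'' alone does not repair this. What does help is that $T_1$---and hence $W_1$---is a \emph{quotient} of $I_{Z'}$; granting step~(i), the $I_{Z'}$ lie in a bounded family, and you then need a lower bound on $\Ch_4$ of $\le 1$-dimensional quotients of a bounded family of sheaves with bounded $\Ch_3$. But that is exactly a \cite[Proposition~3.6]{GK}-style Quot-scheme dimension argument, and at that point it is simpler to apply it directly to $\sO_X\twoheadrightarrow\sO_Z$ and bypass the filtration entirely, which is what the paper does.

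Second, step~(i) is, as you acknowledge, the crux, and neither of your sketches closes it: the Chow-variety argument leaves the boundedness of pure thickenings over a fixed reduced cycle unproved, and the Maruyama-style alternative (Proposition~\ref{prop:bdd-1}, Corollary~\ref{coro:bdd-1}) requires a lower bound on $a_3$ (equivalently on $\omega\cdot\Ch_3$), which you do not have for $I_{Z'}$ with only $\Ch_2$ fixed. Note also that step~(i) is strictly stronger than Condition~\eqref{cond1}, which the paper imposes as an additional hypothesis in later results rather than proving in general.
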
 
\begin{proof}
The claim for $A^{\ge 0}_{\Cc}(\beta)$ follows from the definition of $\Cc_{[0,\infty]}(X)$. Any object of $\op{Coh}_0 (X;1,\sO_X,-\gamma)$ is an ideal sheaf and hence stable. Thus, the claim for $A_I(\beta)$ follows from \cite[Proposition 3.6]{GK}. 
\end{proof}

Consider the following conditions on the class $\gamma$
\begin{align} \label{cond1} \big\{ \omega \cdot \beta \mid & \; \beta=\Ch_3(\sO_S) \text{ for some 2-dimensional subscheme } S \subset X,\\  \notag &\;\text{ such that } \sO_S \text{ is pure and } \Ch_2(\sO_S)=\gamma \big\} \subset \bb Q \text{  is bounded below.} \end{align}
\begin{align} \label{cond2} &\text{For any 2-dimensional subscheme $S\subset X$ in class $\gamma$}\\ \notag &\text{purity implies Cohen-Macaulayness.} \end{align}

Condition \eqref{cond2} can be equivalently rephrased as for any 2-dimensional subscheme $S\subset X$ in class $\gamma$, $\sO_S$ satisfies Serre's condition $S_{1,2}$ implies that $\sO_S$ satisfies $S_{2,2}$.
If $\gamma$ is a reduced class\footnote{This means that in every decomposition $\gamma=\sum \gamma_i$ into effective classes, none of $\gamma_i$ is an integer multiple of an effective class.} then condition \eqref{cond1} is satisfied; In fact more strongly, it is known that in this case there are finitely many possible choices for $(\omega \cdot \Ch_3(\sO_S), \Ch_4(\sO_S))\in \bb Q^2$ (This fact is attributed to Chow in \cite[Lemma 2.4]{Gr}. See the link in the footnote \footnote{https://mathoverflow.net/questions/264423/counting-hilbert-polynomials-of-projective-varieties} for a nice argument by Jason Starr.). 

\begin{lem} \label{lem_b1b2}
 Suppose $\gamma$ satisfies \eqref{cond1}. The following subsets of $\bb Q$ 
 $$B_I:=\{\omega\cdot \Ch_3(E) \colon E \in \op{Coh}_0 (X;1,\sO_X,-\gamma)[1]\},$$ 
$$B_P:=\{\omega\cdot \Ch_3(E) \colon E \in \op{Coh}_0^P (X;-1,\sO_X,\gamma)\}$$ are bounded below.
\end{lem}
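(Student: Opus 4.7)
The plan is to unwind the objects in each set so that each element of $B_I$ and $B_P$ is read off from a pure 2-dimensional subscheme of $X$ of class $\gamma$, plus an auxiliary sheaf of dimension at most 1 that contributes nonnegatively to $\omega\cdot\Ch_3$. Condition \eqref{cond1} then delivers the desired lower bound.

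For $B_I$, the first step is to observe that an element of $\op{Coh}_0(X;1,\sO_X,-\gamma)$ is a $\mu_\omega$-semistable torsion-free sheaf of rank one with trivial determinant and $\Ch_2=-\gamma$, and must therefore equal the ideal sheaf $I_W$ of a codimension-two closed subscheme $W\subset X$ with $\Ch_2(\sO_W)=\gamma$. A short Chern-character computation gives $\omega\cdot\Ch_3(I_W[1]) = \omega\cdot\Ch_3(\sO_W)$. Next I would filter $\sO_W$ by its maximal dimension-$\leq 1$ subsheaf $T$, obtaining a pure 2-dimensional quotient $\sO_S := \sO_W/T$ with $\Ch_2(\sO_S)=\gamma$. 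Since $\Ch_3$ of a sheaf of dimension at most 1 is an effective 1-cycle and $\omega$ is ample, $\omega\cdot\Ch_3(T)\geq 0$, so $\omega\cdot\Ch_3(\sO_W)\geq \omega\cdot\Ch_3(\sO_S)$, and the right-hand side is bounded below by \eqref{cond1}.

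For $B_P$, I would invoke Proposition \ref{lem4} to identify any $E\in \op{Coh}_0^P(X;-1,\sO_X,\gamma)$ with a stable pair $[\sO_X \xrightarrow{s} F]$, where $F$ is a pure 2-dimensional sheaf satisfying $\Ch_2(F)=\gamma$ and $\omega\cdot\Ch_3(E)=\omega\cdot\Ch_3(F)$. Stability forces $s\neq 0$, since otherwise $\coker s = F$ would be 2-dimensional, violating the cokernel-dimension constraint. Thus $\im(s)\cong\sO_Y$ for some closed subscheme $Y\subset X$, and purity of $F$ together with $\sO_Y\hookrightarrow F$ makes $\sO_Y$ itself pure 2-dimensional, so $Y$ is a pure 2-dimensional subscheme. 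Since $\dim\coker s \leq 1$, the exact sequence $0\to\sO_Y\to F\to\coker s\to 0$ yields $\Ch_2(\sO_Y)=\gamma$ and $\omega\cdot\Ch_3(F)\geq \omega\cdot\Ch_3(\sO_Y)$; condition \eqref{cond1} applied to $Y$ concludes the argument.

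The argument is essentially bookkeeping on Chern characters. The only point that requires a moment's reflection is the identification of $\im(s)$ as the structure sheaf of a pure 2-dimensional subscheme, which uses the standard fact that any nonzero subsheaf of a pure $d$-dimensional sheaf is itself pure of dimension $d$. No serious obstacle is anticipated.
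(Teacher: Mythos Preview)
Your proposal is correct and follows essentially the same approach as the paper: for $B_I$ the paper passes to the ideal-sheaf sequence $0\to I_Z\to I_S\to Q\to 0$ with $\sO_S$ pure, while you dualize this to the structure-sheaf side $0\to T\to\sO_W\to\sO_S\to 0$, which is the same filtration; for $B_P$ both arguments use the short exact sequence $0\to\sO_S\to F\to Q\to 0$ coming from the image of the section. Your extra remarks (why $s\neq 0$, why $\sO_Y$ is pure) make explicit what the paper leaves implicit, but no new idea is involved.
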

\begin{proof}

Any object of $\op{Coh}_0 (X;1,\sO_X,-\gamma)$ is the ideal sheaf $I_Z$ of a 2-dimensional subscheme $Z\subset X$ in class $\gamma$. There exists a short exact sequence $$0\to I_Z\to I_S\to Q\to 0,$$ such that $Q\in \op{Coh}^{\le 1}(X)$ and $I_S\in \op{Coh}_0 (X;1,\sO_X,-\gamma)$ is the ideal of a subscheme $S\subset X$ such that $\sO_S$ is a pure 2-dimensional sheaf. This shows that $\Ch_3(I_Z)=\Ch_3(I_S)-\Ch_3(Q)$, and since $\Ch_3(Q)$ is effective, the claim for $B_I$ follows from \eqref{cond1}.
  
Any object of $\op{Coh}_0^P (X;-1,\sO_X,\gamma)$ is a stable pair $[\sO_X\to F]$, so there is a short exact sequence $$0\to \sO_S\to F\to Q\to 0,$$ such that $Q\in \op{Coh}^{\le 1}(X)$ and $\sO_S$ is a pure 2-dimensional sheaf. A similar reasoning as above proves the claim for $B_P$.
\end{proof}

\begin{cor} \label{cor_a3} Suppose $\gamma$ satisfies \eqref{cond1}. Then for a fixed $\beta \in H^6(X,\bb Q)$, the following subset of $\bb Q$
$$A_{\mathcal T}(\beta):=\{\Ch_4(E) \colon E \in \op{Coh}^{\Cc_{[0,\infty]}(X)}_0(X;-1,\sO_X,\gamma)\; \&\; \Ch_3(E)=\beta\}$$ is bounded below.
\end{cor}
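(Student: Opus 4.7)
The plan is to apply the categorical correspondence \eqref{1strel}: any $E\in\op{Coh}^{\Cc_{[0,\infty]}(X)}_0(X;-1,\sO_X,\gamma)$ sits in a short exact sequence
\begin{equation*}
0\to I_W[1]\to E\to Q\to 0
\end{equation*}
in $\Bc_0(X)$, where $I_W$ is the ideal sheaf of a $2$-dimensional subscheme $W\subset X$ in class $\gamma$ (with trivial determinant) and $Q\in\Cc_{[0,\infty]}(X)$. Passing to the cohomology long exact sequence identifies $H^{-1}(E)=I_W$ and $H^0(E)=Q$, so the decomposition is uniquely determined by $E$. Additivity of the Chern character then yields
\begin{equation*}
\Ch_3(\sO_W)+\Ch_3(Q)=\beta,\qquad \Ch_4(E)=\Ch_4(\sO_W)+\Ch_4(Q).
\end{equation*}

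Next I would record two elementary estimates. Since $Q\in\Cc_{[0,\infty]}(X)$ is built by extensions from $\bar\mu_\omega$-semistable sheaves of slope $\geq 0$, the definition $\bar\mu_\omega=\Ch_4/(\omega\cdot\Ch_3)$ forces both $\Ch_4(Q)\geq 0$ and $\omega\cdot\Ch_3(Q)\geq 0$. Combining the latter with Lemma~\ref{lem_b1b2}, which under \eqref{cond1} yields $\omega\cdot\Ch_3(\sO_W)\geq c$ for some constant $c$, places $\omega\cdot\Ch_3(\sO_W)$ inside the bounded interval $[c,\omega\cdot\beta]$. Since effective classes in the finite-dimensional space $H^6(X,\bb Q)$ with bounded $\omega$-pairing form a finite set, only finitely many values of $\Ch_3(\sO_W)$ (and correspondingly $\Ch_3(Q)$) can occur, and the problem reduces to a uniform lower bound for $\Ch_4(\sO_W)$ across those finitely many classes.

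The hard part will be this uniform lower bound on $\Ch_4(\sO_W)$. My plan is to filter $\sO_W$ through its torsion subsheaf, writing
\begin{equation*}
0\to T\to\sO_W\to\sO_S\to 0
\end{equation*}
with $\sO_S$ the maximal pure $2$-dimensional quotient and $T$ a torsion sheaf of dimension at most $1$, so that $\Ch_4(\sO_W)=\Ch_4(T)+\Ch_4(\sO_S)$. For the pure part, I would combine \eqref{cond1} with a Simpson-type boundedness statement for pure $2$-dimensional sheaves with fixed $\Ch_{\leq 3}$ to obtain a lower bound on $\Ch_4(\sO_S)$. For the torsion part $T$, whose class lies in the finite set determined above, a Grothendieck-type boundedness argument for $1$-dimensional subsheaves of $\sO_W$ of fixed class would give a lower bound on $\Ch_4(T)$; here I would invoke the vanishing $\Hom_{D^b(X)}(\Cc_{[0,\infty]}(X),E)=0$, built into the definition of $\op{Coh}^{\mathcal T}_\mu$, to exclude arbitrarily negative contributions from the $0$-dimensional part of $T$ (an $\sO_x$-subobject would produce a nonzero $\Hom(\sO_x,E)$ via the triangle $I_W[1]\to E\to Q$). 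Combining these pieces yields $\Ch_4(E)\geq M(\gamma,\beta,\omega)$ as required.
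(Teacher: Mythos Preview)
Your overall architecture matches the paper: use the categorical identity \eqref{1strel} to decompose $E$ as an extension of some $Q\in\Cc_{[0,\infty]}(X)$ by $I_W[1]$, observe $\Ch_4(Q)\geq 0$, and use Lemma~\ref{lem_b1b2} together with effectivity of $\Ch_3(Q)$ to reduce to finitely many splittings $\beta=\beta_1+\beta_2$. The paper then finishes in one line by invoking Lemma~\ref{bda1a2}: the set $A_I(\beta_1)$ is bounded below because $I_W$ is a rank-one torsion-free sheaf, hence $\mu_\omega$-stable, and \cite[Proposition~3.6]{GK} bounds $\Ch_4$ from below for stable sheaves with fixed $\Ch_{\leq 3}$.

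Your final paragraph, by contrast, tries to rederive this bound by filtering $\sO_W$ into a torsion part $T$ and a pure quotient $\sO_S$, and this is where the argument breaks down. First, the appeal to ``Simpson-type boundedness'' for $\sO_S$ is not justified: $\sO_S$ need not be Gieseker semistable when $S$ is reducible, so you cannot invoke boundedness of semistable sheaves directly. Second, and more seriously, your reasoning about the $0$-dimensional part of $T$ is backwards. A $0$-dimensional subsheaf contributes \emph{positively} to $\Ch_4(T)$, so excluding it does nothing for a lower bound; the actual danger is a \emph{$1$-dimensional} $T$ with very negative $\Ch_4$ (e.g.\ a line bundle of large negative degree on a curve), and your vanishing $\Hom(\Cc_{[0,\infty]}(X),E)=0$ gives no control there. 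Finally, since $W$ itself is varying, ``$1$-dimensional subsheaves of $\sO_W$ of fixed class'' is not a bounded family without further input. The clean fix is simply to cite Lemma~\ref{bda1a2}: stability of the ideal sheaf $I_W$ as a torsion-free sheaf is automatic, and that is what makes the lower bound on $\Ch_4(I_W[1])$ go through.
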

\begin{proof}
Suppose  $E \in  \op{Coh}^{\Cc_{[0,\infty]}(X)}_0(X;-1,\sO_X,\gamma)$ with  $\Ch_3(E)=\beta$. Then $E$  belongs to LHS of \eqref{1strel}. Since $\Ch_3(Q)$ is effective for any $Q \in \Cc_{[0,\infty]}(X)$, by Lemma \ref{lem_b1b2} there are only finitely many ways of decomposing $\beta=\beta_1+\beta_2$, where $\beta_1$  and $\beta_2$ are the $\Ch_3$ of some objects in $\op{Coh}_0 (X;1,\sO_X,-\gamma)[1]$ and $\Cc_{[0,\infty]}(X)$, respectively. The claim now follows from Lemma \ref{bda1a2}.
\end{proof}

\begin{lem} \label{bdab}
Suppose $\gamma$ satisfies \eqref{cond2}. Then for a fixed $\beta \in H^6(X,\bb Q)$, the following subset of $\bb Q$ 
$$A_P(\beta):=\{\Ch_4(E) \colon E \in \op{Coh}_0^P (X;-1,\sO_X,\gamma)\; \&\; \Ch_3(E)=\beta\}$$
is bounded above.
\end{lem}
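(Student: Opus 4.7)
The overall strategy is to apply Proposition \ref{prop_bdch4} with $r=1$, $D=0$, and the given $\gamma,\beta$. Since the set $A_P(\beta)$ is contained in the larger set
\[
\{\Ch_4(E) : E \text{ is } Z_P\text{-semistable in } \Ac^p \text{ with }\Ch_{i\le 3}(E)=(-1,0,\gamma,\beta)\},
\]
it suffices to show that $\Ch_4(\mathcal{E}xt^4(H^0(E),\sO_X))$ is bounded above as $E$ ranges over this larger family, which is precisely the hypothesis of Proposition \ref{prop_bdch4}.

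The first step is to extend the structural description from Proposition \ref{lem3} to objects whose determinant need not be $\sO_X$. Such an $E$ has $H^{-1}(E)$ torsion-free of rank one with $\Ch_1=0$, so $(H^{-1}(E))^{\ast\ast}$ is a line bundle $L$ (with $c_1(L)=0$) and $H^{-1}(E)\cong L\otimes \sI_W$ for some closed subscheme $W\subset X$ of codimension at least two whose class satisfies $\Ch_2(\sO_W)=\gamma$. The octahedral argument of Proposition \ref{lem3} carries over verbatim with $\sO_X$ replaced by $L$, since the necessary vanishings $\Ext^i(R,L)=0$ for $R\in\op{Coh}^{\le 1}(X)$ and $i=1,2$ still hold by Serre duality. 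This exhibits $E$ as the two-term complex $[L\to F]$ sitting in a short exact sequence of sheaves
\[
0\to L|_W\to F\to H^0(E)\to 0.
\]
The $Z_P$-semistability of $E$ together with the obvious analogue of Lemma \ref{lem2} (whose hypotheses hold for $L$ in place of $\sO_X$, as $\Hom(A,L)=\Hom(A,L[1])=0$ for any $A\in\op{Coh}^{\le 1}(X)$) forces $F$ to be a pure 2-dimensional sheaf; hence $L|_W$, being a subsheaf of $F$, is itself pure 2-dimensional, and so $W$ is a purely 2-dimensional subscheme of $X$ in class $\gamma$.

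At this point hypothesis \eqref{cond2} gives that $W$, and therefore $L|_W$, is Cohen--Macaulay, so $\mathcal{E}xt^i(L|_W,\sO_X)=0$ for all $i\ne 2$. The relevant portion of the long exact Ext-sequence associated to the short exact sequence above reads
\[
\mathcal{E}xt^3(L|_W,\sO_X)\to \mathcal{E}xt^4(H^0(E),\sO_X)\to \mathcal{E}xt^4(F,\sO_X)\to \mathcal{E}xt^4(L|_W,\sO_X),
\]
whose outer terms vanish by the Cohen--Macaulay property, while $\mathcal{E}xt^4(F,\sO_X)=0$ by the purity of $F$ (\cite[Proposition 1.1.10]{HL}). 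Therefore $\mathcal{E}xt^4(H^0(E),\sO_X)=0$ identically, so its $\Ch_4$ is uniformly bounded above by $0$, and Proposition \ref{prop_bdch4} delivers the desired upper bound on $\Ch_4(E)$. The main obstacle is the adaptation of Proposition \ref{lem3} to the case when $\det E$ is a nontrivial line bundle in $\pic^0(X)$; once that is established, the rest is a direct long-exact-sequence computation.
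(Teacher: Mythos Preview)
Your proof is correct and follows the same route the paper sketches in its first sentence: reduce to Proposition~\ref{prop_bdch4} by showing that condition~\eqref{cond2} forces $\mathcal{E}xt^4(H^0(E),\sO_X)=0$. You supply the details the paper omits, in particular the mild extension of Proposition~\ref{lem3} to objects with $\det E$ a possibly nontrivial degree-zero line bundle $L$; this is needed because Proposition~\ref{prop_bdch4} is stated for all $Z_P$-semistable objects with the given $\Ch_{\leq 3}$, not just those with trivial determinant. Your verification that the relevant vanishings $\Ext^1(R,L)=\Ext^2(R,L)=0$ and $\Hom(A,L)=\Hom(A,L[1])=0$ persist for such $L$ is correct (Serre duality plus $\dim R,\dim A\leq 1$), and the long exact sequence argument at the end is fine since for pure $2$-dimensional $F$ one has $F^\vee\in D^{[2,3]}_{\op{Coh}(X)}$ (cf.\ the proof of Proposition~\ref{prop:PTopen4fold}), so $\mathcal{E}xt^4(F,\sO_X)=0$.

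The paper also records a second, self-contained argument specific to rank~$1$: assuming an infinite sequence of stable pairs with unbounded $\Ch_4$, it uses the Cohen--Macaulay hypothesis to ensure the cokernel $Q$ is pure $1$-dimensional, then repeatedly peels off skyscraper quotients of $Q$ at smooth points of its support to manufacture stable pairs of arbitrarily large tangent dimension in a single moduli space $P(X,\Ch')$, a contradiction. This alternative avoids invoking the boundedness machinery behind Proposition~\ref{prop_bdch4}, at the cost of being tied to the rank-$1$ geometry; your approach, by contrast, plugs directly into the general Proposition and would adapt to higher rank if the analogous $\mathcal{E}xt^4$-vanishing were known there.
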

\begin{proof}
This Lemma is a special case of Proposition \ref{prop_bdch4} by noting that Condition \eqref{cond2} implies $\mathcal E xt^4(H^0(E),\sO_X)=0$. Here, we give a simpler alternative argument that only works in rank 1. We equivalently prove that for $\xi \gg 0$ and fixed $\gamma$ and $\beta$ the moduli space $P(X,\Ch)$ is empty, where $$\Ch=(0,0,\gamma,\beta, \xi).$$  
If not, there exists an infinite sequence  of stable pairs $(F_i,s_i)$ with fixed $\Ch_2(F_i)=\gamma$, $\Ch_3(F_i)=\beta$ and strictly increasing $\xi_i:=\Ch_4(F_i)$. The scheme theoretic support of $F_i$ is a Cohen-Macaulay surface $S_i$ by assumption. Choose an $i\gg 0$ and let $Q_i:=\op{coker}(s_i)$. It is a pure 1-dimensional sheaf by our assumption and the discussion in \S \ref{sec_description}. Let $C_i$ be the reduced support of $Q_i$, then $Q_i|_{C_i}$ is locally free away from finitely many singularities. Pick a point $p \in C_i$ away from the singularities and a surjection $Q_i|_{C_i}\twoheadrightarrow \sO_p$. We can form a commutative diagram of exact sequences
$$
 \xymatrix{\sO_{S_i} \ar@{=}[d] \ar@{^(->}[r] & F'_i \ar@{^(->}[d] \ar@{->>}[r] & Q'_i \ar@{^(->}[d]\\
   \sO_{S_i}  \ar@{^(->}[r]  & F_i \ar@{->>}[r] \ar@{->>}[d] & Q_i  \ar@{->>}[d] \\
  & \sO_p \ar@{=}[r] & \sO_p.}
 $$
The top row gives a stable pair with $Q'_i$ pure 1-dimensional and  $\Ch_2(F'_i)=\gamma$, $\Ch_3(F'_i)=\beta$ but $\Ch_3(F'_i)=\xi_i-1$. We can carry out the same process with this stable pair by picking another point away from the singularities. After repeating $[\xi_i]$-times, we end up with a stable pair in class $$\Ch':=(0,0,\gamma,\beta,\xi_i-[\xi_i]).$$ We could move the points on $C_i$ and this would imply that the dimension of the tangent spaces of $P(X,\Ch')$ at some points would be bigger than any positive integer, which is impossible.

\end{proof}

\begin{cor} \label{cor_a3bd}
Suppose $\gamma$ satisfies \eqref{cond1} and \eqref{cond2}. Then the set $A_{\mathcal T}(\beta)$ in Corollary \ref{cor_a3} is bounded. 
\end{cor}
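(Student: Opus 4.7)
The plan is to combine the lower bound from Corollary \ref{cor_a3} with the categorical identity \eqref{2ndrel} to extract an upper bound; together they pin $A_{\mathcal T}(\beta)$ down from both sides, which is all that Corollary~\ref{cor_a3bd} asserts.

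Given any $E \in \op{Coh}^{\Cc_{[0,\infty]}(X)}_0(X;-1,\sO_X,\gamma)$ with $\Ch_3(E) = \beta$, observe that $E$ trivially lies in the right-hand side of \eqref{2ndrel} (taking the $\Cc_{(-\infty,0)}(X)$-summand to be zero). The identity \eqref{2ndrel} then produces an exact triangle
\[
  C \to E \to P \to C[1]
\]
with $C \in \Cc_{(-\infty,0)}(X)$ and $P \in \op{Coh}^{P}_0(X;-1,\sO_X,\gamma)$. Additivity of the Chern character yields $\Ch(C) = (0,0,0,\beta_1,\xi_1)$ and $\Ch(P) = (-1,0,\gamma,\beta_2,\xi_2)$ with $\beta_1+\beta_2=\beta$ and $\Ch_4(E) = \xi_1 + \xi_2$, so it suffices to bound $\xi_1$ and $\xi_2$ separately from above.

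For $\xi_1$: unwinding the definition of $\Cc_{(-\infty,0)}(X)$, $C$ is an iterated extension of $\overline{\mu}_\omega$-semistable sheaves $C_j$ with $\overline{\mu}_\omega(C_j) < 0$. Each such $C_j$ is automatically pure $1$-dimensional (pure $0$-dimensional sheaves have slope $+\infty$), so $\omega \cdot \Ch_3(C_j) > 0$ by ampleness of $\omega$, and hence
\[
\Ch_4(C_j) = \overline{\mu}_\omega(C_j)\,\omega \cdot \Ch_3(C_j) < 0.
\]
Summing over the factors gives $\xi_1 \le 0$. For $\xi_2$: condition \eqref{cond1} together with Lemma \ref{lem_b1b2} provides a uniform lower bound on $\omega \cdot \beta_2$, so effectivity of $\beta_1$ forces $\omega \cdot \beta_1 = \omega\cdot \beta - \omega \cdot \beta_2$ to be bounded above. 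Since effective classes of bounded $\omega$-degree form a finite set in the Chern character lattice, only finitely many $\beta_2$ arise, and for each such $\beta_2$ Lemma \ref{bdab} (where condition \eqref{cond2} enters) bounds $\xi_2 \le \sup A_P(\beta_2) < \infty$.

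Taking the maximum of $\sup A_P(\beta_2)$ over this finite collection of $\beta_2$ yields $\Ch_4(E) \le \max_{\beta_2} \sup A_P(\beta_2)$, a finite quantity independent of $E$. Combined with the lower bound already supplied by Corollary \ref{cor_a3}, this proves boundedness of $A_{\mathcal T}(\beta)$. The only delicate point is the finiteness of the admissible set of $\beta_1$: it rests precisely on combining effectivity (from $C\in \op{Coh}^{\leq 1}(X)$) with the $\omega$-degree estimate delivered by condition \eqref{cond1} through Lemma \ref{lem_b1b2}; everything else follows by straightforward accounting in the triangle $C\to E\to P$.
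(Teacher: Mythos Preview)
Your proof is correct and follows essentially the same route as the paper: both use the lower bound from Corollary~\ref{cor_a3}, place $E$ in the left-hand side of \eqref{2ndrel} to obtain a triangle $C\to E\to P$, invoke Lemma~\ref{lem_b1b2} plus effectivity to reduce to finitely many $\beta_2$, and then apply Lemma~\ref{bdab}. The only minor difference is your treatment of $\xi_1=\Ch_4(C)$: you argue directly from the slope inequality that each semistable factor of $C$ has negative $\Ch_4$, whereas the paper observes that the derived dual of any $Q\in\Cc_{(-\infty,0)}(X)$ lies in $\Cc_{(0,\infty)}(X)[-3]$ and then appeals to Lemma~\ref{bda1a2}; both yield the same bound $\xi_1\le 0$, and your version is a shade more elementary.
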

\begin{proof}
By Corollary \ref{cor_a3} we need to show $A_{\mathcal T}(\beta)$  is bounded above. Suppose  $$E \in  \op{Coh}^{\Cc_{[0,\infty]}(X)}_0(X;-1,\sO_X,\gamma)$$ with  $\Ch_3(E)=\beta$. Then $E$  belongs to LHS of \eqref{2ndrel}. Since $\Ch_3(Q)$ is effective for any $Q \in \Cc_{(-\infty,0)}(X)$, by Lemma \ref{lem_b1b2} there are only finitely many ways of decomposing $\beta=\beta_1+\beta_2$, where $\beta_1$  and $\beta_2$ are the $\Ch_3$ of some objects in $\op{Coh}_0^P (X;1,\sO_X,-\gamma)$ and $\Cc_{(-\infty,0)}(X)$, respectively. The claim now follows from Lemma \ref{bdab} and Lemma \ref{bda1a2} by the fact that the derived dual of any $Q\in \Cc_{(-\infty,0)}(X)$ lies in  $\Cc_{(0,\infty)}(X)[-3]$.
\end{proof}

Let $\mathcal{O}bj(\Ac_{0})$ be the stack of the objects in $\Ac_{0}(X)$ (introduced in \eqref{equ_heart}), and $\mathcal{O}bj_{\Ch}(\Ac_{0})\subset \mathcal{O}bj(\Ac_{0})$ be the substack of the objects with Chern character $\Ch$. Denote by $$H(\Ac_{0}):=K(\text{St}/\mathcal{O} bj(\Ac_{0}))$$ the motivic
Hall algebra of  $\Ac_{0}$ \cite{Bridgeland10, Toda2}. Let $H_{\Ch}(\Ac_{0})$  be the span of $$[\mathcal X\to \mathcal{O}bj_{\Ch}(\Ac_{0})\subset \mathcal{O}bj(\Ac_{0})].$$ 

Suppose from now on that  $\gamma$ satisfies \eqref{cond1} and \eqref{cond2}.
Let \begin{align*}C&:=\min(0,\op{inf} B_I, \op{inf} B_P). \\L(\beta)&:=\min(0, \op{inf} A_I(\beta), \op{inf} A_{\mathcal T}(\beta)),\\  U(\beta)&:=\max(0,\op{sup} A_P(\beta), \op{sup} A_{\mathcal T}(\beta)).\end{align*}
By Lemmas \ref{bda1a2}, \ref{lem_b1b2}, \ref{bdab} we know $C, L(\beta), U(\beta) \in \bb Q$. 

For $\Ch=(-1,0,\gamma,\beta,\xi)$ or $\Ch=(0,0,0,\beta,\xi)$ define  $$\widehat{H}(\Ac_{0}):=\prod_{\tiny \begin{array}{c}\omega \cdot \beta \ge C\\ \xi \ge L(\beta)\end{array}} H_{\Ch}(\Ac_{0}),\qquad \widetilde{H}(\Ac_{0}):=\prod_{\tiny \begin{array}{c}\omega \cdot \beta \ge C\\ \xi \le U(\beta)\end{array}} H_{\Ch}(\Ac_{0}).$$
We want to define the following elements of the completed Hall algebras 
\begin{align} \label{member}
&\delta_{\Cc}^{\ge 0} \in \widehat{H}(\Ac_{0}), \quad  \delta_{\Cc}^{<0} \in \widetilde{H}(\Ac_{0}), \quad \delta_{I}(\gamma) \in \widehat{H}(\Ac_{0})\\ \notag
& \delta_{P}(\gamma) \in \widetilde{H}(\Ac_{0}) ,\quad \widetilde{H}(\Ac_{0}) \ni \delta_{\mathcal T}(\gamma) \in \widehat{H}(\Ac_{0}),
\end{align}
which respectively correspond to the moduli stacks of objects in the categories 
\begin{align} \label{cats} &\Cc_{[0,\infty]}(X),\quad \Cc_{(-\infty,0)}(X), \quad \op{Coh}_0(X;-1,\sO_X,\gamma)[1],\\ \notag &\op{Coh}_0^{P}(X;-1,\sO_X,\gamma),\quad \op{Coh}_0^{\Cc_{[0,\infty]}(X)}(X;-1,\sO_X,\gamma).\end{align} For these to make sense we need to know that the $\bb C$-valued points of those moduli stacks, once the Chern character vector $\Ch$ is fixed, form constructible subsets of $\mathcal{O}bj_{\Ch}(\Ac_{0})$. For each of  the categories in the first line of \eqref{cats}, this follows from the fact that the set of semistable sheaves with a fixed Chern character is bounded; for the first category in the second line of \eqref{cats} this follows from Theorem \ref{prop:bdd-2} (or alternatively from the fact that $P(X,\Ch)$ is of finite type over $\bb C$); for the last category in \eqref{cats}, this follows from \eqref{1strel} in combination with Condition \eqref{cond1} and Lemma \ref{lem_b1b2}. 
Finally, the memberships claimed in \eqref{member} follow from Lemmas \ref{bda1a2}, \ref{lem_b1b2}, \ref{bdab} and Corollary \ref{cor_a3bd}.

The categorical relations \eqref{1strel}, \eqref{2ndrel} imply 
\begin{thm} \label{thm_hall} Suppose $\gamma\in H^4(X,\bb Z)$ is an effective class satisfying \eqref{cond1}, \eqref{cond2}. We have the following identities 
\begin{align*}
&\delta_{I}(\gamma)\star \delta^{\ge 0}_{\Cc}= \delta^{\ge 0}_{\Cc} \star \delta_{\mathcal T}(\gamma) \quad \text{ in } \widehat{H}(\Ac_{0}),\\
&\delta_{\Cc}^{< 0} \star \delta_{P}(\gamma) =  \delta_{\mathcal T}(\gamma) \star \delta^{<0}_{\Cc} \quad \text{ in } \widetilde{H}(\Ac_{0}).
\end{align*}
\end{thm}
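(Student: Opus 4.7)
The plan is to translate each categorical equality, \eqref{1strel} and \eqref{2ndrel}, into a Hall algebra identity via the standard fact that, for full subcategories $\mathcal C_1,\mathcal C_2$ of $\Ac_0$ with $\Hom(\mathcal C_2,\mathcal C_1)=0$, the convolution $[\mathcal C_1]\star[\mathcal C_2]$ in the motivic Hall algebra equals the class of the stack of objects in $\ldbrack \mathcal C_1,\mathcal C_2\rdbrack$. The reason is that the Hom-vanishing implicit in the double bracket forces the filtration $0\to A_1\to E\to A_2\to 0$ realizing $E$ to be unique up to unique isomorphism, so the forgetful map from the stack of such short exact sequences to the stack of middle terms $E$ is a stack-theoretic isomorphism. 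This is the same mechanism as in Toda's proof of the PT/DT correspondence in dimension three \cite{Toda2}.

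First I would verify that the five elements listed in \eqref{member} are well defined in the appropriate completed Hall algebras. Constructibility of the graded pieces comes from classical boundedness for $\Cc_{[0,\infty]}(X)$ and $\Cc_{(-\infty,0)}(X)$, representability of Hilbert schemes for $\op{Coh}_0(X;1,\sO_X,-\gamma)[1]$, Theorem \ref{prop:bdd-2} for $\op{Coh}_0^P(X;-1,\sO_X,\gamma)$, and the categorical decomposition \eqref{1strel} combined with the three previous ones for $\op{Coh}_0^{\Cc_{[0,\infty]}(X)}(X;-1,\sO_X,\gamma)$. The index sets of the completions then match: $\delta_I(\gamma)$ satisfies $\xi\ge L(\beta)$ by Lemma \ref{bda1a2}; $\delta_P(\gamma)$ satisfies $\xi\le U(\beta)$ by Lemma \ref{bdab}; and $\delta_{\mathcal T}(\gamma)$ lies in both $\widehat H(\Ac_0)$ and $\widetilde H(\Ac_0)$ by Corollaries \ref{cor_a3} and \ref{cor_a3bd}.

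Next I would check that each of the four convolutions in the theorem converges in the relevant completed Hall algebra. For the first identity one needs, for fixed $(\beta,\xi)$, only finitely many splittings $\beta=\beta_1+\beta_2$, $\xi=\xi_1+\xi_2$ compatible with the two factors; this follows from Lemma \ref{lem_b1b2} (which pins down the admissible $\beta_i$ using condition \eqref{cond1}) combined with Lemma \ref{bda1a2} and Corollary \ref{cor_a3bd}. For the second identity the same analysis applies, now using the upper bound of Lemma \ref{bdab} together with Corollary \ref{cor_a3bd} so that the convolutions land in $\widetilde H(\Ac_0)$. The substantive step is then to apply the extension-stack identification above: the Hom-vanishing $\Hom(\Cc_{[0,\infty]}(X),\op{Coh}_0(X;1,\sO_X,-\gamma)[1])=0$ holds because a one-dimensional sheaf admits no nonzero map into the shift of a rank-one torsion-free sheaf, and this identifies $\delta_I(\gamma)\star\delta_\Cc^{\ge 0}$ with the class of the moduli stack of objects in $\ldbrack\op{Coh}_0(X;1,\sO_X,-\gamma)[1],\Cc_{[0,\infty]}(X)\rdbrack$; the analogous vanishing on the other side identifies $\delta_\Cc^{\ge 0}\star\delta_{\mathcal T}(\gamma)$ with the class of the same bracketed category after invoking \eqref{1strel}. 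The second identity is proved identically from \eqref{2ndrel}, using that any nonzero map $\op{Coh}_0^P(X;-1,\sO_X,\gamma)\to \Cc_{(-\infty,0)}(X)$ would force a one-dimensional quotient of a stable pair with slope violating purity.

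The main obstacle I anticipate is not the extension-stack identification itself, which is formal once the Hom-vanishings are in place, but rather the careful bookkeeping needed to show that the products lie in the claimed completions. The index sets for $\widehat H(\Ac_0)$ and $\widetilde H(\Ac_0)$ are asymmetric — bounded below versus bounded above in $\xi$ — and the middle factor $\delta_{\mathcal T}(\gamma)$ must be shown to lie in both simultaneously. Condition \eqref{cond2} enters precisely here, through Lemma \ref{bdab}, since without it one cannot rule out infinite sequences of stable pairs with $\xi\to +\infty$ in a fixed $(\gamma,\beta)$; and condition \eqref{cond1} enters through Lemma \ref{lem_b1b2} to make the sums defining the convolutions finite in each Chern-character component.
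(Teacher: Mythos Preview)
Your approach coincides with the paper's: the theorem is recorded as an immediate consequence of the categorical identities \eqref{1strel} and \eqref{2ndrel}, with all the finiteness bookkeeping (constructibility of the graded pieces, and the memberships \eqref{member} in the appropriate completions) already handled in the paragraphs preceding the statement, exactly along the lines you outline.

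There is, however, a slip in the direction of the Hom-vanishing. In the paper's convention $\ldbrack \Cc_1, \Cc_2 \rdbrack$ records the vanishing $\Hom(\Cc_1,\Cc_2)=0$, not $\Hom(\Cc_2,\Cc_1)=0$; and it is the former that guarantees uniqueness of the filtration $0\to A_1\to E\to A_2\to 0$, since a second decomposition $0\to A_1'\to E\to A_2'\to 0$ would produce a nonzero composite $A_1'\hookrightarrow E\twoheadrightarrow A_2$ in $\Hom(\Cc_1,\Cc_2)$. Concretely, the vanishing you assert, $\Hom(\Cc_{[0,\infty]}(X),\op{Coh}_0(X;1,\sO_X,-\gamma)[1])=0$, is \emph{false}: this group is $\Ext^1(Q,I_Z)$, which is nonzero whenever, say, a curve supporting $Q$ lies on $Z$. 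The vanishing actually needed for the left side of \eqref{1strel} is $\Hom(I_Z[1],Q)=0$, which is trivial. The same reversal occurs in your treatment of \eqref{2ndrel}. Once you swap the direction, all four required vanishings follow either trivially, from the defining condition (iv) of $\op{Coh}_\mu^{\mathcal T}$, or from the slope orthogonality between $\Cc_{[0,\infty]}$ and $\Cc_{(-\infty,0)}$, and the rest of your argument goes through.
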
 \qed

\section{Stable pair invariants}\label{sec_stable_pair_inv} Let $X$ be a nonsingular projective Calabi-Yau 4-fold.
For the Chern character vector $\Ch$  given as in  \eqref{chern_char} we showed that the moduli space of stable pairs $P:=P(X,\Ch)$ is identified with a moduli space of polynomial stable objects $J:=[\sO_X\to F]$ in the derived category (Theorem \ref{thm_moduli}). In this Section, we use this fact to equip this moduli space with a virtual cycle.    

\subsection{Oh-Thomas construction}\label{subsec_virtual_cycle_Oh-Thomas}
We briefly review the construction of virtual fundamental class of $P$ in the sense of \cite{OT}. 
Denote by 
$$\pi_P: X\times P\to P; \quad \pi_X: X\times P\to X$$
to be the natural projections.  Let 
$$\j:=[\sO_{X\times P}\to\ff]$$ be the universal stable pair  on $X\times P$. For a closed point $t\in P$ let $J:=\bb J_t$. \begin{lem} \label{lem_simple} Any stable pair $J$ as above is simple i.e. $\Hom(J,J)\cong \bb C\cong \Ext^4(J,J)$.
\end{lem}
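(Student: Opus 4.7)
The plan is to prove the two isomorphisms separately: first establish $\Hom(J,J) \cong \mathbb{C}$ purely from the $Z_P$-stability of $J$ as an object of the heart $\mathcal{A}^p$, and then deduce $\Ext^4(J,J) \cong \mathbb{C}$ from the first isomorphism via Serre duality, using the Calabi--Yau hypothesis $K_X \cong \mathcal{O}_X$.

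For the first statement, I would first recall that by Proposition \ref{lem4}, $J = [\mathcal{O}_X \to F]$ is a $Z_P$-stable object in $\mathcal{A}^p$. I would then run the standard Schur-type argument in the abelian category $\mathcal{A}^p$. Take any nonzero $\phi \in \Hom_{D^b(X)}(J,J) = \Hom_{\mathcal{A}^p}(J,J)$ and form the kernel, image and cokernel in $\mathcal{A}^p$. If $\phi$ is not an isomorphism, then the image $I := \operatorname{im}(\phi)$ sits in two short exact sequences in $\mathcal{A}^p$, namely $0 \to \ker(\phi) \to J \to I \to 0$ and $0 \to I \to J \to \operatorname{coker}(\phi) \to 0$, with both $\ker(\phi)$ and $\operatorname{coker}(\phi)$ nonzero. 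The strict inequalities of phases coming from the $Z_P$-stability of $J$ would then force $\phi_{Z_P}(I) \prec \phi_{Z_P}(J) \prec \phi_{Z_P}(I)$, a contradiction. Hence every nonzero endomorphism of $J$ is an isomorphism. Since $\Hom(J,J)$ is a finite-dimensional $\mathbb{C}$-algebra (as $J$ is a bounded complex with coherent cohomology on a smooth projective variety), it is a finite-dimensional division algebra over the algebraically closed field $\mathbb{C}$, hence equal to $\mathbb{C}$.

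For the second statement, since $X$ is a Calabi--Yau 4-fold we have $\omega_X \cong \mathcal{O}_X$, and Serre duality on $X$ yields
\[
\Ext^4(J,J) \;\cong\; \Hom(J, J \otimes \omega_X[0])^{\ast} \;\cong\; \Hom(J,J)^{\ast} \;\cong\; \mathbb{C}.
\]
Here I use that $J$ is a perfect complex (being a 2-term complex of coherent sheaves with $\mathcal{O}_X$ locally free at degree $-1$ and $F$ perfect on the smooth 4-fold $X$), so Serre duality in its standard derived form applies.

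The only subtle point is verifying the Schur argument really goes through in the polynomial stability setting, i.e.\ that strict stability in $(Z_P, \mathcal{A}^p)$ rules out proper subobjects of the same phase; but this is immediate from the definition in Section~2.3 of stability via the phase function $\phi_{Z_P}$, since $\phi_{Z_P}(I) = \phi_{Z_P}(J)$ for a proper $\mathcal{A}^p$-subobject is precisely what stability forbids. Everything else is standard.
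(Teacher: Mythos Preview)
Your proof is correct and follows exactly the second of the two alternatives the paper offers: the paper notes that one may either compute $\mathcal{H}om(J,J)\cong\sO_X$ directly as in \cite[Lemma 1.15]{PT}, or use that $J$ is $Z_P$-stable in $\Ac^p$ and run the standard Schur argument as in \cite[Corollary 1.2.8]{HL}, which is precisely what you do; Serre duality then gives $\Ext^4$. One small refinement: Proposition~\ref{lem4} as stated only yields $Z_P$-\emph{semistability}, so for the strict inequalities needed in the Schur argument you should cite Proposition~\ref{prop1} (which upgrades semistable to stable under the coprime condition, automatic for $r=-1$, $D=0$), exactly as the paper does.
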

\begin{proof}
The second isomorphism follows from the first one by Serre duality. The first isomorphism can either be proven by showing that $\mathcal{H} om(J,J)\cong \sO_X$ by the same argument as in \cite[Lemma 1.15]{PT}. Alternatively, by Proposition \ref{prop1}, we know that $J$ is a $Z_P$-stable object of $\sA^p$, and so the same argument as in \cite[Corollary 1.2.8]{HL} works in which the reduced Hilbert polynomial is replaced by  $\phi_{Z_P}$. 
\end{proof}

Since $J$ is an object of the heart $\Ac^p$ in Proposition \ref{lem4}, we have $\Ext^{i<0}(J,J)=0$. By Lemma \ref{lem_simple} $\Hom(J,J)_0= 0= \Ext^4(J,J)_0$, so by basechange the perfect complex $R\pi_{P*}R\rH om(\j, \j )_0$ has amplitude in $[1,3]$. Here, the subscript $0$ indicates the trace-free part and using the fact that $\rk \j\neq 0$ we have the decomposition $$R\pi_{P*}R\rH om(\j, \j ) \cong R\pi_{P*}R\rH om(\j, \j )_0\oplus R\pi_{P*}\sO_X$$ splitting the trace map $R\pi_{P*}R\rH om(\j, \j )\to R\pi_{P*}\sO_X$.

Let $\mathbb L_P\in D^b(P)$ denote the truncated cotangent complex of $P$.
Then \cite[Theorem 4.1]{HT} defines an obstruction theory
\begin{equation}\label{eqn_obstruction_theory}
\bb E:=R\pi_{P*}R\rH om(\j, \j )_0[3]\stackrel{\phi}{\longrightarrow} \bb L_P
\end{equation}
(\cite[Definition 4.4]{BF})
by using truncated Atiyah class and taking the trace-free part.  The virtual dimension of this  obstruction theory is given by
\begin{align}\label{eqn_virtual_dimension}
\vd_P:=\rk(\bb E)&=\chi(\sO_X)-\chi(J,J)=2\chi(F)-\chi(F,F)\\ \notag &=2(\xi+\gamma\cdot \operatorname{td}_2(X))-\gamma^2. 
\end{align}

The relative Serre duality  for the morphism $\pi_P$ implies an isomorphism 
\begin{equation}\label{eqn_Serre_duality}
\theta: \bb E\stackrel{\sim}{\longrightarrow} \bb E^{\vee}[2]
\end{equation}
such that $\theta=\theta^{\vee}[2]$. The obstruction theory $\phi$ is not perfect, and so \cite{BF} is not directly applied to equip $P$ with a well-behaved virtual cycle. The idea of \cite{OT} is to find an isotropic cone in an oriented orthogonal bundle such that an analog of square-root intersection \cite{EG} gives a virtual fundamental class of $P$.  In Theorem \ref{thm_moduli} since we fix the determinant, one indeed needs to follow an enhancement of this construction as sketched in \cite[\S 4.3]{OT} that allows taking the trace-free part of the obstruction theory. 

It is possible to take a 3-term locally free resolution $E^\bu$ of $\bb E$ of the form
\begin{equation}\label{eqn_obstruction_complex}
 E^{\bullet}:=[T\stackrel{a}{\longrightarrow}E\stackrel{a^*}{\longrightarrow}T^*].
\end{equation}
Thus $E$ is self-dual and there is a non-degenerate quadratic form 
$q$ on it. Then \eqref{eqn_Serre_duality} induces an isomorphism 
\begin{equation}\label{eqn_det_theta}
\det \theta: (\det E^{\bullet})^{\otimes 2}\stackrel{\sim}{\longrightarrow}\sO_P.
\end{equation}
and by \cite{CGJ} the complex $E^{\bullet}$ admits an \emph{orientation}, i.e., an isomorphism 
\begin{equation}\label{eqn_orientation}
o: \sO_P\stackrel{\sim}{\longrightarrow} \det E^{\bullet}
\end{equation}
such that $o^2=(\det\theta)^{-1}$.
The orientation $o$ defines an orientation on the quadratic bundle $E$ in  (\ref{eqn_obstruction_complex}), which means a trivialization
$$o: \sO_P\stackrel{\sim}{\longrightarrow} \det E$$
such that $(\det E)^{\otimes 2}\stackrel{\sim}{\rightarrow}\sO_P$. 
Therefore, $E$ acquires a structure of $\SO(2r, \cc)$-bundle.

Consider the truncation 
\begin{equation}\label{eqn_truncation_EBullet}
\tau E^{\bullet}:= [E\to T^*]\to E^{\bullet}\stackrel{\phi}{\longrightarrow}\bb L_P,
\end{equation} 
which clearly defines a perfect obstruction theory.  
Let $h^1/h^0((\tau E^{\bullet})^{\vee})$ be the corresponding stack bundle  and  $\mathfrak{C}_{P}\subset h^1/h^0((\tau E^{\bullet})^{\vee})$ be the intrinsic normal cone of $P$ \cite{BF}. Then $\mathfrak{C}_P$ lifts to a subcone 
$$C:=C_{E^\bullet}\subset E^*\cong E.$$
One of the main results of \cite{OT} is that the subcone $C \subset E$ is isotropic and defines a \emph{square root Gysin map}:
\begin{equation}\label{eqn_square_root_Gysin}
\sqrt{0^!_E}: A_*(C,\zz\Big[\frac{1}{2}\Big])\to  A_{*-\frac12\op{rk}E}(P, \zz\Big[\frac{1}{2}\Big]).
\end{equation}
In the special case where the isotropic cone $C$ factors through a maximal isotropic subbundle $\Lambda\subset (E,q)$ the square root Gysin map is given by $(-1)^{|\Lambda|} 0^!_\Lambda$, where $(-1)^{|\Lambda|}$ is the sign of $\Lambda$ defined as in \cite[Definition 2.2]{OT}. 

If $i\colon P\subset Y$ is a closed subscheme, and $(E,q,o)$ is the restriction of some $SO(2r, \bb C)$-bundle $(E_Y,q_Y,o_Y)$ on $Y$ with an isotropic section $s$ such that $P=Z(s)$ and $C=C_{P/Y}$ then \begin{equation} \label{equ_pushfwd} i_*\circ \sqrt{0^!_E}=\sqrt e (E_Y) \cap -,\end{equation}
where $\sqrt e (E_Y)$ is the Edidin-Graham square-root Euler class \cite{EG}. Following \cite{OT} we have the following Definition.

\begin{defn}\label{defn_stable_vir}
If the virtual dimension $\vd$ is an odd number define  $[P]^{\vir}=0$.
If $\vd_P$ is an even number the virtual fundamental class of $P$ is defined as:
\begin{equation}\label{eqn_virtual_class}
[P]^{\vir}=[P(X,\Ch)]^{\vir}=\sqrt{0^!_E}[C_{E^\bullet}]\in A_{\frac{1}{2}\vd_P}(P, \zz\Big[\frac{1}{2}\Big])
\end{equation} independent of the choice of the resolution $E^\bu$.
\end{defn}

A special case that arises in some of our examples is when $P$ is smooth and the obstruction sheaf $\op{Ob}_P:=h^1(\bb E^\vee)$ is locally free. Then, the isotropic cone $C_{E^\bullet}$ is the image of $T$ in $E$ and we have an analog of \cite[Proposition 5.6]{BF}:
\begin{equation} \label{BF56} [P]^{\vir}=\sqrt{e}(\op{Ob}_P).\end{equation}

Another situation that we will consider later is when we have a smooth morphism $u\colon P\to M$, where $M$ is a scheme with a self dual obstruction theory $\bb E' \to \bb L_M$. Suppose moreover, that $\Omega:=\Omega_{P/M}$ is a subbundle of $T$ and $\bb E'$ admits a 3-term self-dual locally free resolution $E'^\bu$ such that $$u^* E'^{\bullet} \cong  [T/\Omega\stackrel{a}{\longrightarrow}E\stackrel{a'^*}{\longrightarrow}(T/\Omega)^*],$$ where $a=a' \circ p$ for the natural projection  $p\colon T\to T/\Omega$.
The perfect obstruction theories given by $\tau E^\bu$ and $\tau E'^\bu$ are then compatible over $u$ in the sense of \cite[Definition 5.8]{BF}. By the proof of \cite[Proposition 5.10]{BF}
$C_{E^\bullet}=u^* C_{E'^\bullet}$ and hence by \cite[Lemma 3.9]{OT} \begin{equation} \label{BF510} [P]^{\vir}=u^*[M]^{\vir}. \end{equation}

For integral cohomology classes $\alpha_i\in H^{m_i}(X,\zz)$, we let 
\begin{equation}\label{eqn_tau_insertion}
\tau: H^{m_i}(X,\zz)\to H^{m_i-4}(P,\zz)
\end{equation}
defined by 
$$\tau(\alpha_i)=\pi_{P*}(\pi_X^*\alpha_i\cup\Ch_2(\ff)),$$
where $\Ch_2(\ff)$ is the Poincar\'e dual to the fundamental cycle of $\ff$.

\begin{defn}\label{defn_stable_pair_invariants}
The stable pair invariant with insertions is defined as:
$$\lra{\alpha_1,\dots, \alpha_s}_{\Ch}^P:=\int_{[P(X,\Ch)]^{\vir}}\prod_{i=1}^{s}\tau(\alpha_i).$$
\end{defn}

Fix the components $\gamma \in H^{4}(X,\bb Z)$ and $\beta \in H^6(X,\bb Q)$ of $\Ch$ and define a generating series of stable pair invariants by summing over the last component $\xi \in H^8(X,\bb Q)$
$$Z^P_{\gamma,\beta}(\alpha_1,\dots,\alpha_s)(q)=\sum_{\xi \in \frac{1}{12}\bb Z}\lra{\alpha_1,\dots, \alpha_s}_{\Ch}^Pq^{n},\qquad n:=\xi+\gamma\cdot \operatorname{td}_2(X) \in \bb Z.$$  

\begin{prop} \label{prop_gener}
$Z^P_{\gamma,\beta}(\alpha_1,\dots,\alpha_s)(q)$ is a Laurent series in $q$. 
If $\gamma$ satisfies Condition \eqref{cond2}  then $Z^P_{\gamma,\beta}(\alpha_1,\dots,\alpha_s)(q)$ is a Laurent polynomial in $q$.
\end{prop}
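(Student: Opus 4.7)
The plan is to establish the claimed properties by bounding, for fixed $\gamma$ and $\beta$, the set of $\xi \in \frac{1}{12}\bb Z$ for which the moduli space $P(X,(0,0,\gamma,\beta,\xi))$ is non-empty. Since the insertion integrand $\prod \tau(\alpha_i)$ has well-defined degree, and the invariant $\lra{\alpha_1,\dots,\alpha_s}_{\Ch}^P$ vanishes whenever the moduli space itself is empty, this reduces the problem to a boundedness question for $n=\chi(F)=\xi+\gamma\cdot\operatorname{td}_2(X)$. For the Laurent series claim I will show $n$ is bounded below; for the Laurent polynomial claim I will further show, under \eqref{cond2}, that $n$ is bounded above.

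For the Laurent series part, using Theorem \ref{thm_moduli} a stable pair $(F,s)$ is identified with a $Z_P$-stable object $E=[\sO_X\to F]$ in $\Ac^p$ satisfying $H^{-1}(E)=\sI_S$ and $H^0(E)=Q$, where $0\to\sO_S\to F\to Q\to 0$ and $S$ is the scheme-theoretic support of $F$. By Corollary \ref{coro:bdd-1} the family $\{\sI_S\}$ is bounded, so $\chi(\sO_S)$ takes only finitely many values. Since $\chi(F)=\chi(\sO_S)+\chi(Q)$, it suffices to bound $\chi(Q)$ below. This is where the section $s$ plays the crucial role: $h^0(F)\geq 1$ combined with the bounded family of supports imposes an effective bound on how far $Q$ can extend $\sO_S$. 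In the special case of smooth $S$ (described in \S \ref{sec_description}) this is transparent, since $F\cong i_{*}\sI_Z(D)$ with $|Z|\leq h^0(\sO_S(D))$, giving
\[
\chi(F) = \chi(\sO_S(D)) - |Z| \;\geq\; \chi(\sO_S(D)) - h^0(\sO_S(D)) + 1,
\]
which is bounded below over the bounded families of $S$ and $D$ produced by Corollary \ref{coro:bdd-1}. The general case is reduced to this via the effective-generalized-divisor presentation of $(F^{**},s)$ on $S$ and a comparison between $F$ and its double dual, whose ``size'' is controlled through the bounded Hilbert polynomials of $\sO_S$ and of $\sO_S(D)$.

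For the Laurent polynomial part under \eqref{cond2}, Lemma \ref{bdab} asserts directly that $\xi$ is bounded above, i.e.\ that $A_P(\beta)$ is bounded above. Combined with the lower bound from the first part, this forces the support of $Z^P_{\gamma,\beta}$ to be a finite subset of $\bb Z$, which is exactly what is required.

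The step I expect to be the main obstacle is the rigorous lower bound on $\chi(Q)$ in the general (non-smooth) case, since the direct inequality $|Z|<h^0(\sO_S(D))$ is no longer available. A uniform alternative is to combine the Serre-duality identity on the Calabi-Yau 4-fold
\[
\chi(Q) = -\chi(\mathcal{E}xt^3(Q,\sO_X)) + \chi(\mathcal{E}xt^4(Q,\sO_X)),
\]
with the boundedness results already in the paper: $\chi(\mathcal{E}xt^4(Q,\sO_X))\geq 0$ is immediate, while one must argue $\chi(\mathcal{E}xt^3(Q,\sO_X))$ is bounded \emph{above}, which is a dual version of the claim used in the proof of Proposition \ref{prop_bdch4} (where it was bounded below). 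The required dual bound should follow from the quotient-sheaf presentation used there, applied now to Serre duals of stable pairs rather than to stable pairs themselves, and this is the only technical point that requires extra care beyond invoking Corollary \ref{coro:bdd-1} and Lemma \ref{bdab}.
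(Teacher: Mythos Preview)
Your approach to the first part contains a genuine gap: the moduli space $P(X,(0,0,\gamma,\beta,\xi))$ is \emph{not} empty for $\xi\ll 0$, so the reduction you propose cannot work. Already in the smooth-support case your inequality $|Z|\le h^0(\sO_S(D))$ is false. Given any $Z\subset D\subset S$, the canonical section of $\sO_S(D)$ cutting out $D$ vanishes on $Z$ and hence lies in $H^0(\sI_Z(D))$; thus $(i_*\sI_Z(D),s)$ is a stable pair regardless of the length of $Z$. Taking e.g.\ $S=\bb P^2$, $D$ a line, and $Z$ any length-$n$ subscheme of $D$, one obtains stable pairs with $\chi(F)=3-n$ arbitrarily negative. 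So there is no lower bound on $n$ coming from nonemptiness, and the subsequent dual/ext arguments you sketch cannot salvage this.

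The paper's proof of the first part avoids this issue entirely and is much shorter: from \eqref{eqn_virtual_dimension} one has $\vd_P=2n-\gamma^2$, so for $n<\gamma^2/2$ the virtual dimension is negative and hence $[P(X,\Ch)]^{\vir}=0$ (either because $\vd_P$ is odd, or because $A_{\vd_P/2}(P)=0$ when $\vd_P/2<0$). This forces all coefficients of $q^n$ with $n<\gamma^2/2$ to vanish, giving the Laurent series claim with no analysis of the moduli space needed. Your treatment of the second part, invoking Lemma~\ref{bdab} under Condition~\eqref{cond2} to bound $\xi$ above, agrees with the paper.
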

\begin{proof}
For $\xi \ll 0$ and fixed $\gamma$ and  $\beta$ the virtual dimension $\vd_P$ becomes negative and hence $[P(X,\Ch)]^{\vir}=0$ proving the first part. The second part follows from Lemma \ref{bdab}.

\end{proof}

\subsection{Quasi-projective case} In \S \ref{sec_local_surfaces} and \S \ref{sec_local_three} we consider situations in which $X$ is quasi-projective. In this case, following the adjustments discussed in \cite[Section 4.3]{OT}, we take the Chern character vector \eqref{chern_char} to be \emph{compactly supported}. In all of these situations, there are open immersions $X\subset \overline{X}$ where $\overline{X}$ is a nonsingular projective 4-fold and $D:=\overline{X}\setminus X$ is a nonsingular divisor. We can then view $P(X,\Ch)$ naturally as an open subset of a moduli space $P(\overline X,\overline \Ch)$ consisting of stable pairs $\sO_X\to F$ for which $\operatorname{Supp} F \cap D=\emptyset$. This will enable us to apply the results of \S \ref{sec_stability_moduli}.

Since Serre duality does not apply in quasi-projective case we will do further analysis  to show that the complex $\bb E$ in \eqref{eqn_obstruction_theory} still satisfies \eqref{eqn_Serre_duality}. By the following Lemma and basechange $\bb E$ has perfect amplitude in $[-2,0]$ in quasi-projective situations as well.
\begin{lem}
Suppose $X$ is quasi-projective and $J=[\sO_X\to F]$ is a stable pair  corresponding to a closed point of $P(X,\Ch)$ then $\Ext^{i\le 0}(J,J)_0=0=\Ext^4(J,J)_0$.
\end{lem}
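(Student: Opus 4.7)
The strategy separates the claim into three essentially independent parts. The vanishing $\Ext^{i<0}(J,J)_0=0$ is formal: $J$ lies in the heart $\Ac^p$ of a bounded t-structure on $D^b(X)$, so $\Hom_{D^b(X)}(J,J[k])=0$ for $k<0$ altogether, regardless of projectivity of $X$. For $i=0$, the identity $\Hom(J,J)_0=0$ is equivalent to the simpleness $\Hom(J,J)\cong\mathbb C$. The second argument in Lemma~\ref{lem_simple}, which adapts \cite[Corollary 1.2.8]{HL} by replacing the reduced Hilbert polynomial with the polynomial stability phase $\phi_{Z_P}$, uses only $Z_P$-stability and the noetherianness of $\Ac^p$, so it carries over verbatim.

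The key new input is $\Ext^4(J,J)_0=0$, for which the projective Serre duality argument used in Lemma~\ref{lem_simple} is unavailable. The main observation is that the trace-free endomorphism complex $R\mathcal{H}om(J,J)_0$ has \emph{compact support}. Indeed, on $X\setminus\operatorname{Supp}(F)$ the defining map $\sO_X\to F$ is zero, so $J\cong\sO_X[1]$ there; consequently $R\mathcal{H}om(J,J)|_{X\setminus\operatorname{Supp}(F)}\cong\sO_X$ with the trace an isomorphism, forcing the trace-free summand to vanish on this open. Since $\Ch$ is assumed compactly supported, $\operatorname{Supp}(F)$ is proper over $\mathbb C$.

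With compact support in hand, compactly supported Serre duality on the nonsingular Calabi--Yau $4$-fold $X$, combined with the natural self-duality $R\mathcal{H}om(J,J)_0^{\vee}\cong R\mathcal{H}om(J,J)_0$ (valid for any perfect $J$ and compatible with the splitting of the trace), yields the symmetry
\[
\Ext^i(J,J)_0 \;\cong\; \Ext^{4-i}(J,J)_0^{*}.
\]
Taking $i=4$ and combining with the second step gives $\Ext^4(J,J)_0\cong\Hom(J,J)_0^{*}=0$. The main technical obstacle is making the duality step rigorous: the cleanest route is to pass to a smooth projective compactification $\overline X\supset X$, regard $R\mathcal{H}om(J,J)_0$ as a complex on $\overline X$ whose support is contained in the proper subset $\operatorname{Supp}(F)\subset X$ (in particular disjoint from $\overline X\setminus X$), apply ordinary Serre duality on $\overline X$, and use that the trivialization $\omega_X\cong\sO_X$ can be chosen on a neighborhood of $\operatorname{Supp}(F)$ inside $X$ so that $\omega_{\overline X}$ becomes harmless on the support of the complex.
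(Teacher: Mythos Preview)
Your proof is correct, but the route you take for the $\Ext^4$ vanishing is genuinely different from the paper's. The paper does not pass to a compactification or invoke Serre duality; instead it argues directly with the triangle $J[-1]\to\sO_X\to F$: applying $\Hom(\sO_X,-)$ shows $\Ext^4(\sO_X,J[-1])\cong H^4(\sO_X)$ (using that $F$ is a compactly supported $2$-dimensional sheaf, so $H^3(F)=H^4(F)=0$), and applying $\Hom(-,J[-1])$ shows that this group surjects onto $\Ext^4(J,J)$ and lands in the $H^4(\sO_X)$-summand of the trace decomposition, forcing $\Ext^4(J,J)_0=0$. For $\Ext^{i\le 0}$ the paper also argues differently, citing the local computation $\mathcal{H}om(J,J)\cong\sO_X$ of \cite[Lemma~1.15]{PT} together with the local-to-global spectral sequence rather than the heart/stability arguments you give.

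What your approach buys is structure: the observation that $R\mathcal{H}om(J,J)_0$ is supported on the proper set $\operatorname{Supp}(F)$, combined with Serre duality on $\overline X$ and $\omega_X\cong\sO_X$, yields the full symmetry $\Ext^i(J,J)_0\cong\Ext^{4-i}(J,J)_0^{*}$. This is exactly the self-duality \eqref{eqn_Serre_duality} of $\bb E$ that the paper needs (and separately asserts) in the quasi-projective setting to run the Oh--Thomas construction, so your argument actually proves more than the lemma states. The paper's triangle chase is quicker for the bare vanishing but does not by itself give the duality. One small point: your stability argument for $\Hom(J,J)_0=0$ implicitly uses the polynomial stability $(Z_P,\Ac^p)$ on the compactification $\overline X$, where it was set up; this is harmless because, as your compact-support observation already shows, $\Ext^i_X(J,J)_0=\Ext^i_{\overline X}(\overline J,\overline J)_0$.
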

\begin{proof}
The first equality follows by the same argument as in \cite[Lemma 1.15]{PT} followed by the local to global spectral sequence. As before, we have the decomposition $$\Ext^4(J,J)\cong \Ext^4(J,J)_0 \oplus H^4(\sO_X)$$ splitting the trace map. On the other hand, applying $\Hom(\sO_X,-)$ and  $\Hom(-,J[-1])$  to the natural exact triangle $J[-1]\to \sO_X\to F\to J$, we find that $\Ext^4(\sO_X,J[-1])$ is identified with the second summand $H^4(\sO_X)$ in the decomposition above, and surjects onto $\Ext^4(J,J)$ proving the second equality in the Lemma.
\end{proof}

\subsection{Two exact triangles over the moduli space} \label{sec_triang}
The universal stable pair gives a natural exact triangle over $X\times P$ \begin{equation} \label{equ_natural} \bb J[-1]\to \sO_{X\times P} \to \bb F \to \bb J.\end{equation}  
For simplicity, we will sometimes use the notation $$\lra{-,.}:= R\pi_{P*}\mathcal{H}om(-, \cdot).$$ 
Applying the functor $\lra{-,\bb F}$ to \eqref{equ_natural} gives us the first exact triangle over $P$
\begin{equation} \label{equ_first} 
\lra{\bb F, \bb F}\to \lra{\sO,\bb F}\to \lra{\bb J[-1],\bb F}\to \lra{\bb F, \bb F}[1].
\end{equation} 
On the other hand, by applying the functor $\lra{\bb J,\cdot}$ to \eqref{equ_natural} we can form a commutative diagram
$$
\xymatrix{
& R\pi_{P*}\sO[1]\ar@{=}[r]\ar[d] & R\pi_{P*}\sO[1]\ar[d]\\
\lra{\bb J[-1],\bb F}\ar[r]& \lra{\bb J,\bb J}[1]\ar[r]& \lra{\bb J, \sO_{X\times P}}[2]}
$$ in which the bottom row is part of an exact triangle and the left vertical arrow is the identity map. Taking cones over the vertical arrows, we get the second exact triangle
\begin{equation} \label{equ_sec} 
\lra{\bb J[-1], \bb F}\to \lra{\bb J,\bb J}_0[1]\to \lra{\bb F,\sO}[2]\to \lra{\bb J[-1], \bb F}[1],
\end{equation} where as usual the subscript $0$ indicates the trace-free part.

\subsection{Relation to two types of DT-invariants}\label{DT-4_invariants}
For $\Ch$ as in \eqref{chern_char} let $$I(X,\Ch), \quad M(X,\Ch)$$ respectively denote the moduli spaces of ideal sheaves (the Hilbert scheme) $I<\mathcal O_X$, and semistable 2-dimensional sheaves $F$ with $$1-\Ch(I)=\Ch=\Ch(F).$$ In the case of $M(X,\Ch)$, assume $\Ch$ is such that the semistability implies stability. Let $\bb I$ and $\mathbb F$ be the universal ideal sheaf and the universal (twisted) sheaf over $X\times I(X,\Ch)$ and $X\times M(X,\Ch)$, respectively.
 The natural obstruction theories with the virtual cotangent bundles 
$$ R\pi_{I*}\rH om(\bb I, \bb I)_0[3], \qquad \tau^{[-2,0]}(R\pi_{M*}\rH om(\ff, \ff)[3])$$
and the work of Oh-Thomas provides virtual cycles
$$[I(X,\Ch)]^{\vir}\in A_{\frac{1}{2}\vd_I}(I(X,\Ch), \zz\Big[\frac{1}{2}\Big]), \quad [M(X,\Ch)]^{\vir}\in A_{\frac{1}{2}\vd_M}(M(X,\Ch), \zz\Big[\frac{1}{2}\Big]),$$
where $\vd_I=2(\xi+\gamma\cdot \operatorname{td}_2(X))-\gamma^2$, and $\vd_M=2-\gamma^2$. One can define DT invariants $$\lra{\alpha_1,\dots, \alpha_s}_{\Ch}^I,\qquad \lra{\alpha_1,\dots, \alpha_s}_{\Ch}^M$$ by integrating these virtual cycles similar to stable pair invariants in \S \ref{eqn_obstruction_complex}.

%
%
%
%

One notes that $P(X,\Ch)$ and $I(X,\Ch)$ have the same virtual dimensions. If we define a generating series $Z^I_{\gamma,\beta}(-)$ for the DT invariants $\lra{\alpha_1,\dots, \alpha_s}_{\Ch}^I$ similar to that of stable pair invariants then by \cite[Proposition 3.6]{GK} $Z^I_{\gamma,\beta}(-)$ will be a Laurent series in $q$ (cf. Proposition \ref{prop_gener}). Based on the discussion in \S \ref{subsec_polynomial_ideal_sheaves}, we expect (wall-crossing) relations among the stable pair invariants and DT invariants of ideal sheaves. We plan to pursue this question in the future. In a work in preparation \cite{BKP}, Bae, Kool and Park have formulated some conjectural formulas relating these two theories and have done several toric calculations to verify their conjectures.

In the case $\gamma \in H^4(X,\mathbb Z)$ is an irreducible class\footnote{This means that $\gamma$ cannot be written as a sum of two effective classes.} any pure 2-dimensional sheaf $F$ with $\Ch(F)=\Ch$ is stable, and hence we can define a natural morphism $$\rho\colon P(X,\Ch)\to M(X,\Ch)$$ by mapping a stable pair $[\sO_X{\rightarrow}F]$ to $F$. If $H^{i>0}(F)=0$ for any pure 2-dimension sheaf $F$ with $\Ch(F)=\Ch$ the morphism $\rho$ is the projection map of a projective bundle $\bb P(\pi_{P*}\bb F)$ of fiber dimension $\xi+\gamma\cdot \operatorname{td}_2(X)-1$.
\begin{prop} \label{prop_pullback}
 Suppose $\gamma \in H^4(X,\mathbb Z)$ is an irreducible class and $H^{i>0}(F)=0$ for any $F\in M(X,\Ch)$ then $$[P(X,\Ch)]^{\vir}=\rho^* [M(X,\Ch)]^{\vir}.$$ 
 \end{prop}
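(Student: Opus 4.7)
The plan is to apply the pullback formula \eqref{BF510} for the smooth morphism $\rho$, which requires producing compatible self-dual locally free resolutions of $\bb E_P$ and $\bb E_M$ whose difference is captured by the relative cotangent $\Omega:=\Omega_{P/M}$. Under the hypothesis $H^{>0}(F)=0$ for every $F\in M(X,\Ch)$, cohomology and base change imply that $\pi_{M*}\ff$ is locally free of rank $\chi(F)=\xi+\gamma\cdot \operatorname{td}_2(X)$, so $\rho:P=\mathbb{P}(\pi_{M*}\ff)\to M$ is a smooth projective bundle with $\Omega$ locally free of rank $\chi(F)-1$; flat base change along $\rho$ further yields $R\pi_{P*}\ff\cong \rho^*\pi_{M*}\ff$ concentrated in degree zero, and I write $\mathcal V$ for this vector bundle.

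Next, I would compare $\bb E_P=\lra{\bb J,\bb J}_0[3]$ with $\rho^*\bb E_M\cong \lra{\ff,\ff}_0[3]$ by means of the two triangles \eqref{equ_first} and \eqref{equ_sec}. Relative Serre duality for $\pi_P:X\times P\to P$, combined with the Calabi-Yau condition $\omega_X\cong \sO_X$, gives $\lra{\ff,\sO}\cong \mathcal V^\vee[-4]$, while $\lra{\sO,\ff}=\mathcal V$. Splicing the two resulting triangles via an octahedron produces a distinguished triangle on $P$
\[
\rho^*\bb E_M\longrightarrow \bb E_P\longrightarrow \mathcal K\longrightarrow \rho^*\bb E_M[1]
\]
in which the cone $\mathcal K$ has cohomology $\mathcal H^{-2}(\mathcal K)\cong T_{P/M}=\Omega^\vee$ and $\mathcal H^{0}(\mathcal K)\cong \Omega$, these two summands being Serre-dual to one another under $\theta$ and so forming a hyperbolic piece. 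At a closed point $[(F,s)]\in P$ the comparison reads
\[
\Ext^1(J,J)_0\cong \Ext^1(F,F)_0\oplus T_{P/M},\ \ \Ext^2(J,J)_0\cong \Ext^2(F,F)_0,\ \ \Ext^3(J,J)_0\cong \Ext^3(F,F)_0\oplus \Omega,
\]
reflecting the fact that $H^{>0}(F)=0$ leaves no obstruction to deforming the section.

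Choosing now a self-dual 3-term locally free resolution $E'^\bu=[T'\xrightarrow{a'}E'\xrightarrow{a'^*}T'^*]$ of $\bb E_M$ (working \'etale-locally on $M$ if necessary), the hyperbolic structure of $\mathcal K$ above allows its pullback to be enlarged to a self-dual resolution of $\bb E_P$ of the form
\[
E^\bu=\bigl[\,\rho^*T'\oplus \Omega\xrightarrow{a}\rho^*E'\xrightarrow{a^*}\rho^*T'^*\oplus \Omega^\vee\,\bigr],
\]
with $a=a'\circ p$ for the projection $p:T\to T/\Omega=\rho^*T'$ and quadratic form on $\rho^*E'$ pulled back from $E'$. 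This is exactly the compatibility setup required by \eqref{BF510}, whose conclusion yields $[P(X,\Ch)]^{\vir}=\rho^*[M(X,\Ch)]^{\vir}$.

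The main obstacle is the compatibility of orientations: one must verify that the orientation on $E^\bu$ agrees with the pullback of a chosen orientation on $E'^\bu$ augmented by the canonical orientation of the hyperbolic summand $\Omega\oplus \Omega^\vee$. This reduces to the determinantal identity $\det E^\bu\cong \rho^*\det E'^\bu\otimes \det(\Omega\oplus \Omega^\vee)$ induced by the comparison triangle above, together with a uniform application of the trace-free enhancement of the Oh-Thomas construction \cite[\S 4.3]{OT} on both sides.
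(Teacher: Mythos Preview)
Your overall strategy coincides with the paper's: compare $\bb E_P$ and $\rho^\ast\bb E_M$ using the triangles \eqref{equ_first}, \eqref{equ_sec}, show the difference is governed by $\Omega_{P/M}$, and invoke \eqref{BF510}. But the central step of your argument, namely the production of a single distinguished triangle
\[
\rho^\ast\bb E_M\longrightarrow \bb E_P\longrightarrow \mathcal K\longrightarrow \rho^\ast\bb E_M[1]
\]
``by splicing \eqref{equ_first} and \eqref{equ_sec} via an octahedron,'' is not justified and in fact does not follow from those triangles. Neither triangle contains a map between $\lra{\ff,\ff}$ and $\lra{\bb J,\bb J}_0$; the octahedral axiom applied to the available composition $\lra{\sO,\ff}\to\lra{\bb J[-1],\ff}\to\lra{\bb J,\bb J}_0[1]$ produces a triangle relating $\lra{\ff,\ff}[1]$ and $\lra{\ff,\sO}[2]$ to the cone of the composite, not the triangle you assert. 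Consequently your construction of the self-dual resolution $E^\bu=[\rho^\ast T'\oplus\Omega\to\rho^\ast E'\to\rho^\ast T'^{\ast}\oplus\Omega^\vee]$ of $\bb E_P$ is unsupported: even granting a comparison triangle, knowing only $\mathcal H^{-2}(\mathcal K)\cong T_{P/M}$ and $\mathcal H^0(\mathcal K)\cong\Omega$ would not force $\mathcal K$ to be a direct sum, nor would it force the triangle to be compatible with the Serre self-dualities on both sides. (Relatedly, your pointwise $\Ext$ identifications are short exact sequences, not direct sums.)

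The paper circumvents this by introducing the intermediate complex $\bb A:=\tau^{\le 2}\lra{\bb J[-1],\ff}$, which sits in \emph{two separate} triangles: $T_{P/M}\to\bb A\to\tau^{[0,2]}\lra{\ff,\ff}[1]$ from \eqref{equ_first} and $T_{P/M}^\ast[-3]\to\bb A\to\lra{\bb J,\bb J}_0[1]$ from \eqref{equ_sec}. Passing to an affine bundle via Jouanolou's trick, one can then pick a three-term locally free resolution $A_0\to A_1\to A_2$ of $\bb A$ and read off explicit resolutions of both virtual tangent bundles with the \emph{same} middle term; the self-duality argument of \cite[Proposition~4.1]{OT} then puts them in the exact form $[A_0/T\to A\to(A_0/T)^\ast]$ and $[A_0\to A\to A_0^\ast]$ required by \eqref{BF510}. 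This is the missing mechanism in your argument. A minor additional point: $\bb E_M$ is $\tau^{[-2,0]}(\lra{\ff,\ff}[3])$, not $\lra{\ff,\ff}_0[3]$; the trace-free decomposition used for $\bb J$ relies on $\rk\bb J\neq 0$ and is not available for the torsion sheaf $\ff$.
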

\begin{proof} 
We will drop obvious pullback symbols in the following and denote both projections $X\times P\to P$ and $X\times M\to M$ by $\pi$.
We identify the relative tangent bundle $T:=T_{P/M}$ with $\pi_* \bb F/\sO_P$. Let us define $$\bb A:=\tau^{\le 2} \lra{\bb J[-1],\bb F}.$$  By trimming the complexes in the exact triangles \eqref{equ_first} and \eqref{equ_sec} we get the following exact triangles over $P$
\begin{align*} & T\to \bb A \to \tau^{[0,2]} \lra{\bb F,\bb F}[1]\to T[1],\\
&T^*[-3]\to \bb A \to \lra{\bb J,\bb J}_0[1]\to T^*[-2]. \end{align*}  The  terms $\lra{\bb F,\bb F}[1]$ and $\lra{\bb J,\bb J}_0[1]$ are the (pullback of) virtual tangent bundles on $M$ and $P$, respectively. For any  $J=[\sO_X\to F]\in P(X,\Ch)$ we see that $\Ext^4(J[-1],F)=0$ and $\Ext^3(J[-1],F)\cong \bb C$ from the cohomology long exact sequence, so by basechange theorem $\bb A$ has a perfect amplitude in $[0,2]$. Let $$A_0\to A_1 \to A_2$$ be a locally free resolution for $\bb A$. Using Jouanolou trick we can pullback everything to an affine bundle over $P$, whose total space is an affine scheme (see \cite[Section 1.1]{GT2}). So we can pretend that all short exact sequences of vector bundles are split and morphisms in derived category between perfect complexes are given by genuine map of complexes. The exact triangles above then imply that 
\begin{align*} &\tau^{[0,2]} \lra{\bb F,\bb F}[1] \cong [A_0/T\to A_1 \to A_2],\\
&\lra{\bb J,\bb J}_0[1] \cong [A_0\to A_1\to A_2\oplus T^*].\end{align*} Using the fact that the dual of both of these complexes satisfy \eqref{eqn_Serre_duality}, as in the proof of \cite[Proposition 4.1]{OT}, there exists a vector bundle $A$ such that 
\begin{align*} & \lra{\bb F,\bb F}[1] \cong [A_0/T\to A \to (A_0/T)^*],\\
&\lra{\bb J,\bb J}_0[1] \cong [A_0\to A\to A_0^*].\end{align*}
The claim now follows from \eqref{BF510}.
\end{proof}
 
In the next Subsection, we will discuss the case where $H^1(F)$ is not necessarily zero. 


\subsection{Moduli space of stable pairs as a degeneracy locus} \label{sec_degen}
Let $Y$ be a scheme with a perfect obstruction theory $\bb E\to \bb L_Y$ and the resulting virtual cycle $[Y]^{\vir}$. Suppose that $V$ is a vector bundle over $Y$ with a section $s$. Let $Z:=Z(s)$ be the zero locus of $s$ defined by an ideal $\sI \le \sO_Y$. This will give a commutative diagram $$\xymatrix{ V^*|_Z\ar[r]^-{\lambda} \ar@{->>}[d]_-{s|_Z} & \Omega_Y|_Z \ar@{=}[d] \ar@{->>}[r] & \Omega_Z \ar@{=}[d] \\
\sI/\sI^2 \ar[r]^-{d_{Z\subset Y}} & \Omega_Y|_Z \ar@{->>}[r] & \Omega_Z,}$$ which implies in particular that $\ker \lambda$ surjects onto $K:=\ker d_{Z\subset Y}$.

Let $A$ be a nonsingular variety containing $Y$ (and hence $Z$) as a closed subscheme. Then the truncated cotangent complexes of $Y$ and $Z$ are given by $$\tau^{\ge -1} \bb L_Y\cong[I_Y/I^2_Y\xrightarrow{d_Y} \Omega_A|_Y],\qquad \tau^{\ge -1} \bb L_Z\cong [I_Z/I^2_Z\xrightarrow{d_Z} \Omega_A|_Z].$$ Here, $I_Y\subset I_Z \subset \sO_A$ are the ideals of these subschemes and $\sI$ is identified with $I_Z/I_Y$, which therefore gives an exact sequence \begin{equation} \label{IYZ} I_Y/(I_ZI_Y)\xrightarrow{a} I_Z/ I^2_Z\xrightarrow{b} \sI/ \sI^2 \to 0.\end{equation}

Pick a locally free resolution $\bb E\cong[E^{-1}\to E^0] $, such that the perfect obstruction theory above is given by a map of 2-term complexes (with both vertical arrows surjective)  
$$\xymatrix{ E^{-1}\ar[r]^-{\theta} \ar@{->>}[d]_-{t'} & E^0 \ar@{->>}[d]^-{t} \\
I_Y/I_Y^2 \ar[r]^-{d_{Y}} & \Omega_A|_Y}$$ inducing an isomorphism on $h^0$ and surjection on $h^{-1}$. 

From here on in this Subsection, we pullback everything to an affine bundle over $A$ which is an affine variety (using Jouanolou trick as explained in \cite[Section 1.1]{GT2}). This allows us, using the surjections $b$ and $t|_Z$,  to lift $$V^*|_Z\xrightarrow{s|_Z} \sI/\sI^2 \quad \text{to} \quad V^*|_Z\xrightarrow{\sigma} I_Z/I_Z^2,$$ and also to lift $$V^*|_Z\xrightarrow{d_Z\circ \sigma} \Omega_A|_Z\quad \text{to} \quad V^*|_Z\xrightarrow{\tau} E^0|_Z.$$
The map $\tau$ defines a map of complexes $V^*|_Z\to E^\bullet|_Z$, whose mapping cone is the top row of the following commutative diagram \begin{equation}\label{dpot} \xymatrix{ V^*|_Z\oplus E^{-1}|_Z\ar[r]^-{(\tau,\theta)} \ar[d]_-{(\sigma,}^-{a\circ t')} &E^0|_Z \ar[d]^-{t} \\
I_Z/I_Z^2\ar[r]^-{d_Z} & \Omega_A|_Z.} 
\end{equation}
Here, the commutativity follows from the choices of lifts made above, and we have dropped $|_Z$ for $t$, $t'$ and $\theta$ for simplicity.
\begin{prop} \label{prop_potZ}
Diagram \eqref{dpot} gives a perfect obstruction theory for $Z$.
\end{prop}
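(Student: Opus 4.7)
The plan is to verify the three conditions defining a perfect obstruction theory: (i) the top complex $E^{\bu}_Z := [V^*|_Z \oplus E^{-1}|_Z \to E^0|_Z]$ is perfect (immediate since both terms are locally free, so the amplitude is $[-1,0]$), (ii) the vertical arrows in \eqref{dpot} define a morphism $\phi_Z : E^{\bu}_Z \to \tau^{\geq -1}\bb L_Z$ in $D^b(Z)$, and (iii) $\phi_Z$ induces an isomorphism on $h^0$ and a surjection on $h^{-1}$.

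For (ii), after invoking the Jouanolou trick so that we may work genuinely at the level of chain maps, I would check commutativity of \eqref{dpot} using the three compatibilities already built into the setup: $t\circ\tau = d_Z\circ\sigma$ by the choice of $\tau$ as a lift of $d_Z\circ\sigma$; $t\circ\theta = d_Y\circ t'|_Z$ coming from the original $Y$-obstruction theory restricted to $Z$; and $d_Z\circ a = d_Y|_Z$ on $I_Y/(I_Y I_Z)$, from the construction of the connecting map $a$ in \eqref{IYZ}.

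My plan for (iii) is to exploit a mapping-cone interpretation. Namely, $E^{\bu}_Z$ is essentially the shifted cone of $\tau : V^*|_Z \to E^{\bu}|_Z$, while the bottom row $\tau^{\geq -1}\bb L_Z$ is, up to a careful truncation, the shifted cone of a corresponding map $V^*|_Z \to \tau^{\geq -1}\bb L_Y|_Z$ coming from the standard triangle $\bb L_{Z/Y}[-1]\to Li^*\bb L_Y\to \bb L_Z$ together with the identification $\tau^{\geq -1}\bb L_{Z/Y}\cong V^*|_Z[1]$. Then $\phi_Z$ is induced by cone functoriality from the morphism of triangles whose other vertical maps are the identity on $V^*|_Z$ and the original obstruction theory map $\phi: E^{\bu}\to \tau^{\geq -1}\bb L_Y$. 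A $5$-lemma chase on the induced ladder of long exact cohomology sequences then transfers the $h^0$-isomorphism and $h^{-1}$-surjection from $\phi$ to $\phi_Z$.

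The hard part will be justifying this cone interpretation of the bottom row when $s$ is not a regular section, since $\bb L_{Z/Y}$ is then no longer concentrated in degree $-1$ and the interaction between $Li^*$ and the truncation $\tau^{\geq -1}$ needs care. If this obstacle turns out to be serious, I would fall back on a direct verification: for $h^0$, the surjection is clear from the surjectivity of $t$, while injectivity reduces to restricting the inclusion $\ker t\subseteq \im\theta$ (which holds on $Y$ by the $h^0$-isomorphism property of the $Y$-obstruction theory) to $Z$; for $h^{-1}$, given $\alpha\in\ker d_Z$, one writes $\alpha=\sigma(v)+a(\bar x)$ using the generation $\sigma(V^*|_Z)+a(I_Y/(I_YI_Z)) = I_Z/I_Z^2$ (a consequence of \eqref{IYZ} combined with the surjection $s|_Z:V^*|_Z\twoheadrightarrow \sI/\sI^2$), lifts $\bar x$ through $t'|_Z$, and then adjusts the candidate preimage within $\im\theta|_Z$ using $\ker t\subseteq \im\theta$ so that it lands in $\ker(\tau,\theta)$ and still maps to $\alpha$.
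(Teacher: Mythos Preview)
Your primary mapping-cone approach runs into exactly the obstacle you flag and do not resolve: when $s$ is not a regular section, $\tau^{\geq -1}\bb L_{Z/Y}$ is $\sI/\sI^2[1]$ rather than $V^*|_Z[1]$, and the map $V^*|_Z\twoheadrightarrow \sI/\sI^2$ is only a surjection. There is no honest triangle with $V^*|_Z$ on one end and the truncated cotangent complexes on the other two, so the 5-lemma transfer from $\phi$ to $\phi_Z$ does not go through as stated. The paper sidesteps this entirely by a direct elementwise verification.

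Your fallback direct argument is close in spirit to what the paper does, but the $h^{-1}$ step has a genuine gap. After writing $\alpha=\sigma(v)+a(\bar x)$ and lifting $\bar x$ to $e\in E^{-1}|_Z$ via $t'|_Z$, the pair $(v,e)$ maps to $\alpha$ but typically $(v,e)\notin T:=\ker(\tau,\theta)$. You correctly observe that $\tau(v)+\theta(e)\in\ker t|_Z\subseteq\im(\theta|_Z)$, so one can adjust by some $\delta$ with $\theta(\delta)=-(\tau(v)+\theta(e))$ to obtain $(v,e+\delta)\in T$. However, the image then becomes $\alpha+a(t'(\delta))$, and the error $a(t'(\delta))$ need not vanish: it lies in $\ker b\cap R$ (since $t'(\delta)\in\ker(d_Y|_Z)$), but you give no mechanism to kill it. Your sentence ``and still maps to $\alpha$'' hides precisely this difficulty.

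The paper's proof handles $h^{-1}$ in two stages that address this. First, using the surjection $\ker\lambda\twoheadrightarrow K:=\ker d_{Z\subset Y}$ (set up in the paragraph before \eqref{IYZ}), it shows that for any $r\in R:=\ker d_Z$ one can find $(v,-e)\in T$ whose image $r'$ satisfies $r-r'\in\ker b\cap R$. Second, it shows $\ker b\cap R\subseteq f(T)$ directly: since $a$ carries $\ker(d_Y|_Z)$ onto $\ker b\cap R$, and the $h^{-1}$-surjectivity of the $Y$-obstruction theory gives $(\ker\theta)|_Z\twoheadrightarrow(\ker d_Y)|_Z\twoheadrightarrow\ker(d_Y|_Z)$, one sees that $\ker(\theta|_Z)\subseteq T$ already surjects onto $\ker b\cap R$ via $a\circ t'$. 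This second stage is exactly what is missing from your sketch.
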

\begin{proof}
We need to show that the map of complexes making the rows of \eqref{dpot} induces isomorphism on $h^0$ and surjection on $h^{-1}$. Since $\op{coker} \theta\cong \Omega_Y$ and $\op{coker} \lambda \cong \Omega_Z \cong \op{coker} d_Z$, so the $h^0$ requirement is fulfilled.

For the $h^{-1}$ requirement, we need to show that the induced map between the kernels $$T:=\ker (\tau,\theta)\xrightarrow{f}R:=\ker d_Z$$ is surjective.
 Denote also the induced map $R\to K=\ker d_{Z\subset Y}$ by $g$. For any $k\in K$, since $\ker \lambda$ surjects onto $K$, there exists a $v\in \ker \lambda \subset V^*|_Z$ such that $s(v)=k$. We must therefore have $$\tau(v)\in \ker(E_0|_Z\to \Omega_Y|_Z),$$ so there exists an $e\in E^{-1}|_Z$ such that $\theta(e)=\tau(v)$ and so $(v,-e)\in T$. Since $b\circ a \circ t'(e)=0$, we have $f\circ g(v,-e)=k$. 

Let $r\in R$, then by what we said above for $k:=f(r)$, there exists $(v,-e)\in T$ such that $f\circ g(v,-e)=f(r)$. Let $r':=g(v,-e) \in R$, then $$r-r' \in \ker(b) \cap R.$$ Since $I_Y/(I_ZI_Y)$ surjects onto $\ker b$, we see that $\ker(d_Y|_Z)$ surjects onto $R\cap \ker b$ as well. 
Using the induced commutative diagram
$$\xymatrix{(\ker \theta)|_Z \ar@{->>}[r] \ar@{->>}[d] & \ker(\theta|_Z) \ar[d] & \\
(\ker d_Y)|_Z \ar@{->>}[r] & \ker(d_Y|_Z)\ar@{->>}[r]& R\cap \ker b,}$$
we see that $r-r'$ must be in the image of $\ker(\theta|_Z)$ and hence in that of $T$.  Now since both $r'$ and $r-r'$ are in the image of $T$, so must be $r$.
\end{proof}
Let $\bb E_Z$ be the 2-term complex in the top row of \eqref{dpot}. The perfect obstruction theory corresponding to this Diagram gives a virtual cycle denoted by $[Z]^{\vir}$. The following Corollary is a special case of Manolache's functoriality of the virtual pullback (\cite[Theorem 4.8]{man}).
\begin{cor} \label{cor_kkp}
Let $\iota\colon Z=Z(s)\hookrightarrow Y$. Then $\iota_*[Z]^{\vir}=e(V)\cap [Y]^{\vir}$.
\end{cor}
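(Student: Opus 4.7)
The plan is to recognize this as the standard fact that virtual classes behave well under intersection with zero sections of bundles, made precise via Manolache's functoriality of virtual pullback \cite{man}. Viewing the section $s$ as a morphism $s\colon Y\to V$ into the total space of $V$, we have a cartesian square
\[
\xymatrix{Z \ar[r]^-\iota \ar[d]_-\iota & Y \ar[d]^-s \\ Y \ar[r]_-{0} & V}
\]
where the zero section $0\colon Y\hookrightarrow V$ is a regular embedding of codimension $\rk V$, with conormal bundle $V^*|_Y$.

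The first and key step is to identify the POT on $Z$ from Diagram \eqref{dpot} with the one obtained by pulling back $\bb E$ along the regular embedding $0$. Concretely, the inclusion of two-term complexes $[E^{-1}|_Z\to E^0|_Z]\hookrightarrow [V^*|_Z\oplus E^{-1}|_Z\to E^0|_Z]$ yields a distinguished triangle
\[
  \bb E|_Z\longrightarrow \bb E_Z\longrightarrow V^*|_Z[1]\xrightarrow{\;+1\;}
\]
and unravelling the vertical arrows in Diagram \eqref{dpot} shows that the induced map on the third term agrees with the canonical map $V^*|_Z[1]=\iota^*\bb L_{Y/V}\to \bb L_{Z/Y}$. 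This is precisely the compatibility datum of Behrend--Fantechi/Manolache between the POTs $\bb E\to \bb L_Y$ and $\bb E_Z\to \bb L_Z$ along the regular embedding $0$. (The verification is a direct diagram chase using the surjections $b\colon I_Z/I_Z^2\twoheadrightarrow \sI/\sI^2$ and $t\colon E^0\twoheadrightarrow \Omega_A|_Y$ chosen in the construction.)

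Given this compatibility, Manolache's theorem \cite[Theorem 4.8]{man} yields the key identity
\[
  [Z]^{\vir}=0^![Y]^{\vir},
\]
where $0^!$ denotes the refined Gysin pullback along the regular embedding $0\colon Y\to V$. Combining this with the self-intersection formula for the zero section of a vector bundle (Fulton, \emph{Intersection Theory}, Proposition 3.3 / Corollary 6.3), which asserts $\iota_*\circ 0^!=e(V)\cap (-)$ on $A_*(Y)$ for the diagram above, we obtain
\[
  \iota_*[Z]^{\vir}=\iota_*0^![Y]^{\vir}=e(V)\cap [Y]^{\vir},
\]
as desired. The main obstacle is the first step: carefully checking that Diagram \eqref{dpot} really produces the \emph{compatible} POT in Manolache's sense, rather than simply some POT on $Z$; once this is in place, the rest is a direct application of standard intersection theory.
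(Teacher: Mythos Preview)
Your proof is correct and follows essentially the same route as the paper: set up the fiber square over $0_V\colon Y\to V$, verify the compatibility triangle $\iota^*\bb E\to\bb E_Z\to V^*|_Z[1]$ over $\iota^*\bb L_Y\to\bb L_Z\to\bb L_{Z/Y}$, deduce $[Z]^{\vir}=0_V^![Y]^{\vir}$, and finish with the self-intersection formula. The only cosmetic difference is that the paper invokes \cite[Theorem~1]{KKP} for the functoriality step (having already noted in the preamble that this is a special case of Manolache's \cite[Theorem~4.8]{man}) and cites \cite[Proposition~14.1]{F} for the final identity, whereas you cite Manolache directly and appeal to the self-intersection formula in Fulton; these are the same arguments with different references.
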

\begin{proof}
By our construction we have a commutative diagram whose rows form the natural exact triangles 
\begin{equation} \label{equ_comp}\xymatrix{\iota^* \bb E \ar[r] \ar[d] & \bb E_Z \ar[r] \ar[d] &  \iota^*V^*[1] \ar[d]\\
\iota^* \bb L_Y \ar[r] & \bb L_Z\ar[r]& \bb L_{Z/Y}.}\end{equation} (We need to choose a lift of $t'\colon E^{-1}\to I_Y/I_Y^2\cong \op{coker}(l)$ to $E^{-1}\to L^{-1}$ and a lift of $\sigma\colon V^* \to I_Z/I_Z^2\cong \op{coker}(l')$ to $V^*|_Z\to L'^{-1}$, where $$\bb L_Y\cong [\dots \to L^{-2}\xrightarrow{l} L^{-1}\to \Omega_A|_Y],\qquad \bb L_Z\cong [\dots \to L'^{-2}\xrightarrow{l'} L'^{-1}\to \Omega_A|_Z]$$ are locally free resolutions. This will give the left commutative square of \eqref{equ_comp}. The right square will be then induced by the properties of exact triangles.) Consider the Cartesian diagram $$\xymatrix{Z\ar[r] \ar[d] &Y \ar[d]^-s\\Y\ar[r]^-{0_V} & V.} 
$$ The perfect obstruction theories given by the first two vertical arrows of \eqref{equ_comp} are compatible over $0_V$ \cite[Section 7]{BF}. By \cite[Theorem 1]{KKP} we have $[Z]^{\vir}=0_V^![Y]^{\vir}$. The claim now follows from \cite[Proposition 14.1]{F}. 
\end{proof}

Next, suppose that $Y$ is a scheme with a self-dual 3-term obstruction theory \begin{equation} \label{eq_sdot} [T\to E\to T^*] \to \bb L_Y\end{equation} as in \eqref{eqn_obstruction_complex}, and the resulting Oh-Thomas' virtual cycle $[Y]^{\vir}\in A_{\vd_Y/2}(Y,\bb Z[\frac 1 2])$. Let $V$, $s\in \Gamma(Y,V)$, $Z:=Z(s)$, and $\sI \le \sO_Y$ be as in the beginning of this Subsection.
The induced map from the truncation $[E\to T^*]$ to  $\bb L_Y$ is a perfect obstruction theory to which we apply the construction above leading to Proposition \ref{prop_potZ}. This gives a perfect obstruction theory for $Z$ \begin{equation} \label{eq_2term} [E|_Z\oplus V^*|_Z\to T^*|_Z]\to \bb L_Z.\end{equation} Using the dual of the map $V^*|_Z\to T^*|_Z$ in \eqref{eq_2term} and the zero map $V|_Z[1]\to \bb L_Z$,  this can in turn be enlarged to a self-dual 3-term obstruction theory \begin{equation} \label{equ_3term} [T|_Z\to V|_Z\oplus E|_Z \oplus V^*|_Z\to T^*|_Z]\to \bb L_Z.\end{equation}

The following Proposition is a special case of the functoriality of the \emph{square root virtual pullback} recently proven by Hyeonjun Park \cite[Theorem 2.2]{park}.
\begin{prop} \label{prop_zero}
If $[Z]^{\vir}\in A_{\vd_Z/2}(Z,\bb Z[\frac 1 2])$ is Oh-Thomas' virtual cycle resulting from \eqref{equ_3term} then $$\iota_*[Z]^{\vir}=e(V)\cap [Y]^{\vir}.$$
\end{prop}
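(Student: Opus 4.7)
The plan is to deduce the identity from Hyeonjun Park's functoriality of the square-root virtual pullback \cite[Theorem 2.2]{park}, and then reduce the resulting pullback operation to the refined Euler class of $V$. Form the Cartesian square
$$
\xymatrix{
Z \ar[r]^-{\iota} \ar[d] & Y \ar[d]^-{s} \\
Y \ar[r]^-{0_V} & V
}
$$
whose two vertical arrows are the section $s$ and the zero section $0_V$ of $V$. The first step is to verify that the self-dual three-term obstruction theory \eqref{equ_3term} of $Z$ is Park-compatible with the self-dual obstruction theory \eqref{eq_sdot} of $Y$ along the right-hand vertical arrow: the hyperbolic summand $V|_Z \oplus V^*|_Z$ adjoined to $E|_Z$ in \eqref{equ_3term} is precisely the pullback of the relative tangent bundle of $s$ paired hyperbolically with its dual, the canonical SO-bundle data one attaches to this square.

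Once this compatibility is in place, Park's theorem yields $[Z]^{\vir} = \sqrt{0_V^!}\,[Y]^{\vir}$, where $\sqrt{0_V^!}$ is the square-root virtual pullback for the hyperbolic SO-bundle $V \oplus V^* \to Y$ endowed with its canonical orientation. Since $V \subset V \oplus V^*$ is a maximal isotropic subbundle whose induced orientation is the standard one, the formula recorded in \eqref{equ_pushfwd} (a direct consequence of the sign convention in \cite[Definition 2.2]{OT}) reduces $\iota_* \sqrt{0_V^!}$ applied to $[Y]^{\vir}$ to the refined intersection with the Edidin-Graham square-root Euler class $\sqrt{e}(V \oplus V^*) = e(V)$, which produces $\iota_*[Z]^{\vir} = e(V) \cap [Y]^{\vir}$.

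An alternative route that avoids citing \cite{park} is to inspect the isotropic cone directly, in the spirit of the proof of Corollary \ref{cor_kkp}. Take a three-term self-dual resolution $[T \to E \to T^*]$ of the OT obstruction theory of $Y$ with isotropic cone $C \subset E$, so that $[Y]^{\vir} = \sqrt{0^!_E}[C]$. A diagram chase analogous to the one in the proof of Proposition \ref{prop_potZ} identifies the isotropic cone $C_Z$ for $Z$ inside $V|_Z \oplus E|_Z \oplus V^*|_Z$ with $C_{Z/Y} \oplus C|_Z \subset V|_Z \oplus E|_Z$, where $C_{Z/Y} \subset V|_Z$ is the normal cone of $Z$ in $Y$ viewed inside $V|_Z$ via $s$, and the $V^*|_Z$ component vanishes. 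The factorization of the square-root Gysin map along the orthogonal splitting $(V|_Z \oplus V^*|_Z) \perp E|_Z$ then collapses $\sqrt{0^!_{V|_Z \oplus E|_Z \oplus V^*|_Z}}[C_Z]$ to $e(V|_Z) \cap \sqrt{0^!_{E|_Z}}[C|_Z]$, and pushing forward to $Y$ together with the projection formula gives $\iota_*[Z]^{\vir} = e(V) \cap [Y]^{\vir}$.

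The principal technical obstacle in either route is the orientation bookkeeping: the orientation of \eqref{equ_3term} inherited from the orientation of $[T \to E \to T^*]$ together with the canonical orientation of the hyperbolic pair $V|_Z \oplus V^*|_Z$ must match the orientation producing $+e(V)$ rather than $-e(V)$ when $\sqrt{0^!_{V \oplus V^*}}$ is reduced to $0^!_V$. This reduces to the standard convention in \cite{OT} that on a hyperbolic bundle the chosen maximal isotropic subbundle carries the positive orientation.
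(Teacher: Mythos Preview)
Your first route, via Park's functoriality \cite{park}, is essentially the paper's approach: set up the Cartesian square with the section $s$ and the zero section $0_V$, verify Park-compatibility of \eqref{equ_3term} with \eqref{eq_sdot}, and conclude $0_V^![Y]^{\vir}=[Z]^{\vir}$ (the paper cites Park's Lemma 2.6 and Proposition 2.7 rather than Theorem 2.2 directly, but the substance is the same), then finish with \cite[Proposition 14.1]{F}.

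Your second route, however, has a genuine gap --- and it is exactly the gap the paper flags in a footnote.  You assert that the isotropic cone $C_Z$ for \eqref{equ_3term} is identified with $C_{Z/Y}\oplus C|_Z$ inside $V|_Z\oplus E|_Z$, and then factor the square-root Gysin map along the orthogonal splitting.  But one only knows a priori that $C_Z$, $C_{Z/C}$ and $V|_Z\times_Z C|_Z$ are \emph{rationally equivalent} in $V|_Z\oplus E|_Z$ (this is what the argument of \cite{KKP} gives), and rational equivalence of isotropic cones inside the ambient orthogonal bundle is \emph{not} sufficient to conclude that their images under $\sqrt{0^!}$ agree.  The square-root Gysin map depends on more than the cycle class of the cone in the bundle.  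The paper's footnote records that this was a gap in an earlier draft, fixed precisely by invoking Park's results (specifically \cite[Proposition 2.7]{park} together with \cite[Lemma 3.9]{OT}) to pass from $C_{Z/C_0}$ to $C_1$.  So your ``alternative that avoids citing \cite{park}'' does not actually avoid it: the step where you collapse the square-root Gysin map over the product cone is where Park's deformation-invariance for isotropic cones is doing the real work.
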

\begin{proof}
 If $\vd_Y$ is odd then so is $\vd_Z$ and hence both sides of the claimed formula are zero. Assume $\vd_Y$ is even. Let $C_0\subset E$ and $C_1 \subset E|_Z \oplus V|_Z$ be the isotropic cones obtained from \eqref{eq_sdot} and \eqref{equ_3term}, respectively, as described in \S \ref{subsec_virtual_cycle_Oh-Thomas}.  We have $$[Y]^{\vir}=\sqrt{0_{E}^!}\; [C_0],\qquad [Z]^{\vir}=\sqrt{0_{V|_Z\oplus E|_Z \oplus V^*|_Z}^!} \;[C_1].$$ 
As in the proof of Corollary \ref{cor_kkp}, the perfect obstruction theories  
$$[E\to T^*]\to  \bb L_Y,\qquad [E|_Z\oplus V^*|_Z\to T^*|_Z]\to \bb L_Z$$ mentioned above are compatible over $0_V$. From this, one can furthermore see that \eqref{eq_sdot} and \eqref{equ_3term} satisfy the compatibility condition of \cite[Definition 2.1]{park}. 
By \cite[Lemma 2.6]{park}  $$0_V^! \sqrt{0_{E}^!}\; [C_0]=\sqrt{0_{V|_Z\oplus E|_Z \oplus V^*|_Z}^!} \;[C_{Z/C_0}].$$ Here, $C_{Z/C_0} \subset  C_{Z/Y}\times_Z C_0|_Z\subset  V|_Z\oplus E_0|_Z$ is therefore an isotropic subcone of $V|_Z\oplus E|_Z \oplus V^*|_Z$. On the other hand, by  \cite[Proposition 2.7]{park} and \cite[Lemma 3.9]{OT} $$\sqrt{0_{V|_Z\oplus E|_Z \oplus V^*|_Z}^!} \;[C_{Z/C_0}]=\sqrt{0_{V|_Z\oplus E|_Z \oplus V^*|_Z}^!} \;[C_1].$$
From these equalities we get $0_V^! [Y]^{\vir}=[Z]^{\vir}$,\footnote{There was a gap in making this conclusion in the first draft of the  paper: the cones $V|_Z\times_Z C_0|_Z$, $C_1$ and $C_{Z/C_0}$ are rationally equivalent in $V|_Z\oplus E|_Z$ (see the proof of \cite[Theorem 1]{KKP}), but as pointed out to us by Hyeonjun Park, this is not sufficient to conclude this equality relating Oh-Thomas' virtual cycles. This was resolved in the next revision of the paper by using further properties of the square root Gysin map proven in \cite{park}, which appeared shortly after our first draft.} and so the claim follows from \cite[Proposition 14.1]{F}. 


\end{proof}

We apply Proposition \ref{prop_zero} to the moduli spaces of stable pairs $P:=P(X,\Ch)$.  
Letting $\Ch(m):=\Ch(F(m))$ for $(F,s)\in P$, we consider the moduli space $$P_m:=P(X,\Ch(m)).$$ 
Choose an integer $m\gg 0$ such that $H^{i>0}(F)=0$ for all $(F,s)\in P_m$, and suppose that there exists a nonsingular effective divisor $D\subset X$ corresponding to a section of $\sO_X(m)$ such that $D$ is normal to all underlying coherent sheaves in $P_m$ i.e.
\begin{equation}\label{equ_normal} \op{Tor}_1^{\sO_X}(F,\sO_D)=0 \qquad \forall (F,s)\in P_m.\end{equation}
Suppose moreover\footnote{Using the natural exact sequence $0\to \sO_S\xrightarrow{s} F\to Q\to 0$, where $S$ is the support of $F$ and $Q$ is at most 1-dimensional, one way to ensure this condition is to assume $H^2(\sO_S(-D))=0$  for all possible supporting surfaces $S\subset X$ for stable pairs in $P_m$.}, \begin{equation} \label{equ_H2F} H^{2}(F(-D))=0 \qquad \forall (F,s)\in P_m.\end{equation}
The canonical section $\sO_X\to \sO_X(D)$ gives a closed immersion \begin{equation} \label{equ_zeroemb} \iota \colon P\hookrightarrow P_m\qquad [\sO_X\to F]\mapsto [\sO_X\to F(D)].\end{equation}
Consider 
$$
\xymatrix{& &\sO_{X\times P_m} \ar[d]^-{\mathsf{s}_m} & &\\ 0\ar[r] &\bb F_m(-D)  \ar[r] & \bb F_m \ar[r] & \bb F_m|_D \ar[r] & 0}$$ over $X\times P_m$, where the vertical arrow is the universal stable pair, denoted by $\bb J_m$ in the following, and the bottom row is the natural short exact sequence that exists because of the normality assumption \eqref{equ_normal}. Here, for simplicity we use the same symbol for $D\subset X$ and its inverse image  $D\times P_m\subset X\times P_m$. Then, \eqref{equ_H2F} implies that \begin{equation} \label{vecV}V:=\pi_*\bb (F_m|_D)\end{equation} is a vector bundle over $P_m$, and $\mathsf s_m$ pushes down to a section of $V$, whose zero locus is $P\subset P_m$. The restriction of \eqref{equ_first} for $P_m$ to $P$ fits into the following commutative diagram of exact triangles on $P$
\begin{equation}\label{DFF}\xymatrix{\lra{\bb F,\bb F}\ar[r] \ar@{=}[d] &\lra{\sO,\bb F} \ar[r] \ar[d]& \lra{\bb J[-1],\bb F}\ar[d] \\ 
\lra{\bb F(D),\bb F(D)}\ar[r]  &H \ar[r] \ar[d]_-{\alpha}& \lra{\bb J_m[-1],\bb F(D)}\ar[d]^-\zeta \\
& V\ar@{=}[r] &V }\end{equation}  
where $H:=\lra{\sO,\bb F(D)}$ is a vector bundle because of our choice of $D$. As we keep working on the affine bundle mentioned above, there is no nonzero map $H^*[-3]\to V$. As a result, using the self-duality \eqref{eqn_Serre_duality} for $\lra{\bb J_m,\bb J_m}_0[1]$ and Diagram \eqref{DFF}, we can form the following commutative diagram

\begin{equation}\label{DJJ}\xymatrix{& V^*[-2] \ar@{=}[r] \ar[d] & V^*[-2]\ar[d]^-{\alpha^*[-2]}\\
  \lra{\bb J_m[-1],\bb F(D)}\ar[r] \ar[d]_-{\zeta} &\lra{\bb J_m,\bb J_m}_0[1]\ar[r] \ar[d]& H^*[-2] \\
 V\ar@{=}[r] &V &}\end{equation} 
in which the middle row is the restriction of \eqref{equ_sec} for $P_m$ to $P$. Given a locally free resolution $\lra{\bb J_m,\bb J_m}_0[1]\cong [T\xrightarrow{u} E  \xrightarrow{u^*} T^*]$  over $P_m$ as in \eqref{eqn_obstruction_complex}, we may assume the middle vertical arrows in \eqref{DJJ} are given by a map $\eta\colon T\to V$ and $\eta^*[-2]$.  By using Diagrams \eqref{DFF} and \eqref{DJJ} it is straightforward to  deduce the following isomorphism $$  \lra{\bb J,\bb J}_0[1]\cong  [T|_P\xrightarrow{[\eta,u,0]^t}V|_P\oplus E|_P\oplus V^*|_P\xrightarrow{[0,u^*,\eta^*]} T^*|_P].$$ From the construction above, we also see that if $I_P< \sO_{P_m}$ is the ideal sheaf of $P$ the natural map $V^*|_P\to I_P/I_P^2$ factors through $\alpha^*$, and by \eqref{DJJ} it also factors through $\eta^*$, so we get a commutative square as in \eqref{dpot} with the top row $$E|_P\oplus V^*|_P\xrightarrow{[u^*,\eta^*]} T^*|_P,$$ and the bottom row the truncated cotangent complex of $P$ giving a perfect obstruction theory for $P$ by Proposition \ref{prop_potZ}. These considerations place us in the situation of Proposition \ref{prop_zero}, so we have
\begin{thm} \label{thm_degen} 
Given a Chern Character vector $\Ch$ and a nonsingular divisor $D\in |\sO_X(m)|$ for $m\gg 0$ satisfying \eqref{equ_normal} and \eqref{equ_H2F}, the closed immersion \eqref{equ_zeroemb} is realized as the zero locus of a section of the vector bundle $V$  over $P_m(X,\Ch(m))$ defined in \eqref{vecV}, and  
$$\iota_*[P(X,\Ch)]^{\vir}= e(V)\cap [P_m(X,\Ch(m))]^{\vir}.$$
\end{thm}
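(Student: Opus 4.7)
The strategy is to verify that the setup of this Theorem is a direct instance of Proposition~\ref{prop_zero} applied to $Y = P_m$, with the vector bundle $V$ and the section of $V$ constructed from the universal stable pair on $P_m$. The bulk of the work has already been laid out in the two commutative diagrams \eqref{DFF} and \eqref{DJJ} preceding the statement, so the proof amounts to assembling these pieces.

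First, I would check that $V = \pi_*(\bb F_m|_D)$ is a vector bundle on $P_m$ and that the natural section pushed down from $\sO \to \bb F_m \to \bb F_m|_D$ has zero locus exactly $\iota(P)$. Pointwise, for a stable pair $(F,s) \in P_m$, the long exact sequence associated with $0 \to F(-D) \to F \to F|_D \to 0$ together with $H^{i>0}(F)=0$ (by the choice of $m$) and $H^2(F(-D))=0$ (by \eqref{equ_H2F}) forces $H^1(F|_D)=0$, so cohomology and base change shows $V$ is locally free. The induced section of $V$ vanishes at $(F,s)$ exactly when the composition $\sO_X \to F \to F|_D$ is zero, i.e.\ when $s$ factors through $F(-D) \hookrightarrow F$; this identifies the zero locus with the image of $\iota$, since any pair $(F',s') \in P(X,\Ch)$ gets sent by $\iota$ to $(F'(D), s'(D))$, whose restriction to $D$ vanishes by construction.

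Next I would assemble the 3-term self-dual obstruction theory on $P$ inherited from that on $P_m$. Here the diagrams \eqref{DFF} and \eqref{DJJ} directly exhibit the required structure: fixing a self-dual locally free resolution $\lra{\bb J_m,\bb J_m}_0[1] \cong [T \xrightarrow{u} E \xrightarrow{u^\ast} T^\ast]$ on $P_m$ with $T\to V$ given by some map $\eta$, the manipulations already carried out in the text produce an isomorphism
\[
\lra{\bb J,\bb J}_0[1] \cong \bigl[T|_P \xrightarrow{[\eta,u,0]^t} V|_P \oplus E|_P \oplus V^\ast|_P \xrightarrow{[0,u^\ast,\eta^\ast]} T^\ast|_P\bigr]
\]
on $P$. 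Moreover, the map $V^\ast|_P \to I_P/I_P^2$ factors through both $\alpha^\ast$ and $\eta^\ast$, and Proposition~\ref{prop_potZ} supplies the associated 2-term obstruction theory $[E|_P \oplus V^\ast|_P \to T^\ast|_P] \to \bb L_P$, which is then enlarged to the 3-term self-dual obstruction theory exactly of the form \eqref{equ_3term}.

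Finally, I would invoke Proposition~\ref{prop_zero} with $Y=P_m$, $Z=P$, $s=\mathsf{s}_m$, the bundle $V$, and the two obstruction theories just described. The conclusion $\iota_\ast [P]^{\vir} = e(V) \cap [P_m]^{\vir}$ follows immediately. The main subtlety —  and the only real work beyond bookkeeping — is checking that the obstruction theory produced from the construction in Proposition~\ref{prop_potZ} genuinely coincides with the Oh–Thomas virtual cycle $[P(X,\Ch)]^{\vir}$ defined earlier via the 3-term self-dual resolution of $\lra{\bb J,\bb J}_0[1]$. This is where the independence statement in Definition~\ref{defn_stable_vir} plays a role: since the 3-term self-dual obstruction theory we constructed on $P$ is of the standard Oh–Thomas form, the resulting virtual cycle must agree with $[P(X,\Ch)]^{\vir}$, reducing the entire argument to an application of Proposition~\ref{prop_zero}.
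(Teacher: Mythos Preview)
Your proposal is correct and follows essentially the same approach as the paper: the theorem is stated with a \qed\ after the preceding discussion, which does exactly what you outline — verify $V$ is locally free with zero locus $\iota(P)$, use diagrams \eqref{DFF} and \eqref{DJJ} to produce the self-dual resolution $[T|_P \to V|_P\oplus E|_P\oplus V^*|_P \to T^*|_P]$ of $\lra{\bb J,\bb J}_0[1]$, check the compatibility with $\bb L_P$ via Proposition~\ref{prop_potZ}, and conclude by Proposition~\ref{prop_zero}. Your explicit justification that $H^1(F|_D)=0$ and your remark on the independence in Definition~\ref{defn_stable_vir} are details the paper leaves implicit, but the route is the same.
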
 \qed

Suppose the component $\gamma \in H^4(X,\mathbb Z)$ of $\Ch$ is an irreducible class, and  consider the morphisms $$\rho\colon P(X,\Ch)\to M(X,\Ch), \qquad \rho_m\colon P_m(X,\Ch(m))\to M(X,\Ch(m))$$ introduced in \S \ref{DT-4_invariants}.  We have an isomorphism of moduli spaces $$M(X,\Ch)\cong M(X,\Ch(m))\qquad F\mapsto F(m),$$ and for $m\gg 0$ the morphism $\rho_m$ is the projection of a projective bundle.
By Proposition \ref{prop_pullback} we have (The following result is independently proven by Bae, Kool and Park in an upcoming paper \cite{BKP}.)

\begin{cor}\label {cor_degen} In the situation of Theorem \ref{thm_degen}, suppose additionally that $\gamma$ is an irreducible class then
$$\iota_*[P(X,\Ch)]^{\vir}= e(V)\cap \rho_m^*[M(X,\Ch)]^{\vir}.$$
In particular, if $\vd_M=2-\gamma^2<0$ then $\iota_*[P(X,\Ch)]^{\vir}=0$.
\end{cor}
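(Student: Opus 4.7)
The plan is to chain Theorem~\ref{thm_degen} with Proposition~\ref{prop_pullback}, applied to the ambient moduli space $P_m:=P_m(X,\Ch(m))$. Theorem~\ref{thm_degen} already gives
\[
\iota_*[P(X,\Ch)]^{\vir}=e(V)\cap [P_m]^{\vir},
\]
so the task reduces to showing that $[P_m]^{\vir}=\rho_m^*[M(X,\Ch)]^{\vir}$.

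First I would verify that Proposition~\ref{prop_pullback} applies to $\rho_m\colon P_m\to M(X,\Ch(m))$. Because $\Ch_0=\Ch_1=0$, the twist $\Ch(m)=\Ch\cdot e^{m\omega}$ has the same degree-$4$ component $\gamma$, so irreducibility is inherited. The required vanishing $H^{i>0}(F)=0$ for every $F\in M(X,\Ch(m))$ holds for $m\gg 0$ by Serre vanishing together with the boundedness of $M(X,\Ch)$, and in any case is part of the choice of $m$ built into Theorem~\ref{thm_degen}. Proposition~\ref{prop_pullback} therefore yields $[P_m]^{\vir}=\rho_m^*[M(X,\Ch(m))]^{\vir}$.

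Next I would identify $[M(X,\Ch(m))]^{\vir}$ with $[M(X,\Ch)]^{\vir}$ under the canonical twist isomorphism $M(X,\Ch)\xrightarrow{\sim}M(X,\Ch(m))$, $F\mapsto F(m)$, noted in \S~\ref{sec_degen}. Tensoring a universal family by the line bundle $\sO_X(m)$ is an autoequivalence that commutes with $R\mathcal{H}om$, so it preserves both the self-dual obstruction theory \eqref{eqn_obstruction_theory} and the Serre duality pairing \eqref{eqn_Serre_duality}, and it matches orientations. The Oh--Thomas construction of \S~\ref{subsec_virtual_cycle_Oh-Thomas} is built functorially from these data, so the two virtual cycles are carried into one another. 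Chaining this with the previous step and capping with $e(V)$ produces the main formula.

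For the ``in particular'' assertion, when $\vd_M=2-\gamma^2<0$ the class $[M(X,\Ch)]^{\vir}$ sits in $A_{\vd_M/2}(M(X,\Ch),\mathbb Z[\tfrac12])$, which vanishes for dimension reasons (or by the odd-dimension convention of Definition~\ref{defn_stable_vir} when $\vd_M$ is odd). Hence $\rho_m^*[M(X,\Ch)]^{\vir}=0$ and the right-hand side of the main formula is zero. The only delicate point is the compatibility of the Oh--Thomas virtual cycles with the twist isomorphism, but this is formal as indicated, so I do not anticipate a serious obstacle.
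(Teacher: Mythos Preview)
Your proposal is correct and follows essentially the same route as the paper: combine Theorem~\ref{thm_degen} with Proposition~\ref{prop_pullback} applied to $\rho_m$, then identify $M(X,\Ch(m))$ with $M(X,\Ch)$ via the twist isomorphism. The paper's proof is just the sentence ``By Proposition~\ref{prop_pullback} we have'' preceded by the observation that $\rho_m$ is a projective bundle for $m\gg 0$ and that $M(X,\Ch)\cong M(X,\Ch(m))$; your write-up simply spells out the verifications (preservation of irreducibility of $\gamma$ under twist, Serre vanishing, compatibility of the Oh--Thomas class with tensoring by a line bundle) that the paper leaves implicit.
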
 \qed

\section{Local surfaces}\label{sec_local_surfaces}  
The first class of examples that we consider is that of local surfaces. Let $S$ be a smooth projective surface and $V$ be a rank $2$ locally free sheaf on $S$ such that $\det(V)=K_S$.   
Then $$p\colon X:=\spec(\Sym V^*)\to S,$$ the total space of the associated vector bundle, is a noncompact Calabi-Yau 4-fold.   In \S \ref{subsec_local_surface} we make an assumption on the Chern character vectors $\Ch$ so that the moduli spaces of stable pairs that we consider are smooth. We are then able to express stable pair invariants as integration over Hilbert schemes of points on $S$. In \S \ref{sec_virloc} we choose the geometry of the fourfold so that it admits $\bb C^*$-action preserving its holomorphic 4-form. This enables us to apply the virtual localization formula and express stable pair invariants of the fourfold in term of the stable pair invariants of $S$.

\subsection{Smooth moduli of stable pairs and Hilbert scheme of points}\label{subsec_local_surface}
In this Subsection, we assume  $H^{i>0}(\sO_S)=0$ and also $H^0(V)=0$.    Let 
$i: S\hookrightarrow X$ be the inclusion of the zero section of 
$p$, and let $[S]\in H^4_c(X,\bb Z)$ be the class of the zero section. We work with the (compactly supported) Chern character vector $$\Ch=(0,0,[S], \beta, \xi),$$ where $\beta$ and $\xi$ are the Poincare duals of pushforward of classes on $S$ that are denoted by the same symbols by abuse of notation. Because of $H^0(V)=0$ condition and the description given in \S \ref{sec_description} for a stable pair  $[\sO_X\to F]$ in class $\Ch$ we must have $F=i_*(\sI(D))$, where $\sI\subset \sO_S$
is the ideal sheaf of a zero dimensional subscheme $Z\subset S$ of length $n=n(\beta,\xi)$, and $D\subset S$ is an effective divisor in class $\beta$. As a result, the moduli space of stable pairs $P(X,\Ch)$
is identified with the moduli space of stable pairs on $S$ \cite{PT3, KT}, or equivalently with the nested Hilbert scheme 
$$
S^{[0,n]}_\beta \subset \coprod_{L\in \pic_{\beta}(S)}S^{[n]}\times |L|$$
consisting of pairs $(D,Z)$ such that $Z\subset D$. Here, $S^{[n]}$ denotes the Hilbert scheme of $n$ points on $S$ and  $\pic_{\beta}(S)$, the isomorphism of classes of line bundles in class $\beta$, is a finite set by $H^{1}(\sO_S)=0$ condition.

In this Subsection, we also assume that any $L\in \pic_{\beta}(S)$ is $(n-1)$-very ample:
\begin{equation}\label{eqn_condition1}
H^1(\sI(D))=0; \quad \forall (D, \sI)\in |L|\times S^{[n]}.
\end{equation}
Condition (\ref{eqn_condition1}) ensures that $P$ is smooth with components of dimension 
$n+\dim|L|$ for $L\in \pic_{\beta}(S)$.  In fact for any such component the projection 
$P\to S^{[n]}$ is a projective bundle with fibers $\pp(H^0(\sI(D)))$, which is of dimension $h^0(\sO(D))+n-1$.

Let $\pi_S, \pi_{P}$ be the projections to the first and the second factors of $S\times P$, respectively, and let $\j:=[\sO_{S\times P}\to \i(D)]$ be the universal stable pair over $S\times P$. 

\begin{prop}\label{prop_virtual_class_P}
The obstruction bundle for the moduli space $P$ is given by 
$$\Ob_{P}=\sE xt^1_{\pi_{P}}(\i, \pi_S^*V\otimes \i)$$
and the Oh-Thomas virtual fundamental class is given by 
$$[P]^{\vir}=\sqrt{e}(\Ob_{P})\in A_{\frac{\vd}{2}}\left(P, \zz\Big[\frac{1}{2}\Big]\right),$$
where $\vd=2\chi(\sI(D))-[S]^2=2h^0(D)-2n-e(V)$.
\end{prop}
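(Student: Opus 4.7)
The plan is in three steps: smoothness of $P$, identification of the obstruction bundle, and the virtual dimension count.

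Under assumption \eqref{eqn_condition1}, the description of $P$ as the nested Hilbert scheme $S^{[0,n]}_\beta$ exhibits each component as a Zariski-locally trivial $\mathbb{P}(H^0(\sI(D)))$-bundle over $S^{[n]}$, so $P$ is smooth. Formula \eqref{BF56} then yields $[P]^{\vir}=\sqrt{e}(\op{Ob}_P)$ once $\op{Ob}_P = h^1(\bb E^{\vee})$ is proven to be locally free and identified as stated. Fiberwise at a point $J=[\sO_X\to F]$ with $F=i_*\sI(D)$, this obstruction sheaf is $\Ext^2_X(J,J)_0$, so the crux is the natural identification
\[
\Ext^2_X(J,J)_0 \cong \Ext^1_S\bigl(\sI(D), V\otimes\sI(D)\bigr),
\]
followed by relative base change.

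To establish this identification I will plug the exact triangle $\sO_X \to F \to J \to \sO_X[1]$ into the functors $R\Hom(-,J)$, $R\Hom(J,-)$, $R\Hom(-,F)$, $R\Hom(-,\sO_X)$ and $R\Hom(\sO_X,-)$, and assemble the resulting long exact sequences. The key geometric input is that $i\colon S\hookrightarrow X$ is the zero section of $V=N_{S/X}$, so the Koszul resolution $p^*\wedge^{\bullet}V^*\to i_*\sO_S$ combined with Grothendieck--Verdier duality $i^!(-)=Li^*(-)\otimes K_S[-2]$ (using $\det V=K_S$) produces the canonical split decomposition
\[
R\mathcal{H}om_X(i_*A,i_*B)\cong i_*\bigl(R\mathcal{H}om_S(A,B)\oplus R\mathcal{H}om_S(A,V\otimes B)[-1]\oplus R\mathcal{H}om_S(A,K_S\otimes B)[-2]\bigr)
\]
for coherent sheaves $A,B$ on $S$. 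With $A=B=\sI(D)$, Serre duality on $S$ together with the vanishings $H^0(K_S)=H^2(\sO_S)^*=0$ annihilate the $K_S$-twisted summand in $\Ext^0$ and $\Ext^2$, leaving $\Ext^2_X(F,F)=\Ext^1_S(\sI(D),V\otimes\sI(D))$. The boundary terms $\Ext^{\le 2}(\sO_X,J)$, $\Ext^{\le 2}(F,\sO_X)$ and $\Ext^{\le 3}(J,\sO_X)$ appearing in the long exact sequences are controlled by \eqref{eqn_condition1} (forcing $H^1(\sI(D))=0$), $H^{>0}(\sO_S)=0$, $H^0(V)=0$, and the compactly supported Serre duality on $X$; taking trace-free parts then produces the claimed isomorphism. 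Globalizing by relative base change over $S\times P$ with the universal family $\bb I=\bb I_{\bb Z}(\bb D)$ yields $\op{Ob}_P = \sE xt^1_{\pi_P}(\bb I,\pi_S^*V\otimes\bb I)$; local freeness follows since Serre duality on $S$ (using $V^*\otimes K_S\cong V$ for rank-$2$ $V$) identifies $\Ext^0$ with the dual of $\Ext^2$, and $\Hom_S(\sI(D),V\otimes\sI(D))$ injects via restriction into $\Gamma(S\setminus Z,V)=H^0(V)=0$ by Hartogs.

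Finally, \eqref{eqn_virtual_dimension} gives $\vd = 2\chi(F) - \chi_X(F,F)$; under \eqref{eqn_condition1}, $\chi(F)=\chi_S(\sI(D))=h^0(D)-n$. The Koszul decomposition together with Serre duality on $S$ (giving $\chi_S(\sI(D),K_S\otimes\sI(D))=\chi_S(\sI(D),\sI(D))$) reduces $\chi_X(F,F)$ to $2\chi_S(\sI(D),\sI(D))-\chi_S(\sI(D),V\otimes\sI(D))$. A Hirzebruch--Riemann--Roch computation using $c_1(V)=K_S$ and $\chi(\sO_S)=1$ yields $\chi(V)=2-e(V)$, so $\chi_S(\sI(D),V\otimes\sI(D))=\chi(V)-4n=2-e(V)-4n$ and $\chi_S(\sI(D),\sI(D))=\chi(\sO_S)-2n=1-2n$, whence $\chi_X(F,F)=e(V)$ and $\vd=2h^0(D)-2n-e(V)$. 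The main obstacle will be the careful trace-free bookkeeping through the long exact sequences above; the assumptions $H^{>0}(\sO_S)=0$, $H^0(V)=0$, and \eqref{eqn_condition1} are tailored precisely to ensure that only the middle $V$-twisted Koszul summand survives in $\Ext^2_X(J,J)_0$.
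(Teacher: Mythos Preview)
Your proposal is correct in outline and uses the same underlying geometric input as the paper---the Koszul resolution of $i_*\sO_S$ on the total space of $V$, adjunction, and the vanishings $H^{>0}(\sO_S)=0$, $H^0(V)=0$, and \eqref{eqn_condition1}. Your computation of $\Ext^2_X(F,F)\cong\Ext^1_S(\sI(D),V\otimes\sI(D))$ via the split decomposition of $R\mathcal{H}om_X(i_*A,i_*B)$ is exactly right, and the local-freeness/self-duality argument via $V\cong V^*\otimes K_S$ and Hartogs is correct.

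Where you diverge from the paper is in the passage from $\Ext^2_X(F,F)$ to $\Ext^2_X(J,J)_0$. You propose to chase several long exact sequences coming from $R\Hom(-,J)$, $R\Hom(J,-)$, etc., and then perform ``careful trace-free bookkeeping.'' This is workable but roundabout, and on the noncompact $X$ the terms $\Ext^i(\sO_X,J)$ involve $H^i(\sO_X)$, which are infinite-dimensional; separating out the trace-free part here genuinely requires care you have not spelled out. The paper avoids this entirely by using the exact triangle \eqref{equ_sec},
\[
R\Hom(J_X[-1],F)\to R\Hom(J_X,J_X)_0[1]\to R\Hom(F,\sO_X)[2],
\]
whose outer terms involve the compactly supported $F$ and whose middle term is already trace-free. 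After applying your Koszul decomposition to $Li^*J_X$ (giving $J_S\oplus V^*\otimes\sI(D)[1]\oplus\sI(D-K_S)[2]$), the paper identifies the connecting map explicitly as $[0\;0\;h]^T$, where $h$ is induced by the section and Serre duality on $S$. This yields the clean global splitting
\[
R\Hom(J_X,J_X)_0[1]\cong R\Hom(J_S[-1],\sI(D))\oplus R\Hom(\sI,V\otimes\sI)\oplus R\Hom(J_S[-1],\sI(D))^\vee[-2],
\]
from which both $T_P$ and $\op{Ob}_P$ (and the self-duality $\bb E^\vee\cong\bb E[-2]$) can be read off at once. Your route reaches the same destination, but the single triangle \eqref{equ_sec} plus the connecting-map analysis is what makes the trace-free part painless.
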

\begin{proof}
Let $E$ be a locally free sheaf on $S$, then we have the Koszul complex 
$$0\to p^*(K_S^*\otimes E)\rightarrow p^*(V^*\otimes E) \rightarrow p^*E\rightarrow i_*E\to 0,$$
in which the arrows are zero when restricted to $S$. As a result, we have 
\begin{equation}\label{eqn_restriction_S}
Li^*i_*E\cong E\oplus (V^*\otimes E)[1]\oplus (K_S^*\otimes E)[2].
\end{equation}
A stable pair $J_X:=[\sO_X\to i_* \sI(D)]$ in class $\Ch$ corresponds by adjunction to a pair $J_S:=[\sO_S\to \sI(D)]$ on $S$. Writing a finite locally free resolution for $i_*\sI(D)$, applying (\ref{eqn_restriction_S}), and then using the natural exact triangle \begin{equation*}\label{eqn_dis_triangle}J_X[-1]\to \sO_X\to i_*\sI(D),\end{equation*} we get
\begin{equation}\label{eqn_restriction_S-2}
Li^*J_X\cong J_S\; \oplus \;V^*\otimes \sI(D)[1]\; \oplus \;\sI(D-K_S)[2].
\end{equation}

Consider the exact triangle
\begin{equation}\label{eqn_exact_triangle1}
R \Hom(J_X[-1], i_*\sI(D))\rightarrow R\Hom(J_X, J_X)_0[1]\rightarrow R\Hom(i_*\sI(D), \sO_X)[2]
\end{equation}
resulted from \eqref{equ_sec}. 
Using an adjunction between $Li^*$ and $i_*$ and (\ref{eqn_restriction_S-2}), the leftmost term in the exact triangle (\ref{eqn_exact_triangle1}) can be written as
$$R\Hom(J_S[-1], \sI(D))\oplus R\Hom(\sI, V\otimes \sI)\oplus R\Hom(\sI, \sI(K_S))[-1].$$
By Serre duality and adjunction again, the rightmost term in the exact triangle  (\ref{eqn_exact_triangle1})  is isomorphic to 
$(R\Gamma (\sI(D)))^{\vee}[-2]$. Moreover, the arrow $$R\Hom(J_X[-1],\sO_X)\to R\Hom(J_X[-1],i_*\sI(D))$$ in the exact triangle  in the bottom row of diagram above factors through
$$R\Hom(J_X[-1],i_*\sO_S)\cong R\Hom(J_S[-1], \sO_S)\oplus R\Hom(\sI(D), V)\oplus R\Hom(\sI(D), K_S)[-1]$$
in which the last summand is identified with $(R\Gamma (\sI(D)))^{\vee}[-3]$ by Serre duality. This means that under identifications above the arrow $$R\Hom(i_*\sI(D), \sO_X)[1]\to R \Hom(J_X[-1], i_*\sI(D))$$ in the exact triangle \eqref{eqn_exact_triangle1} is simply $[0\; 0\; h]^T$, where $$h\colon (R\Gamma (\sI(D)))^{\vee}[-3]\to  R\Hom(\sI(D), \sI(D))^\vee[-3]\cong R\Hom(\sI, \sI(K_S))[-1]$$ is the natural arrow induced by $\sO_S\to \sI(D)$ and Serre duality.
Therefore, we finally arrive at \begin{align} \label{JXJX} \bb E^\vee_{J_X}=&R\Hom(J_X,J_X)_0[1]\\ \cong \notag& R\Hom(J_S[-1], \sI(D))\oplus R\Hom(\sI, V\otimes \sI) \oplus R\Hom(J_S[-1], \sI(D))^\vee[-2].\end{align}
Note that the natural paring  $V\otimes V\to \det(V)=K_S$ gives an isomorphism $V\cong V^*\otimes K_S$, and so by Serre duality $R\Hom(\sI, V\otimes \sI)^\vee\cong R\Hom(\sI, V\otimes \sI)[-2]$, and this shows that $\bb E_{J_X}^\vee\cong \bb E_{J_X}[-2]$.

Next, we take cohomology from \eqref{JXJX}: 
\begin{align*}T_{P, J_X}&=\Ext^1_X(J_X, J_X)_0\cong \Hom_S(J_S[-1], \sI(D)),\\
\Ob_{P, J_X}&=\Ext^2_X(J_X, J_X)_0\cong \Ext^1_S(\sI, V\otimes\sI).\end{align*}
Here, we have used the vanishing (\ref{eqn_condition1}) as well as 
\begin{align*} &\Hom_S(\sI, V\otimes\sI)=H^0(V)=0,\\
&\Hom_S(\sI, \sI(K_S))=H^0(K_S)=0  \quad \text{ and } \quad \Ext^{i\ge1}_{S}(J_S[-1], \sI(D))=0, \end{align*} where the vanishing of $\Ext^{i\ge 1}_{S}$ follows from the cohomology exact sequence of an analog of \eqref{equ_first} for $J_S$.
by using the condition  (\ref{eqn_condition1}) again. We also have
$$\Ext^2_{S}(\sI, V\otimes \sI)\cong \Hom_S(\sI, V\otimes \sI)^*=0$$
and that $\Ext^1_{S}(\sI, V\otimes \sI)$ is self-dual. 
Moreover, by Riemann-Roch theorem the latter has the constant dimension 
$r=4n+e(V)-2$ as $\sI$ varies in $S^{[n]}$.

This analysis can be similarly done in families. Therefore,  $$\Ob_{P}=\sE xt^1_{\pi_{P}}(\i, \pi_S^*V\otimes \i)$$ is a self-dual obstruction bundle on the moduli space 
$P$ and its square root Euler class gives the virtual fundamental class by \eqref{BF56}, as claimed. 
\end{proof}
\bigskip
\noindent \textbf{Examples:} Let $S=\pp^2$.
\begin{itemize}
\item  If $V=\Omega_{\pp^2}$, then $\vd$ is odd and hence $[P_S]^{\vir}=0$.  

\item If $V=\sO_{\pp^2}(-1)\oplus \sO_{\pp^2}(-2)$, and  $\beta$ corresponds to $m$ times the class of a line  $[\pp^1]$ inside $\pp^2$, then 
$$[P_S]^{\vir}=\sqrt e(\Ob_{P})=e\left(\sE xt_{\pi_P}^1(\i,\i(-1))\right)\in A_{m(m+3)/2-n}\left(P,\zz\left[\small \frac 1 2\right]\right).$$
In particular, for $n=m(m+3)/2$ we have $P=(\pp^2)^{[n]}$ (recall $P\to \pp^2$ is a projective bundle with fibers of dimension 
$m(m+3)/2-n$) and the degree of $[P]^{\vir} \in A_0(\left(P,\zz\left[\small \frac 1 2\right]\right)$  can be read off from Carlsson-Okounkov formula 
\cite[Corollary 1]{CO}:
$$\sum_{n\geq 0}q^n\int_{(\pp^2)^{[n]}}e\left(\sE xt_{\pi_P}^1(\i,\i(-1))\right)=\prod_{n\ge 0}\frac{1}{(1-q)^7}.$$
If $n<m(m+3)/2$ the virtual cycle $[P]^{\vir}$ is the pullback of the 0-dimensional virtual class of the previous case via the projective bundle projection $ P\to \pp^2$. If $n>m(m+3)/2$ then the condition \eqref{eqn_condition1} fails but on the other hand, the virtual dimension is negative and hence $[P]^{\vir}=0$.
 
\end{itemize}

\subsection{Virtual localization} \label{sec_virloc}

We use most of the set up in \S \ref{subsec_local_surface}, but no longer need any of the vanishing conditions in there. In this Subsection, we assume $V=L_1\oplus L_2$ is the direct sum of two invertible sheaves. 

 Let $\cc^*$ act on $X$ such that the action preserves the holomorphic 4-form. This can be obtained by acting on $S$ trivially and 
on each fiber of $V$ with the weight $(1,-1)$.  In particular, the induced action on $\Lambda^2V\cong K_S$ is trivial.  If 
$\mathfrak{t}$ is the standard $\cc^*$-representation then we can write
$$V=L_1\mathfrak{t}\oplus L_2\mathfrak{t}^{-1}.$$
We will sometimes drop the tensor product from the notation. Let $t:=c_1(\mathfrak{t})$.

The moduli space of stable pairs $P:=P(X,\Ch)$ on $X$ is no longer compact. 
However, the induced $\cc^*$-action on the moduli space has the compact fixed locus 
$$\iota \colon P^{\cc^*}\cong S^{[0,n]}_{\beta} \subset P(X,\Ch)$$ by the discussion in \S \ref{sec_description}.
Let $J_X:=[\sO_X\to i_*\sI(D)]$ be a stable pair in the fixed locus $P^{\cc^*}$ and 
$J_S:=[\sO_S\to \sI(D)]$ be the corresponding pair on $S$ using adjunction. These two are related via the $\cc^*$-equivariant version of 
(\ref{eqn_restriction_S-2})
$$
Li^*J_X\cong J_S\oplus (L^*_1\mathfrak{t}^{-1}\oplus L^*_2\mathfrak{t})\otimes \sI(D)[1]\oplus \sI(D-K_S)[2].
$$
We use this to extract the fixed and moving parts of the obstruction theory of $P$ restricted to $P^{\cc^*}$.
\begin{lem}
The fixed part of the stable pairs obstruction theory gives the virtual cycle of the moduli space of stable pairs on $S$ \cite{PT3, KT} as well as the virtual cycle of the nested Hilbert schemes \cite{GSY, GT1, GT2}
$$[P^{\cc^*}]^{\vir}=[S_\beta^{[0,n]}]^{\vir}\in A_{n+\beta(\beta-K_S)/2}(P^{\cc^*}).$$
\end{lem}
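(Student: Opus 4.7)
The plan is to decompose the Oh-Thomas obstruction complex on $P$ restricted to the fixed locus $P^{\cc^*}=S^{[0,n]}_{\beta}$ into its $\cc^*$-weight pieces, show that the weight-zero part is a two-term perfect obstruction theory coinciding with the one on the nested Hilbert scheme constructed in \cite{GSY, GT1, GT2} (which by \cite[Lemma A.4]{KT} also governs the stable pair moduli on $S$ of \cite{PT3, KT}), and then reduce the Oh-Thomas square-root construction on the fixed locus to an ordinary Behrend-Fantechi virtual cycle.

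First I would combine the equivariant splitting
\[
Li^*J_X \cong J_S \;\oplus\; V^*\otimes \mathcal{I}(D)[1] \;\oplus\; \mathcal{I}(D-K_S)[2]
\]
with $V = L_1\mathfrak{t}\oplus L_2\mathfrak{t}^{-1}$ and apply adjunction to \eqref{equ_sec}, following the steps of the proof of Proposition \ref{prop_virtual_class_P} but without the vanishing hypotheses used there. This yields the equivariant decomposition
\[
R\Hom(J_X,J_X)_0[1] \cong R\Hom_S(J_S[-1],\mathcal{I}(D)) \oplus R\Hom_S(\mathcal{I}, V\otimes\mathcal{I}) \oplus R\Hom_S(J_S[-1],\mathcal{I}(D))^\vee[-2].
\]
The middle summand carries only the $\mathfrak{t}$-weights $\pm 1$ inherited from $V$, so its fixed part vanishes; the two outer summands carry trivial weight and are Serre-dual to one another on $S$. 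Thus the fixed part of the Oh-Thomas obstruction complex is a self-dual two-term complex whose truncation is the perfect obstruction theory on $S^{[0,n]}_\beta$ of \cite{GSY, GT1, GT2}. A Riemann-Roch computation gives virtual rank $\chi_S(J_S[-1],\mathcal{I}(D)) = n + \beta(\beta-K_S)/2$, confirming the claimed dimension.

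The main obstacle I expect is showing that the Oh-Thomas square-root Euler class on the fixed locus reduces to the ordinary Behrend-Fantechi virtual cycle of \cite{GSY, GT1, GT2}. Since the middle quadratic summand of $\bb E$ vanishes in the fixed part, a compatible three-term resolution \eqref{eqn_obstruction_complex} will restrict on $P^{\cc^*}$ to one whose fixed part has trivial middle (quadratic) bundle; the isotropic cone of \cite{OT} then degenerates to a maximal isotropic subbundle, placing us in the setting analogous to \eqref{BF56}, and the square-root Gysin map becomes an ordinary Gysin pullback. The delicate points are tracking the \cite{CGJ}-orientation and verifying compatibility of Serre duality on $S$ with \eqref{eqn_Serre_duality} on $X$ through the adjunction; both compatibilities should follow structurally from the decomposition, but checking them carefully is the technical heart of the argument.
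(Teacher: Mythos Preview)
Your approach is essentially the same as the paper's: both extract the fixed part of \eqref{JXJX} to obtain $R\Hom(J_S[-1],\sI(D))\oplus R\Hom(J_S[-1],\sI(D))^\vee[-2]$, and both need to argue that the Oh-Thomas virtual cycle arising from this ``doubled'' two-term complex coincides with the ordinary Behrend--Fantechi virtual cycle. The paper handles the final identification slightly differently: it first uses the diagram in \S\ref{sec_description} together with \cite[Appendix A.3]{KT} to rewrite $R\Hom(J_S[-1],\sI(D))\cong R\Hom(\sI_{Z\subset D},\sI)[1]$, matching the virtual tangent bundle of \cite{GSY} directly, and then invokes \cite[Section 8]{OT} for the fact that the Oh-Thomas cycle on the fixed locus depends only on the $K$-theory class of the virtual tangent bundle. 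Your route via the maximal isotropic subbundle and the reduction of $\sqrt{0^!_E}$ to an ordinary Gysin map is the content of that same section of \cite{OT}, so the two arguments converge; the paper's citation is quicker, while your formulation makes the mechanism more explicit and correctly flags the orientation compatibility as the one point that requires care.
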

\begin{proof}
By \eqref{JXJX} for any $J_X \in P^{\bb C^*}$ as above we have 
$$R\Hom(J_X, J_X)_0^{\fix}[1]\cong R\Hom(J_S[-1], \sI(D))\oplus R\Hom(J_S[-1], \sI(D))^{\vee}[-2].$$
Using the first column of the diagram in \S \ref{sec_description}, the complex $J_S$ is identified with $\sI_{Z\subset D}(D)$, therefore by \cite[Appendix A.3]{KT} $$R\Hom(J_S[-1], \sI(D))\cong R\Hom(\sI_{Z\subset D}, \sI)[1]$$ is identified with the natural virtual tangent bundle of $P^{\cc^*}$ at the point $J_S$.  Carrying out this analysis with the universal family 
$\j:=[\sO_{S\times P^{\bb C^*}}\to \i(\mathcal D)]$ of the fixed locus we can identify the virtual tangent bundles of $P^{\cc^*}$ arising from these two stable pair theories on $X$ and $S$ (see \cite[Section 8]{OT}), and hence their virtual cycles as they depend on the K-theory classes of the virtual tangent bundles.
Finally, this  virtual cycle was identified with $[S_\beta^{[0,n]}]^{\vir}$ in \cite{GSY}, and so the Lemma is proven.
\end{proof}

For the universal stable pair $\j$ in the proof above $\mathcal D$ and $\bb I$  are respectively the pullbacks of the universal divisor on $S\times S_\beta$ and the universal ideal $\bb I < \sO_{S\times S^{[n]}}$. Let $\pi$ be the projection $\pi\colon S\times S^{[n]}\times S_\beta \to S^{[n]}\times S_\beta$ and $$\mathsf{CO}^{[0,n]}_\beta=R\pi_*\sO(\sD)-R\pi_*\bb I(\sD)$$ be the K-theory class of the rank $n$ tautological bundle on $S^{[n]}\times S_\beta$ (see \cite{GT2} for a more general definition). Then by \cite[Theorem 5]{GT2} (see also \cite[Proposition 2.1]{K}) we have 
\begin{cor} Under the inclusion in \eqref{nested}
$$j_*[P^{\bb C^*}]^{\vir}=c_n(\mathsf{CO}^{[0,n]}_\beta)\cap [S^{[n]}]\times [S_\beta]^{\vir}.$$
\end{cor}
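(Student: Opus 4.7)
The plan is essentially a direct invocation of the preceding Lemma combined with the cited result on virtual classes of nested Hilbert schemes, so the proof is short.

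First, I would use the preceding Lemma to rewrite $[P^{\bb C^*}]^{\vir}$ as $[S^{[0,n]}_\beta]^{\vir}$ in the Chow group $A_{n+\beta(\beta-K_S)/2}(P^{\bb C^*})$, under the natural identification $P^{\bb C^*} \cong S^{[0,n]}_\beta$. The point is that the fixed part of the Oh–Thomas obstruction theory for $P^{\bb C^*}$ produces exactly the virtual tangent bundle of the nested Hilbert scheme $S^{[0,n]}_\beta$ as studied in \cite{GSY, GT1, GT2}, so the two virtual cycles coincide.

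Next, I would apply \cite[Theorem 5]{GT2} (or equivalently \cite[Proposition 2.1]{K}), which gives a clean formula for the pushforward of $[S^{[0,n]}_\beta]^{\vir}$ along the inclusion $j : S^{[0,n]}_\beta \hookrightarrow S^{[n]} \times S_\beta$ of \eqref{nested}. Precisely, that result says
\[
j_*[S^{[0,n]}_\beta]^{\vir} = c_n(\mathsf{CO}^{[0,n]}_\beta) \cap \bigl([S^{[n]}]\times [S_\beta]^{\vir}\bigr),
\]
where $\mathsf{CO}^{[0,n]}_\beta = R\pi_* \sO(\sD) - R\pi_* \bb I(\sD)$ is the rank $n$ tautological K-theory class on $S^{[n]}\times S_\beta$ described just before the corollary.

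Combining these two identifications yields the claim. The only subtlety worth checking is that the universal family $\j = [\sO_{S\times P^{\bb C^*}} \to \bb I(\sD)]$ appearing here matches the universal nested data used in \cite{GT2, K}, so that the relevant tautological bundle is indeed $\mathsf{CO}^{[0,n]}_\beta$; this is immediate from the description of $P^{\bb C^*}$ via the diagram in \S \ref{sec_description}, where the universal divisor $\sD$ and universal ideal $\bb I$ are pulled back from $S\times S_\beta$ and $S\times S^{[n]}$, respectively. Hence no further verification is required, and the corollary follows.
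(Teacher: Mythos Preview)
Your proposal is correct and follows precisely the paper's own reasoning: the preceding Lemma identifies $[P^{\bb C^*}]^{\vir}$ with $[S^{[0,n]}_\beta]^{\vir}$, and then \cite[Theorem 5]{GT2} (or \cite[Proposition 2.1]{K}) gives the pushforward formula. The paper states the corollary with exactly these citations and no further argument, so your write-up is essentially an expansion of that.
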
 \qed

The moving part of \eqref{JXJX} gives:
$$
R\Hom(J_X, J_X)_0^{\mov}[1]\cong R\Hom(\sI, (L_1\mathfrak{t}\oplus L_2\mathfrak{t}^{-1})\otimes \sI).
$$
The same calculation using the universal family $\j=[\sO_{S\times P^{\bb C^*}}\to \i(\mathcal D)]$ shows that the virtual normal bundle $N^{\vir}$ of $P^{\cc^*}$ is given by 
$$\sqrt{e_{\cc^*}}(N^{\vir})=
\frac{e_{\cc^*}(\mathcal H om_{\pi_P}(\i, \i V))}{\sqrt{e_{\cc^*}}(\sE xt^1_{\pi_P}(\i,  \i V))}=
\frac{t^{h^0(L_1)}(-t)^{h^0(L_2)}}{e_{\cc^*}(\sE xt^1_{\pi_P}(\i, \i L_1\mathfrak{t}))}.
$$ 
The Oh-Thomas localization formula gives
\begin{prop}
 The  equivariant virtual fundamental class of $P$ is given by:
$$[P]^{\vir}=\frac{
(-1)^{h^0(L_2)}}{t^{\chi(L_1)}}\;\iota_*\Big( e_{\cc^*}\big(\sE xt^1_{\pi_P}(\i, \i L_1 \mathfrak{t})_0\big)\cap[P^{\cc^*}]^{\vir}\Big)
\in A_{\vd/2}(P, \qq)[t, 1/t],$$
where $\vd$ is given by $$2\chi(\sI(D))-[S]^2=\beta(\beta-K_S)+2\chi(\sO_S)-2n-e(V)=\beta(\beta-K_S)+2\chi(L_1)-2n.$$
\end{prop}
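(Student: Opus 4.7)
The plan is to apply the Oh-Thomas equivariant virtual localization formula of \cite[\S 7]{OT}:
$$[P]^{\vir} = \iota_*\left( \frac{[P^{\mathbb C^*}]^{\vir}}{\sqrt{e_{\mathbb C^*}}(N^{\vir})}\right),$$
where the virtual normal bundle $N^{\vir}$ is the moving part of the virtual (co)tangent complex. The input from the preceding display already identifies $N^{\vir}$ with the K-theory class $R\Hom_{\pi_P}(\i,\i V)$ on $P^{\mathbb C^*}$, so the remaining task is to evaluate $\sqrt{e_{\mathbb C^*}}(N^{\vir})$ and plug it into the formula.

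First I would decompose $N^{\vir}$ along the weight splitting $V=L_1\mathfrak t\oplus L_2\mathfrak t^{-1}$ into
$$N^{\vir}=R\Hom_{\pi_P}(\i,\i L_1)\,\mathfrak t\;\oplus\;R\Hom_{\pi_P}(\i,\i L_2)\,\mathfrak t^{-1}.$$
The crucial observation is that the two summands are Serre dual to one another: relative Serre duality on $S$, together with the identity $L_1\otimes L_2=K_S$, yields
$$R\Hom_{\pi_P}(\i,\i L_2)^{\vee}[-2]\cong R\Hom_{\pi_P}(\i,\i\otimes K_SL_2^{-1})=R\Hom_{\pi_P}(\i,\i L_1),$$
and the equivariant weights $\mathfrak t$, $\mathfrak t^{-1}$ pair up perfectly. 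Hence $N^{\vir}$ has the self-dual form $A\oplus A^{\vee}$ (in the appropriate equivariant sense) required for the square-root Euler class.

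Next, for such a self-dual K-theory class, the square-root Euler class equals (up to a sign fixed by the Oh-Thomas orientation) the ordinary equivariant Euler class of a chosen isotropic summand, which I can organize cohomologically as
$$\sqrt{e_{\mathbb C^*}}(N^{\vir})=\frac{e_{\mathbb C^*}(\mathcal H om_{\pi_P}(\i,\i V))}{\sqrt{e_{\mathbb C^*}}(\mathcal Ext^1_{\pi_P}(\i,\i V))}.$$
The numerator is immediate since $\mathcal H om(\i,\i)\cong\sO_S$ (as $\i$ is a rank-one reflexive ideal on a smooth surface), giving $\mathcal H om_{\pi_P}(\i,\i V)\cong H^0(L_1)\mathfrak t\oplus H^0(L_2)\mathfrak t^{-1}$ and equivariant Euler class $t^{h^0(L_1)}(-t)^{h^0(L_2)}$. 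The denominator simplifies by the same self-duality to $\pm e_{\mathbb C^*}(\mathcal Ext^1_{\pi_P}(\i,\i L_1\mathfrak t))$ since the second weight summand is dual to the first.

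Substituting into the localization formula and collecting signs yields the factor $(-1)^{h^0(L_2)}$ and a power $t^{h^0(L_1)+h^0(L_2)}$ in the denominator. To recognize this as $t^{\chi(L_1)}$, one uses Serre duality $\chi(L_2)=\chi(K_SL_1^{-1})=\chi(L_1)$ on $S$ together with the K-theoretic interpretation of the Euler class of $\mathcal H om_{\pi_P}$ (i.e., taking $R\pi_{P*}$ and letting the higher direct images contribute with alternating signs, the $\mathfrak t$- and $\mathfrak t^{-1}$-parts combine to $t^{\chi(L_1)}$ rather than $t^{h^0(L_1)+h^0(L_2)}$). The main obstacle will be this careful bookkeeping of signs and $t$-degrees: orienting the square-root Euler class consistently with the choice in \cite{OT} and keeping track of the full derived pushforward in the numerator. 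Once the self-duality of $N^{\vir}$ via $L_1L_2=K_S$ is in place, the rest is routine degree counting.
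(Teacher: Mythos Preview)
Your approach is the paper's approach: Oh--Thomas virtual localization, with the virtual normal bundle already identified as the moving part $R\Hom_{\pi_P}(\i,\i V)$, and the self-duality of this class coming from Serre duality on $S$ together with $L_1L_2=K_S$. The computation of the numerator $t^{h^0(L_1)}(-t)^{h^0(L_2)}$ and the reduction of $\sqrt{e_{\cc^*}}(\sE xt^1_{\pi_P}(\i,\i V))$ to $e_{\cc^*}(\sE xt^1_{\pi_P}(\i,\i L_1\mathfrak t))$ are exactly as in the paper.

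There is one concrete gap in your final step. You end up with the denominator $t^{h^0(L_1)+h^0(L_2)}$ and the class $e_{\cc^*}(\sE xt^1_{\pi_P}(\i,\i L_1\mathfrak t))$, but the statement has the \emph{trace-free} class $\sE xt^1_{\pi_P}(\i,\i L_1\mathfrak t)_0$ and the denominator $t^{\chi(L_1)}$. These two changes go together, and your explanation via ``higher direct images in $\mathcal H om_{\pi_P}$'' is not the mechanism. What actually happens is the trace splitting
\[
\sE xt^1_{\pi_P}(\i,\i L_1\mathfrak t)\;\cong\;\sE xt^1_{\pi_P}(\i,\i L_1\mathfrak t)_0\;\oplus\;H^1(L_1)\,\mathfrak t,
\]
so that $e_{\cc^*}(\sE xt^1_{\pi_P}(\i,\i L_1\mathfrak t))=t^{h^1(L_1)}\cdot e_{\cc^*}(\sE xt^1_{\pi_P}(\i,\i L_1\mathfrak t)_0)$. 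The net power of $t$ becomes $t^{h^1(L_1)-h^0(L_1)-h^0(L_2)}$. Now Serre duality on $S$ gives $h^0(L_2)=h^0(K_SL_1^{-1})=h^2(L_1)$, so the exponent is $-(h^0(L_1)-h^1(L_1)+h^2(L_1))=-\chi(L_1)$, as required. Once you insert this trace-free passage, the bookkeeping is complete.
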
 \qed

\bigskip
\noindent \textbf{Example:} Suppose that $p_g(S)>0$, $L_1=\sO_S$, $L_2=K_S$, and $\mathsf{SW}_\beta=\deg[S_\beta]^{\vir}$ be the Seiberg-Witten invariant in class $\beta$ (it is 0 if $\beta(\beta-K_S)\neq 0$).  If in Definition \ref{defn_stable_pair_invariants} we  choose $\alpha_i \in H^{*}_{\bb C^*}(X, \bb Q)$ supported on the zero section $S$. By \cite[Theorem 6]{GT2}  $$\langle \prod_{i=1}^s \alpha_i \rangle^P_{\Ch}=\frac{
(-1)^{p_g(S)}}{t^{\chi(\sO_S)}} \mathsf{SW}_\beta  \cdot \int_{S^{[n]}}  c_n(L^{[n]})\; e_{\cc^*}\big(\sE xt^1_{\pi_P}(\i, \i\mathfrak{t})_0\big)\prod_{i=1}^s\tau(\alpha_i)\in  \qq[t, 1/t]$$ for any $L \in \op{Pic}_\beta(S)$.

\section{Local threefolds} \label{sec_local_three} In this Section, we take $X$ to be the total space of the canonical bundle $K_Y$ of a nonsingular projective threefold. Denote by $p\colon X\to Y$ and $i\colon Y\to X$ the bundle projection and the inclusion of the zero-section, respectively. Then $X$ is a noncompact Calabi-Yau fourfold. We consider the moduli spaces of stable pairs $P(X,\Ch)$ in which $\Ch$ are the pushforward by $i$ of some Chern character vectors on $Y$.

\subsection{Stable pairs of 2-dimensional sheaves on Fano threefolds}\label{sec_Fano_threefolds}

Let $Y$ be a nonsingular Fano projective threefold and  
$$\Ch_Y=(0,\gamma,\beta,\xi) \in H^{2*}(Y,\qq)$$ be a Chern character vector with $\gamma=c_1(L)$ for some  ample invertible sheaf $L$. Let $$P_Y:=P(Y,\Ch_Y)$$ be the moduli space of stable pairs $(G,s)$, where $G$ is a pure 2-dimensional sheaf and $s$ is a section of $G$ with at most 1-dimensional cokernel. This stability condition can be realized as a limit of Le Potier stability condition as in \S \ref{sec_mst}, and then $P(Y,\Ch_Y)$ is constructed as a moduli space of Le Potier's pairs. Our goal in this Subsection is to define a virtual cycle for $P(Y,\Ch_Y)$ using the theory of stable pairs on Calabi-Yau fourfold $X$.

We denote by $\Ch:=i_* \Ch_Y$ the compactly supported class on $X$ corresponding to $\Ch_Y$. Note that $p_*\Ch=\Ch_Y$, so we may as well define $\Ch_Y(F):=p_*\Ch(F)$ for any compactly supported coherent sheaf $F$ on $X$.

\begin{lem} \label{PY} $P(X,\Ch)$ is projective over $\cc$ containing $P(Y,\Ch_Y)$ as a closed subscheme. If $\Ch_Y$ is such that all stable pairs over $X$ in class $\Ch$ are scheme theoretically supported in $Y$ (e.g. if $\gamma$ is a reduced class ) then $P(Y,\Ch_Y)=P(X,\Ch)$.
\end{lem}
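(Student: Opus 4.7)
I would address the three assertions in order. \emph{First}, for the closed embedding $P(Y,\Ch_Y)\hookrightarrow P(X,\Ch)$, the closed immersion $i\colon Y\hookrightarrow X$ gives a pushforward $(G,s)\mapsto (i_*G,i_*s)$; purity and support-dimension are preserved by $i_*$, and $\mathrm{coker}(i_*s)=i_*\mathrm{coker}(s)$ transfers the $1$-dimensional cokernel condition. The image consists exactly of pairs $(F,s)$ with $F$ scheme-theoretically supported on $Y$, i.e.\ with $\mathcal{I}_Y\cdot F=0$ for $\mathcal{I}_Y\cong p^*K_Y^{-1}$; on the moduli space this is the vanishing locus of the multiplication morphism $\pi_{P,*}(p^*K_Y^{-1}\otimes \mathbb{F})\to \pi_{P,*}\mathbb{F}$, hence closed.

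\emph{Second}, for projectivity, I would embed $X$ in its projective completion $\bar X:=\mathbb{P}_Y(K_Y\oplus \mathcal{O}_Y)$, with boundary divisor $D=\mathbb{P}_Y(K_Y)\cong Y$. Since $P(\bar X,\bar \Ch)$ is projective by the moduli constructions of Section~\ref{sec_stability_moduli} and $P(X,\Ch)\subset P(\bar X,\bar \Ch)$ is the open subscheme of pairs with support disjoint from $D$ (as noted in Section~\ref{sec_stable_pair_inv}), it suffices to prove that this inclusion is an equality. Setting $\zeta:=c_1(\mathcal{O}_{\bar X}(1))$ with the Grothendieck relation $\zeta^2=p^*c_1(K_Y)\cdot \zeta$, one computes $[D]=\zeta$, $[Y_0]=p^*c_1(K_Y)-\zeta$, and $\bar\Ch_2\cdot [D]=0$ in $H^6(\bar X)$. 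A $2$-dim component of $\supp(F)$ contained in $D$ would contribute a class $\zeta\cdot p^*\alpha_1$ with $\alpha_1\in H^4(Y)$ effective and nonzero; balancing this against effective contributions from components off $D$ in the identity $\bar\Ch_2\cdot [D]=0$ forces $c_1(K_Y)\cdot \alpha_1=0$ in $H^6(Y)$, which is impossible because $-K_Y$ is ample and $\alpha_1$ is effective nonzero. A parallel Gysin-restriction argument to $D$ rules out $2$-dim components meeting $D$ properly.

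\emph{Third}, under the support hypothesis the closed embedding of the first step is surjective, giving the equality $P(Y,\Ch_Y)=P(X,\Ch)$. For the reduced-$\gamma$ criterion: if $(F,s)\in P(X,\Ch)$ has $2$-dim support components $S_i$ with generic $F$-multiplicity $m_i$ projecting with generic degree $d_i$ onto $Y'_i:=p(S_i)\subset Y$, then $\sum_i m_id_i[Y'_i]=\gamma$, and reducedness forces $m_i=d_i=1$ with each $[Y'_i]$ a reduced summand. Anti-ampleness of $K_Y$ yields $H^0(Y'_i,K_Y|_{Y'_i})=0$ (passing to normalization if necessary), so the birational morphism $S_i\to Y'_i$ in $X=\mathrm{Tot}(K_Y)$ must factor through the zero section, giving $S_i=i(Y'_i)\subset Y$ set-theoretically. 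Purity of $F$ combined with multiplicity $1$ then yields $\mathcal{I}_Y\cdot F=0$: the pure $2$-dim subsheaf $\mathcal{I}_Y\cdot F\subset F$ vanishes at each $2$-dim generic point of its potential support (since $\mathcal{I}_Y$ acts by zero at points of $Y$), hence has empty $\Ass$, hence is zero. The hard part will be the intersection-theoretic bookkeeping in the second step, where one must carefully account for all possible mixed support configurations on $\bar X$; the decisive input is the ampleness of $-K_Y$, which provides the nonvanishing intersection numbers needed to forbid support meeting $D$.
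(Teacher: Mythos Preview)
Your proposal is correct in outline but takes a different and considerably more laborious route than the paper on two of the three points.

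For the \emph{closed immersion}, the paper simply observes that the $\cc^*$-action on $X$ scaling the fibres of $K_Y$ induces a $\cc^*$-action on $P(X,\Ch)$, and that $\iota(P(Y,\Ch_Y))$ is a component of the fixed locus; closedness is then immediate. Your ideal-theoretic approach also works but requires tracking the vanishing-locus condition through the moduli construction.

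For \emph{projectivity}, the paper avoids the compactification $\bar X$ entirely. It shows directly that for \emph{any} stable pair $[\sO_X\to F]$ in class $\Ch$, the reduced support $S$ of $F$ already lies in the zero section $i(Y)$: since $\gamma=c_1(L)$ is ample, the image $D=p(S)\subset Y$ is an ample divisor, and Kodaira vanishing on $Y$ gives $H^0(K_Y^r|_D)=0$ for all $r>0$, forcing every (multi)section of $K_Y$ over $D$ to be the zero section. Properness of $P(X,\Ch)$ follows at once because all supports are contained in the proper subset $Y\subset X$. This completely bypasses the intersection-theoretic case analysis on $\bar X$ that you flag as the hard part; in particular your ``parallel Gysin-restriction argument'' for components meeting the boundary $D$ properly is left vague and is never needed in the paper's approach. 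Note also that the paper's argument establishes set-theoretic support in $Y$ \emph{without} the reducedness hypothesis on $\gamma$; the reduced-$\gamma$ assumption is only used to upgrade this to scheme-theoretic support.

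For the \emph{reduced-$\gamma$ criterion}, the paper gives essentially no detail beyond the parenthetical remark; your multiplicity decomposition followed by the purity argument killing $\sI_Y\cdot F$ is the right way to fill this in, and is more careful than what the paper writes.
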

\begin{proof} 
Suppose $J_X= [\sO_X\to F]$ is a stable pair in class $\Ch$. We will show that if $S\subset X$ is the reduced support of $F$ then $S\subset i(Y)$. To see this, note that $S\subset X$ can be thought of as a section of the bundle $K_Y^r|_{p(S)}$ for some $r>0$.  Since $L$ is ample, so is the divisor $D=p(S)$. Since by assumption $K_Y^*$ is also ample by Kodaira vanishing and Serre duality $$H^0(K_Y^r)=0=H^1(\sO(-D)\otimes K_Y^r).$$ Taking cohomology of the exact sequence $$0\to \sO(-D)\otimes K_Y^r \to K_Y^r\to K_Y^r|_{D}\to 0,$$  we see that $H^0(K_Y^r|_{D})=0$ and so $S$ must be the zero section of $K_Y^r|_{D}$ and the claim is proven. This shows that $P(X,\Ch)$ is proper and hence projective over $\cc$.

There is a natural morphism $\iota \colon P(Y,\Ch_Y) \to P(X,\Ch)$ that maps a stable pair $J_Y:=[\sO_Y\to G]$ in class $\Ch_Y$ to the composition $\sO_X\to i_* \sO_Y \to i_*G$, which gives a stable pair on $X$ in class $\Ch$ that we denote by $J_X$. $X$ is equipped with a $\cc^*$-action that scales the fibers of the bundle map $p$. This induces a $\cc^*$-action on $P(X,\Ch)$ and  $\iota$ can be easily identified with the inclusion of one of the $\cc^*$-fixed components. Therefore $\iota$ is a closed immersion. 
If $\Ch_Y$ satisfies the condition in the Lemma for any stable pair $J_X=[\sO_X\to F]$ in class  $\Ch$ we must have $F=i_*G$ and then by adjunction we get a stable pair $J_Y=[\sO_Y\to G]$ in class $\Ch_Y$ such that $\iota(J_Y)=J_X$. Therefore, $\iota$ is an isomorphism.    


\end{proof}
 


Let $\bb J:=[\sO_{X\times P}\to  \bb F]$ and $\bb J_Y:=[\sO_{Y\times P_Y}\to  \bb G]$ be the universal stable pairs.  We use the same symbol $i$ (resp. $p$ and $\iota$) for any basechange $i\times 1$  (resp. $p \times 1$ and $1\times \iota$). We also denote the derived restriction of $\bb J$ to $X\times \iota(P(Y,\Ch_Y)$ by the same symbol. We finally identify $i_* \bb G$ with the restriction of $ \bb F$ to $X\times \iota(P_Y)$. 

Our goal is to express the restriction of the virtual tangent bundle $\bb E^ \vee$ of the obstruction theory \eqref{eqn_obstruction_theory} to $\iota(P_Y)$ in terms of $\bb J_Y$ and $\bb G$. We denote the restriction by $\bb E^{ \vee}_Y$.
\begin{lem} \label{dsum} There are isomorphisms 
$$\bff L i^* i_*\bb G\cong \bb G\oplus \bb G(-K_Y)[1],\qquad \bff Li^*\bb J \cong  \bb J_Y \oplus \bb G(-K_Y)[1]$$ on $Y\times  \iota(P_Y)$.
\end{lem}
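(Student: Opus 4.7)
The plan is to derive both isomorphisms from the Koszul resolution of the zero section $i\colon Y\hookrightarrow X$, where $Y$ is cut out inside $X=\mathrm{Tot}(K_Y)$ as the zero locus of the tautological fiber coordinate, a section of $\sO_X(Y)\cong p^*K_Y$.

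For the first isomorphism, I would start from the two-term Koszul resolution
\[
0\to p^*K_Y^{\ast}\to \sO_X\to i_*\sO_Y\to 0,
\]
which tensored with $p^*G$ (and using the projection formula to pass freely between $G$ on $Y$ and $i_*G$ on $X$) represents $i_*G$ by the complex $[p^*(G\otimes K_Y^{\ast})\to p^*G]$. The key point is that the differential of this Koszul complex vanishes identically when restricted to $Y$, so
\[
\bff L i^*i_*G\;\cong\;[\,G(-K_Y)\xrightarrow{0}G\,]\;\cong\;G\oplus G(-K_Y)[1].
\]
Carried out in families over $\iota(P_Y)$, this gives the first claim.

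For the second isomorphism, I would apply $\bff Li^*$ to the canonical exact triangle
\[
\bb J[-1]\longrightarrow \sO_{X\times \iota(P_Y)}\longrightarrow i_*\bb G\longrightarrow \bb J
\]
obtained by restricting \eqref{equ_natural} to $X\times\iota(P_Y)$ and using $\bb F|_{X\times\iota(P_Y)}\cong i_*\bb G$. Via the first part, the middle arrow becomes a morphism
\[
\sigma\colon \sO_{Y\times \iota(P_Y)}\longrightarrow \bb G\oplus \bb G(-K_Y)[1].
\]
The crucial computation is that the component of $\sigma$ in $\bb G(-K_Y)[1]$ is zero, so $\sigma=(s,0)$ where $s$ is the universal section defining $\bb J_Y$. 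Granting this, the cone of $\sigma$ splits as $\mathrm{Cone}(s)\oplus \bb G(-K_Y)[1]\cong \bb J_Y\oplus \bb G(-K_Y)[1]$, and since $\bb J\cong \mathrm{Cone}(\sO\to i_*\bb G)$ the result follows.

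The main obstacle is justifying the vanishing of the $\bb G(-K_Y)[1]$-component of $\sigma$. My strategy is to factor the structural map as
\[
\sO_X\xrightarrow{\;\mathrm{adj}\;} i_*\sO_Y\xrightarrow{\;i_*s_Y\;} i_*\bb G,
\]
so that $\sigma$ is the composition $\sO_Y\xrightarrow{\bff Li^*(\mathrm{adj})}\bff Li^*i_*\sO_Y\xrightarrow{\bff Li^*(i_*s_Y)}\bff Li^*i_*\bb G$. Represented by the Koszul complex, $\bff Li^*(\mathrm{adj})$ is simply the identity onto the degree-$0$ summand of $[\,K_Y^{\ast}\xrightarrow{0}\sO_Y\,]$ and zero onto $K_Y^{\ast}[1]$; similarly $\bff Li^*(i_*s_Y)$ is the map $(s,s\otimes \mathrm{id})$ on the two summands. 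Composing, the component landing in $\bb G(-K_Y)[1]$ is zero, which is exactly what is needed. The splitting of the cone of $(s,0)$ into $\bb J_Y\oplus \bb G(-K_Y)[1]$ is then formal.
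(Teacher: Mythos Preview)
Your proof is correct and follows essentially the same approach as the paper: both arguments rest on the Koszul resolution of the zero section together with the observation that its differential restricts to zero on $Y$, which forces the splitting of $\bff Li^*i_*$. Your treatment of the second isomorphism is in fact more explicit than the paper's—where the paper simply says the splitting ``easily follows'' from applying $\bff Li^*$ to the exact triangle, you verify directly that the component of $\sigma$ landing in $\bb G(-K_Y)[1]$ vanishes by factoring through the adjunction map and using the diagonal form of $\bff Li^*(i_*s_Y)$.
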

\begin{proof}
For any locally free sheaf $V$ on $Y\times  \iota(P_Y)$ we have a natural short exact sequence on $X\times  \iota(P_Y)$
$$0\to p^*V(-K_Y)\stackrel{t}{\rightarrow} p^*V\to i_*V\to 0$$ with $t|_{Y\times  \iota(P(Y,\Ch))}=0$. 
The last vanishing implies that $Li^*i_* V\cong V\oplus V(-K_Y)[1]$. The first isomorphism in the statement follows by writing a finite locally free resolution for $i_*\bb G$ and using the decomposition above for each term of the resolution. The second isomorphism in the statement easily follows from the first by applying $Li^*$ to the natural exact triangle $\bb J[-1]\to \sO_{X\times  \iota(P_Y)}\to i_* \bb G.$
\end{proof}

Consider the following parts of the natural exact triangles on $\iota(P_Y)$:
\beq{text1}
\xymatrix{& \lra{\bb J,\bb J}\ar[d] &\\
\lra{\sO_{X\times \iota(P_Y)}, \sO_{X\times \iota(P_Y)}} \ar[r] &  \lra{\bb J[-1],\sO_{X\times \iota(P_Y)}} \ar[r] \ar[d]&\lra{\bb F, \sO_{X\times \iota(P_Y)}}[1].\\ &\lra{\bb J[-1], \bb F} &
}
\eeq
%
The first horizontal arrow in \eqref{text1} factors as the identity map followed by the first vertical arrow. In particular, the composition of the first horizontal arrow and the second vertical arrow is zero. As a result, we get a map \beq{map} \lra{\bb F, \sO_{X\times \iota(P_Y)}}[1] \to \lra{\bb J[-1], \bb F}.\eeq  Note also that the second vertical arrow in \eqref{text1} factors naturally through $\lra{\bb J[-1],i_*\sO_{Y\times \iota(P_Y)}}$.
By Duality, adjunction and Lemma \ref {dsum} \begin{align} \label{decm} &\lra{\bb F, \sO_{X\times \iota(P_Y)}}[1] \cong \lra{\bb G(-K_Y), \sO_{Y\times \iota(P_Y)}}, \\ \notag&  \lra{\bb J[-1], \bb F}\cong \lra{\bb J_Y[-1], \bb G}\oplus \lra{\bb G(-K_Y), \bb G}, \\ \notag & \lra{\bb J[-1], i_*\sO_{Y\times \iota(P_Y)}}\cong \lra{\bb J_Y[-1], \sO_{Y\times \iota(P_Y)}}\oplus \lra{\bb G(-K_Y), \sO_{Y\times \iota(P_Y)}}.\end{align}
Thus, we can write \eqref{map} as 
$$\lra{\bb G(-K_Y), \sO_{Y\times \iota(P)}}\xrightarrow{[0\; h]^T} \lra{\bb J_Y[-1], \bb G} \oplus \lra{\bb G(-K_Y), \bb G},$$ where
$h$ is naturally induced by the universal  section $\sO_{Y\times P}\to  \bb G$. But the mapping cone of \eqref{map} is $\bb E^{ \vee}_Y=\lra{\bb J, \bb J}_0[1]$ by the octahedron axiom, and also $\operatorname{Cone}(h)\cong \lra{\bb G(-K_Y), \bb J_Y}$. So we have the exact triangle 
$$\lra{\bb G(-K_Y), \bb J_Y}\xrightarrow{0} \lra{\bb J_Y[-1], \bb G} \to \bb E_Y^\vee, $$ and so using duality 
\beq{text3}
 \bb E_Y \cong \lra{\bb J_Y[-1], \bb G}[2]  \oplus  \lra{ \bb J_Y[-1],\bb G}^\vee.
\eeq
This in particular shows that $ \bb E_Y\cong  \bb E_Y^\vee[2]$.


We have proven 
\begin{thm} \label{fano}
Let $Y$ be a nonsingular projective Fano 3-fold. Suppose that $\Ch_Y$ satisfies the condition in Lemma \ref{PY}. Then $P(Y,\Ch_Y)$ is equipped with a self dual obstruction theory $\bb E_Y\to \bb L_{P_Y}$ in the sense of \cite{OT}, where $\bb E_Y$ is given by \eqref{text3}. This gives a virtual cycle 
$$[P(Y,\Ch_Y)]^{\vir} \in A_{\vd/2}\big(P(Y,\Ch_Y), \bb Z[\frac 12]\big),$$ where $\vd=2(\gamma \cdot\td_2(Y)-\beta\cdot K_Y/2+\xi)-\gamma^2\cdot K_Y.$
\end{thm}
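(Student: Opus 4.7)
The bulk of the argument already appears in the paragraphs immediately preceding the theorem: Lemma~\ref{dsum} together with the triangles in \eqref{text1} and the decompositions \eqref{decm} give the formula \eqref{text3} for the restriction $\bb E_Y$ of the virtual cotangent bundle $\bb E^\vee$ to $\iota(P_Y)$. So the plan is to assemble these ingredients into an honest Oh--Thomas virtual cycle for $P_Y := P(Y,\Ch_Y)$.

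First, I would invoke Lemma~\ref{PY}: under the assumed condition on $\Ch_Y$, the closed immersion $\iota\colon P_Y \hookrightarrow P(X,\Ch)$ is an isomorphism of schemes. Consequently the obstruction theory $\phi\colon \bb E \to \bb L_{P(X,\Ch)}$ from \eqref{eqn_obstruction_theory} (which is well defined here by Lemma~\ref{lem_simple} applied on the noncompact Calabi--Yau $X$, exactly as in Section~\ref{subsec_virtual_cycle_Oh-Thomas}) restricts to an obstruction theory on $P_Y$. The computation in \eqref{text1}--\eqref{text3} then identifies this restricted complex with $\bb E_Y \cong \lra{\bb J_Y[-1],\bb G}[2]\oplus \lra{\bb J_Y[-1],\bb G}^\vee$.

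Next, I would record the self-duality $\bb E_Y \cong \bb E_Y^\vee[2]$, which is immediate from the explicit form \eqref{text3} by the calculation $(A[2]\oplus A^\vee)^\vee[2] = A^\vee \oplus A[2]$, with $A = \lra{\bb J_Y[-1],\bb G}$. The Serre-duality pairing \eqref{eqn_Serre_duality} on $\bb E$ restricts to the tautological pairing coming from this swap, and the orientation on $P(X,\Ch)$ provided by \cite{CGJ} pulls back to an orientation on $P_Y$. In particular the hypotheses of Section~\ref{subsec_virtual_cycle_Oh-Thomas} are met, and the Oh--Thomas construction produces a square-root virtual class $[P_Y]^{\vir}\in A_{\vd/2}(P_Y,\mathbb Z[\tfrac12])$, with the convention that it is zero when $\vd$ is odd.

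Finally, I would verify that $\vd = \rk \bb E_Y$ equals the expression stated in the theorem. Since shifts by even integers and dualization preserve rank, $\rk\bb E_Y = 2\chi(J_Y[-1],G)$ for any $J_Y = [\sO_Y \to G]\in P_Y$. The triangle $J_Y[-1]\to \sO_Y\to G$ gives $\chi(J_Y[-1],G)=\chi(\sO_Y,G)-\chi(G,G)$, and Riemann--Roch on $Y$ yields
\[
\chi(\sO_Y,G)=\xi-\tfrac{\beta\cdot K_Y}{2}+\gamma\cdot\td_2(Y),\qquad \chi(G,G)=\tfrac{\gamma^2\cdot K_Y}{2},
\]
where the second equality uses $\ch^*(G)=(0,-\gamma,\beta,-\xi)$ together with $\td_1(Y)=-K_Y/2$. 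Combining these gives exactly $\vd=2(\gamma\cdot\td_2(Y)-\beta\cdot K_Y/2+\xi)-\gamma^2\cdot K_Y$, completing the proof. The only step requiring any real care is the first one, namely that the restriction of an obstruction theory along the scheme-theoretic equality $P_Y = P(X,\Ch)$ remains an obstruction theory in the sense of \cite{BF}, but this is automatic since the two schemes coincide and so does their truncated cotangent complex.
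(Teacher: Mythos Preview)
Your proposal is correct and follows essentially the same approach as the paper: the theorem is stated immediately after the computations \eqref{text1}--\eqref{text3} with a bare \qed, so the proof really is just the assembly you describe---identify $P_Y$ with $P(X,\Ch)$ via Lemma~\ref{PY}, restrict the Oh--Thomas obstruction theory, read off the decomposition \eqref{text3} and its self-duality, and apply the construction of \S\ref{subsec_virtual_cycle_Oh-Thomas}. Your explicit Riemann--Roch verification of $\vd$ is a useful addition that the paper leaves implicit.
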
 \qed

Here, we discuss some implications of this Theorem. In the situation of Theorem \ref{fano}, we know that the virtual tangent bundle $\bb E^\vee_Y$ has perfect amplitude in $[0,2]$. This means that for any stable pair $J_Y=[\sO_Y\to G]$ in class $\Ch_Y$
$$h^i(\bb E^\vee|_{J_Y})\cong\Ext^i(J_Y[-1],G)\oplus \Ext^{i+1}(G(-K_Y),J_Y[-1])^*=0 \qquad i\neq 0, 1, 2.$$
For $i> 3$ or $i<-1$ the vanishing is obvious. For $i=-1$ and $i=3$ the vanishing follows from Serre duality and the exact sequence $$0=H^3(G)\to \Ext^3(J_Y[-1],G) \to \Ext^4(G,G)=0.$$ Note also that since $$h^0(\bb E_Y^\vee)\cong T_{P_Y,J_Y}\cong  \Hom(J_Y[-1],G)$$ we conclude
\begin{equation} \label{vanext} \Ext^1(G(-K_Y),J_Y[-1])^*\cong\Ext^2(J_Y[-1],G)=0.\end{equation}
 From the exact sequence 
$$ \Ext^3(G,G)\xrightarrow{\operatorname{tr}} H^2(G)\to \Ext^2(J_Y[-1],G) \to \Ext^3(G,G)\to 0$$ we see that the vanishing of the  $\Ext^2(J_Y[-1],G)$ obtained in \eqref{vanext}
implies $\Ext^3(G,G)=0$ and also that the trace map above is surjective.  

\begin{rmk} \label{rem_non}
i) If the scheme theoretic support of $G$ is a nonsingular surface $j:S\subset Y$ then $G=j_*\sI_Z(D)$ by the discussion in \S \ref{sec_description}. In this case $$H^2(G)=0=\Ext^3(G,G).$$ Here, the vanishing of $H^2$ follows from the cohomology exact sequence of the natural short exact sequence $0\to \sI_Z(D)\to \sO_{S}(D)\to \sO_Z\to 0$, and the vanishing of $\Ext^3$ follows from Serre duality and the stability of $G$. This gives an explanation for the vanishing \eqref{vanext}. At the moment, we do not have such a direct explanation for \eqref{vanext} if the support of $G$ is not nonsingular.

ii) The vanishing $\Ext^{i\ge 2}(J_Y[-1],G)=0$ discussed above implies that $\lra{ \bb J_Y[-1],\bb G}$ has perfect amplitude in $[0,1]$. This suggests that one may be able to construct a (natural) perfect obstruction theory $\lra{\bb J_Y[-1],\bb G}^\vee\to \bb L_{P_Y}$ that will lead to the same virtual cycles given in Theorem \ref{fano} (see \cite[Section 8]{OT}).
\end{rmk}


\subsection{Local $\pp^3$}
We study the example $Y=\pp^3$ using the notation and set up of this Section so far. Let $H\in H^2(\bb P^3,\bb Z)$ be the hyperplane class. We can write
$$\Ch_{\bb P^3}=(0,aH,bH^2/2,cH^3/6) \in H^{2*}(\pp^3,\qq)$$ for some integers $a, b, c$ with $a>0$. As before, let $\Ch:=i_* \Ch_{\pp^3}$.


\subsubsection{Planes inside $\pp^3$} \label{subsec_planes}
We start by taking $a=1$.
For any stable pair $J_X=[\sO_X\to F]$ in class $\Ch=i_*\Ch_{\bb P^3}$, the 2-dimensional sheaf $F$ is of the form $F=i_*j_*\sI(m)$, where $j\colon S\hookrightarrow \bb P^3$ is the inclusion of some $\bb P^2\cong S \in |\sO_{\bb P^3}(1)|$, 
$\sI<\sO_S$ is the ideal of a 0-dimensional subscheme $Z\subset D$ of length $n$, and $D\subset S$ is a degree $m\ge 0$ curve. If $m=0$ then $n=0$ also.  The integers $m$ and $n$ are determined by the choices of $\beta$ and $\xi$ in $\Ch_{\bb P^3}$.
Since $\Ch_{\bb P^3}$ satisfies the condition in the second part of Lemma \ref{PY}  we have $P(\bb P^3, \Ch_{\bb P^3})=P(X, \Ch)$. Moreover, we are in the situation of Remark \ref{rem_non}.


Suppose now that $\dim |\sO_{\bb P^2}(m)|=m(m+3)/2\ge n$. We claim that $P(\bb P^3, \Ch_{\pp^3})$ is nonsingular with the obstruction bundle (using \eqref{text3})
\beq{h1E} \op{Ob_P}=h^1(\bb E^\vee)\cong \sE xt^1_p(\bb J_{\pp^3}[-1],j_* \bb I(m,1))\oplus \sE xt^1_p(\bb J_{\pp^3}[-1],j_* \bb I(m,1))^*,\eeq 
where $\bb I(m,1)$ is a universal object with $P(\bb P^3, \Ch_{\pp^3})$-fibers $\sI(m)$ as described below. To see this, 
taking the support of $j_*\sI(m)$ gives a morphism $$\rho\colon P(\bb P^3, \Ch_{\pp^3})\to |\sO_{\bb P^3}(1)|$$ with fibers are identified with $$P(\op{tot}(\sO_{\bb P^2}(1)\oplus \sO_{\bb P^2}(-4)), \Ch)$$  studied in \S \ref{sec_local_surfaces}. 
If $\bb S \subset |\sO_{\bb P^3}(1)| \times \bb P^3$ denotes the universal divisor then $P(\bb P^3, \Ch_{\pp^3})$ is identified with the relative Hilbert scheme of points $\Hilb^n(\bb S/|\sO_{\bb P^3}(1)|)$. Let $\bb I$ be the universal ideal sheaf over \beq{eye} j\colon \bb S\times_{|\sO_{\bb P^3}(1)|} \Hilb^n(\bb S/|\sO_{\bb P^3}(1)|)\subset \bb P^3\times \Hilb^n(\bb S/|\sO_{\bb P^3}(1)|).\eeq The universal stable pair over $ \bb P^3\times P(\bb P^3, \Ch_{\pp^3})$ is then given by $$\bb J_{\bb P^3}=[\sO_{ \bb P^3 \times P(\bb P^3, \Ch_{\pp^3})}\to j_* \bb I(m,1)],$$ where $\sO(m,1)$ is the pullback of $ \sO(m) \boxtimes \sO(1) $  from $\bb P^3\times   |\sO_{\bb P^3}(1)|$. 
By our assumption, $\sO_{\bb P^2}(m)$ is $(n-1)$-very ample, similar to \S \ref{subsec_local_surface} we can conclude that $P(\bb P^3, \Ch_{\pp^3})$ is nonsingular of dimension $n+m(m+3)/2+3.$

Let $\pi'$ be the composition of  $j$ and the projection to the second factor in \eqref{eye}. Using adjunction and the vanishing of $\sE xt^1_{\pi'}(\bb I, \bb I(m,1))$ that follows from  $\sO_{\bb P^2}(m)$ being $(n-1)$-very ample, similar to \S \ref{subsec_local_surface}, we can rewrite \eqref{h1E} as
$$ \Ob_P\cong  \sE xt_{\pi'}^1(\bb I,\bb I(1,1)) \oplus \sE xt_{\pi'}^1(\bb I,\bb I(1,1))^*.$$ 
By \eqref{BF56} the virtual cycle of $P(\bb P^3, \Ch_{\bb P^3})$ is therefore $$[P(\bb P^3, \Ch_{\bb P^3})]^{\vir}=\sqrt{e}(\Ob_P)=e(\sE xt_{\pi'}^1(\bb I,\bb I(1,1)))\in A_{\vd/2}\left(P(\bb P^3, \Ch_{\pp^3}),\bb Z\left[\frac {1} 2\right]\right),$$
where $\vd=m(m+3)-2n+6$. 


\subsubsection{Quadrics in $\pp^3$} \label{quadrics} Next, we take $a=2$. Let $J_X:=[\sO_X\to F]$ be a stable pair in class $\Ch=i_*\Ch_{\pp^3}$. 
 There exists a 2-dimensional closed subscheme $S\subset X$ and a short exact sequence $0\to \sO_S\to F\to Q\to 0$, where $Q$ is at most 1-dimensional. This shows that if we write $\Ch_{\pp^3,2} (\sO_S)=b' H^2/2$ then $b\ge b'$. 
  
There are only two possibilities in this case: either $S$ is a subscheme of the zero section $\bb P^3 \subset X$ or $S$ has a non-reduced structure that thickens out of $\bb P^3 \subset X$. The 2-term locally free resolutions of the ideals of $S$ in these two cases are 
$$p^*\sO(2)\to p^*\sO(-2)\oplus p^*\sO(4), \qquad p^*\sO(7)\to p^*\sO(-1)\oplus p^*\sO(8),$$ respectively. The Chern character $\Ch_{\pp^3}(\sO_S)$ in these two cases are respectively
$$(0,2H,-2H^2,4H^3/3),\qquad (0,2H,3H^2,19H^3/3).$$

We conclude that for $b<-4$ the moduli space is empty, and as long as $b\le 5$ there will be no contributions from the stable pairs thickening out of the zero section $\bb P^3 \subset X$, and hence Theorem \ref{fano} can be applied to get a virtual cycle
$$[P(\bb P^3,\Ch_{\pp^3})]^{\vir} \in A_{\vd/2}\big(P(\pp^3,\Ch_{\pp^3}), \bb Z[\frac 12]\big),$$ where $\vd=2(11/3+b+c/6)+16.$ 

\subsubsection{Higher degree surfaces} If $a>2$ there will be more possibilities of cases to consider as in \S \ref{quadrics}. It is always possible to find an upper bound $b_0$ such that for $b\le b_0$ Theorem \ref{fano} can be applied.

\section{Compact examples} \label{sec_compact}

In this Section, we consider a few examples of the stable pair moduli spaces on projective Calabi-Yau 4-folds.

\subsection{Sextic 4-fold} \label{sec_sextic}

 Let $j\colon X\subset \pp^5$ be a nonsingular sextic 4-fold. Then,
$j^*\colon H^k(\pp^5,\zz)\to   H^k(X, \zz)$ is an isomorphism for $k<4$ and injective for $k=4$ (Lefschetz theorem), and moreover, $H^k(\pp^5,\zz)\cong   H^k(X, \zz)$ for $k\ge 6$  with $j^*$ is multiplication by 6.  

Let $\gamma \in H^4(X,\zz)$ (resp. $\ell \in H^6(X,\zz)$) be the image of the class of a plane (resp. $\frac{1}6$ the class of a line) under $j^*$, and let $\op{pt} \in H^8(X,\zz)$ be the point class.  Suppose $i\colon S\subset X$ is a nonsingular sextic surface, i.e. the intersection of a general $\bb P^3 \subset \bb P^5$ with $X$. Then
$$\Ch:=\Ch(i_*\sO_S)=(0,0,\gamma, -6\ell, \frac 72\op{pt}).$$
Let $P=P(X,\Ch)$ be the moduli space of stable pairs in class $\Ch$. The virtual dimension of $P$ is $\vd=2\chi(\sO_S)-\gamma^2=22-6=16$. 
If \begin{equation}\label{jos} 0\to I_S\to \sO_X\to i_*\sO_S\to 0\end{equation} is the short exact sequence defining $S\subset X$ there is a component of $P$ containing the stable pair $$J_X:=[\sO_X\to i_*\sO_S]\cong I_S[1].$$ We claim that $P$ is nonsingular at this point by showing that the corresponding obstruction space $\Ext^2(J_X,J_X)_0=\Ext^2(I,I)_0$ vanishes.

By the choice of $S$ we can write $J_X \cong [\sO_X(-2)\to \sO_X(-1)^2]$. As in \S \ref{subsec_local_surface} we can write $$Li^*i_* \sO_S \cong \sO_S\oplus \sO_S(-1)^{2}[1]\oplus \sO_S(-2)[2],$$ and by Grothendieck-Verdier duality $(i_*\sO_S)^\vee \cong i_*\sO_S(2) [-2]$. From this and adjunction we can see that the natural map $R\Hom(i_*\sO_S, \sO_X)\to R\Hom(i_*\sO_S, i_*\sO_S)$ is given by  $$R\Gamma (\sO_S(2))[-2]\xrightarrow{[0\; 0\; 1]^t} R\Gamma(\sO_S)\oplus R\Gamma(\sO_S(1))^{2}[-1]\oplus R\Gamma(\sO_S(2))[-2].$$ Dualizing \eqref{jos} gives an exact triangle $\sO_X\to I_S^\vee \to (i_*\sO_S)^\vee[1]$, from which, by the fact that the natural map $R\Gamma(\sO_X) \to R\Hom(I_S,\sO_X)$ factors through $ R\Hom(I_S,I_S)$ via identity, we see that the natural map $$R\Hom(I_S, \sO_X)\to RHom(I_S, i_*\sO_S)\cong R\Gamma(\sO_S(1))^{2}\oplus R\Gamma(\sO_S(2))[-1]$$ must factor through \begin{equation}\label{eqn_first_arrow}RHom(i_*\sO_S, \sO_X)[1]\cong R\Gamma(\sO_S(2))[-1]\xrightarrow{[0\; 1]^t} R\Gamma(\sO_S(1))^{2}\oplus R\Gamma(\sO_S(2))[-1].\end{equation}
From \eqref{equ_sec} and similar to \eqref{eqn_exact_triangle1}, we have an exact triangle
$$R\Hom(i_*\sO_S, \sO_X)[1]\to R \Hom(I_S, i_*\sO_S)\rightarrow R\Hom(I_S, I_S)_0[1]$$ in which the first arrow is \eqref{eqn_first_arrow}. From this, we see that $$R\Hom(I_S, I_S)_0[1]\cong R\Gamma(\sO_S(1))^{2}.$$ Taking cohomology 
$$\Ext^{1}(I_S,I_S)_0\cong H^0(\sO_S(1))^2,\qquad \Ext^{2}(I_S,I_S)_0=0,$$ and so the claim is proven.
The component of $P$ containing $J_X$ is then identified with the Grassmannian $ \mbox{Gr}(4,6)$ parameterizing $\bb P^3 \subset \bb P^5$. It is therefore nonsingular and has the expected dimension $\vd/2=8$. 

We can use this to compute a simple stable pair invariant $$\lra{\op{pt}^4}_{\Ch}^P=1,$$ because $\sigma_2^4=1$ in the cohomology ring of $ \mbox{Gr}(4,6)$ corresponding to the fact that there is a unique $\bb P^3\subset \bb P^5$ passing through 4 generic points, which in turn  implies that  there is a unique sextic surface in $X$ passing through the given 4 generic points.

\subsection{K3 surface fibrations}\label{subsec_K3_fibration}

Let $X$ be a Calabi-Yau 4-fold admitting a flat morphism $p: X\to B$ to a nonsingular smooth projective surface $B$ and with general fibers  nonsingular K3 surfaces. 

 Let $\Ch$ be the Chern character of the structure sheaf of a fiber. A component of the moduli space $P(X,\Ch)$ is identified with $B$ by assigning to any $b\in B$ the stable pair $J_X:=[\sO_X\to i_*\sO_S]$,  where $i\colon S\subset X$ is the fiber over $b\in B$.  This component is nonsingular of expected dimension $\vd/2=2$.

Now suppose that $b\in B$ is a general point , so the fiber $S$ is a nonsingular K3 surface. Pulling back a Koszul resolution for $\sO_b$ in $B$ gives a locally free resolution $$0\to \wedge^2 p^*N\to p^*N\to  \sO_X\to i_*\sO_S\to 0$$ for some rank 2 locally free sheaf $N$ on $B$. Clearly $p^*N|_S\cong \sO_S^2$ and so $Li^*i_* \sO_S\cong \sO_S^2[1]\oplus \sO_S[2]$. 
As in \S \ref{sec_sextic}, $R\Hom(J_X, J_X)_0[1]\cong R\Gamma(\sO_S)^{2}$, and hence 
$$\Ext^{1}(J_X,J_X)_0\cong H^0(\sO_S)^2,\qquad \Ext^{2}(J_X,J_X)_0=0,$$ and as a result $\lra{\op{pt}}_{\Ch}^P=1$, being the integral over $B$  of the class of a point. 
%

\subsection{Abelian surface fibrations}\label{subsec_Ab_fibration}
Let $X$ be a Calabi-Yau 4-fold admitting a smooth morphism $p: X\to B$ to a nonsingular smooth projective surface $B$ and with fibers  abelian surfaces. We  assume that \begin{equation} \label{equ_rpstar} Rp_* \sO_X\cong \sO_B \oplus (\sO_B \oplus K_B)[-1]\oplus K_B[-2].\end{equation} An example of such $X$ is the product of an abelian surface by either another abelian surface or  a K3 surface.

Let $S$ be a fiber of $p$, and $\Ch=\Ch(\sO_S^n)$ for some integer $n>0$. We claim that the moduli space of stable pairs in class $\Ch$ is identified with the Hilbert scheme of points on $B$ (see \cite[Lemma 2.5]{CMT} for a parallel result).
\begin{lem} $P(X,\Ch)\cong B^{[n]}$.
\end{lem}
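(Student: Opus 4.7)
The plan is to exhibit a morphism $\Phi: B^{[n]} \to P(X, \Ch)$ and prove it is an isomorphism by showing bijectivity on $T$-points for all test schemes $T$.

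First, I would construct $\Phi$ as follows. Given any $T$-flat family $\sZ \subset B \times T$ of length-$n$ zero-dimensional subschemes of $B$, set $\sY := (p \times \id_T)^{-1}(\sZ) \subset X \times T$. Since $p$ is smooth, $p \times \id_T$ is flat, so $\sY$ is $T$-flat with $\sO_{\sY} = (p \times \id_T)^* \sO_{\sZ}$. The canonical surjection $\sO_{X \times T} \twoheadrightarrow \sO_{\sY}$ thus defines a $T$-family of pairs, each fiber of which is pure 2-dimensional (the fibers of $p$ are smooth abelian surfaces) with vanishing cokernel, hence is a stable pair. Assumption \eqref{equ_rpstar} in particular gives $p_*\sO_X = \sO_B$, so by the projection formula $\Ch(p^*\sO_Z) = n\,\Ch(\sO_S)$ for each $Z_t \in B^{[n]}$. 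This assembles to the desired $\Phi: B^{[n]} \to P(X, \Ch)$.

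Next, I would show $\Phi$ is bijective on $T$-points. The essential step is that every stable pair $(F, s) \in P(X,\Ch)$ arises from this construction, i.e., there is a unique $Z \in B^{[n]}$ with $F \cong \sO_{p^{-1}(Z)}$ and $s$ the canonical surjection. Let $T_F$ denote the scheme-theoretic support of $F$; since $s$ factors as $\sO_X \twoheadrightarrow \sO_{T_F} \hookrightarrow F$, we get a short exact sequence $0 \to \sO_{T_F} \to F \to Q \to 0$ with $Q$ at most 1-dimensional by the stability condition, giving $\Ch_2(\sO_{T_F}) = n[S]$.

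The key technical step, which is the main obstacle, is to show that $T_F$ is vertical with respect to $p$, i.e.\ $T_F = p^{-1}(Z)$ for some 0-dimensional $Z \subset B$. Intuitively, an effective 2-cycle on $X$ of class $n$ times a smooth fiber class must be of pullback form in an abelian surface fibration; concretely, I would analyze $Rp_*\sO_{T_F}$ via base change (valid by smoothness of $p$) and \eqref{equ_rpstar} to rule out horizontal components of $T_F$, or equivalently show $p(T_F)$ is zero-dimensional by reducing to the case of a single fiber and using $\Ch_2 = n[S]$ together with purity. Granting this, the projection formula yields $\Ch(\sO_{T_F}) = n\,\Ch(\sO_S) = \Ch(F)$, so $\Ch(Q) = 0$; since $Q$ is at most 1-dimensional, $\Ch_3(Q) = \Ch_4(Q) = 0$ forces $Q = 0$. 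Thus $F = \sO_{p^{-1}(Z)}$ with the canonical section, and $Z$ is clearly unique. The inverse $\Psi: P(X, \Ch) \to B^{[n]}$ is then defined, in families, by $p_{T,*}\bb F \cong \sO_{\sZ}$ (again using base change and \eqref{equ_rpstar}), with $T$-flatness of $\sZ$ inherited from $\bb F$. A direct inspection shows $\Phi \circ \Psi = \id$ and $\Psi \circ \Phi = \id$.
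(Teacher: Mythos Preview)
Your proposal has a genuine gap at exactly the place you flag: you write ``Granting this'' for the verticality of the scheme-theoretic support $T_F$, but you never supply a proof. The hints you offer---analyze $Rp_*\sO_{T_F}$ via base change, or ``reduce to a single fiber''---are not arguments. Knowing that $\Ch_2(\sO_{T_F})=n[S]$ does not on its own force $T_F=p^{-1}(Z)$: you would need to exclude the possibility of irreducible surfaces in $X$ whose class lies in $\bb Z[S]$ but which are not fibers, and purity of $F$ alone does not do this. A smaller issue is that the factorization $\sO_X\twoheadrightarrow\sO_{T_F}\hookrightarrow F$ requires justification: the map $\sO_{T_F}\to F$ is not automatically injective for an arbitrary pure 2-dimensional $F$ (the paper only asserts this in \S\ref{sec_description} under a Cohen--Macaulay hypothesis on the support), so you are implicitly using verticality before you have proved it.

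The paper takes a different route that avoids analyzing the support directly. It runs the Harder--Narasimhan and Jordan--H\"older filtrations on $F$ to obtain stable factors $E_i$ with $\Ch(E_i)=(0,0,n_i\gamma,\beta_i,\xi_i)$, observes that each $E_i$ (being stable with $\Ch_2$ a multiple of the fiber class) is the pushforward of a torsion-free sheaf $G_i$ from a single fiber, and then uses the numerical constraints $\sum\omega\!\cdot\!\beta_i=0$ and $\sum\chi(E_i)=0$ together with the decreasing slope conditions to force $\omega\!\cdot\!\beta_i=0$ and $\chi(E_i)=0$ for all $i$. An inductive argument (peeling off $E_r=\sO_S$ and replacing the section by its restriction to $I_S$) then shows every factor is a structure sheaf of a fiber. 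This filtration approach sidesteps the geometric question of which surfaces can occur, replacing it by sheaf-theoretic stability constraints; it is what you should look at to fill your gap.
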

\begin{proof}
Let $I_Z<\sO_B$ be the ideal of a length $n$ subscheme $Z$. To this, we assign the stable pair $$[\sO_X \to \sO_{p^{-1}(Z)}]\cong p^*I_Z[1].$$
Conversely, given a stable pair $[\sO_X\to F]$ with $\Ch(F)=\Ch$, using Harder-Narasimhan
and Jordan-H\"older filtrations, we have
$$0 = F_0 \subset  F_1 \subset F_2 \subset \cdots\subset F_r = F,$$
where the quotient $E_i = F_i/F_{i-1}$'s are non-zero stable sheaves with decreasing reduced Hilbert polynomials with respect to a very ample divisor $\omega$. In particular, if $\Ch(E_i)=(0,0,n_i\gamma, \beta_i, \xi_i)$ with $n_i>0$ and $\gamma$ the class of the fiber, we must have 
$$\frac{\omega\cdot \beta_1}{n_1}\ge \frac{\omega\cdot \beta_2}{n_2}\ge \cdots \ge \frac{\omega\cdot \beta_r}{n_r}.$$ Note that $n_1> 0$ by the purity of $F$, and so all $n_i>0$ by the inequalities above. Moreover, each $E_i$ is stable so it must be the pushforward of some rank $n_i$ torsion free sheaf $G_i$ from a single fiber.
Now in the given stable pair the section $\sO_X\to F$ is surjective in dimension 2, so is the induced section $\sO_X\to F\twoheadrightarrow E_r$. By adjunction one gets a non zero section $\sO_S\to G_r$ with an at most 1-dimensional cokernel for some fiber $S$ of $p$. This in particular shows that $\omega\cdot \beta_r \ge 0$. But since $0=\omega \cdot\Ch_3(F)=\sum_{i=1}^r \omega\cdot \beta_i$, we conclude that $\omega\cdot \beta_i=0$ for all $i$. 

As a result of the discussion above, the cokernel of $\sO_S\to G_r$  must be at most 0-dimensional, so $\chi(G_r)=\chi(E_r)\ge 0$. In the filtration above we also have  $$\frac{\chi(E_1)}{n_1}\ge \frac{\chi(E_2)}{n_2}\ge \cdots \ge \frac{\chi(E_r)}{n_r}.$$ Therefore, from $0=\chi(F)=\sum_{i=1}^r \chi(E_i)$, we conclude that $\chi(E_i)=0$ for all $i$, and in particular $G_r=\sO_S$.

If $I_S<\sO_X$ is the ideal of $S$, the composition $I_S\to F\twoheadrightarrow E_r$ is clearly 0 ($E_r$ is the push forward of $G_r=\sO_S$), so the first arrow in the composition above must factor through $F_{r-1}$, and the induced map $I_S\to F_{r-1}\twoheadrightarrow E_{r-1}$ has an at most 1-dimensional cokernel. Let $\iota\colon S'\subset X$ be the fiber $p$ with the ideal  $I_{S'}<\sO_X$ supporting $E_{r-1}=\iota_* G_{r-1}$. By adjunction, we get a map $\iota^* I_{S}\to G_{r-1}$. But $\iota^* I_S=\sO_{S'}$ if $S\neq S'$ and $\iota^* I_S=\sO_{S'}^2$ if $S=S'$. In any case, we get a nonzero map $\sO_{S'}\to G_{r-1}$ with an at most 1-dimension cokernel. Repeating the argument above, we can show that $G_{r-1}=\sO_{S'}$. Eventually, we arrive at the conclusion that in our stable pair the section $\sO_X\to F$ is surjective and $F$ is the pull back of the structure sheaf of some 0-dimensional subscheme of $B$.
\end{proof}

This means that the moduli space of stable pairs $P(X,\Ch)$ is nonsingular of dimension $2n$ in this case. The virtual dimension is $\vd/2=0$. To find the obstruction bundle, let $I_Z< \sO_B$ be the ideal of a closed subscheme of $Z$ of $B$ of length $n$. The corresponding stable pair in $P(X,\Ch)$ is isomorphic to $p^*I_Z[1]$. By adjunction, projection formula and  our assumption \eqref{equ_rpstar} we can write \begin{align*} &R\Hom(p^*I_Z[1],p^*I_Z[1])_0\cong R\Hom(I_Z,Rp_* p^*I_Z)_0\cong \\& R\Hom(I_Z,I_Z)_0\oplus R\Hom(I_Z,I_Z\oplus I_Z(K_B))_0[-1]\oplus R\Hom(I_Z,I_Z(K_B))_0[-2].\end{align*}
In particular, $$\Ext^2(p^*I_Z[1],p^*I_Z[1])_0\cong \Ext^1(I_Z,I_Z)_0\oplus \Ext^1(I_Z,I_Z)^*_0.$$

We know $\Ext^1(I_Z,I_Z)_0$ is the tangent space of $B^{[n]}$ at the point corresponding to $I_Z$. Arguing by means of the universal families instead, we can identify the obstruction bundle of $P(X,\Ch)$ with $T_{B^{[n]}}\oplus \Omega_{B^{[n]}}$. So by \eqref{BF56} we find that  $$\lra{\;}^P_{\Ch}=\deg [P(X,\Ch)]^{\vir}=\sqrt e(T_{B^{[n]}}\oplus \Omega_{B^{[n]}})=e({B^{[n]}})$$ that can be found by means of the famous G\"ottsche's formula.

\subsection*{}

\end{document}